\documentclass[reqno,11pt]{amsart}

\usepackage{amsmath}
\usepackage{amsthm}
\usepackage{amsfonts}
\usepackage{amssymb}
\usepackage{enumerate}
\usepackage{graphicx}
\usepackage{color}
\usepackage[usenames,dvipsnames]{xcolor}
\usepackage{verbatim}

\newcommand{\oindicator}[1]{\ensuremath{\mathbf{1}_{{#1}}}}
\newcommand\abs[1]{\left|#1\right|}
\newcommand\norm[1]{\left\|#1\right\|}
\newcommand\JOmega{\Psi}

\numberwithin{equation}{section}

\DeclareMathOperator{\tr}{tr}

\DeclareMathOperator{\diag}{diag}
\DeclareMathOperator{\dist}{dist}
\DeclareMathOperator{\rank}{rank}

\newcommand{\transp}[1]{#1^{\mathrm T}}

\newcommand{\Prob}{\mathbb{P}}
\newcommand{\E}{\mathbb{E}}

\renewcommand\Re{\operatorname{Re}}
\renewcommand\Im{\operatorname{Im}}
\newcommand{\eps}{\varepsilon}

\newcommand{\mat}{\mathbf}

\newcommand{\Esc}{\mathcal{E}^{\mathrm{sc}}}
\newcommand{\Emp}{\mathcal{E}^{\mathrm{MP}}}
\newcommand{\EQ}{\mathcal{E}^{Q}}

\newcommand{\pfrac}[2]{\left(\frac{#1}{#2}\right)}

\theoremstyle{plain}
  \newtheorem{theorem}{Theorem}[section]

  \newtheorem{lemma}[theorem]{Lemma}
  \newtheorem{corollary}[theorem]{Corollary}
  
  \newtheorem{question}[theorem]{Question}

\theoremstyle{definition}
  \newtheorem{definition}[theorem]{Definition}
  \newtheorem{example}[theorem]{Example}
  \newtheorem{remark}[theorem]{Remark}

\begin{document}
\title{Spectra of nearly Hermitian random matrices}

\author[S. O'Rourke]{Sean O'Rourke}
\address{Department of Mathematics, University of Colorado at Boulder, Boulder, CO 80309  }
\email{sean.d.orourke@colorado.edu}

\author[P. Wood]{Philip Matchett Wood}
\thanks{The second author was partially supported by National Security Agency (NSA) Young Investigator Grant number H98230-14-1-0149.} 
\address{Department of Mathematics, University of Wisconsin-Madison, 480 Lincoln Dr., Madison, WI 53706 }
\email{pmwood@math.wisc.edu}

\begin{abstract}
We consider the eigenvalues and eigenvectors of matrices of the form $\mat M + \mat P$, where $\mat M$ is an $n \times n$ Wigner random matrix and $\mat P$ is an arbitrary $n \times n$ deterministic matrix with low rank.  In general, we show that none of the eigenvalues of $\mat M + \mat P$ need be real, even when $\mat P$ has rank one. We also show that, except for a few outlier eigenvalues, most of the eigenvalues of $\mat M + \mat P$ are within $n^{-1}$ of the real line, up to small order corrections.  We also prove a new result quantifying the outlier eigenvalues for multiplicative perturbations of the form $\mat S (\mat I + \mat P)$, where $\mat S$ is a sample covariance matrix and $\mat I$ is the identity matrix.  We extend our result showing all eigenvalues except the outliers are close to the real line to this case as well.  As an application, we study the critical points of the characteristic polynomials of nearly Hermitian random matrices.  
\end{abstract}

\maketitle

\section{Introduction and notation}

The fundamental question of perturbation theory is the following.  How does a function change when its argument is subject to a perturbation?  In particular, matrix perturbation theory is concerned with perturbations of matrix functions, such as solutions to linear system, eigenvalues, and eigenvectors.  The purpose of this paper is to analyze the eigenvalues and eigenvectors of large perturbed random matrices.  

For an $n \times n$ matrix $\mat M$ with complex entries, the eigenvalues of $\mat M$ are the roots in $\mathbb{C}$ of the characteristic polynomial $p_{\mat M}(z) := \det(\mat M - z \mat{I})$, where $\mat{I}$ denotes the identity matrix.  Let $\lambda_1(\mat{M}), \ldots, \lambda_n(\mat M)$ denote the eigenvalues of $\mat M$ counted with multiplicity, and let $\Lambda(\mat M)$ denote the set of all eigenvalues of $\mat M$.  In particular, if $\mat{M}$ is Hermitian (that is, $\mat M = \mat M^\ast$), then $\Lambda(\mat M) \subset \mathbb{R}$.  

The central question in the perturbation theory of eigenvalues is the following.  Given a matrix $\mat M$ and a perturbation $\mat P$ of $\mat M$, how are the spectra $\Lambda(\mat M)$ and $\Lambda(\mat M + \mat P)$ related?  Many challenges can arise when trying to answer this question.  For instance, different classes of matrices behave differently under perturbation.  

The simplest case to consider is when both $\mat M$ and $\mat P$ are Hermitian.  Indeed, if $\mat M$ and $\mat P$ are Hermitian $n \times n$ matrices then the eigenvalues of $\mat M$ and $\mat M + \mat P$ are real and the classic Hoffman--Wielandt theorem \cite{HW} states that there exists a permutation $\pi$ of $\{1, \ldots, n\}$ such that
\begin{equation} \label{eq:hw}
	\sum_{j=1}^n \left| \lambda_{\pi(j)}(\mat{M}) - \lambda_j(\mat{M} + \mat{P}) \right|^2 \leq \|\mat{P}\|^2_2.
\end{equation}
Here, $\| \mat{P} \|_2$ denotes the Frobenius norm of $\mat P$ defined by the formula
\begin{equation} \label{eq:frobenius}
	\| \mat P \|_2 := \sqrt{ \tr (\mat P \mat P^\ast ) } = \sqrt{ \tr (\mat P^\ast \mat P ) }.
\end{equation}
The classic Hoffman--Weilandt theorem \cite{HW} is in fact a bit more general: \eqref{eq:hw} holds assuming only that $\mat M$ and $\mat M + \mat P$ are normal.

However, in this note, we focus on the case when $\mat M$ is Hermitian but $\mat P$ is arbitrary.  In this case, the spectrum $\Lambda(\mat M)$ is real, but the eigenvalues of $\mat M + \mat P$ need not be real.  Indeed, in the following example, we show that even when $\mat P$ contains only one nonzero entry, all the eigenvalues of $\mat M + \mat P$ can be complex.  

\begin{example} \label{ex:toeplitz}
Consider an $n \times n$ tridiagonal Toeplitz matrix $\mat{T}$ with zeros on the diagonal and ones on the super- and sub-diagonals.  The matrix $\mat T$ has eigenvalues $-2\cos\left(\frac{k\pi}{n+1}\right)$ for $k=1,2,\dots,n$ (see for example \cite{KST1999}).  Let $p_\mat{T}$ denote the characteristic polynomial for $\mat{T}$.  Let $\mat{P}$ be the matrix with every entry zero except for the $(1,n)$-entry which is set to  $\sqrt{-1}$, the imaginary unit.\footnote{Here and in the sequel, we use $\sqrt{-1}$ to denote the imaginary unit and reserve $i$ as an index.}  Then the characteristic polynomial of $\mat{T} + \mat P$ is $p_{\mat{T}}(z) + \sqrt{-1}$.  Since $p_{\mat{T}}(x) \in \mathbb{R}$ for all $x \in \mathbb{R}$, it follows that none of the eigenvalues of $\mat{T} + \mat{P}$ are real.  
\end{example}

Example \ref{ex:toeplitz} shows that we cannot guarantee that even one of the eigenvalues of $\mat M + \mat P$ is real.  However, this example does raise the following question.
\begin{question} \label{q:main}
If $\mat{M}$ is Hermitian and $\mat P$ is arbitrary, how far are the eigenvalues of $\mat M + \mat P$ from the real line?
\end{question}

Question \ref{q:main} was addressed by Kahan \cite{K}.  We summarize Kahan's results below in Theorem \ref{thm:kahan}, but we first fix some notation.  For an $n \times n$ matrix $\mat M$, let $\| \mat M \|$ denote the spectral norm of $\mat M$.  That is, $\| \mat M \|$ is the largest singular value of $\mat M$.  We also denote the real and imaginary parts of $\mat M$ as 
$$ \Re(\mat M) := \frac{\mat M + \mat M^\ast}{2}, \qquad \Im(\mat M) := \frac{ \mat M - \mat M^\ast }{2i}. $$
It follows that $\mat M$ is Hermitian if and only if $\Im(\mat M) = 0$.  

\begin{theorem}[Kahan \cite{K}] \label{thm:kahan}
Let $\mat M$ be an $n \times n$ Hermitian matrix with eigenvalues $\lambda_1 \geq \cdots \geq \lambda_n$.  Let $\mat P$ be an arbitrary $n \times n$ matrix, and denote the eigenvalues of $\mat M + \mat P$ as $\mu_1 + \sqrt{-1} \nu_1, \ldots, \mu_n + \sqrt{-1} \nu_n$, where $\mu_1 \geq \cdots \geq \mu_n$.  Then
\begin{equation} \label{eq:kahan1}
	\sup_{1 \leq k \leq n} |\nu_k| \leq \| \Im(\mat P) \|, \qquad \sum_{k=1}^n \nu_k^2 \leq \| \Im(\mat P) \|_2^2, 
\end{equation}
and
\begin{equation} \label{eq:kahan2}
	\sum_{k=1}^n \left| (\mu_k + \sqrt{-1} \nu_k) - \lambda_k \right|^2 \leq 2 \| \mat P \|_2^2. 
\end{equation}
\end{theorem}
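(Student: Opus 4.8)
The plan is to deduce all three estimates from the Schur triangularization of $\mat{A} := \mat{M} + \mat{P}$ together with the Hermitian case of the Hoffman--Wielandt inequality \eqref{eq:hw}. Since $\mat{M}$ is Hermitian we have $\Im(\mat{A}) = \Im(\mat{P})$ and $\Re(\mat{A}) = \mat{M} + \Re(\mat{P})$ is Hermitian; these two observations are used throughout. The first inequality in \eqref{eq:kahan1} is immediate: if $\mat{A}\vect{v} = (\mu + \sqrt{-1}\nu)\vect{v}$ with $\|\vect{v}\| = 1$, then taking imaginary parts in $\vect{v}^\ast \mat{A}\vect{v} = \mu + \sqrt{-1}\nu$ gives $\nu = \vect{v}^\ast \Im(\mat{A})\vect{v}$, so $|\nu| \le \|\Im(\mat{A})\| = \|\Im(\mat{P})\|$.

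For the remaining bounds I would fix a Schur decomposition $\mat{A} = \mat{U}(\mat{D} + \mat{N})\mat{U}^\ast$, with $\mat{U}$ unitary, $\mat{D} = \diag(\mu_1 + \sqrt{-1}\nu_1, \dots, \mu_n + \sqrt{-1}\nu_n)$ with the eigenvalues ordered so that $\mu_1 \ge \cdots \ge \mu_n$, and $\mat{N}$ strictly upper triangular. Because $\mat{N}$ and $\mat{N}^\ast$ have disjoint supports and vanishing diagonal, one checks directly that $\|\Im(\mat{D}+\mat{N})\|_2^2 = \sum_k \nu_k^2 + \tfrac12\|\mat{N}\|_2^2$ and $\|\Re(\mat{N})\|_2^2 = \tfrac12\|\mat{N}\|_2^2$. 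Since $\Im(\mat{P}) = \Im(\mat{A}) = \mat{U}\,\Im(\mat{D}+\mat{N})\,\mat{U}^\ast$ and the Frobenius norm is unitarily invariant, this yields the identity
\begin{equation*}
  \|\Im(\mat{P})\|_2^2 \;=\; \sum_{k=1}^n \nu_k^2 \;+\; \tfrac12 \|\mat{N}\|_2^2 ,
\end{equation*}
which already contains the second inequality of \eqref{eq:kahan1} and also the estimate $\|\mat{N}\|_2 \le \sqrt{2}\,\|\Im(\mat{P})\|_2$ that I will use below.

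To prove \eqref{eq:kahan2}, let $\rho_1 \ge \cdots \ge \rho_n$ be the eigenvalues of the Hermitian matrix $\Re(\mat{A})$, equivalently of $\Re(\mat{D}+\mat{N}) = \diag(\mu_1,\dots,\mu_n) + \Re(\mat{N})$. Applying \eqref{eq:hw} in the Hermitian case (in which the permutation $\pi$ may be taken to be the identity, since all eigenvalues are listed in decreasing order) twice, to the pair $\mat{M},\,\mat{M}+\Re(\mat{P})$ and to the pair $\diag(\mu_k),\,\Re(\mat{D}+\mat{N})$, gives $\sum_k (\rho_k - \lambda_k)^2 \le \|\Re(\mat{P})\|_2^2$ and $\sum_k (\mu_k - \rho_k)^2 \le \|\Re(\mat{N})\|_2^2 = \tfrac12\|\mat{N}\|_2^2$. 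The triangle inequality for the Euclidean norm on $\mathbb{R}^n$ applied to the vectors $(\mu_k),(\rho_k),(\lambda_k)$ then gives
\begin{equation*}
  \Big(\sum_{k=1}^n (\mu_k - \lambda_k)^2\Big)^{1/2} \;\le\; \tfrac{1}{\sqrt{2}}\|\mat{N}\|_2 + \|\Re(\mat{P})\|_2 .
\end{equation*}
Squaring this, adding $\sum_k \nu_k^2 = \|\Im(\mat{P})\|_2^2 - \tfrac12\|\mat{N}\|_2^2$ from the identity above, bounding the cross term via $\|\mat{N}\|_2 \le \sqrt{2}\,\|\Im(\mat{P})\|_2$ and $2ab \le a^2 + b^2$, and invoking $\|\mat{P}\|_2^2 = \|\Re(\mat{P})\|_2^2 + \|\Im(\mat{P})\|_2^2$ (valid because $\Re(\mat{P})$ and $\Im(\mat{P})$ are Hermitian), one finds that the $\|\mat{N}\|_2^2$ terms cancel and $\sum_{k=1}^n |(\mu_k + \sqrt{-1}\nu_k) - \lambda_k|^2 \le 2\|\mat{P}\|_2^2$.

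I expect the last step to be the only delicate point. Estimated on its own, $\sum_k(\mu_k-\lambda_k)^2$ picks up a term of order $\tfrac12\|\mat{N}\|_2^2$ coming from the strictly triangular (non‑normal) part of $\mat{A}$, and this term is only controlled by $\|\Im(\mat{P})\|_2^2$, so bounding the real‑ and imaginary‑part contributions separately would give a constant strictly bigger than $2$. The resolution is that exactly this $\tfrac12\|\mat{N}\|_2^2$ is subtracted off in the expression for $\sum_k \nu_k^2$, so the departure from normality has already been paid for; arranging the computation to exploit this cancellation rather than estimating the two sums in isolation is what forces the sharp constant $2$. Everything else is Schur triangularization plus the Hermitian Hoffman--Wielandt inequality.
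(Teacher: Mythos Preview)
The paper does not supply its own proof of Theorem~\ref{thm:kahan}; the result is quoted from Kahan~\cite{K}, and the reader is referred to \cite[Section~IV.5.1]{SS} for a concise proof. Your argument is correct and is essentially the classical Schur--triangularization route found in those references: the identity $\|\Im(\mat P)\|_2^2 = \sum_k \nu_k^2 + \tfrac12\|\mat N\|_2^2$ for the strictly upper-triangular part $\mat N$ disposes of \eqref{eq:kahan1}, and two applications of the Hermitian Hoffman--Wielandt inequality (where, by the rearrangement inequality, the identity permutation minimizes the left side and therefore also satisfies the bound) combine with this identity to give \eqref{eq:kahan2}. Your remark that the $\tfrac12\|\mat N\|_2^2$ contributions cancel between the real- and imaginary-part sums is precisely what produces the sharp constant~$2$.
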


\begin{remark}
In the case that $\mat{M}$ is only assumed to be normal (instead of Hermitian) but $\mat M + \mat{P}$ is arbitrary, Sun \cite{S} proved that there exists a permutation $\pi$ of $\{1, \ldots, n\}$ such that
\begin{equation} \label{eq:sun}
	\sum_{j=1}^n \left| \lambda_{\pi(j)}(\mat{M}) - \lambda_j(\mat{M} + \mat{P}) \right|^2 \leq n\|\mat{P}\|^2_2, 
\end{equation}
and \cite{S} also shows that this bound is sharp.
\end{remark}

We refer the reader to \cite[Section IV.5.1]{SS} for a discussion of Kahan's results as well as a concise proof of Theorem \ref{thm:kahan}.  The bounds in \eqref{eq:kahan1} were shown to be sharp in \cite{K}.  In the next example, we show that \eqref{eq:kahan2} is also sharp.  

\begin{example} \label{ex:sharp}
Consider the matrices $\mat M = \begin{bmatrix} 0 & 1\\ 1 & 0 \end{bmatrix}$ and $\mat P = \begin{bmatrix} 0 & 0 \\ -1 & 0\end{bmatrix}$.  Then $\mat M$ has eigenvalues $\pm 1$ and $\mat M + \mat P$ has only the eigenvalue $0$ with multiplicity two.  Since $\| \mat P \|_2 = 1$, it follows that
$$ |1-0|^2 + |-1 - 0|^2 = 2 \| \mat P \|_2^2, $$
and hence the bound in \eqref{eq:kahan2} is sharp.  
\end{example}

In this note, we address Question \ref{q:main} when $\mat{M}$ is a Hermitian random matrix and $\mat P$ is a deterministic, low rank perturbation.  In particular, our main results (presented in Section \ref{sec:main}) show that, in this setting, one can improve upon the results in Theorem \ref{thm:kahan}.  One might not expect this improvement since the bounds in Theorem \ref{thm:kahan} are sharp; however, the bounds appear to be sharp for very contrived examples (such as Example \ref{ex:sharp}).  Intuitively, if we consider a random matrix $\mat M$, we expect with high probability to avoid these worst-case scenarios, and thus, some improvement is expected.  Before we present our main results, we describe the ensembles of Hermitian random matrices we will be interested in.

\subsection{Random matrix models}
We consider two important ensembles of Hermitian random matrices.  The first ensemble was originally introduced by Wigner \cite{W} in the 1950s to model Hamiltonians of atomic nuclei.

\begin{definition}[Wigner matrix]
Let $\xi, \zeta$ be real random variables.  We say $\mat{W}$ is a \emph{real symmetric Wigner matrix} of size $n$ with atom variables $\xi$ and $\zeta$ if $\mat{W} = (w_{ij})_{i,j=1}^n$ is a random real symmetric $n \times n$ matrix that satisfies the following conditions.
\begin{itemize}
\item $\{w_{ij} : 1 \leq i \leq j \leq n\}$ is a collection of independent random variables.
\item $\{w_{ij} : 1 \leq i < j \leq n\}$ is a collection of independent and identically distributed (iid) copies of $\xi$.
\item $\{w_{ii} : 1 \leq i \leq n\}$ is a collection of iid copies of $\zeta$.  
\end{itemize}
\end{definition}

\begin{remark}
One can similarly define Hermitian Wigner matrices where $\xi$ is allowed to be a complex-valued random variable.  However, for simplicity, we only focus on the real symmetric model in this note.  
\end{remark}

The prototypical example of a Wigner real symmetric matrix is the \emph{Gaussian orthogonal ensemble} (GOE).  The GOE is defined as an $n \times n$ Wigner matrix with atom variables $\xi$ and $\zeta$, where $\xi$ is a standard Gaussian random variable and $\zeta$ is a Gaussian random variable with mean zero and variance two.  Equivalently, the GOE can be viewed as the probability distribution  
\begin{equation*}
	\Prob(d \mat W) = \frac{1}{Z_n} \exp\left({-\frac{1}{4}\tr{\mat{W}^2}}\right) d \mat W
\end{equation*}
on the space of $n \times n$ real symmetric matrices, where $d \mat W$ refers to the Lebesgue measure on the $n(n+1)/2$ different elements of the matrix.  Here $Z_n$ denotes the normalization constant.

We will also consider an ensemble of sample covariance matrices.  

\begin{definition}[Sample covariance matrix]
Let $\xi$ be a real random variable.  We say $\mat{S}$ is a \emph{sample covariance matrix} with atom variable $\xi$ and parameters $(m,n)$ if $\mat S = \mat X^\mathrm{T} \mat X$, where $\mat X$ is a $m \times n$ random matrix whose entries are iid copies of $\xi$.  
\end{definition}

A fundamental result for Wigner random matrices is Wigner's semicircle law, which describes the global behavior of the eigenvalues of a Wigner random matrix.  Before stating the result, we present some additional notation.  For an arbitrary $n \times n$ matrix $\mat A$, we define the \emph{empirical spectral measure} $\mu_{\mat A}$ of $\mat A$ as 
$$ \mu_{\mat A} := \frac{1}{n} \sum_{k=1}^n \delta_{\lambda_k(\mat A)}. $$
In general, $\mu_{\mat A}$ is a probability measure on $\mathbb{C}$, but if $\mat A$ is Hermitian, then $\mu_{\mat A}$ is a probability measure on $\mathbb{R}$.  In particular, if $\mat A$ is a random matrix, then $\mu_{\mat A}$ is a random probability measure on $\mathbb{C}$.  Let us also recall what it means for a sequence of random probability measures to converge weakly. 

\begin{definition}[Weak convergence of random probability measures] \label{def:weakconvergence} 
Let $T$ be a topological space (such as $\mathbb{R}$ or $\mathbb{C}$), and let $\mathcal{B}$ be its Borel $\sigma$-field.  Let $(\mu_n)_{n\geq 1}$ be a sequence of random probability measures on $(T,\mathcal{B})$, and let $\mu$ be a probability measure on $(T,\mathcal{B})$.  We say \emph{$\mu_n$ converges weakly to $\mu$ in probability} as $n \to \infty$ (and write $\mu_n \to \mu$ in probability) if for all bounded continuous $f:T \to \mathbb{R}$ and any $\eps > 0$,
$$ \lim_{n \to \infty} \Prob \left( \left| \int f d\mu_n - \int f d\mu \right| > \eps \right) = 0. $$
In other words, $\mu_n \to \mu$ in probability as $n \to \infty$ if and only if $\int f d\mu_n \to \int f d\mu$ in probability for all bounded continuous $f: T \to \mathbb{R}$.  Similarly, we say \emph{$\mu_n$ converges weakly to $\mu$ almost surely} (or, equivalently, \emph{$\mu_n$ converges weakly to $\mu$ with probability $1$}) as $n \to \infty$ (and write $\mu_n \to \mu$ almost surely) if for all bounded continuous $f:T \to \mathbb{R}$,
$$ \lim_{n \to \infty} \int f d\mu_n = \int f d\mu $$
almost surely.    
\end{definition}

Recall that Wigner's semicircle law is the (non-random) measure $\mu_{\mathrm{sc}}$ on $\mathbb{R}$ with density 
\begin{equation} \label{eq:def:rhosc}
	\rho_{\mathrm{sc}}(x) := \left\{
	\begin{array}{ll}
		\frac{1}{2 \pi} \sqrt{4 - x^2}, & \text{if } |x| \leq 2, \\
		0, & \text{otherwise}.
	\end{array} \right. 
\end{equation}

\begin{theorem}[Wigner's semicircle law; Theorem 2.5 from \cite{BSbook}] \label{thm:wigner}
Let $\xi$ and $\zeta$ be real random variables; assume $\xi$ has unit variance.  For each $n \geq 1$, let $\mat W_n$ be an $n \times n$ real symmetric Wigner matrix with atom variables $\xi$ and $\zeta$.  Then, with probability $1$, the empirical spectral measure $\mu_{\frac{1}{\sqrt{n}}\mat {W}_n}$ of $\frac{1}{\sqrt{n}} \mat W_n$ converges weakly on $\mathbb{R}$ as $n \to \infty$ to the (non-random) measure $\mu_{\mathrm{sc}}$ with density given by \eqref{eq:def:rhosc}.  
\end{theorem}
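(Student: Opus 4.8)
The plan is to prove the theorem by the method of moments. Write $\mat A_n := \tfrac{1}{\sqrt n}\mat W_n$ and $\mu_n := \mu_{\mat A_n}$. Since $\mu_{\mathrm{sc}}$ is compactly supported, it is the unique probability measure on $\mathbb{R}$ whose moments are $\int x^{2p}\,d\mu_{\mathrm{sc}} = C_p$ (the $p$-th Catalan number) and $\int x^{2p+1}\,d\mu_{\mathrm{sc}} = 0$; consequently, by the Weierstrass approximation theorem together with a tightness estimate ruling out escape of mass to infinity, it suffices to show that for every fixed integer $k \geq 0$,
$$ \int x^k\, d\mu_n(x) \;=\; \frac1n \tr \mat A_n^k \;\longrightarrow\; \int x^k\, d\mu_{\mathrm{sc}}(x) \qquad \text{almost surely as } n \to \infty, $$
and then to intersect the resulting countably many probability-one events over $k \in \mathbb{N}$.

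Before computing moments, I would carry out a standard truncation and centralization step. Using the elementary inequalities bounding the sup-norm and the $L^2$-distance between the empirical distribution functions of two real symmetric matrices by, respectively, the rank and the Frobenius norm of their difference, one replaces each entry $w_{ij}$ ($i<j$) by the recentered, renormalized truncation of $w_{ij}\,\indicator{|w_{ij}| \leq \delta_n \sqrt n}$ for a sequence $\delta_n \downarrow 0$ tending to zero slowly; the diagonal entries play no role in the limit and may be discarded in the same way. This does not change the limiting spectral distribution, so from now on we may assume that all entries have moments of every order finite and are bounded in absolute value by $\delta_n \sqrt n$.

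The heart of the proof is the combinatorial evaluation of
$$ \E\,\frac1n \tr \mat A_n^k \;=\; \frac{1}{n^{1+k/2}} \sum_{i_1,\dots,i_k=1}^n \E\bigl[\, w_{i_1 i_2} w_{i_2 i_3} \cdots w_{i_k i_1} \,\bigr]. $$
Each summand corresponds to a closed walk of length $k$ on the complete graph with vertex set contained in $\{1,\dots,n\}$; by independence and the mean-zero assumption, a summand vanishes unless every edge traversed by the walk is traversed at least twice. A counting argument then identifies the configurations contributing in the limit: they are exactly the walks whose edge set is a tree on $k/2+1$ vertices with each edge used precisely twice --- so there is no contribution when $k$ is odd --- and the number of such walks is $n^{k/2+1} C_{k/2}(1+o(1))$, while all other configurations contribute $O(n^{k/2})$. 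Hence $\E\,\tfrac1n\tr\mat A_n^k$ converges to $C_{k/2}$ when $k$ is even and to $0$ when $k$ is odd, matching the moments of $\mu_{\mathrm{sc}}$. To promote this to almost sure convergence, I would show $\var(\tfrac1n\tr\mat A_n^k) = O(n^{-2})$ by the analogous estimate applied to \emph{pairs} of closed walks: the pairs whose two walks share no edge contribute exactly the square of the mean up to lower order, and the pairs sharing at least one edge are forced to lose two free vertex labels and hence contribute $O(n^{-2})$. Chebyshev's inequality and the Borel--Cantelli lemma then give the claimed almost sure moment convergence for each fixed $k$, which completes the proof.

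The main obstacle is this combinatorial bookkeeping: carefully isolating the leading (tree-like, doubled-edge) walks and verifying that every other configuration --- edges used three or more times, or walks whose underlying graph contains a cycle --- is genuinely of lower order, and then performing the analogous and somewhat more delicate count for pairs of walks in the variance bound. The truncation reduction, though routine, also requires a little care to ensure that it is compatible with almost sure rather than merely in-probability convergence.
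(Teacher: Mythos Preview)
Your proposal is correct and follows the classical moment-method proof of Wigner's semicircle law. However, the paper does not give its own proof of this statement: Theorem~\ref{thm:wigner} is simply quoted from \cite[Theorem~2.5]{BSbook} as a known result and used as a tool elsewhere in the paper. There is therefore no ``paper's own proof'' to compare against. For what it is worth, the argument in \cite{BSbook} is precisely the truncation-and-moments approach you outline, so your sketch aligns with the cited source.
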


For sample covariance matrices, the Marchenko--Pastur law describes the limiting global behavior of the eigenvalues.  Recall that the Marchenko--Pastur law is the (non-random) measure $\mu_{\mathrm{MP},y}$ with parameter $y > 0$ which has density
\begin{equation} \label{eq:def:rhoMP}
	\rho_{\mathrm{MP},y}(x) := \left\{
	\begin{array}{ll}
		\frac{\sqrt{y}}{2 x\pi} \sqrt{ (x - \lambda_-)(\lambda_+ - x) }, & \text{if } \lambda_- \leq x \leq \lambda_+, \\
		0, & \text{otherwise}
	\end{array} \right. 
\end{equation}
and with point mass $1 - y$ at the origin if $y < 1$, where 
\begin{equation} \label{eq:def:lambdapm}
	\lambda_{\pm} := \sqrt{y} \left( 1 \pm \frac{1}{\sqrt{y}} \right)^2. 
\end{equation}

\begin{theorem}[Marchenko--Pastur law; Theorem 3.6 from \cite{BSbook}] \label{thm:mp}
Let $\xi$ be a real random variable with mean zero and unit variance.  For each $n \geq 1$, let $\mat S_n$ be a sample covariance matrix with atom variable $\xi$ and parameters $(m,n)$, where $m$ is a function of $n$ such that $y_n := \frac{m}{n} \to y \in (0, \infty)$ as $n \to \infty$.  Then, with probability $1$, the empirical spectral measure $\mu_{\frac{1}{\sqrt{mn}} \mat S_n}$ of $\frac{1}{\sqrt{mn}} \mat S_n$ converges weakly on $\mathbb{R}$ as $n \to \infty$ to the (non-random) measure $\mu_{\mathrm{MP},y}$.  
\end{theorem}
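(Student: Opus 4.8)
Since the Marchenko--Pastur law is classical (see \cite{BSbook} for a complete proof), we only sketch the argument via the Stieltjes transform method, as this is the technique that will underlie the perturbative results below. Write $\mat A_n := \frac{1}{\sqrt{mn}}\mat S_n$, so that $\mat A_n = \frac{1}{\sqrt{mn}}\sum_{k=1}^m \mat x_k\transp{\mat x_k}$, where $\mat x_1,\dots,\mat x_m\in\mathbb R^n$ are the independent rows of $\mat X$. For $z$ with $\eta := \Im z > 0$, set
$$ s_n(z) := \frac1n\tr\big(\mat A_n - z\mat I\big)^{-1}, \qquad s_{\mathrm{MP},y}(z) := \int \frac{d\mu_{\mathrm{MP},y}(x)}{x - z}. $$
A sequence of (random) probability measures on $\mathbb R$ converges weakly to $\mu_{\mathrm{MP},y}$ with probability $1$ if and only if the corresponding Stieltjes transforms converge with probability $1$ to $s_{\mathrm{MP},y}(z)$ for each fixed $z$ in the upper half-plane $\{z\in\mathbb C:\Im z>0\}$; here one uses that the $s_n$ are analytic and uniformly bounded by $1/\eta$, so that almost sure pointwise convergence along a countable dense set upgrades to locally uniform almost sure convergence. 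Thus it suffices to prove $s_n(z)\to s_{\mathrm{MP},y}(z)$ almost surely for each such $z$.

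\emph{Truncation.} A routine truncation, recentering, and rescaling of the entries of $\mat X$ --- whose effect on $\mu_{\mat A_n}$ is controlled via the rank inequality for empirical distribution functions together with the Hoffman--Wielandt bound \eqref{eq:hw} (see \cite{BSbook})--- lets us assume $|\xi|\le\delta_n\sqrt n$ for a deterministic sequence $\delta_n\to 0$, so in particular $\xi$ has finite moments of all orders. \emph{Concentration.} Fix $z$ in the upper half-plane and let $\mathcal F_k := \sigma(\mat x_1,\dots,\mat x_k)$. Then $s_n(z) - \E s_n(z) = \sum_{k=1}^m\big(\E[s_n(z)\mid\mathcal F_k] - \E[s_n(z)\mid\mathcal F_{k-1}]\big)$ is a sum of martingale differences; removing or resampling the row $\mat x_k$ perturbs $\mat A_n$ by a matrix of rank at most $2$, so by eigenvalue interlacing and the bound $|(\lambda - z)^{-1}|\le 1/\eta$ for real $\lambda$ the $k$-th difference has absolute value $O(1/(n\eta))$. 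Since $m\asymp n$, the Azuma--Hoeffding inequality gives $\Prob(|s_n(z) - \E s_n(z)| > \eps)\le 2\exp(-c\,n\,\eps^2\eta^2)$ for a constant $c>0$, and Borel--Cantelli yields $s_n(z) - \E s_n(z)\to 0$ almost surely. It remains to identify $\lim_{n\to\infty}\E s_n(z)$.

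\emph{The self-consistent equation.} Let $\mathbf a\in\mathbb R^m$ be the first column of $\mat X$, let $\mat X_1$ be $\mat X$ with that column removed, and set $\mat A_n' := \frac{1}{\sqrt{mn}}\transp{\mat X_1}\mat X_1$, which is the principal $(n-1)\times(n-1)$ submatrix of $\mat A_n$ obtained by deleting the first row and column. The Schur complement formula gives
$$ \big(\mat A_n - z\mat I\big)^{-1}_{11} = \left( \frac{\norm{\mathbf a}^2}{\sqrt{mn}} - z - \frac{1}{mn}\transp{\mathbf a}\,\mat X_1\big(\mat A_n' - z\mat I\big)^{-1}\transp{\mat X_1}\,\mathbf a \right)^{-1}. $$
Since $\mathbf a$ is independent of $\mat X_1$ with i.i.d.\ mean-zero, unit-variance entries, the law of large numbers gives $\norm{\mathbf a}^2/\sqrt{mn} = \sqrt{y_n} + o(1)\to\sqrt y$, while the concentration of quadratic forms about their conditional mean (an $L^2$ estimate using the moment bound from the truncation step), together with the identity $\transp{\mat X_1}\mat X_1 = \sqrt{mn}\,\mat A_n'$, the cyclicity of the trace, and $\tr(\mat A_n' - z\mat I)^{-1} = n\,s_n(z) + O(1/\eta)$, shows that the last term in the denominator equals $\frac{1}{\sqrt y}\big(1 + z\,s_n(z)\big)$ up to an error that tends to $0$. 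Because all $n$ diagonal entries of $(\mat A_n - z\mat I)^{-1}$ are identically distributed, because the denominators above have imaginary part at most $-\eta$ (so inversion is well conditioned on compact subsets of the upper half-plane), and by the concentration step, averaging these relations gives
$$ \E s_n(z) = \left( \sqrt y - z - \frac{1}{\sqrt y}\big(1 + z\,\E s_n(z)\big) \right)^{-1} + o(1). $$
Every limit point $s_\infty$ of the bounded sequence $\E s_n(z)$ therefore satisfies $s_\infty\big(\sqrt y - z - \frac{1}{\sqrt y}(1 + z\,s_\infty)\big) = 1$, i.e. the quadratic
$$ z\,s_\infty^2 + \big(\sqrt y\,z + 1 - y\big)s_\infty + \sqrt y = 0. $$
One checks --- for instance by the Stieltjes inversion formula, which recovers the density \eqref{eq:def:rhoMP} (with $\lambda_\pm$ as in \eqref{eq:def:lambdapm}) and, when $y<1$, the point mass $1-y$ at the origin, or by matching the large-$z$ expansion $-1/z - \sqrt y/z^2 - (1+y)/z^3 - \cdots$ of the two roots --- that this quadratic has a unique root with positive imaginary part on the upper half-plane and that this root equals $s_{\mathrm{MP},y}(z)$. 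Hence $\E s_n(z)\to s_{\mathrm{MP},y}(z)$, which together with the concentration step proves $s_n(z)\to s_{\mathrm{MP},y}(z)$ almost surely.

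The step I expect to be the main obstacle is the self-consistent equation: one must make the approximation $\transp{\mathbf a}\,\mat R\,\mathbf a\approx\tr\mat R$ quantitative (uniformly in the resolvent $\mat R$), propagate the resulting errors through the nonlinear inversion $x\mapsto 1/x$ (legitimate precisely because the denominators stay a fixed distance from $0$ on compacta of the upper half-plane), and, at the very end, use the sign constraint $\Im s_n(z)>0$ to discard the spurious root of the quadratic. (An alternative route is the method of moments: after truncation one expands $\E\frac1n\tr\mat A_n^{\,k}$ as a sum over closed walks, identifies the leading contribution with the $k$-th moment of $\mu_{\mathrm{MP},y}$ --- a weighted count of non-crossing pair partitions --- bounds the variance by $O(1/n^2)$, and appeals to the Carleman condition to conclude; there the combinatorial bookkeeping is the principal difficulty.)
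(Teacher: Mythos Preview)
The paper does not give its own proof of this statement: Theorem~\ref{thm:mp} is simply quoted as Theorem~3.6 from \cite{BSbook} and used as a black box in later sections. So there is no in-paper argument to compare against.

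That said, your sketch is correct and is precisely the Stieltjes-transform proof carried out in \cite{BSbook}: truncate to get moment control, use a resampling martingale with Azuma to reduce to $\E s_n(z)$, apply the Schur complement to the first row/column to extract a self-consistent equation, and then select the correct branch of the resulting quadratic via the constraint $\Im s_n>0$. Your handling of the slightly nonstandard $\frac{1}{\sqrt{mn}}$ normalization is right (first moment $\sqrt y$, edges at $\sqrt y+1/\sqrt y\pm 2$), and the identity $\mat X_1(\mat A_n'-z\mat I)^{-1}\transp{\mat X_1}=\sqrt{mn}\big[\mat I+z(\mat A_n'-z\mat I)^{-1}\big]$ that feeds the quadratic-form step is exactly what one needs. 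The one place worth tightening in a full write-up is passing from the \emph{identically distributed} diagonal entries to their average: you want the approximation of the denominator to hold simultaneously for all $n$ diagonal entries (or in $L^1$), not just for a single one, before you can replace $\frac1n\sum_i(\mat A_n-z\mat I)^{-1}_{ii}$ by the right-hand side. This is routine with the truncation in place, but it is the step most often glossed over.
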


\subsection{Notation} 
We use asymptotic notation (such as $O,o$) under the assumption that $n \rightarrow \infty$.  We use $X = O(Y)$ to denote the bound $X \leq CY$ for all sufficiently large $n$ and for some constant $C$.  Notation such as $X=O_k(Y)$ mean that the hidden constant $C$ depends on another constant $k$.  The notation $X=o(Y)$ or $Y=\omega(X)$ means that $X/Y \rightarrow 0$ as $N \rightarrow \infty$.  

For an event $E$, we let $\oindicator{E}$ denote the indicator function of $E$, and $E^{c}$ denotes the complement of $E$.  We write a.s. and a.e. for almost surely and Lebesgue almost everywhere, respectively. We let $\sqrt{-1}$ denote the imaginary unit and reserve $i$ as an index.  

For any matrix $\mat A$, we define the Frobenius norm $\|\mat A\|_2$ of $\mat A$ by \eqref{eq:frobenius}, and we use $\|\mat{A}\|$ to denote the spectral norm of $\mat{A}$.  We let $\mat{I}_n$ denote the $n \times n$ identity matrix.  Often we will just write $\mat{I}$ for the identity matrix when the size can be deduced from the context.     

We let $C$ and $K$ denote constants that are non-random and may take on different values from one appearance to the next.  The notation $K_p$ means that the constant $K$ depends on another parameter $p$.

\section{Main results} \label{sec:main}

Studying the eigenvalues of deterministic perturbations of random matrices has generated much interest.  In particular, recent results have shown that adding a low-rank  perturbation to a large random matrix barely changes the global behavior of the eigenvalues.  However, as illustrated below, some of the eigenvalues can deviate away from the bulk of the spectrum.  This behavior, sometimes referred to as the BBP transition, was first studied by Johnstone \cite{J} and Baik, Ben Arous, and P\'{e}ch\'{e} \cite{BBP} for spiked covariance matrices.  Similar results have been obtained in \cite{BGGM, BGGM2, BGN, BGN2, CDMF, CDMF2, CDMF3, KY, KY2, P, PRS, RS2} for other Hermitian random matrix models.  Non-Hermitian models have also been studied, including \cite{BC, Raj, Tout, Touterr} (iid random matrices), \cite{OR} (elliptic random matrices), and \cite{BGR, BGR2} (matrices from the single ring theorem).  

In this note, we focus on Hermitian random matrix ensembles perturbed by non-Hermitian matrices.  This model has recently been explored in \cite{OR, R}.  However, the results in \cite{OR, R} address only the ``outlier'' eigenvalues, and leave Question \ref{q:main} unanswered for the bulk of the eigenvalues.  The goal of this paper is to address these bulk eigenvalues.  We begin with some examples.  

\subsection{Some example perturbations}
In Example \ref{ex:toeplitz}, we gave a deterministic example when $\mat{M}$ is Hermitian,  $\mat{P}$ is non-Hermitian, and none of the eigenvalues of $\mat{M} + \mat P$ are real.  We begin this section by giving some random examples where the same phenomenon holds.  We first consider a Wigner matrix perturbed by a diagonal matrix.  

\begin{theorem} \label{thm:nonreal}
For $\mu > 0$, let $\xi$ be a real random variable satisfying
\begin{equation} \label{eq:nondeg}
	\Prob(\xi = x) \leq 1 - \mu \text{ for all } x \in \mathbb{R}, 
\end{equation}
and let $\zeta$ be an arbitrary real random variable.  
Suppose $\mat W$ is an $n \times n$ Wigner matrix with atom variables $\xi$ and $\zeta$.  Let $\mat P$ be the diagonal matrix $\mat P = \diag(0, \ldots, 0, \gamma \sqrt{-1})$ for some $\gamma \in \mathbb{R}$ with $\gamma \neq 0$.  Then, for any $\alpha > 0$, there exists $C > 0$ (depending on $\alpha$ and $\mu$) such that the following holds with probability at least $1 - Cn^{-\alpha}$:
\begin{itemize}
\item if $\gamma > 0$, then all eigenvalues of $\frac{1}{\sqrt{n}} \mat{W} + \mat P$ are in the upper half-plane $\mathbb{C}^+ := \{ z \in \mathbb{C} : \Im(z) > 0 \}$.  
\item if $\gamma < 0$, then all eigenvalues of $\frac{1}{\sqrt{n}} \mat{W} + \mat P$ are in the lower half-plane $\mathbb{C}^- := \{ z \in \mathbb{C} : \Im(z) < 0 \}$.  
\end{itemize}
Moreover, if $\xi$ and $\zeta$ are absolutely continuous random variables, then the above holds with probability $1$.  
\end{theorem}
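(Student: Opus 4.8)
The plan is to exploit the rank-one structure of $\mat P$ to reduce the problem to the relative position of the spectra of $\mat W$ and of its top-left $(n-1)\times(n-1)$ principal submatrix $\mat W_{n-1}$; the first steps are purely deterministic and only the last is probabilistic. Write $\mat A:=\tfrac{1}{\sqrt n}\mat W$, let $\mat A_{n-1}$ be the top-left $(n-1)\times(n-1)$ principal submatrix of $\mat A$, and let $\vect e_n$ be the $n$-th standard basis vector, so that $\mat P=\gamma\sqrt{-1}\,\vect e_n\transp{\vect e_n}$. Expanding $\det(\mat A+\mat P-z\mat I)$ along its last row and column, the only term of the cofactor expansion affected by adding $\gamma\sqrt{-1}$ to the $(n,n)$ entry is the one multiplying $\det(\mat A_{n-1}-z\mat I)$, so that for all $z\in\C$,
\[
	\det(\mat A+\mat P-z\mat I)\;=\;\det(\mat A-z\mat I)\;+\;\gamma\sqrt{-1}\,\det(\mat A_{n-1}-z\mat I).
\]

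Next I would read off the location of the roots. Suppose $\gamma>0$ (the case $\gamma<0$ is identical after complex conjugation) and let $z$ be an eigenvalue of $\mat A+\mat P$. If $z\notin\Lambda(\mat A)$, then dividing the identity by $\det(\mat A-z\mat I)\neq0$ and using $\det(\mat A_{n-1}-z\mat I)/\det(\mat A-z\mat I)=\transp{\vect e_n}(\mat A-z\mat I)^{-1}\vect e_n=\sum_k|u_k(n)|^2/(\lambda_k-z)$ (with $\lambda_k$ the eigenvalues of $\mat A$, $u_k$ an orthonormal eigenbasis, and $u_k(n)$ the last coordinate of $u_k$) gives
\[
	\sum_{k=1}^n\frac{|u_k(n)|^2}{\lambda_k-z}\;=\;-\frac{1}{\gamma\sqrt{-1}}\;=\;\frac{\sqrt{-1}}{\gamma}.
\]
The right-hand side has imaginary part $1/\gamma>0$, while the imaginary part of the left-hand side is $\Im(z)\sum_k|u_k(n)|^2/|\lambda_k-z|^2$, a positive multiple of $\Im(z)$ since $\sum_k|u_k(n)|^2=1$ and $z\notin\mathbb R$; hence $\Im(z)>0$. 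If instead $z\in\Lambda(\mat A)$, then $z$ is real and $\det(\mat A-z\mat I)=0$, so by the identity $z\in\Lambda(\mat A+\mat P)$ iff $\det(\mat A_{n-1}-z\mat I)=0$, i.e.\ $z\in\Lambda(\mat A_{n-1})$. Therefore, deterministically, every eigenvalue of $\mat A+\mat P$ lies in $\mathbb{C}^+$ unless it is a common eigenvalue of $\mat A$ and $\mat A_{n-1}$; equivalently, unless $\Lambda(\mat W)\cap\Lambda(\mat W_{n-1})\neq\emptyset$.

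It thus suffices to bound $\Prob(\Lambda(\mat W)\cap\Lambda(\mat W_{n-1})\neq\emptyset)$. Conditioning on $\mat W_{n-1}$ — which is a Wigner matrix of size $n-1$ independent of the last column $\vect b=\transp{(w_{1n},\dots,w_{n-1,n})}$ of $\mat W$ — there are two cases. If $\mat W_{n-1}$ has a repeated eigenvalue, it automatically shares an eigenvalue with $\mat W$ (a two-dimensional eigenspace of $\mat W_{n-1}$ meets the hyperplane $\vect b^\perp$ nontrivially), and this has probability $O_\alpha(n^{-\alpha})$ by a quantitative form of the simplicity of the spectrum of Wigner matrices. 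Otherwise, writing $\vect u'_j$ for the unit eigenvector of $\mat W_{n-1}$ belonging to its $j$-th eigenvalue $\lambda'_j$, expansion of $\det(\mat W-\lambda'_j\mat I)$ along its last row and column — using $\det(\mat W_{n-1}-\lambda'_j\mat I)=0$ and that the adjugate of the rank-$(n-2)$ matrix $\mat W_{n-1}-\lambda'_j\mat I$ is a nonzero multiple of $\vect u'_j\transp{(\vect u'_j)}$ — shows $\det(\mat W-\lambda'_j\mat I)$ to be a nonzero multiple of $(\transp{\vect b}\,\vect u'_j)^2$, so
\[
	\lambda'_j\in\Lambda(\mat W)\iff\transp{\vect b}\,\vect u'_j=0 .
\]
It therefore remains to estimate $\Prob(\exists\,j:\ \transp{\vect b}\,\vect u'_j=0\mid\mat W_{n-1})$.

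This last estimate will be the main obstacle. A union bound over the $n-1$ values of $j$ is not enough: even granting eigenvector delocalization, a Littlewood--Offord bound only yields $\Prob(\transp{\vect b}\,\vect u'_j=0\mid\mat W_{n-1})=O(n^{-1/2})$ for a fixed $j$ (which is sharp in general, for instance when the coordinates of $\vect u'_j$ are all $\pm n^{-1/2}$ and $\vect b$ has $\pm1$ entries), and $n\cdot n^{-1/2}$ diverges. To obtain the arbitrary polynomial rate in the theorem, one must instead show that with probability $1-O_\alpha(n^{-\alpha})$ every eigenvector of $\mat W_{n-1}$ is free of additive structure in the sense of inverse Littlewood--Offord theory; off this bad event $\Prob(\transp{\vect b}\,\vect u'_j=0\mid\mat W_{n-1})\leq n^{-\alpha-1}$ uniformly in $j$, so the union bound closes. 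Establishing this structural fact for all eigenvectors of $\mat W_{n-1}$ simultaneously, together with the quantitative simplicity input, is the heart of the argument. The absolutely continuous case is easy: then $\mat W_{n-1}$ has simple spectrum almost surely (the discriminant of $p_{\mat W_{n-1}}$ is a polynomial in the entries which is not identically zero), and conditionally on $\mat W_{n-1}$ each $\transp{\vect b}\,\vect u'_j$ has an absolutely continuous law, hence is nonzero with probability $1$; equivalently, $\mathrm{Res}(p_{\mat W},p_{\mat W_{n-1}})$ is a polynomial in the entries of $\mat W$ that is not identically zero — it is nonzero, for example, for the tridiagonal matrix of Example~\ref{ex:toeplitz}, whose $n$- and $(n-1)$-dimensional versions have eigenvalues $-2\cos(k\pi/(n+1))$ and $-2\cos(k\pi/n)$ and thus no eigenvalue in common — so it is nonzero almost surely. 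Combining the deterministic reduction with these estimates gives the theorem, with $\mathbb{C}^+$ replaced by $\mathbb{C}^-$ when $\gamma<0$.
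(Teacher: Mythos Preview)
Your proof is correct and follows the same overall architecture as the paper: a deterministic reduction to the event that $\mat W$ and its principal $(n-1)\times(n-1)$ minor share an eigenvalue, followed by the Tao--Vu eigenvector input to bound the probability of that event. The probabilistic half is identical in substance --- the paper packages Lemma~\ref{lemma:nonzero} and the Tao--Vu result \cite[Proposition~2.2]{TVsimple} into Lemma~\ref{lemma:interlace} (strict interlacing of $\mat W$ and $\mat W'$), which by Cauchy interlacing is exactly your condition $\Lambda(\mat W)\cap\Lambda(\mat W_{n-1})=\emptyset$.

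The deterministic reduction, however, is handled differently. The paper invokes the Hermite--Biehler theorem (Theorem~\ref{thm:hb}) to conclude that strict interlacing of $\det(\mat A-z\mat I)$ and $\det(\mat A_{n-1}-z\mat I)$ forces all roots of their combination $\det(\mat A-z\mat I)+\gamma\sqrt{-1}\det(\mat A_{n-1}-z\mat I)$ into one half-plane, and then uses a separate trace argument to decide which half-plane. Your route is more elementary and more direct: the spectral representation $\transp{\vect e_n}(\mat A-z\mat I)^{-1}\vect e_n=\sum_k|u_k(n)|^2/(\lambda_k-z)$ immediately gives $\mathrm{sgn}\,\Im(z)=\mathrm{sgn}\,\gamma$ for any eigenvalue $z\notin\Lambda(\mat A)$, so Hermite--Biehler and the trace step are both bypassed. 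This is a genuine simplification of the deterministic part, at no cost.
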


\begin{remark}
The choice for the last coordinates of $\mat{P}$ to take the value $\gamma \sqrt{-1}$ is completely arbitrary.  Since $\mat W$ is invariant under conjugation by a permutation matrix, the same result also holds for $\mat{P} = \gamma \sqrt{-1} v v^\ast$, where $v$ is any $n$-dimensional standard basis vector.  
\end{remark}

Figure~\ref{fig:Gau+i} depicts a numerical simulation of Theorem \ref{thm:nonreal} when the entries of $\mat W$ are Gaussian.  The proof of Theorem \ref{thm:nonreal} relies on some recent results due to Tao and Vu \cite{TVsimple} and Nguyen, Tao, and Vu \cite{NTV} concerning gaps between eigenvalues of Wigner matrices.  

\begin{figure}
\includegraphics[width=\textwidth]{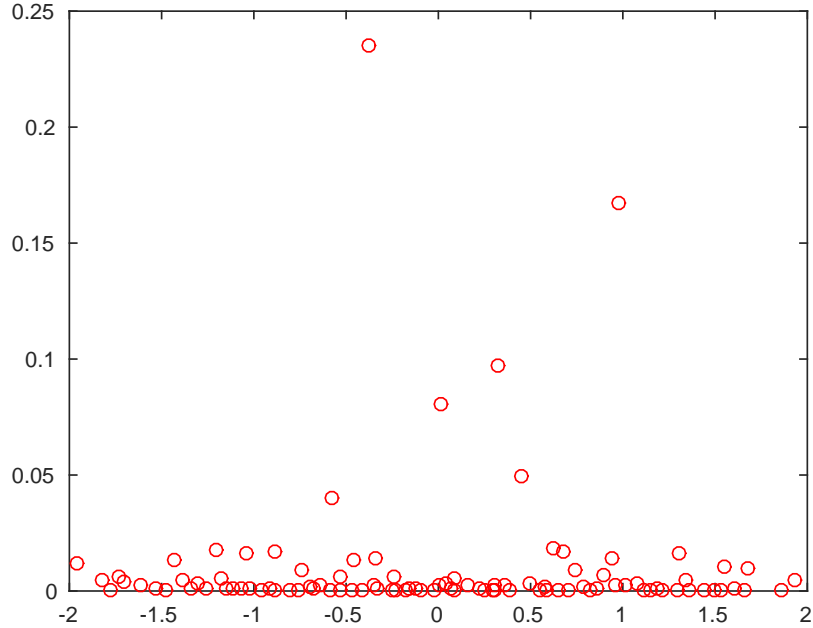}
\caption{ {\bf All imaginary eigenvalues after a rank-1 perturbation.}
Above is a plot of the eigenvalues of $\frac1{ \sqrt n} \mat{W} + \mat P$, where $\mat W$ is a 100 by 100 
GOE matrix,
and where $\mat P = \diag(0,\dots,0,\sqrt{-1})$.  As described in Theorem~\ref{thm:nonreal}, all the eigenvalues have positive imaginary part.  
Here, the minimum imaginary part is $8.6294\times 10^{-7}$. 
}
\label{fig:Gau+i}
\end{figure}

The next result is similar to Theorem~\ref{thm:nonreal} and applies to perturbations of random sample covariance matrices.

\begin{theorem} \label{thm:sampcov-nonreal}
Let $\xi$ be an absolutely continuous real random variable.  Let $\mat S_n$ be a sample covariance matrix with atom variable $\xi$ and parameters $(m,n)$, where $m$ and $n$ are positive integers, and take $r:=\min\{m,n\}$.  Let $v$ be any $n$-dimensional standard basis vector, so one coordinate of $v$ equals 1 and the rest are 0. Then, with probability $1$, 
\begin{itemize}
\item if $\gamma>0$, then $\mat S_n(\mat I + \gamma \sqrt{-1} v v^*) $ has $r$ eigenvalues with positive imaginary part, and 
\item if $\gamma <0$, then $\mat S_n(\mat I + \gamma \sqrt{-1}v v^*) $  has $r$ eigenvalues with negative imaginary part.
\end{itemize}
The remaining $n-r$ eigenvalues of $\mat S_n$ (if any) are all equal to 0.
\end{theorem}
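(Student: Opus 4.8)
The plan is to treat the perturbation as a rank-one additive update of $\mat S_n$ and reduce the problem to a scalar secular equation. Since $v$ is a real standard basis vector, $v^* = v^{\mathrm T}$, so $\mat B := \mat S_n(\mat I + \gamma\sqrt{-1}v v^*) = \mat S_n + \gamma\sqrt{-1}\,(\mat S_n v)\,v^{\mathrm T}$, a rank-one perturbation of $\mat S_n$. Because $\xi$ is absolutely continuous, the following hold with probability $1$: \emph{(i)} $\mat X$ has rank $r = \min\{m,n\}$, so $\mat S_n$ has exactly $r$ positive eigenvalues $\lambda_1 > \cdots > \lambda_r > 0$ (distinct), together with the eigenvalue $0$ of multiplicity $n-r$; and \emph{(ii)} in the spectral decomposition $\mat S_n = \sum_{j=1}^r \lambda_j u_j u_j^{\mathrm T}$, the overlaps $c_j := (v^{\mathrm T}u_j)^2$ are strictly positive for every $j = 1,\dots,r$. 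I will justify \emph{(ii)} at the end; granting \emph{(i)} and \emph{(ii)}, the argument proceeds as follows.

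First I would compute the characteristic polynomial. By the matrix determinant lemma, for $z \notin \Lambda(\mat S_n)$,
\[
  \det(\mat B - z\mat I) = \det(\mat S_n - z\mat I)\left(1 + \gamma\sqrt{-1}\, v^{\mathrm T}(\mat S_n - z\mat I)^{-1}\mat S_n v\right),
\]
and the spectral decomposition gives $v^{\mathrm T}(\mat S_n - z\mat I)^{-1}\mat S_n v = \sum_{j=1}^r \frac{\lambda_j c_j}{\lambda_j - z} =: f(z)$, the kernel of $\mat S_n$ contributing nothing since its eigenvalue is $0$. Using $\det(\mat S_n - z\mat I) = (-z)^{n-r}\prod_{j=1}^r(\lambda_j - z)$ and clearing denominators, this becomes the polynomial identity
\[
  \det(\mat B - z\mat I) = (-z)^{n-r}\, q(z), \qquad q(z) := \prod_{j=1}^r(\lambda_j - z) \;+\; \gamma\sqrt{-1}\sum_{k=1}^r \lambda_k c_k \prod_{j \neq k}(\lambda_j - z),
\]
valid for all $z$, with $q$ of degree exactly $r$. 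Since $\Re q(0) = \prod_{j=1}^r \lambda_j > 0$ we have $q(0) \neq 0$; hence $\mat B$ has the eigenvalue $0$ with multiplicity exactly $n-r$, and its remaining $r$ eigenvalues are exactly the roots of $q$. Also $q(\lambda_k) = \gamma\sqrt{-1}\,\lambda_k c_k \prod_{j\neq k}(\lambda_j - \lambda_k) \neq 0$ for each $k$, so no root of $q$ equals any $\lambda_k$, and therefore every root $z$ of $q$ satisfies $1 + \gamma\sqrt{-1}f(z) = 0$, i.e. $f(z) = \sqrt{-1}/\gamma$.

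The conclusion now follows from a sign computation. For $z \notin \{\lambda_1,\dots,\lambda_r\}$,
\[
  \Im f(z) = \sum_{j=1}^r \lambda_j c_j \, \Im\!\left(\frac{1}{\lambda_j - z}\right) = \Im(z) \sum_{j=1}^r \frac{\lambda_j c_j}{|\lambda_j - z|^2},
\]
and $\sum_{j=1}^r \lambda_j c_j / |\lambda_j - z|^2 > 0$ by \emph{(i)} and \emph{(ii)}, so $\Im f(z)$ has the same sign as $\Im z$. But at a root $z$ of $q$ one has $f(z) = \sqrt{-1}/\gamma$, hence $\Im f(z) = 1/\gamma \neq 0$, which has the sign of $\gamma$. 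So $\Im z \neq 0$ with the sign of $\gamma$ for every root of $q$: all $r$ of these eigenvalues of $\mat B$ lie in $\mathbb{C}^+$ if $\gamma > 0$ and in $\mathbb{C}^-$ if $\gamma < 0$, and the other $n-r$ equal $0$, which is the claim.

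The one nontrivial point is \emph{(ii)}, the statement that $v$ is orthogonal to no eigenvector belonging to a nonzero eigenvalue of $\mat S_n$; unlike \emph{(i)}, this is not directly a polynomial condition on the entries of $\mat X$. I would argue as follows. Write $v = e_\ell$, let $x_\ell$ be the $\ell$-th column of $\mat X$, and let $\mat X'$ be $\mat X$ with that column deleted. Since each $\lambda_k$ is a simple eigenvalue of $\mat S_n$, having $c_k > 0$ is equivalent to $\lambda_k$ being a genuine pole of the rational function $z \mapsto e_\ell^{\mathrm T}(\mat S_n - z\mat I)^{-1}e_\ell = \det\!\big((\mat S_n - z\mat I)^{(\ell)}\big) / \det(\mat S_n - z\mat I)$ (where $(\cdot)^{(\ell)}$ deletes row and column $\ell$), that is, to $\lambda_k \notin \Lambda(\mat S_n^{(\ell)})$. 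Now $\mat S_n^{(\ell)} = \mat X'^{\mathrm T}\mat X'$ has the same nonzero eigenvalues as $\mat X'\mat X'^{\mathrm T}$, while $\mat S_n = \mat X^{\mathrm T}\mat X$ has the same nonzero eigenvalues as $\mat X\mat X^{\mathrm T} = \mat X'\mat X'^{\mathrm T} + x_\ell x_\ell^{\mathrm T}$; so \emph{(ii)} amounts to saying that the symmetric matrix $\mat X'\mat X'^{\mathrm T}$ and its rank-one update $\mat X'\mat X'^{\mathrm T} + x_\ell x_\ell^{\mathrm T}$ share no nonzero eigenvalue. Conditioning on $\mat X'$ (which a.s. has full rank and simple nonzero spectrum, by the same polynomial-zero-set reasoning used for \emph{(i)}), this reduces to the elementary fact that adding $x_\ell x_\ell^{\mathrm T}$ to a fixed symmetric matrix with simple nonzero spectrum strictly moves all of its nonzero eigenvalues as long as $x_\ell$ is not orthogonal to any of its eigenspaces — and the bad $x_\ell$ form a finite union of proper subspaces of $\mathbb{R}^m$, a Lebesgue-null set, hence null for the absolutely continuous law of $x_\ell$. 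Fubini then yields \emph{(ii)} with probability $1$. The remaining details are routine.
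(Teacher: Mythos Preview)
Your proof is correct and takes a genuinely different route from the paper's. The paper proceeds by diagonalizing $\mat S_n$ via a unitary $\mat U$ and reducing to a statement about $\mat D + \gamma\sqrt{-1}\mat E zz^*$ (its Lemma~3.3), where the deterministic core is a quadratic-form computation: for an eigenvector $w$ of the relevant $k\times k$ block, the paper evaluates $w^*\mat E^{[-1]}\lambda w$ and reads off the sign of $\Im\lambda$. You instead apply the matrix determinant lemma to get the scalar secular equation $f(z)=\sqrt{-1}/\gamma$ and conclude from the elementary identity $\Im f(z) = \Im(z)\sum_j \lambda_j c_j/|\lambda_j - z|^2$. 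Your route is arguably more transparent, since it avoids the auxiliary ``partial inverse'' matrix and the block reduction, and it gives the exact multiplicity of the zero eigenvalue via $q(0)\neq 0$ with no extra work; the paper's approach, on the other hand, is packaged as a reusable deterministic lemma (its Theorem~2.4/Lemma~3.3) that applies to any Hermitian $\mat M$ with $k$ distinct eigenvalues, not just sample covariance matrices.

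Both arguments rest on the same almost-sure inputs, your \emph{(i)} and \emph{(ii)}, which the paper isolates as Lemma~3.5. Your justification of \emph{(ii)} via Cramer's rule and the equivalence ``$c_k>0 \iff \lambda_k\notin\Lambda(\mat S_n^{(\ell)})$'' is a nice variant of the paper's argument; the paper instead observes directly that if the $\ell$-th coordinate of an eigenvector vanishes then the truncated vector is an eigenvector of $\mat X'^{\mathrm T}\mat X'$, forcing $\mat X'\hat v$ to be orthogonal to $x_\ell$, and then conditions on $\mat X'$. The two are essentially the same idea viewed from opposite ends (resolvent pole versus eigenvector support), and your reduction to ``$\mat X'\mat X'^{\mathrm T}$ and $\mat X'\mat X'^{\mathrm T}+x_\ell x_\ell^{\mathrm T}$ share no nonzero eigenvalue'' is correct: the key point, which you handle, is that if a simple eigenvalue $\mu$ of $A$ with eigenvector $w$ survived in $A+x_\ell x_\ell^{\mathrm T}$, then $x_\ell\in\operatorname{range}(A-\mu\mat I)=w^\perp$.
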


Figure~\ref{fig:SampCov+ivvt} gives a numerical demonstration of Theorem~\ref{thm:sampcov-nonreal}.  We conjecture that Theorem \ref{thm:sampcov-nonreal} can also be extended to the case when $\xi$ is a discrete random variable which satisfies a non-degeneracy condition, such as \eqref{eq:nondeg}.  In order to prove such a result, one would need to extend the results of \cite{NTV, TVsimple} to the sample covariance case; we do not pursue this matter here.  

\begin{figure}
\includegraphics[width=\textwidth]{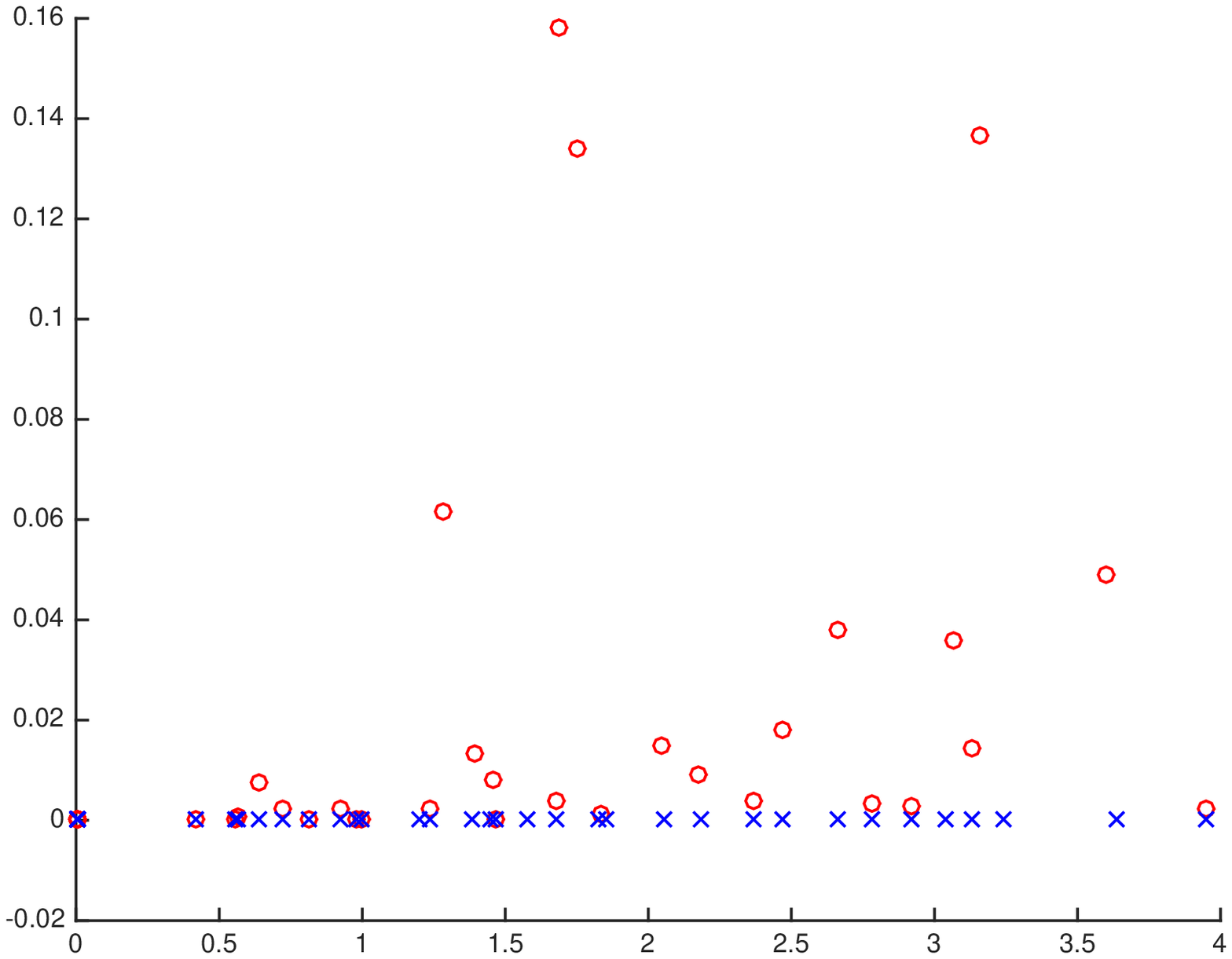}
\caption{{\bf All non-zero eigenvalues imaginary after a rank-1 perturbation of a random sample covariance matrix.}
Above is a plot of the eigenvalues (shown as $\times$ symbols) of $\mat M= \transp{\mat X} \mat X$, where $\mat X$ is the $30$ by $100$ random matrix with iid standard real Gaussian entries, and the eigenvalues (shown as $\circ$ symbols) of $\mat M(\mat I +\sqrt{-1}v v^*)$ where
$v=\transp{(1,0,0,\dots,0)}$.  There are 70 eigenvalues at the origin which are the same for both matrices, and the other $30$ eigenvalues for $\mat M(\mat I + \sqrt{-1}vv^*)$ have strictly positive imaginary parts, with minimum $3.3117 \times 10^{-8}$, illustrating Theorem~\ref{thm:sampcov-nonreal}.}
\label{fig:SampCov+ivvt}
\end{figure}


Below, we give a deterministic result with a similar flavor to Theorem~\ref{thm:nonreal} and Theorem~\ref{thm:sampcov-nonreal} which applies to any Hermitian matrix.  

\begin{theorem}\label{thm:nonreal-nonrandom}
Let $\mat M$ be an $n \times n$ Hermitian matrix with eigenvalues $\lambda_1,\lambda_2,
\dots,\lambda_n$ (including repetitions).  Assume that the $k$ eigenvalues $\lambda_1, \ldots,\allowbreak \lambda_k$ are distinct.  Then there exists a column vector $v$ such that $\mat M + \gamma \sqrt{-1} v {v}^*$ shares the $n-k$ eigenvalues $\lambda_{k+1},\dots ,\lambda_n$ with $\mat M$ and also the following holds:
\begin{itemize}
\item if $\gamma>0$, then $\mat M + \gamma \sqrt{-1} v {v}^* $ has $k$ eigenvalues with positive imaginary part, and
\item if $\gamma <0$, then $\mat M + \gamma \sqrt{-1} v{v}^* $  has $k$ eigenvalues with negative imaginary part.
\end{itemize}
Furthermore, there are many choices for the vector $v$: if $w_1,\dots,w_k$ are eigenvectors corresponding to the $k$ distinct eigenvalues $\lambda_1,\dots,\lambda_k$ of $\mat M$, then any $v=\sum_{j=1}^k z_j w_j$ suffices, so long as the complex numbers $z_j \ne 0$ for all $1\le j\le k$.
\end{theorem}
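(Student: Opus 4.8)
The plan is to change to an orthonormal eigenbasis of $\mat M$, observe that $\mat M + \gamma\sqrt{-1}\,vv^*$ becomes block diagonal, and then reduce everything to the secular equation of a rank-one perturbation of a diagonal matrix. Since $\mat M$ is Hermitian, eigenvectors belonging to the distinct eigenvalues $\lambda_1,\dots,\lambda_k$ are mutually orthogonal; after normalizing we may take $w_1,\dots,w_k$ to be orthonormal, and since the orthogonal complement of $\operatorname{span}(w_1,\dots,w_k)$ is $\mat M$-invariant we may extend to an orthonormal eigenbasis $w_1,\dots,w_n$ of $\C^n$. In this basis $\mat M=\diag(\lambda_1,\dots,\lambda_n)$, and a vector $v=\sum_{j=1}^k z_j w_j$ has coordinate vector $(z_1,\dots,z_k,0,\dots,0)^{\mathrm T}$. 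Hence $\mat M+\gamma\sqrt{-1}\,vv^*$ is block diagonal with an $(n-k)\times(n-k)$ block equal to $\diag(\lambda_{k+1},\dots,\lambda_n)$ — which already yields the asserted shared eigenvalues — and a $k\times k$ block $\mat B:=D+\gamma\sqrt{-1}\,zz^*$, where $D=\diag(\lambda_1,\dots,\lambda_k)$ and $z=(z_1,\dots,z_k)^{\mathrm T}$ has all coordinates nonzero. It remains to show every eigenvalue of $\mat B$ has imaginary part of the same sign as $\gamma$.

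Next I would compute the characteristic polynomial of $\mat B$. Setting $q(\mu):=\prod_{j=1}^k(\lambda_j-\mu)$, the matrix determinant lemma gives the polynomial identity
$$ \det(\mat B-\mu\mat I)=q(\mu)+\gamma\sqrt{-1}\sum_{j=1}^k |z_j|^2\prod_{l\neq j}(\lambda_l-\mu), $$
which has degree exactly $k$. Evaluating at $\mu=\lambda_m$ for $m\le k$ and using that $\lambda_1,\dots,\lambda_k$ are distinct, every term vanishes except $\gamma\sqrt{-1}\,|z_m|^2\prod_{l\neq m}(\lambda_l-\lambda_m)$, which is nonzero; thus no $\lambda_m$ is an eigenvalue of $\mat B$. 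Consequently each of the $k$ eigenvalues $\mu$ of $\mat B$ satisfies $q(\mu)\neq 0$ and hence the secular equation
$$ 1+\gamma\sqrt{-1}\sum_{j=1}^k\frac{|z_j|^2}{\lambda_j-\mu}=0,\qquad\text{i.e.}\qquad \sum_{j=1}^k\frac{|z_j|^2}{\lambda_j-\mu}=\frac{\sqrt{-1}}{\gamma}. $$

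Finally I would take imaginary parts. Writing $\mu=x+\sqrt{-1}\,y$ with $\mu\notin\{\lambda_1,\dots,\lambda_k\}$, the left-hand side above has imaginary part $y\sum_{j=1}^k\frac{|z_j|^2}{(\lambda_j-x)^2+y^2}$, and the sum here is a strictly positive real number because each $|z_j|^2>0$. Since the right-hand side has imaginary part $1/\gamma$, we conclude $\operatorname{sign}(y)=\operatorname{sign}(\gamma)$; in particular $y\neq 0$, so all $k$ eigenvalues of $\mat B$ lie in the open upper half-plane when $\gamma>0$ and in the open lower half-plane when $\gamma<0$. Together with the first paragraph this gives the theorem, and the argument plainly applies to any choice of nonzero $z_1,\dots,z_k$. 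The one step requiring care — and the only place the distinctness hypothesis is used — is ruling out the $\lambda_m$ with $m\le k$ as eigenvalues of $\mat B$: this is exactly what forces all $k$ eigenvalues of $\mat B$ to solve the secular equation, and therefore to acquire a nonzero imaginary part of the correct sign.
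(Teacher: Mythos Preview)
Your proof is correct. Both you and the paper begin the same way: pass to an orthonormal eigenbasis of $\mat M$, observe that $\mat M+\gamma\sqrt{-1}\,vv^*$ splits as a block-diagonal matrix with the trivial block $\diag(\lambda_{k+1},\dots,\lambda_n)$ and the interesting $k\times k$ block $\mat B=D+\gamma\sqrt{-1}\,zz^*$. The divergence is in how the imaginary parts of the eigenvalues of $\mat B$ are controlled. The paper takes an eigenvector $w$ of $\mat B$ with eigenvalue $\lambda$, argues $z^*w\neq 0$, and then reads off $\Im(\lambda)$ from the identity $w^*\lambda w = w^*Dw + \gamma\sqrt{-1}\,|z^*w|^2$ (in a slightly more general weighted form). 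You instead expand $\det(\mat B-\mu\mat I)$ by the matrix determinant lemma, verify directly that none of the $\lambda_m$ are roots, and then equate imaginary parts in the secular equation $\sum_j |z_j|^2/(\lambda_j-\mu)=\sqrt{-1}/\gamma$. Your route is arguably the more natural one for a rank-one perturbation and makes the role of the distinctness hypothesis very transparent (it is exactly what keeps the $\lambda_m$ from being roots of $\det(\mat B-\mu\mat I)$). The paper's eigenvector argument, on the other hand, is set up to handle the more general perturbation $\sqrt{-1}\,uv^*$ with $u=\sum a_jz_jw_j$ (their Theorem~\ref{thm:nonreal-nonrandom-diag}), where a diagonal weight $\mat E$ enters and the secular-equation computation would be a bit less clean.
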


Theorem~\ref{thm:nonreal-nonrandom} has a natural corollary applying to multiplicative perturbations of the form $\mat M(\mat I + \mat P)$, where $\mat M$ can be any Hermitian matrix, including a sample covariance matrix (see Corollary~\ref{cor:multshift}).  In fact, we will prove Theorem~\ref{thm:sampcov-nonreal} essentially by combining a version of Theorem~\ref{thm:nonreal-nonrandom} 
(see Lemma~\ref{lem:nonreal-nonrandom}) with a lemma showing the necessary conditions on the eigenvalues and eigenvectors are satisfied with probability 1 (see Lemma~\ref{lemma:distinct}).

\subsection{Global behavior of the eigenvalues} \label{sec:global}
As Theorem \ref{thm:nonreal} shows, it is possible that no single eigenvalue of the sum $\mat{M} + \mat{P}$ is real, even when $\mat{M}$ is random and $\mat P$ has low rank.  However, we can still describe the limiting behavior of the eigenvalues.  We do so in Theorem~\ref{thm:global:sc} and Theorem~\ref{thm:global:mp} below, both of which are consequences of Theorem \ref{thm:kahan}.  

Recall that Wigner's semicircle law $\mu_{\mathrm{sc}}$ is the measure on $\mathbb{R}$ with density given in \eqref{eq:def:rhosc}.  Here and in the sequel, we view $\mu_{\mathrm{sc}}$ as a measure on $\mathbb{C}$.  In particular, Definition \ref{def:weakconvergence} defines what it means for a sequence of probability measures on $\mathbb{C}$ to converge to $\mu_{\mathrm{sc}}$.  We observe that, as a measure on $\mathbb{C}$, $\mu_{\mathrm{sc}}$ is not absolutely continuous (with respect to Lebesgue measure).  In particular, the density $\rho_{\mathrm{sc}}$ given in \eqref{eq:def:rhosc} is not the density of $\mu_{\mathrm{sc}}$ when viewed as a measure on $\mathbb{C}$.  However, if $f:\mathbb{C} \to \mathbb{C}$ is a bounded continuous function, then
$$ \int_{\mathbb{C}} f(z) d \mu_{\mathrm{sc}}(z) = \int_{\mathbb{R}} f(x) d \mu_{\mathrm{sc}}(x) = \int_{-2}^2 f(x) \rho_{\mathrm{sc}}(x) dx. $$

\begin{theorem}[Perturbed Wigner matrices] \label{thm:global:sc}
Let $\xi$ and $\zeta$ be real random variables, and assume $\xi$ has unit variance.  For each $n \geq 1$, let $\mat W_n$ be an $n \times n$ real symmetric Wigner matrix with atom variables $\xi$ and $\zeta$, and let $\mat P_n$ be an arbitrary $n \times n$ deterministic matrix.  If 
\begin{equation} \label{eq:Pnhs}
	\lim_{n \to \infty} \frac{1}{\sqrt{n}} \| \mat P_n \|_2 = 0, 
\end{equation}
then, with probability $1$, the empirical spectral measure $\mu_{\frac{1}{\sqrt{n}}\mat {W}_n + \mat P_n}$ of $\frac{1}{\sqrt{n}} \mat W_n + \mat P_n$ converges weakly on $\mathbb{C}$ as $n \to \infty$ to the (non-random) measure $\mu_{\mathrm{sc}}$.  
\end{theorem}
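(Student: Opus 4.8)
The plan is to deduce this from Wigner's semicircle law (Theorem~\ref{thm:wigner}) together with Kahan's bound \eqref{eq:kahan2} and a standard moment/Lipschitz argument for weak convergence of empirical measures. Write $\mat M_n := \frac{1}{\sqrt n}\mat W_n$ and $\mat A_n := \mat M_n + \mat P_n$. By Theorem~\ref{thm:wigner}, $\mu_{\mat M_n} \to \mu_{\mathrm{sc}}$ weakly on $\mathbb{R}$ with probability $1$, hence also weakly on $\mathbb{C}$ (viewing $\mu_{\mathrm{sc}}$ as a measure on $\mathbb{C}$, as in the discussion preceding the theorem). So it suffices to show that $\mu_{\mat A_n}$ and $\mu_{\mat M_n}$ have the same weak limit, i.e. that $\int f\,d\mu_{\mat A_n} - \int f\,d\mu_{\mat M_n} \to 0$ almost surely for every bounded continuous $f:\mathbb{C}\to\mathbb{R}$.

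First I would reduce to Lipschitz test functions: by a routine approximation argument it is enough to prove the convergence for all bounded Lipschitz $f$ (these are convergence-determining on $\mathbb{C}$). Fix such an $f$ with Lipschitz constant $L$ and bound $\|f\|_\infty$. Enumerate the eigenvalues of $\mat M_n$ as $\lambda_1 \geq \cdots \geq \lambda_n$ (all real) and those of $\mat A_n$ as $\mu_k + \sqrt{-1}\,\nu_k$ with $\mu_1 \geq \cdots \geq \mu_n$, matched as in Theorem~\ref{thm:kahan}. Then
\begin{align*}
\left| \int f\,d\mu_{\mat A_n} - \int f\,d\mu_{\mat M_n} \right|
&= \frac{1}{n}\left| \sum_{k=1}^n \bigl( f(\mu_k + \sqrt{-1}\,\nu_k) - f(\lambda_k) \bigr) \right| \\
&\leq \frac{L}{n} \sum_{k=1}^n \left| (\mu_k + \sqrt{-1}\,\nu_k) - \lambda_k \right|
\leq \frac{L}{\sqrt n} \left( \sum_{k=1}^n \left| (\mu_k + \sqrt{-1}\,\nu_k) - \lambda_k \right|^2 \right)^{1/2},
\end{align*}
by Cauchy--Schwarz. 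Now apply \eqref{eq:kahan2} with $\mat M = \mat M_n$ and $\mat P = \mat P_n$: the right-hand side is at most $\frac{L}{\sqrt n}\sqrt{2}\,\|\mat P_n\|_2 = \sqrt{2}\,L \cdot \frac{1}{\sqrt n}\|\mat P_n\|_2$, which tends to $0$ by hypothesis \eqref{eq:Pnhs}. This bound is deterministic in the perturbation and uses the matching permutation only through Kahan's theorem, so it holds on the full-probability event where $\mu_{\mat M_n}\to\mu_{\mathrm{sc}}$; combining the two gives $\int f\,d\mu_{\mat A_n}\to\int f\,d\mu_{\mathrm{sc}}$ almost surely for each fixed Lipschitz $f$.

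Finally I would assemble this into weak convergence: take a countable convergence-determining family of bounded Lipschitz functions $\{f_j\}$, intersect the corresponding full-probability events, and conclude that on this intersection $\mu_{\mat A_n}\to\mu_{\mathrm{sc}}$ weakly on $\mathbb{C}$. There is no real obstacle here — the only mild technical point is passing from a countable determining class to all bounded continuous $f$, which is standard (e.g. tightness is automatic since $\mu_{\mathrm{sc}}$ has compact support and the Kahan estimate controls how far the mass of $\mu_{\mat A_n}$ can spread). The essential content is entirely contained in the one-line Cauchy--Schwarz manipulation combined with \eqref{eq:kahan2} and \eqref{eq:Pnhs}; the $\sqrt n$ normalization in \eqref{eq:Pnhs} is exactly what is needed to absorb the $1/n$ from the empirical measure against the $\sqrt{2}\|\mat P_n\|_2$ from Kahan.
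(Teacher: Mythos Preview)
Your proposal is correct and is essentially identical to the paper's proof: the paper packages the Cauchy--Schwarz plus Kahan \eqref{eq:kahan2} estimate into a separate lemma (Lemma~\ref{lemma:lip}) and then invokes the portmanteau theorem, Wigner's semicircle law, and condition \eqref{eq:Pnhs} exactly as you do. Your remark on passing to a countable determining class is a point the paper leaves implicit in its appeal to the portmanteau theorem.
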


In the coming sections, we will typically consider the case when $\| \mat P_n\| = O(1)$ and $\rank(\mat P_n) = O(1)$.  In this case, it follows that $\|\mat P_n\|_2 = O(1)$, and hence \eqref{eq:Pnhs} is satisfied.  

We similarly consider the Marchenko--Pastur law $\mu_{\mathrm{MP},y}$ as a measure on $\mathbb{C}$.  For perturbed sample covariance matrices, we can also recover the Marchenko--Pastur law as the limiting distribution.  

\begin{theorem}[Perturbed sample covariance matrices] \label{thm:global:mp}
Let $\xi$ be a real random variable with mean zero, unit variance, and finite fourth moment.  For each $n \geq 1$, let $\mat S_n$ be a sample covariance matrix with atom variable $\xi$ and parameters $(m,n)$, where $m$ is a function of $n$ such that $y_n := \frac{m}{n} \to y \in (0, \infty)$ as $n \to \infty$.  Let $\mat P_n$ be an arbitrary $n \times n$ deterministic matrix.  If \eqref{eq:Pnhs} holds, then, with probability $1$, the empirical spectral measure $\mu_{\frac{1}{\sqrt{mn}} \mat S_n(\mat I + \mat P_n)}$ of $\frac{1}{\sqrt{mn}} \mat S_n (\mat I + \mat P_n)$ converges weakly on $\mathbb{C}$ as $n \to \infty$ to the (non-random) measure $\mu_{\mathrm{MP},y}$. 
\end{theorem}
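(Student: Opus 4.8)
The plan is to reduce the multiplicative perturbation to an additive one and then argue exactly as in Theorem \ref{thm:global:sc}, with Kahan's bound \eqref{eq:kahan2} playing the role of the basic comparison estimate and Theorem \ref{thm:mp} supplying the unperturbed limit. Write
\[
	\mat A_n := \frac{1}{\sqrt{mn}} \mat S_n (\mat I + \mat P_n) = \mat B_n + \mat Q_n, \qquad \mat B_n := \frac{1}{\sqrt{mn}} \mat S_n, \quad \mat Q_n := \frac{1}{\sqrt{mn}} \mat S_n \mat P_n .
\]
Here $\mat B_n$ is real symmetric, and by Theorem \ref{thm:mp} its empirical spectral measure $\mu_{\mat B_n}$ converges weakly on $\mathbb{C}$ to $\mu_{\mathrm{MP},y}$ with probability $1$. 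So it is enough to show that, almost surely, $\mu_{\mat A_n}$ and $\mu_{\mat B_n}$ have the same weak limit, and for this I would control the perturbation $\mat Q_n$ in Frobenius norm.

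First I would bound $\| \mat Q_n \|_2$. By submultiplicativity of the Frobenius norm, $\| \mat Q_n \|_2 \le \frac{1}{\sqrt{mn}} \| \mat S_n \| \, \| \mat P_n \|_2$, so
\[
	\frac{1}{\sqrt n} \| \mat Q_n \|_2 \le \left( \frac{1}{\sqrt{mn}} \| \mat S_n \| \right) \cdot \frac{1}{\sqrt n} \| \mat P_n \|_2 .
\]
This is where the finite fourth moment hypothesis is used: by the Bai--Yin theorem (see, e.g., \cite{BSbook}) one has $\limsup_{n \to \infty} \frac{1}{\sqrt{mn}} \| \mat S_n \| = \lambda_+ < \infty$ almost surely, so the first factor is a.s. bounded while the second tends to $0$ by \eqref{eq:Pnhs}; hence $\frac1n \| \mat Q_n \|_2^2 \to 0$ a.s. Applying Kahan's inequality \eqref{eq:kahan2} to the Hermitian matrix $\mat B_n$ with the (arbitrary) perturbation $\mat Q_n$, and writing the eigenvalues of $\mat B_n$ as $\lambda_1 \ge \cdots \ge \lambda_n$ and those of $\mat A_n$ as $\mu_k + \sqrt{-1}\nu_k$ with $\mu_1 \ge \cdots \ge \mu_n$, gives
\[
	\frac1n \sum_{k=1}^n \left| (\mu_k + \sqrt{-1}\nu_k) - \lambda_k \right|^2 \le \frac{2}{n} \| \mat Q_n \|_2^2 \longrightarrow 0 \quad \text{a.s.}
\]

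Finally I would pass from this $L^2$-type eigenvalue comparison to weak convergence. For any bounded Lipschitz $f : \mathbb{C} \to \mathbb{R}$, Cauchy--Schwarz gives
\[
	\left| \int f \, d\mu_{\mat A_n} - \int f \, d\mu_{\mat B_n} \right| \le \frac{\| f \|_{\mathrm{Lip}}}{n} \sum_{k=1}^n \left| (\mu_k + \sqrt{-1}\nu_k) - \lambda_k \right| \le \| f \|_{\mathrm{Lip}} \left( \frac1n \sum_{k=1}^n \left| (\mu_k + \sqrt{-1}\nu_k) - \lambda_k \right|^2 \right)^{1/2},
\]
which tends to $0$ a.s.; equivalently, the bounded-Lipschitz distance between $\mu_{\mat A_n}$ and $\mu_{\mat B_n}$ tends to $0$ a.s. Since the bounded-Lipschitz metric metrizes weak convergence on the Polish space $\mathbb{C}$, on the probability-$1$ event where both $\mu_{\mat B_n} \to \mu_{\mathrm{MP},y}$ weakly and $\frac1n \| \mat Q_n \|_2^2 \to 0$, the triangle inequality yields $\mu_{\mat A_n} \to \mu_{\mathrm{MP},y}$ weakly, as claimed. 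The one genuinely new point compared with the additive argument for Theorem \ref{thm:global:sc} is the a.s. boundedness of $\frac{1}{\sqrt{mn}}\|\mat S_n\|$: under only a second moment assumption the top eigenvalue of $\frac{1}{\sqrt{mn}}\mat S_n$ need not stay bounded, and then $\mat Q_n$ could fail to be negligible even when \eqref{eq:Pnhs} holds, so invoking Bai--Yin (hence the finite fourth moment hypothesis) is the main obstacle; everything else is the same bookkeeping as in the Wigner case.
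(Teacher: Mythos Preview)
Your proof is correct and follows essentially the same approach as the paper: the paper also writes $\frac{1}{\sqrt{mn}}\mat S_n(\mat I+\mat P_n)=\frac{1}{\sqrt{mn}}\mat S_n+\frac{1}{\sqrt{mn}}\mat S_n\mat P_n$, bounds $\|\mat S_n\mat P_n\|_2\le\|\mat S_n\|\,\|\mat P_n\|_2$, invokes the Bai--Yin bound on $\|\mat S_n\|$ (Theorem~\ref{thm:sc:norm}) together with \eqref{eq:Pnhs}, and then applies the Kahan/Lipschitz comparison (packaged there as Lemma~\ref{lemma:lip}) and Theorem~\ref{thm:mp}. One small quibble: the inequality $\|\mat S_n\mat P_n\|_2\le\|\mat S_n\|\,\|\mat P_n\|_2$ is not ``submultiplicativity of the Frobenius norm'' (that would give $\|\mat S_n\|_2\|\mat P_n\|_2$) but the mixed spectral/Frobenius bound; the inequality itself is correct.
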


\subsection{More refined behavior of the eigenvalues} \label{sec:refine}
While Theorems \ref{thm:global:sc} and \ref{thm:global:mp} show that 
all but a vanishing proportion
of the eigenvalues of $\mat{M} + \mat {P}$ converge to the real line, the results do not quantitatively answer Question~\ref{q:main}.  We now give a more quantitative bound on the imaginary part of the eigenvalues.  

We first consider the Wigner case.  For any $\delta > 0$, let $\Esc_{\delta}$ denote the $\delta$-neighborhood of the interval $[-2,2]$ in the complex plane.   That is, 
$$ \Esc_{\delta} := \left\{ z \in \mathbb{C} : \inf_{x \in [-2,2]} |z - x| \leq \delta \right\}. $$
Here, we work with the interval $[-2,2]$ as this is the support of the semicircle distribution $\mu_{\mathrm{sc}}$.  Our main results below are motivated by the following result from \cite{OR}, which describes the eigenvalues of $\mat M := \frac{1}{\sqrt{n}} \mat W + \mat P$ when $\mat W$ is a Wigner matrix and $\mat P$ is a deterministic matrix with rank $O(1)$.  In the numerical analysis literature \cite{K}, such perturbations of Hermitian matrices are sometimes referred to as \emph{nearly Hermitian matrices}.  

\begin{theorem}[Theorem 2.4 from \cite{OR}] \label{thm:or} 
Let $\xi$ and $\zeta$ be real random variables.  Assume $\xi$ has mean zero, unit variance, and finite fourth moment;  suppose $\zeta$ has mean zero and finite variance.  For each $n \geq 1$, let $\mat{W}_n$ be an $n \times n$ Wigner matrix with atom variables $\xi$ and $\zeta$.  Let $k \geq 1$ and $\delta > 0$.  For each $n \geq 1$, let $\mat P_n$ be an $n \times n$ deterministic matrix, where $\sup_{n \geq 1} \rank( \mat P_n) \leq k$ and $\sup_{n \geq 1} \| \mat P_n\| = O(1)$.  Suppose for $n$ sufficiently large, there are no nonzero eigenvalues of $\mat P_n$ which satisfy
$$ \lambda_i(\mat P_n) + \frac{1}{ \lambda_i(\mat P_n)} \in \Esc_{3\delta} \setminus \Esc_{\delta} \quad \text{with} \quad \abs{\lambda_i(\mat P_n)} > 1, $$
and there are $j$ eigenvalues $\lambda_1(\mat P_n), \ldots, \lambda_j(\mat P_n)$ for some $j \leq k$ which satisfy 
$$ \lambda_i(\mat P_n) + \frac{1}{ \lambda_i(\mat P_n)} \in \mathbb{C} \setminus \Esc_{3 \delta} \quad \text{with} \quad \abs{\lambda_i(\mat P_n)} > 1. $$
Then, almost surely, for $n$ sufficiently large, there are exactly $j$ eigenvalues of $\frac{1}{\sqrt{n}} \mat W_n + \mat P_n$ in the region $\mathbb{C} \setminus \Esc_{2\delta}$, and after labeling the eigenvalues properly, 
\begin{equation} \label{eq:outliereigdesc}
	\lambda_i \left( \frac{1}{\sqrt{n}} \mat W_n + \mat P_n \right) = \lambda_i(\mat P_n) + \frac{1}{\lambda_i(\mat P_n)} + o(1) 
\end{equation}
for each $1 \leq i \leq j$.  
\end{theorem}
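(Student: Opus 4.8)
The plan is to turn the outlier problem into a fixed-size determinant equation, analyze that equation with an isotropic version of the semicircle law for the resolvent of $\frac{1}{\sqrt n}\mat W_n$, and then read off both the count and the locations of the outliers by a Rouch\'e-type argument. First, the low-rank reduction: write $\mat P_n = \mat U_n \mat V_n^\ast$ with $\mat U_n,\mat V_n$ deterministic $n\times k$ matrices satisfying $\|\mat U_n\|,\|\mat V_n\| = O(1)$; the nonzero eigenvalues of $\mat P_n$ are exactly those of the $k\times k$ matrix $\mat V_n^\ast\mat U_n$. Set $\mat G_n(z):=\bigl(\tfrac{1}{\sqrt n}\mat W_n - z\mat I\bigr)^{-1}$. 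By Wigner's semicircle law (Theorem~\ref{thm:wigner}) together with the standard ``no outliers'' statement for unperturbed Wigner matrices, almost surely for all large $n$ every eigenvalue of $\tfrac{1}{\sqrt n}\mat W_n$ lies in $\Esc_\delta$, so $\mat G_n(z)$ is well defined on the relevant region $\mathbb{C}\setminus\Esc_\delta$. On that region the Weinstein--Aronszajn (Schur complement) identity shows that $z$ is an eigenvalue of $\tfrac{1}{\sqrt n}\mat W_n+\mat P_n$, with the same multiplicity, if and only if
\[
	\det\!\bigl(\mat I_k + \mat V_n^\ast \mat G_n(z)\mat U_n\bigr) = 0 .
\]

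The key analytic input is an isotropic local semicircle law: for deterministic unit vectors $u,v$ one has $v^\ast\mat G_n(z)u = m_{\mathrm{sc}}(z)\,v^\ast u + o(1)$ with probability $1-o(1)$ (indeed with an explicit polynomial rate), uniformly for $z$ in compact subsets of $\mathbb{C}\setminus\Esc_\delta$, where $m_{\mathrm{sc}}$ is the Stieltjes transform of $\mu_{\mathrm{sc}}$. Applying this to each of the $k^2$ entries of $\mat V_n^\ast\mat G_n(z)\mat U_n$ gives, uniformly on compact subsets of $\mathbb{C}\setminus\Esc_\delta$,
\[
	\det\!\bigl(\mat I_k + \mat V_n^\ast \mat G_n(z)\mat U_n\bigr) = \det\!\bigl(\mat I_k + m_{\mathrm{sc}}(z)\,\mat V_n^\ast\mat U_n\bigr) + o(1).
\]
The limiting ``secular'' equation $\det(\mat I_k + m_{\mathrm{sc}}(z)\,\mat V_n^\ast\mat U_n)=0$ holds precisely when $-1/m_{\mathrm{sc}}(z)$ is an eigenvalue of $\mat V_n^\ast\mat U_n$, i.e.\ when $m_{\mathrm{sc}}(z) = -1/\lambda_i(\mat P_n)$ for some $i$ with $\lambda_i(\mat P_n)\neq 0$. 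Since $m_{\mathrm{sc}}$ maps $\mathbb{C}\setminus[-2,2]$ biholomorphically onto the punctured unit disc $\{0<|w|<1\}$ with inverse $z = -w-1/w$, such a $z$ exists exactly when $|\lambda_i(\mat P_n)|>1$, in which case $z = \lambda_i(\mat P_n)+1/\lambda_i(\mat P_n)$. Under the hypotheses, the $j$ numbers $\lambda_i(\mat P_n)+1/\lambda_i(\mat P_n)$, $1\le i\le j$, lie in $\mathbb{C}\setminus\Esc_{3\delta}$, the remaining $|\lambda_i(\mat P_n)|>1$ eigenvalues (if any) give locations inside $\Esc_\delta$, and the ``gap'' hypothesis excluding the annulus $\Esc_{3\delta}\setminus\Esc_\delta$ guarantees that these $j$ points are the only zeros of the limiting equation in $\mathbb{C}\setminus\Esc_{2\delta}$ and are bounded away from $\Esc_{2\delta}$.

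Finally, counting and localization by Rouch\'e: fix small disjoint discs $D_1,\dots,D_j$ around the predicted outliers (grouping coincident $\lambda_i(\mat P_n)$ so that repeated roots are absorbed) inside a large disc containing everything. On $\partial D_i$ and on the boundary of $\mathbb{C}\setminus\Esc_{2\delta}$ with the $D_i$ removed, the limiting determinant is bounded away from $0$ uniformly in $n$, so by the $o(1)$ estimate the random determinant is nonzero there as well; applying Rouch\'e's theorem (equivalently, comparing winding numbers of the two holomorphic functions) along these contours shows that, almost surely for large $n$, $\det(\mat I_k + \mat V_n^\ast\mat G_n(z)\mat U_n)$ has exactly as many zeros, counted with multiplicity, inside each $D_i$ as $\det(\mat I_k + m_{\mathrm{sc}}(z)\mat V_n^\ast\mat U_n)$, and none in $(\mathbb{C}\setminus\Esc_{2\delta})\setminus\bigcup_i D_i$. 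Summing gives exactly $j$ eigenvalues of $\tfrac{1}{\sqrt n}\mat W_n+\mat P_n$ in $\mathbb{C}\setminus\Esc_{2\delta}$; shrinking the radii of the $D_i$ to $0$ yields the expansion \eqref{eq:outliereigdesc}.

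\textbf{Main obstacle.} The crux is the isotropic local semicircle law: one must control the bilinear forms $v^\ast\mat G_n(z)u$ uniformly for $z$ near (but outside) $[-2,2]$ and simultaneously over all $k^2$ entries of $\mat V_n^\ast\mat G_n(z)\mat U_n$, with a rate strong enough to drive the Rouch\'e comparison. This rests on the eigenvector delocalization and fluctuation-averaging machinery for Wigner matrices, and the edge region $z\to\pm2$ is the delicate part. A secondary difficulty is that $\mat P_n$ is not assumed normal, so the limiting secular determinant may have repeated roots and the outlier map may be degenerate; this is why the counting step must be phrased through winding numbers of the determinant rather than a naive continuity argument.
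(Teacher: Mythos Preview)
The paper does not prove Theorem~\ref{thm:or}; it is quoted as Theorem~2.4 of \cite{OR} and used as a black box (notably inside the proof of Theorem~\ref{thm:wigner:refine}). Your proposal is correct and is essentially the argument of \cite{OR}; it also mirrors what the present paper does explicitly for the analogous statements in Section~\ref{sec:proof:refine}: the reduction via Lemma~\ref{lemma:eigcriterion} and the Sylvester identity~\eqref{eq:fundamental}, the isotropic resolvent approximation (Theorem~\ref{thm:wigner:isotropic}), and the Rouch\'e/argument-principle count are precisely the ingredients used there, and the sample-covariance proof of Theorem~\ref{thm:sc:refine} carries out the Rouch\'e step in the same form you describe.

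One calibration of your ``main obstacle'': for Theorem~\ref{thm:or} the isotropic estimate is only needed on compact subsets of $\mathbb{C}\setminus\Esc_\delta$, i.e.\ at fixed macroscopic distance from the support $[-2,2]$. There the resolvent norm is deterministically $O(1/\delta)$, and the isotropic law $v^\ast\mat G_n(z)u = m_{\mathrm{sc}}(z)\,v^\ast u + o(1)$ follows from softer arguments (concentration for quadratic forms together with the global semicircle law for the trace) under the fourth-moment hypothesis actually assumed here; the full local-law machinery with fluctuation averaging and the delicate edge analysis is not needed. Those tools become essential only for the finer Theorem~\ref{thm:wigner:refine}, which pushes down to $|\Im z|\geq n^{-1+\varepsilon}$ and correspondingly requires the stronger condition~{\bf C0}.
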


See Figure~\ref{fig:WigOut} for a numerical example illustrating Theorem~\ref{thm:or}.  Recently, Rochet \cite{R} has obtained the rate of convergence for the $j$ eigenvalues characterized in \eqref{eq:outliereigdesc} as well as a description of their fluctuations.

\begin{figure}
\includegraphics[width=\textwidth]{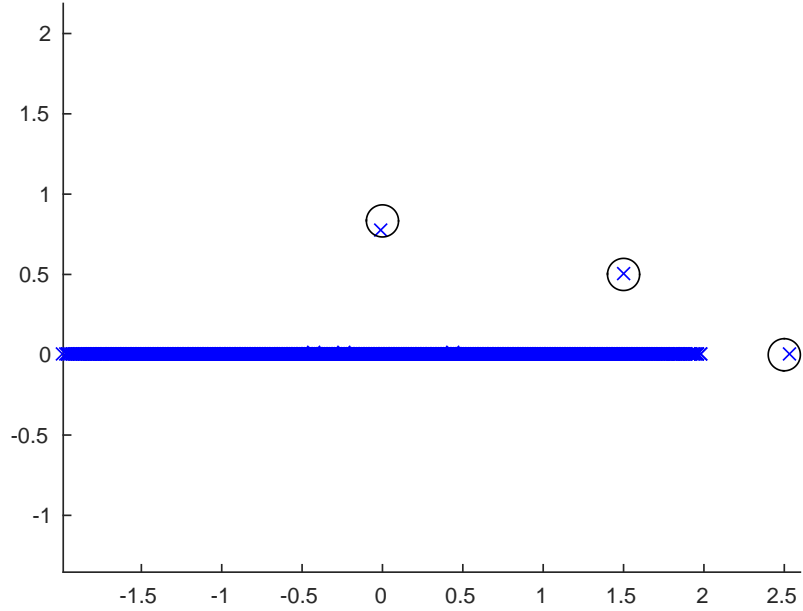}
\caption{{\bf Wigner outliers and Theorem~\ref{thm:or}.}
Above is a plot of the eigenvalues (marked by x's) of a 2000 by 2000 matrix $\frac1{\sqrt{n}}\mat W + \mat P$, where $\mat W$ is a GOE matrix
 and $\mat P = 
\diag(\frac32i,1+i,2,0,0,\dots, 0)$.  
Three circles of radius $0.1$ have been drawn centered at $(0,\frac56)$, 
$(\frac32, 1/2)$, and $(\frac52,0)$, which are the locations of the
outliers predicted by Theorem~\ref{thm:or}.
}
\label{fig:WigOut}
\end{figure}

While Theorem \ref{thm:or} describes the location of the ``outlier'' eigenvalues, it says nothing substantial about the $n-j$ eigenvalues in $\Esc_{2\delta}$.  We address this point in the following theorem.  We will require that the atom variables $\xi$ and $\zeta$ satisfy the following assumptions.  

\begin{definition}[Condition {\bf C0}] \label{def:C0}
We say the atom variables $\xi$ and $\zeta$ satisfy condition {\bf C0} if either one of the following conditions hold.  
\begin{enumerate}[(i)]
\item $\xi$ and $\zeta$ both have mean zero, unit variance, and finite moments of all orders; that is, for every non-negative integer $p$, there exists a constant $C_p$ such that
$$ \E|\xi|^p + \E|\zeta|^p \leq C_p. $$ 
\item $\xi$ and $\zeta$ both have mean zero, $\xi$ has unit variance, $\zeta$ has variance $2$, $\E[\xi^3] = \E[\zeta^3] = 0$, and both $\xi$ and $\zeta$ have sub-exponential tails, that is, there exists $\vartheta > 0$ such that
$$ \Prob (|\xi| \geq t) \leq \vartheta^{-1} \exp(-t^\vartheta) \text{ and } \Prob(|\zeta| \geq t) \leq \vartheta^{-1} \exp(-t^{\vartheta}) $$
for all $t > 0$.
\end{enumerate}
\end{definition}

\begin{theorem}[Location of the eigenvalues for Wigner matrices] \label{thm:wigner:refine}
Let $\xi$ and $\zeta$ be real random variables which satisfy condition {\bf C0}.  For each $n \geq 1$, let $\mat{W}_n$ be an $n \times n$ Wigner matrix with atom variables $\xi$ and $\zeta$.  Let $k \geq 1$ and $\delta > 0$.  For each $n \geq 1$, let $\mat P_n$ be an $n \times n$ deterministic matrix, where $\sup_{n \geq 1} \rank( \mat P_n) \leq k$ and $\sup_{n \geq 1} \| \mat P_n\| = O(1)$.  Suppose for $n$ sufficiently large, there are no eigenvalues of $\mat P_n$ which satisfy 
$$ |1 - |\lambda_i(\mat P_n)|| < \delta $$
and there are $j$ eigenvalues $\lambda_1(\mat P_n), \ldots, \lambda_j(\mat P_n)$ for some $j \leq k$ which satisfy 
\begin{equation} \label{eq:largeevalues}
	|\lambda_i(\mat P_n)| \geq 1 + \delta. 
\end{equation}
Then, there exists a constant $\delta'$ satisfying $0< \delta' < \frac{\delta^2}{1+\delta}$ such that, for every $\eps > 0$, the following holds.  Almost surely, for $n$ sufficiently large, there are exactly $j$ eigenvalues of $\frac{1}{\sqrt{n}} \mat W_n + \mat P_n$ in the region $\mathbb{C} \setminus \Esc_{\delta'}$, and after labeling the eigenvalues properly, 
$$ \lambda_i \left( \frac{1}{\sqrt{n}} \mat W_n + \mat P_n \right) = \lambda_i(\mat P_n) + \frac{1}{\lambda_i(\mat P_n)} + o(1) $$
for each $1 \leq i \leq j$.  In addition, almost surely, for $n$ sufficiently large, the remaining eigenvalues of $\frac{1}{\sqrt{n}} \mat W_n + \mat P_n$ satisfy
$$ \sup_{j+1 \leq i \leq n} \left|\Im \left( \lambda_i \left( \frac{1}{\sqrt{n}} \mat W_n + \mat P_n \right) \right) \right| \leq n^{-1 + \eps} $$
and
$$ \sup_{j+1 \leq i \leq n} \left| \Re \left( \lambda_i \left( \frac{1}{\sqrt{n}} \mat W_n + \mat P_n \right) \right) \right| \leq 2 + n^{-2/3 + \eps}. $$
\end{theorem}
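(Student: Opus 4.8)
The strategy is to reduce the problem to the low-dimensional determinantal equation satisfied by the eigenvalues of a low-rank perturbation, and then to feed in the isotropic local semicircle law together with edge rigidity, both of which are available under condition {\bf C0}. Using the singular value decomposition, write $\mat P_n=\mat A_n\mat B_n$ with $\mat A_n$ of size $n\times k$, $\mat B_n$ of size $k\times n$, and $\|\mat A_n\|,\|\mat B_n\|\le\sqrt{\|\mat P_n\|}=O(1)$. Put $\mat M_n:=\tfrac1{\sqrt n}\mat W_n$ and $R_n(z):=(\mat M_n-z\mat I)^{-1}$. From $\det(\mat M_n+\mat P_n-z\mat I)=\det(\mat M_n-z\mat I)\,\det(\mat I_k+\mat B_n R_n(z)\mat A_n)$ we see that the eigenvalues of $\mat M_n+\mat P_n$ which are not eigenvalues of $\mat M_n$ are precisely the zeros, with multiplicity, of
$$ F_n(z):=\det\!\big(\mat I_k+\mat B_n R_n(z)\mat A_n\big). $$
The entries of $\mat B_n R_n(z)\mat A_n$ are bilinear forms $\langle a,R_n(z)b\rangle$ in the deterministic columns of $\mat A_n$ and rows of $\mat B_n$, so the isotropic local law gives, under {\bf C0}, that almost surely for $n$ large
$$ \big\|\mat B_n R_n(z)\mat A_n-m_{\mathrm{sc}}(z)\mat B_n\mat A_n\big\|=o(1) $$
uniformly over (i) $\{|z|\le R,\ |\Im z|\ge n^{-1+\eps}\}$ and (ii) $\{|z|\le R,\ |\Re z|\ge 2+n^{-2/3+\eps}\}$, for a large fixed constant $R$; here $m_{\mathrm{sc}}$ is the Stieltjes transform of $\mu_{\mathrm{sc}}$, which maps $\mathbb{C}^{+}$ biholomorphically onto $\mathbb{C}^{+}\cap\{|w|<1\}$ and obeys $z=-m_{\mathrm{sc}}(z)-1/m_{\mathrm{sc}}(z)$. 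In region (ii) the imaginary part may be tiny, so one passes from the comparable point with imaginary part equal to $\dist(z,\text{spec}\,\mat M_n)$ down to $z$ itself using a Ward-identity estimate; edge rigidity, valid under {\bf C0}, ensures $\dist(z,\text{spec}\,\mat M_n)\gtrsim n^{-2/3+\eps}$ throughout region (ii).

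Thus $F_n(z)=\widetilde F_n(z)+o(1)$ uniformly on regions (i)--(ii), where $\widetilde F_n(z):=\det(\mat I_k+m_{\mathrm{sc}}(z)\mat B_n\mat A_n)=\prod_i\big(1+m_{\mathrm{sc}}(z)\lambda_i(\mat P_n)\big)$, the product over the eigenvalues of $\mat B_n\mat A_n$ (the nonzero eigenvalues of $\mat P_n$, together with zeros). The key elementary point is that $\widetilde F_n$ is bounded away from $0$ off the outlier set. Indeed $1+m_{\mathrm{sc}}(z)\lambda=0$ forces $m_{\mathrm{sc}}(z)=-1/\lambda$, which — since $|m_{\mathrm{sc}}(z)|\le 1$ with equality only on $[-2,2]$ — has a unique solution $z=\lambda+1/\lambda=:z_\lambda$ exactly when $|\lambda|>1$, and a short computation with the ellipse $\{\lambda+1/\lambda:|\lambda|=1+\delta\}$ shows $\dist(z_\lambda,[-2,2])\ge\tfrac{\delta^2}{1+\delta}$ when $|\lambda|\ge 1+\delta$. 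Fix $\delta'$ with $0<\delta'<\tfrac{\delta^2}{1+\delta}$ and a small $\eta>0$ with $\eta<\tfrac{\delta^2}{1+\delta}-\delta'$, and let $D(w,\eta)$ be the disk of radius $\eta$ about $w$. Each factor with $|\lambda_i(\mat P_n)|\le 1-\delta$ satisfies $|1+m_{\mathrm{sc}}(z)\lambda_i(\mat P_n)|\ge\delta$ everywhere on regions (i)--(ii); each factor with $|\lambda_i(\mat P_n)|\ge 1+\delta$ satisfies $|1+m_{\mathrm{sc}}(z)\lambda_i(\mat P_n)|\ge c(\delta,\eta)>0$ on those regions away from $D(z_{\lambda_i(\mat P_n)},\eta)$, by a compactness argument using the boundary values of $m_{\mathrm{sc}}$ on $\mathbb{R}$ (the real outliers moreover have $\Re z_{\lambda_i(\mat P_n)}\ge 2+\tfrac{\delta^2}{1+\delta}>2+\delta'$, so they are harmless in region (ii)). Hence $|\widetilde F_n(z)|\ge c_0>0$, and therefore $F_n(z)\ne 0$ for $n$ large, on regions (i)--(ii) minus $\bigcup_i D(z_{\lambda_i(\mat P_n)},\eta)$. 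Since $\|\mat M_n+\mat P_n\|=O(1)$ forces all eigenvalues into $\{|z|\le R\}$, we conclude: a.s. for $n$ large, every eigenvalue of $\mat M_n+\mat P_n$ outside $\bigcup_i D(z_{\lambda_i(\mat P_n)},\eta)$ has $|\Im z|<n^{-1+\eps}$ and $|\Re z|\le 2+n^{-2/3+\eps}$, which is the content of the two displayed bounds in the theorem and also places all these eigenvalues in $\Esc_{\delta'}$ for $n$ large.

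For the outliers, apply Rouch\'e's theorem to $F_n$ and $\widetilde F_n$ on each circle $\partial D(z_{\lambda_i(\mat P_n)},\eta)$ with $|\lambda_i(\mat P_n)|\ge 1+\delta$: these circles lie at distance $\ge\delta'/2$ from $[-2,2]$ and hence from $\text{spec}\,\mat M_n$, there $|\widetilde F_n|\gtrsim\eta^{\,k}\gg o(1)=|F_n-\widetilde F_n|$, and $m_{\mathrm{sc}}'$ never vanishes off $[-2,2]$, so $F_n$ has exactly as many zeros inside the circle as $\widetilde F_n$, namely the multiplicity of $\lambda_i(\mat P_n)$. Summing, $\mat M_n+\mat P_n$ has exactly $j$ eigenvalues in $\bigcup_i D(z_{\lambda_i(\mat P_n)},\eta)$, each at distance $\ge\tfrac{\delta^2}{1+\delta}-\eta>\delta'$ from $[-2,2]$ and therefore outside $\Esc_{\delta'}$; combined with the previous paragraph this yields exactly $j$ eigenvalues in $\mathbb{C}\setminus\Esc_{\delta'}$. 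Taking $\eta=\eta_n\to 0$ slowly (allowed as long as the local-law error stays $\ll\eta_n^{\,k}$) improves the localization to $\lambda_i\big(\tfrac1{\sqrt n}\mat W_n+\mat P_n\big)=\lambda_i(\mat P_n)+1/\lambda_i(\mat P_n)+o(1)$ for $1\le i\le j$. Passing from ``with probability $1-n^{-D}$ for every $D$'' to ``almost surely for $n$ large'' is Borel--Cantelli.

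The step I expect to be the main obstacle is region (ii): understanding $F_n(z)$, equivalently the bilinear forms $\langle a,R_n(z)b\rangle$, for $z$ just beyond the spectral edge (real part $2+n^{-2/3+\eps}$) with vanishingly small imaginary part, where $\|R_n(z)\|$ can be as large as $n^{2/3-\eps}$ and the usual form of the isotropic local law degenerates. This is precisely where one needs edge rigidity at scale $n^{-2/3+\eps}$ together with a careful Ward-identity passage from imaginary part $\asymp n^{-2/3+\eps}$ down to the real axis. Everything else — the Schur-complement reduction, the determinant perturbation estimate, the Rouch\'e count, and the elementary bounds on $m_{\mathrm{sc}}$ — is routine.
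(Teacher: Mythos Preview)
Your proposal is correct and follows essentially the same route as the paper: reduce via Sylvester's identity to the $k\times k$ determinant $F_n(z)=\det(\mat I_k+\mat B_n R_n(z)\mat A_n)$, feed in the isotropic local semicircle law (Theorem~\ref{thm:wigner:isotropic}) on the region $\mathcal{S}_n(\kappa,\eps)$ to replace $F_n$ by $\prod_i(1+m_{\mathrm{sc}}(z)\lambda_i(\mat P_n))$, and then argue that this product cannot be $o(1)$ inside $\Esc_{2\delta'}\cap\mathcal{S}_n(\kappa,\eps)$ using $|m_{\mathrm{sc}}|\le 1$ and Lemma~\ref{lem:ellipse}. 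The only structural difference is that the paper handles the outlier count and location by invoking Theorem~\ref{thm:or} as a black box, whereas you redo that step yourself via Rouch\'e on the disks $D(z_{\lambda_i(\mat P_n)},\eta)$; this is the same mechanism (and indeed is how results like Theorem~\ref{thm:or} are proved), so the two arguments are interchangeable.
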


In other words, Theorem \ref{thm:wigner:refine} states that besides for the ``outlier'' eigenvalues noted in Theorem \ref{thm:or}, all of the other eigenvalues are within $n^{-1}$ of the real line, up to $n^{\eps}$ multiplicative corrections.  In addition, the real parts of the eigenvalues highly concentrate in the region $[-2,2]$.  

Similar results to Theorem \ref{thm:wigner:refine} have also appeared in the mathematical physics literature due to the role random matrix theory plays in describing scattering in chaotic systems.  We refer the interested reader to \cite{FSphys1996,FSphys1997,FKphys1999,FSphys2003,SFTphys1999} and references therein.  In particular, these works focus on the case when $\mat W_n$ is drawn from the GOE or it complex Hermitian relative, the Gaussian unitary ensemble (GUE), and $\mat P_n$ is of the form $\mat P_n = -\sqrt{-1} \mat \Gamma_n$, where $\mat\Gamma_n$ is a positive definite matrix of low rank.  We emphasis that the methods used in \cite{FSphys1996,FSphys1997,FKphys1999,FSphys2003,SFTphys1999} only apply to the GUE and GOE, while Theorem \ref{thm:wigner:refine} applies to a large class of Wigner matrices.  

We now consider the sample covariance case.  For simplicity, we will consider sample covariance matrices with parameters $(n,n)$, that is, sample covariance matrices $\mat S_n$ of the form $\mat S_n = \mat X^\mathrm{T} \mat X$, where $\mat X$ is $n \times n$.  For any $\delta > 0$, let $\Emp_{\delta}$ be the $\delta$-neighborhood of $[0,4]$ in the complex plane.  That is, 
$$ \Emp_{\delta} := \left\{ z \in \mathbb{C} : \inf_{x \in [0,4]} |z - x| \leq \delta \right\}. $$
Here, we work on $[0,4]$ as this is the support of $\mu_{\mathrm{MP},1}$.  Our main result for sample covariance matrices is the following theorem.  We assume the atom variable $\xi$ of $\mat S_n$ satisfies the following condition.

\begin{definition}[Condition {\bf C1}] \label{def:C1}
We say the atom variable $\xi$ satisfies condition {\bf C1} if $\xi$ has mean zero, unit variance, and finite moments of all orders, that is, for every non-negative integer $p$ there exists a constant $C_p > 0$ such that
$$ \E |\xi|^p \leq C_p. $$
\end{definition}

\begin{theorem}[Location of the eigenvalues for sample covariance matrices] \label{thm:sc:refine}
Let $\xi$ be a real random variable which satisfies condition {\bf C1}.  For each $n \geq 1$, let $\mat{S}_n$ be an $n \times n$ sample covariance matrix with atom variable $\xi$ and parameters $(n,n)$.  Let $k \geq 1$ and $\delta > 0$.  For each $n \geq 1$, let $\mat P_n$ be an $n \times n$ deterministic matrix, where $\sup_{n \geq 1} \rank( \mat P_n) \leq k$ and $\sup_{n \geq 1} \| \mat P_n\| = O(1)$.  Suppose for $n$ sufficiently large, there are no eigenvalues of $\mat P_n$ which satisfy 
\begin{equation} \label{eq:sc:noeigdelta}
	|1 - |\lambda_i(\mat P_n)|| < \delta 
\end{equation}
and there are $j$ eigenvalues $\lambda_1(\mat P_n), \ldots, \lambda_j(\mat P_n)$ for some $j \leq k$ which satisfy 
\begin{equation} \label{eq:sc:largeevalues}
	|\lambda_i(\mat P_n)| \geq 1 + \delta. 
\end{equation}
Then, there exists a constant $\delta'$ satisfying $0< \delta'<\frac{\delta^2}{1+\delta}$ such that, for every $\eps > 0$, the following holds.  Almost surely, for $n$ sufficiently large, there are exactly $j$ eigenvalues of $\frac{1}{n} \mat S_n (\mat I + \mat P_n)$ in the region $\mathbb{C} \setminus \Emp_{\delta'}$, and after labeling the eigenvalues properly, 
$$ \lambda_i \left( \frac{1}{n} \mat S_n (\mat I+ \mat P_n) \right) = \lambda_i(\mat P_n) \left(1 + \frac{1}{\lambda_i(\mat P_n)} \right)^2 + o(1) $$
for each $1 \leq i \leq j$.  In addition, almost surely, for $n$ sufficiently large, the remaining eigenvalues of $\frac{1}{n} \mat S_n (\mat I+ \mat P_n)$ either lie in the disk $\{z \in \mathbb{C} : |z| \leq \delta' \}$ or the eigenvalues satisfy
$$ \sup_{j+1 \leq i \leq n} \left|\Im \left( \lambda_i \left( \frac{1}{{n}} \mat S_n(\mat I + \mat P_n) \right) \right) \right| \leq n^{-1 + \eps}, $$
$$ \inf_{j+1 \leq i \leq n} \Re \left( \lambda_i \left( \frac{1}{n} \mat S_n(\mat I + \mat P_n) \right) \right) > 0, $$
and
$$ \sup_{j+1 \leq i \leq n} \Re \left( \lambda_i \left( \frac{1}{{n}} \mat S_n(\mat I + \mat P_n) \right) \right) \leq 4 + n^{-2/3 + \eps}. $$
\end{theorem}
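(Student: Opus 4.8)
The plan is to reduce to the Hermitian‑plus‑low‑rank situation already treated for Theorem~\ref{thm:wigner:refine}, running the same argument with the semicircle law replaced by the Marchenko--Pastur law $\mu_{\mathrm{MP},1}$ on $[0,4]$ and its Stieltjes transform $m(z)=\int_0^4(x-z)^{-1}\,d\mu_{\mathrm{MP},1}(x)$. Write $\mat A:=\frac1n\mat S_n$, which is Hermitian positive semidefinite, fix a rank factorization $\mat P_n=\mat B\mat C^{\ast}$ with $\mat B,\mat C$ of size $n\times k$ and $\|\mat B\|,\|\mat C\|=O(1)$ (padding with zero columns if $\rank\mat P_n<k$), and note that $\mat A(\mat I+\mat P_n)$ has the same eigenvalues as the Hermitian‑plus‑low‑rank matrix $\mat A+\mat A^{1/2}\mat P_n\mat A^{1/2}$. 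Condition~{\bf C1} (all moments finite) supplies the two random‑matrix inputs I will use: the isotropic local Marchenko--Pastur law with failure probability $O(n^{-D})$ for every $D$, and edge rigidity, $\lambda_{\max}(\mat A)\le 4+n^{-2/3+\eps}$ almost surely for large $n$. The $j$ outlier eigenvalues outside $\Emp_{\delta'}$, with limits $\lambda_i(\mat P_n)(1+1/\lambda_i(\mat P_n))^2$, are supplied by the sample‑covariance analogue of Theorem~\ref{thm:or} established earlier in the paper (by the same resolvent analysis carried out in the region $\{z:\dist(z,[0,4])\ge\delta'\}$, where the local law is unproblematic, together with a Rouch\'e‑type count); the only point to check is that, under $|\lambda_i(\mat P_n)|\ge 1+\delta$ and $|\,1-|\lambda_i(\mat P_n)|\,|\ge\delta$, these limit points lie at distance $\ge\frac{\delta^2}{1+\delta}$ from $[0,4]$. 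This is an elementary computation with the Joukowski‑type map $w\mapsto 2-w-w^{-1}$, which sends $\{|w|=1/(1+\delta)\}$ to an ellipse centered at $2$ with horizontal semi‑axis $r+r^{-1}$, at distance $(1-r)^2/r=\frac{\delta^2}{1+\delta}$ from $[0,4]$; this is exactly why one takes $0<\delta'<\frac{\delta^2}{1+\delta}$.

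For the remaining eigenvalues, I start from the identity that, for $z\notin\Lambda(\mat A)$, $z$ is an eigenvalue of $\mat A(\mat I+\mat P_n)=\mat A+\mat A\mat B\mat C^{\ast}$ if and only if
\[
\det\!\big(\mat I_k+\mat C^{\ast}\mat B+z\,\mat C^{\ast}(\mat A-z\mat I)^{-1}\mat B\big)=0,
\]
using $\det(\mat I+\mat U\mat V)=\det(\mat I+\mat V\mat U)$ and $(\mat A-z\mat I)^{-1}\mat A=\mat I+z(\mat A-z\mat I)^{-1}$. All eigenvalues of $\mat A+\mat A\mat B\mat C^{\ast}$ lie in a fixed bounded region (since $\|\mat A\|=4+o(1)$ and $\|\mat A\mat B\mat C^{\ast}\|=O(1)$), and a union bound over a polynomially fine net in $z$ together with Borel--Cantelli upgrades the isotropic local Marchenko--Pastur law to: almost surely, for large $n$,
\[
\sup\big\|\mat C^{\ast}(\mat A-z\mat I)^{-1}\mat B-m(z)\,\mat C^{\ast}\mat B\big\|=o(1)
\]
uniformly over $z$ in that region with $|\Im z|\ge n^{-1+\eps}$ and $|z|\ge\delta'$ (eigenvalues with $|z|<\delta'$ will be collected in the disk $\{|z|\le\delta'\}$ and disposed of separately, which also sidesteps the degeneracy of the local law at the hard edge). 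Hence if such a $z$ is an eigenvalue, the characteristic equation forces $\det(\mat I_k+h(z)\,\mat C^{\ast}\mat B)=o(1)$, where $h(z):=1+z\,m(z)$; expanding the determinant as $\prod_i(1+h(z)\mu_i)$ over the eigenvalues $\mu_i$ of $\mat C^{\ast}\mat B$ (whose nonzero values are the nonzero eigenvalues of $\mat P_n$) and noting that each factor is $O(1)$, we get $h(z)=-1/\lambda_i(\mat P_n)+o(1)$ for some nonzero eigenvalue $\lambda_i(\mat P_n)$.

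Next I analyze $h$. A direct computation gives $h(z)=1-\tfrac z2+\tfrac12\sqrt{z^2-4z}$ (the branch $\sim -1/z$ at infinity), so $|h|\equiv 1$ on $(0,4)$ and $|h|<1$ on $\mathbb{R}\setminus[0,4]$ with $h(\infty)=0$; since $h$ is bounded and analytic on $\mathbb{C}^{+}$, the maximum principle gives $|h(z)|<1$ whenever $\Im z\ne0$. On an annulus $\{c\le|w|\le 1/(1+\delta)\}$ with $c>0$ the map $h$ has a uniformly continuous inverse $h^{-1}(w)=2-w-w^{-1}$, and $h^{-1}(-1/\lambda)=\lambda(1+1/\lambda)^2$. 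So if $z$ is an eigenvalue with $|\Im z|\ge n^{-1+\eps}$ and $|z|\ge\delta'$, then $h(z)=-1/\lambda_i(\mat P_n)+o(1)$ together with $|h(z)|<1$ rules out $|\lambda_i(\mat P_n)|\le 1-\delta$, so by the gap hypothesis $|\lambda_i(\mat P_n)|\ge 1+\delta$; applying $h^{-1}$ then gives $z=\lambda_i(\mat P_n)(1+1/\lambda_i(\mat P_n))^2+o(1)$, i.e.\ $z$ is within $o(1)$ of one of the $j$ outlier locations, all of which lie at distance $>\delta'$ from $[0,4]$. Consequently every eigenvalue of $\frac1n\mat S_n(\mat I+\mat P_n)$ that lies in $\Emp_{\delta'}$ with $|z|\ge\delta'$ has $|\Im z|<n^{-1+\eps}$ --- the claimed imaginary‑part bound for the $n-j$ remaining eigenvalues, after absorbing constants into $\eps$, those with $|z|<\delta'$ going into the disk. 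For the real parts, for a unit eigenvector $u$ write $\Re z=u^{\ast}\mat A u+\Re(u^{\ast}\mat A\mat B\mat C^{\ast}u)$, where $u^{\ast}\mat A u\in[0,\lambda_{\max}(\mat A)]\subseteq[0,4+n^{-2/3+\eps}]$ by positive semidefiniteness and edge rigidity, and $u^{\ast}\mat A\mat B\mat C^{\ast}u=O(n^{-1+\eps})$ by a delocalization estimate for the bulk eigenvectors of $\mat A+\mat A\mat B\mat C^{\ast}$ --- obtained, exactly as for Theorem~\ref{thm:wigner:refine}, by substituting $u=-(\mat A-z\mat I)^{-1}\mat A\mat B(\mat C^{\ast}u)$ back into the isotropic local law. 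This yields $\Re z\le 4+n^{-2/3+\eps}$, together with the dichotomy: either $u^{\ast}\mat A u=O(n^{-1+\eps})$, in which case $|z|=O(n^{-1+\eps})$ puts $z$ in the disk $\{|z|\le\delta'\}$, or $\Re z>0$.

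The main obstacle is making the reduction of the random characteristic equation to the deterministic equation $\det(\mat I_k+h(z)\,\mat C^{\ast}\mat B)=o(1)$ uniform down to the optimal scale $|\Im z|\asymp n^{-1+\eps}$: this needs the isotropic (anisotropic) local Marchenko--Pastur law tested against the \emph{arbitrary}, possibly spectrally resonant, deterministic directions coming from a rank factorization of $\mat P_n$, with enough uniformity in $z$ to pass to an almost sure statement; the companion step --- delocalization of the bulk eigenvectors of the non‑Hermitian matrix $\mat A+\mat A\mat B\mat C^{\ast}$, needed for the real‑part bounds --- is of comparable difficulty. A secondary issue is the bookkeeping at the three special scales: $z$ near the soft edge $4$ (absorbed into the $n^{-2/3}$ edge fluctuations), $z$ near the hard edge $0$ (such eigenvalues are simply collected in the disk $\{|z|\le\delta'\}$, which also avoids the degeneracy of the local law there and any issue with $\mat S_n$ being singular), and $|\lambda_i(\mat P_n)|$ near $1$ (ruled out by hypothesis, and the source of the constraint $\delta'<\frac{\delta^2}{1+\delta}$).
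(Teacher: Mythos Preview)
Your determinant reduction and the argument ruling out eigenvalues with $|\Im z|\ge n^{-1+\eps}$, $|z|\ge\delta'$ are essentially the paper's: write $f(z)=\det(\mat I_k+\mat C^{\ast}\mat B+z\,\mat C^{\ast}(\mat A-z\mat I)^{-1}\mat B)$, use the isotropic local Marchenko--Pastur law to replace this by $g(z)=\prod_i(1+h(z)\lambda_i(\mat P_n))$ with $h(z)=1+zm_{\mathrm{MP}}(z)$, and derive a contradiction via $|h(z)|\le 1$ (the paper's Lemma~\ref{lemma:mMP}). Your Rouch\'e sketch for the outliers is also what the paper does --- note that no separate sample-covariance analogue of Theorem~\ref{thm:or} is proved earlier; it is established inside this very proof.

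Where you diverge is in the real-part bounds, and here there is a gap. You propose to write $\Re z = u^{\ast}\mat A u + \Re(u^{\ast}\mat A\mat B\mat C^{\ast}u)$ for a unit eigenvector $u$ and then bound $u^{\ast}\mat A\mat B\mat C^{\ast}u=O(n^{-1+\eps})$ by ``substituting $u=-(\mat A-z\mat I)^{-1}\mat A\mat B(\mat C^{\ast}u)$ back into the isotropic local law, exactly as for Theorem~\ref{thm:wigner:refine}.'' But the paper's proof of Theorem~\ref{thm:wigner:refine} does \emph{not} use any such eigenvector delocalization, and your substitution cannot be fed into the local law here: the eigenvalues you are trying to constrain have $|\Im z|<n^{-1+\eps}$, precisely the region where you have \emph{not} established control of $(\mat A-z\mat I)^{-1}$ (indeed, for $\Re z$ in the bulk the resolvent norm blows up). So the claimed $O(n^{-1+\eps})$ bound is unsupported.

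The paper's route is simpler and avoids eigenvectors entirely. The isotropic local law (Theorem~\ref{thm:sc:isotropic}) is stated on a region $\mathcal{T}_n(\kappa,\delta',\eps)$ that, in addition to $\{|\eta|\ge n^{-1+\eps},\ |z|\ge\delta'\}$, also contains the two strips $\{4+n^{-2/3+\eps}\le E\le\kappa\}$ and $\{E<0,\ |z|\ge\delta'\}$ all the way down to the real axis --- this is legitimate because $\Lambda(\mat A)\subset[0,4+n^{-2/3+\eps}]$ by edge rigidity, so the resolvent is uniformly bounded there. One then runs the \emph{same} $f(z)=g(z)+o(1)$ contradiction on $\tilde{\mathcal{T}}_n:=\mathcal{T}_n\cap\Emp_{\delta'}$. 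The upshot is that the $n-j$ remaining eigenvalues lie in $\Emp_{\delta'}\setminus\mathcal{T}_n$, and unpacking what that means (given $|z|\le\kappa$) gives \emph{simultaneously} $|z|<\delta'$ or else $|\Im z|<n^{-1+\eps}$, $0<\Re z<4+n^{-2/3+\eps}$. No separate real-part argument is needed.
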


See Figure~\ref{fig:SampCovOut} for a numerical demonstration of Theorem~\ref{thm:sc:refine}.

\begin{figure}
\includegraphics[width=\textwidth]{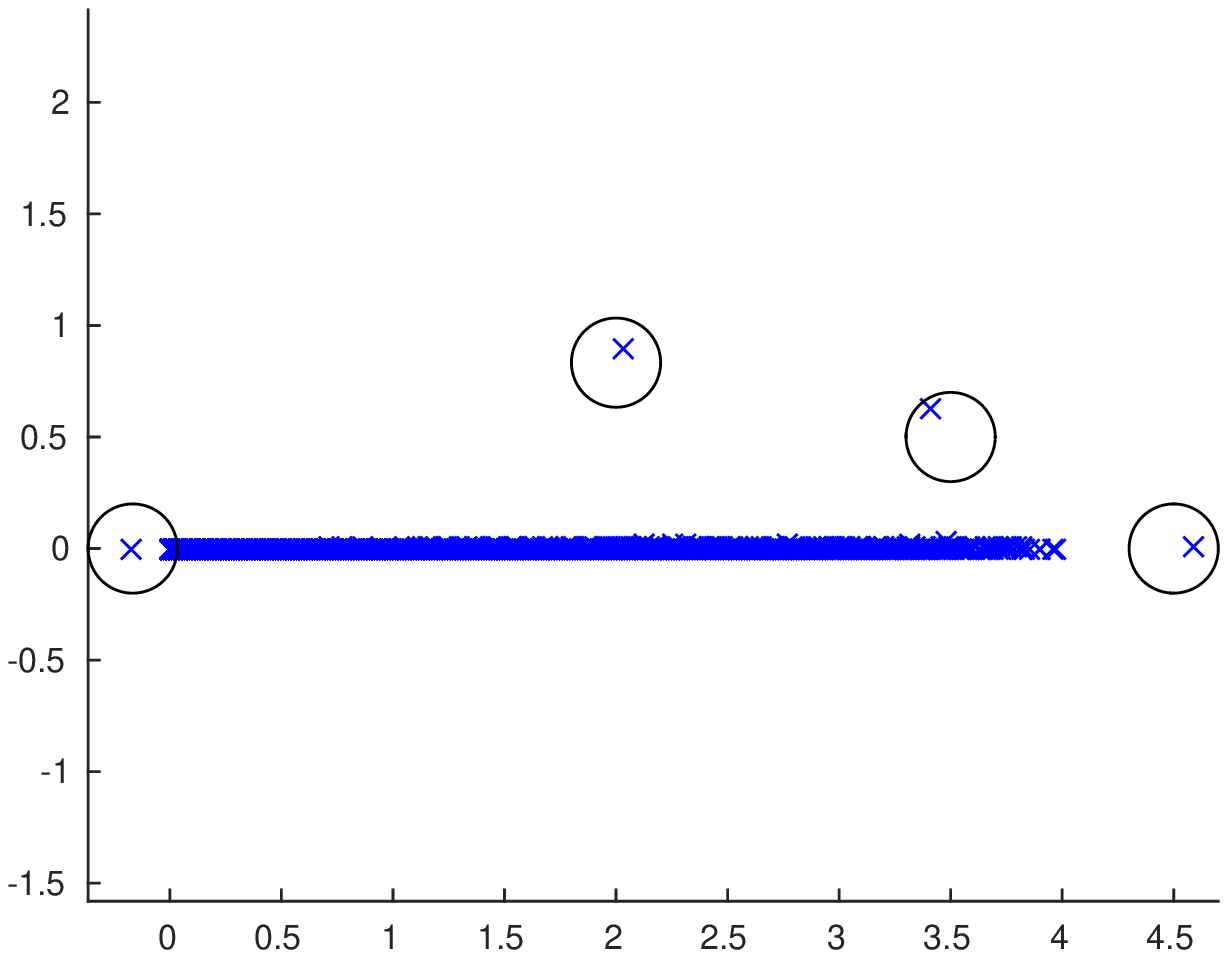}
\caption{{\bf Sample covariance outliers and Theorem~\ref{thm:sc:refine}.}
Above is a plot of the eigenvalues (marked by x's) of a 2000 by 2000 matrix $\frac1{n}\transp{\mat X}\mat X(\mat I + \mat P)$, where $\mat X$ has iid standard real Gaussian entries and $\mat P = 
\diag(-\frac32,\frac32i,1+i,2,0,0,\dots, 0)$.  
Three circles of radius $0.2$ have been drawn centered at $(-\frac16,0)$, $(2,\frac56)$, 
$(\frac72, \frac12)$, and $(\frac92,0)$, which are the locations of the
outliers predicted by Theorem~\ref{thm:sc:refine}.
}
\label{fig:SampCovOut}
\end{figure}

\subsection{Eigenvectors of random perturbations} \label{sec:eigenvectors}
In this section, we mention a few results concerning the eigenvectors of perturbed Wigner matrices.  In particular, using the methods developed by Benaych-Georges and Nadakuditi \cite{BGN}, when the perturbation $\mat P_n$ is random (and independent of $\mat W_n$), we can say more about the eigenvectors of $\frac{1}{\sqrt{n}}\mat W_n + \mat P_n$.  For simplicity, we consider the case when $\mat P_n$ is a rank one normal matrix.  

\begin{theorem}[Eigenvectors: Wigner case] \label{thm:eigenvectors}
Let $\xi$ and $\zeta$ be real random variables.  Assume $\xi$ has mean zero, unit variance, and finite fourth moment;  suppose $\zeta$ has mean zero and finite variance.  For each $n \geq 1$, let $\mat{W}_n$ be an $n \times n$ Wigner matrix with atom variables $\xi$ and $\zeta$.  In addition, for each $n \geq 1$, let $u_n$ be a random vector uniformly distributed on the unit sphere in $\mathbb{R}^n$ or, respectively, in $\mathbb{C}^n$; and let $u_n$ be independent of $\mat W_n$.  Set $\mat{P}_n = \theta u_n u_n^\ast$, where $\theta \in \mathbb C$ and $|\theta| > 1$.  Then there exists $\delta > 0$ (depending on $\theta$) such that the following holds.  Almost surely, for $n$ sufficiently large, there is exactly one eigenvalue of $\frac{1}{\sqrt{n}} \mat W_n + \mat P_n$ outside $\Esc_{\delta}$, and this eigenvalue takes the value $\theta + \frac{1}{\theta} + o(1)$.  Let $v_n$ be the unit eigenvector corresponding to this eigenvalue.  Then, almost surely, 
$$ |u_n^\ast v_n|^2 \longrightarrow \frac{ \left| \int \frac{\rho_{\mathrm{sc}}(x)}{x - \tilde{\theta}}\,  dx \right|^2 }{ \int \frac{\rho_{\mathrm{sc}}(x)}{|x - \tilde{\theta}|^2}\, dx} $$
as $n \to \infty$, where $\tilde{\theta} := \theta + \frac{1}{\theta}$ and $\rho_{\mathrm{sc}}$ is the density of the semicircle law given in \eqref{eq:def:rhosc}.  
\end{theorem}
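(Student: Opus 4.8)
The plan is to adapt the resolvent/secular-equation method of Benaych-Georges and Nadakuditi \cite{BGN}. Write $\mat M_n := \frac{1}{\sqrt{n}}\mat W_n$ and let $g(z) := \int \frac{\rho_{\mathrm{sc}}(x)}{x - z}\,dx$ be the Stieltjes transform of the semicircle law; recall $g(z)^2 + z g(z) + 1 = 0$ and that $g$ maps $\mathbb{C}\setminus[-2,2]$ bijectively onto the punctured disk $\{w : 0 < \abs{w} < 1\}$, with inverse $w \mapsto -w - 1/w$. The first reduction is standard: a scalar $\lambda \notin \Lambda(\mat M_n)$ is an eigenvalue of $\mat M_n + \theta u_n u_n^\ast$ exactly when $\det(\mat I + \theta u_n u_n^\ast (\mat M_n - \lambda\mat I)^{-1}) = 0$, and since the perturbation has rank one this is the scalar \emph{secular equation}
\begin{equation} \label{eq:secular-prop}
	1 + \theta\, u_n^\ast (\mat M_n - \lambda\mat I)^{-1} u_n = 0 ,
\end{equation}
the order of the zero on the left being the algebraic multiplicity of $\lambda$. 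So the whole problem reduces to understanding the random analytic function $z \mapsto u_n^\ast (\mat M_n - z\mat I)^{-1} u_n$ off the spectrum, together with the companion $z \mapsto u_n^\ast (\mat M_n - \bar z\mat I)^{-1}(\mat M_n - z\mat I)^{-1} u_n$ needed for the eigenvector.

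The analytic heart of the proof is an isotropic law: almost surely, for every compact $\mathcal K \subset \mathbb{C}\setminus[-2,2]$ and uniformly for $z \in \mathcal K$,
\begin{equation} \label{eq:iso1-prop}
	u_n^\ast (\mat M_n - z\mat I)^{-1} u_n = g(z) + o(1)
\end{equation}
and
\begin{equation} \label{eq:iso2-prop}
	u_n^\ast (\mat M_n - \bar z\mat I)^{-1}(\mat M_n - z\mat I)^{-1} u_n = \int \frac{\rho_{\mathrm{sc}}(x)}{\abs{x - z}^2}\,dx + o(1) .
\end{equation}
I would prove these from three ingredients: (i) an a.s.\ bound $\norm{\mat M_n} \le 2 + o(1)$ of Bai--Yin type, valid because $\xi$ has mean zero, unit variance and finite fourth moment while $\zeta$ has finite variance (so $\max_i \abs{\zeta_{ii}}/\sqrt n \to 0$ a.s.); this confines $\Lambda(\mat M_n)$ eventually to any prescribed neighborhood of $[-2,2]$ and makes the resolvents uniformly bounded on $\mathcal K$; (ii) concentration of the quadratic form $u_n^\ast \mat A u_n$ about $\tfrac1n \tr \mat A$ for fixed Hermitian $\mat A$ with $\norm{\mat A} = O(1)$, which is clean since $u_n$ is genuinely uniform on the sphere (write $u_n = \mat g/\norm{\mat g}$ with $\mat g$ standard Gaussian and use standard concentration of quadratic forms, giving exponentially small deviations at any fixed scale); and (iii) Wigner's semicircle law (Theorem \ref{thm:wigner}), which gives $\tfrac1n \tr(\mat M_n - z\mat I)^{-1} \to g(z)$ and $\tfrac1n \tr[(\mat M_n - \bar z\mat I)^{-1}(\mat M_n - z\mat I)^{-1}] \to \int \rho_{\mathrm{sc}}(x)\abs{x - z}^{-2}\,dx$ once the confinement from (i) is used to cut the unbounded test functions off away from $[-2,2]$. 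Pointwise a.s.\ convergence is upgraded to uniform convergence on $\mathcal K$ via a union bound over a polynomial-size net, the Lipschitz estimate $\norm{(\mat M_n - z\mat I)^{-2}} \le \dist(z, \Lambda(\mat M_n))^{-2}$, and the Borel--Cantelli lemma.

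Granting \eqref{eq:iso1-prop}--\eqref{eq:iso2-prop}, the rest is soft. Since $\abs\theta > 1$, the equation $g(z) = -1/\theta$ has the unique solution $\tilde\theta = \theta + 1/\theta$ in $\mathbb{C}\setminus[-2,2]$, with $g'(\tilde\theta) \ne 0$; fix $\delta > 0$ so small that $\tilde\theta \notin \Esc_{2\delta}$. The function $z \mapsto 1 + \theta g(z)$ is zero-free on $\partial \Esc_\delta$ and on all sufficiently large circles, so by \eqref{eq:iso1-prop} and the argument principle (Hurwitz's theorem), almost surely for $n$ large the left-hand side of \eqref{eq:secular-prop}, as a function of $\lambda$, has exactly one zero in $\mathbb{C}\setminus\Esc_\delta$, this zero is simple, and it converges to $\tilde\theta$. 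Since $\Lambda(\mat M_n) \subset \Esc_\delta$ eventually, the determinant identity turns this into exactly one simple eigenvalue $\lambda_n$ of $\mat M_n + \theta u_n u_n^\ast$ outside $\Esc_\delta$, with $\lambda_n \to \tilde\theta$. (Alternatively one may condition on $u_n$ and quote Theorem \ref{thm:or}.) Let $v_n$ be a unit eigenvector for $\lambda_n$. From $(\mat M_n - \lambda_n\mat I) v_n = -\theta (u_n^\ast v_n) u_n$ and $\lambda_n \notin \Lambda(\mat M_n)$ we get $u_n^\ast v_n \ne 0$ and $v_n = -\theta (u_n^\ast v_n)(\mat M_n - \lambda_n\mat I)^{-1} u_n$; taking squared norms and using that $\mat M_n$ is Hermitian,
$$ 1 = \abs\theta^2\, \abs{u_n^\ast v_n}^2\; u_n^\ast (\mat M_n - \bar\lambda_n\mat I)^{-1}(\mat M_n - \lambda_n\mat I)^{-1} u_n . $$
Feeding \eqref{eq:iso2-prop} in at $z = \lambda_n \to \tilde\theta$ gives $\abs{u_n^\ast v_n}^2 \to \bigl(\abs\theta^2 \int \rho_{\mathrm{sc}}(x)\abs{x - \tilde\theta}^{-2}\,dx\bigr)^{-1}$ almost surely; and since $g(\tilde\theta) = -1/\theta$ by definition of $\tilde\theta$, we have $\abs\theta^{-2} = \abs{g(\tilde\theta)}^2 = \bigl|\int \rho_{\mathrm{sc}}(x)(x - \tilde\theta)^{-1}\,dx\bigr|^2$, so substituting this in yields the asserted formula. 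The real and complex cases run identically, since only the scalar concentration of $u_n^\ast \mat A u_n$ is used; this is why no Gaussian assumption on $\mat W_n$ is needed.

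The main obstacle is establishing the isotropic law \eqref{eq:iso1-prop}--\eqref{eq:iso2-prop} under only a finite fourth moment, where no local semicircle law is available: one must extract it from the global law (Theorem \ref{thm:wigner}) by combining it with the sharp a.s.\ edge bound on $\norm{\mat M_n}$ (for which the finite-fourth-moment hypothesis is precisely what Bai--Yin requires) and with the sphere concentration for $u_n$, upgrading pointwise convergence to uniform convergence on compacts via a net and Borel--Cantelli. Everything else (the determinant identity, Hurwitz's theorem, the reformulation through $g(\tilde\theta) = -1/\theta$, and the eigenvector manipulation) is entirely routine.
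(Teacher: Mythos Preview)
Your proposal is correct and follows essentially the same route as the paper's proof: both derive the eigenvector formula $v_n \propto (\mat M_n - \lambda_n\mat I)^{-1} u_n$ from the eigenvalue equation, then combine the Bai--Yin norm bound, Hanson--Wright-type concentration of $u_n^\ast \mat A u_n$ about $\tfrac{1}{n}\tr\mat A$ (the paper's Lemma~\ref{lemma:conc}), and Wigner's semicircle law to pass to the limit. The only cosmetic differences are that the paper cites Theorem~\ref{thm:or} for the outlier rather than redoing Hurwitz, and replaces $\lambda_n$ by $\tilde\theta$ via the resolvent identity rather than via locally uniform convergence; your rewriting of the numerator using $g(\tilde\theta) = -1/\theta$ is equivalent to the paper's direct computation of $u_n^\ast \mat G_n(\lambda_n) u_n \to m_{\mathrm{sc}}(\tilde\theta)$, since the secular equation forces $u_n^\ast \mat G_n(\lambda_n) u_n = -1/\theta$ exactly.
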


The first part of Theorem \ref{thm:eigenvectors} follows from Theorem \ref{thm:or}.  The second part, regarding the unit eigenvector $v_n$ is new.  In particular, Theorem \ref{thm:eigenvectors} describes the portion of the vector $v_n$ pointing in direction $u_n$.  In the case when $\mat P_n$ is Hermitian (i.e. $\theta \in \mathbb{R}$), we recover a special case of \cite[Theorem 2.2]{BGN}.  We also have the following version for the sample covariance case.  

\begin{theorem}[Eigenvectors: sample covariance case] \label{thm:SC:eigenvectors}
Let $\xi$ be a real random variable which satisfies condition {\bf C1}.  For each $n \geq 1$, let $\mat{S}_n$ be an $n \times n$ sample covariance matrix with atom variables $\xi$ and parameters $(n,n)$.  In addition, for each $n \geq 1$, let $u_n$ be a random vector uniformly distributed on the unit sphere in $\mathbb{R}^n$ or, respectively, in $\mathbb{C}^n$; and let $u_n$ be independent of $\mat S_n$.  Set $\mat{P}_n = \theta u_n u_n^\ast$, where $\theta \in \mathbb C$ and $|\theta| > 1$.  Then there exists $\delta > 0$ (depending on $\theta$) such that the following holds.  Almost surely, for $n$ sufficiently large, there is exactly one eigenvalue of $\frac{1}{n} \mat{S}_n(\mat I + \mat P_n)$ outside $\Emp_{\delta}$, and this eigenvalue takes the value $\theta \left( 1 + \frac{1}{\theta} \right)^2 + o(1)$.  Let $v_n$ be the unit eigenvector corresponding to this eigenvalue.  Then, almost surely, 
$$ |u_n^\ast v_n|^2 \longrightarrow \frac{ \left| \int \frac{x\rho_{\mathrm{MP},1}(x)}{x - \hat{\theta}}\,  dx \right|^2 }{ \int \frac{x^2\rho_{\mathrm{MP},1}(x)}{|x - \hat{\theta}|^2}\, dx} $$
as $n \to \infty$, where $\hat{\theta} := \theta\left(1 + \frac{1}{\theta}\right)^2$ and $\rho_{\mathrm{MP},1}$ is the density of the Marchenko-Pastur law given in \eqref{eq:def:rhoMP} and \eqref{eq:def:lambdapm} with $y = 1$.  
\end{theorem}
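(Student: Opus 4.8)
We would prove Theorem~\ref{thm:SC:eigenvectors} in parallel with Theorem~\ref{thm:eigenvectors}, following the resolvent/rank-one approach of Benaych-Georges and Nadakuditi~\cite{BGN}. Write $\mat B_n := \frac1n \mat S_n$, so that $\frac1n\mat S_n(\mat I + \mat P_n) = \mat B_n + \theta \mat B_n u_n u_n^\ast$. For each realization of $u_n$ the matrix $\mat P_n = \theta u_n u_n^\ast$ has rank one, spectral norm $|\theta| = O(1)$, one eigenvalue $\theta$ with $|\theta|\ge 1+\delta$ for a suitable fixed $\delta>0$, and no eigenvalue with $|1-|\lambda_i(\mat P_n)||<\delta$; hence Theorem~\ref{thm:sc:refine} (with $k=j=1$, applied conditionally on the sequence $(u_n)$ and combined with the independence $\mat S_n\perp u_n$) gives that, almost surely for $n$ large, there is exactly one eigenvalue $z_n$ of $\frac1n\mat S_n(\mat I+\mat P_n)$ outside $\Emp_{\delta}$, and $z_n = \hat\theta + o(1)$ with $\hat\theta = \theta(1+1/\theta)^2 \notin \Emp_{\delta'} \supset [0,4]$. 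Moreover, under {\bf C1} the Marchenko--Pastur law (Theorem~\ref{thm:mp}) together with the Bai--Yin edge bound give that, almost surely, $\Lambda(\mat B_n)\subset [0, 4+o(1)]$, so for $n$ large $z_n$ and $\hat\theta$ both lie in a fixed compact set at positive distance from $\Lambda(\mat B_n)$; in particular $\mat B_n - z_n\mat I$ is invertible.

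Let $v_n$ be the unit eigenvector attached to $z_n$ and put $c_n := u_n^\ast v_n$. From $(\mat B_n + \theta \mat B_n u_n u_n^\ast)v_n = z_n v_n$ we get $(\mat B_n - z_n\mat I)v_n = -\theta c_n \mat B_n u_n$, i.e.
$$ v_n = -\theta\, c_n\, (\mat B_n - z_n\mat I)^{-1}\mat B_n u_n. $$
Taking the inner product with $u_n$ and dividing by $c_n$ — which is nonzero for $n$ large, since otherwise $v_n$ would be an eigenvector of $\mat B_n$ with eigenvalue $z_n\notin\Lambda(\mat B_n)$ — yields, using $(\mat B_n - z_n\mat I)^{-1}\mat B_n = \mat I + z_n(\mat B_n-z_n\mat I)^{-1}$ and $\|u_n\|=1$, the scalar equation
$$ 1 + \theta + \theta z_n\, u_n^\ast(\mat B_n - z_n\mat I)^{-1}u_n = 0. $$
Taking squared norms in the display above, and using that $\mat B_n$ is Hermitian (so $((\mat B_n-z_n\mat I)^{-1})^\ast = (\mat B_n-\bar z_n\mat I)^{-1}$) and that all these matrices commute, gives
$$ 1 = |\theta|^2 |c_n|^2\; u_n^\ast\, \mat B_n^2 (\mat B_n - \bar z_n\mat I)^{-1}(\mat B_n - z_n\mat I)^{-1} u_n. $$

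It remains to pass to the limit in the two quadratic forms above. Since $z_n\to\hat\theta$ almost surely and, for $n$ large, $z_n$ and $\hat\theta$ lie in a fixed compact set disjoint from $\Lambda(\mat B_n)$, the resolvent $z\mapsto(\mat B_n-z\mat I)^{-1}$ is Lipschitz there with constant $\|(\mat B_n-z\mat I)^{-1}\|^2 = O(1)$, so (using also $\|\mat B_n\|=O(1)$) we may replace $z_n$ by $\hat\theta$ in every resolvent at the cost of an $o(1)$ operator-norm error. This leaves quadratic forms $u_n^\ast\mat M_n u_n$ in which $\mat M_n$ is a bounded function of $\mat B_n$ alone, hence independent of $u_n$; the standard almost-sure concentration estimate for quadratic forms in a vector uniform on the sphere (as in \cite{BGN}) then gives $u_n^\ast\mat M_n u_n = \frac1n\tr\mat M_n + o(1)$, and by weak convergence of $\mu_{\mat B_n}$ to $\mu_{\mathrm{MP},1}$ (Theorem~\ref{thm:mp}) together with the confinement of $\Lambda(\mat B_n)$ this converges to $\int g\,\rho_{\mathrm{MP},1}\,dx$ for the relevant bounded continuous $g$. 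Applied to the first scalar equation with $g(x)=x/(x-\hat\theta)$, this yields $\int \frac{x\rho_{\mathrm{MP},1}(x)}{x-\hat\theta}\,dx = 1 + \hat\theta\, m_{\mathrm{MP},1}(\hat\theta) = -1/\theta$, where $m_{\mathrm{MP},1}$ is the Stieltjes transform of $\mu_{\mathrm{MP},1}$; so the numerator of the claimed limit equals $1/|\theta|^2$. Applied to the second equation with $g(x)=x^2/|x-\hat\theta|^2$, it gives $|c_n|^2 \to \big(|\theta|^2\int x^2\rho_{\mathrm{MP},1}(x)/|x-\hat\theta|^2\,dx\big)^{-1}$. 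Combining the two displays produces the stated formula.

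The genuinely delicate point — the main obstacle — is the dependence of the outlier $z_n$ and its eigenvector $v_n$ on $u_n$, which a priori destroys the independence needed for the quadratic-form concentration. The resolution is the decoupling step above: replace the random $z_n$ by the deterministic limit $\hat\theta$, controlled by the uniform resolvent bound that comes from the almost-sure confinement of $\Lambda(\mat B_n)$ away from $\hat\theta$. Carrying this out requires care that all the $o(1)$ errors (the edge confinement of $\mat B_n$, the quadratic-form concentration) are almost sure rather than merely in probability. The remaining ingredients — the rank-one determinant/eigenvector algebra, the identity $1+\hat\theta\,m_{\mathrm{MP},1}(\hat\theta) = -1/\theta$ extracted from the eigenvalue equation, and the passage to $\rho_{\mathrm{MP},1}$ — are routine.
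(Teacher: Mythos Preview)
Your proposal is correct and follows essentially the same route as the paper: derive the eigenvector formula $v_n = -\theta c_n \mat R_n(z_n)\mat B_n u_n$, decouple by replacing the random outlier $z_n$ with the deterministic $\hat\theta$ via a resolvent Lipschitz bound (using the a.s.\ confinement of $\Lambda(\mat B_n)$), then apply quadratic-form concentration (the paper's Lemma~\ref{lemma:conc}) and the Marchenko--Pastur law. The only cosmetic difference is that the paper writes the ratio \eqref{eq:sc:eigenvectordef} directly and passes to the limit in numerator and denominator separately, whereas you additionally extract the scalar eigenvalue equation and the identity $1+\hat\theta\,m_{\mathrm{MP},1}(\hat\theta)=-1/\theta$ to evaluate the numerator---this is equivalent and arguably cleaner.
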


\begin{remark}
Both Theorem \ref{thm:eigenvectors} and Theorem \ref{thm:SC:eigenvectors} can be extended to the case when $\mat P_n$ is non-normal.  For instance, Theorem \ref{thm:eigenvectors} holds when $\mat P_n := \sigma u_n w_n^\ast$, where $\sigma > 0$, where $u_n$ and $w_n$ are unit vectors, where $u_n$ is a random vector uniformly distributed on the unit sphere in $\mathbb{R}^n$ or, respectively, in $\mathbb{C}^n$, and where  $(u_n, w_n)$ is independent of $\mat W_n$ (but $u_n$ and $w_n$ are not necessarily assumed independent of each other).  In this case, $\mat P_n$ has eigenvalue $\theta := \sigma w_n^\ast u_n$ with corresponding unit eigenvector $u_n$.  Since $\theta$ is now random and (possibly) dependent on $n$, it must be additionally assumed that almost surely $| \theta | > 1 + \eps$ for some $\eps > 0$.  Similarly, both theorems can also be extended to the case when $\mat P_n$ has rank larger than one.  These extensions can be proved by modifying the arguments presented in Section \ref{sec:proof:eigenvectors}.
\end{remark}

\subsection{Critical points of characteristic polynomials} \label{sec:critical}
As an application of our main results, we study the critical points of characteristic polynomials of random matrices.  Recall that a \emph{critical point} of a polynomial $f$ is a root of its derivative $f'$.  There are many results concerning the location of critical points of polynomials whose roots are known.  For example, the famous Gauss--Lucas theorem offers a geometric connection between the roots of a polynomial and the roots of its derivative.  

\begin{theorem}[Gauss--Lucas; Theorem 6.1 from \cite{M}] \label{thm:gauss}
If $f$ is a non-constant polynomial with complex coefficients, then all zeros of $f'$ belong to the convex hull of the set of zeros of $f$.  
\end{theorem}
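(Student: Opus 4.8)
The plan is to argue via the logarithmic derivative of $f$, the standard route to this classical fact. First I would factor $f$ over $\mathbb{C}$ as $f(z) = c \prod_{k=1}^{n} (z - z_k)$ with $c \neq 0$ and $z_1, \dots, z_n$ the zeros of $f$ repeated according to multiplicity; since $f$ is non-constant, $n \geq 1$, $f'$ is not identically zero, and so the zero set of $f'$ is a well-defined finite set. Let $w$ be any zero of $f'$. If $w$ happens to be a zero of $f$ as well (possible only when $f$ has a repeated root), then $w \in \{z_1, \dots, z_n\}$ and there is nothing to prove; hence I may assume $f(w) \neq 0$.

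Next, for such a $w$ I would use logarithmic differentiation to write
\[
0 \;=\; \frac{f'(w)}{f(w)} \;=\; \sum_{k=1}^{n} \frac{1}{w - z_k} \;=\; \sum_{k=1}^{n} \frac{\overline{w - z_k}}{\,|w - z_k|^2\,}.
\]
Taking complex conjugates of both sides and setting $\lambda_k := |w - z_k|^{-2} > 0$ turns this into the linear relation $\sum_{k=1}^{n} \lambda_k (w - z_k) = 0$, which rearranges to
\[
w \;=\; \frac{1}{\Lambda} \sum_{k=1}^{n} \lambda_k z_k, \qquad \Lambda := \sum_{k=1}^{n} \lambda_k \;>\; 0.
\]
Since the weights $\lambda_k / \Lambda$ are nonnegative and sum to $1$, this exhibits $w$ as a convex combination of $z_1, \dots, z_n$, hence a point of their convex hull, completing the proof.

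This argument is elementary and self-contained, so there is no substantial obstacle; the only things to handle with a little care are the trivial case distinction according to whether $f(w) = 0$, and the remark that the convex hull of the (finite, nonempty) zero set of $f$ is precisely the set of all convex combinations of $z_1, \dots, z_n$, so that producing one such combination equal to $w$ suffices.
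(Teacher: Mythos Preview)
Your argument is correct and is the standard proof of the Gauss--Lucas theorem. Note, however, that the paper does not supply its own proof of this result: it is simply quoted as Theorem~6.1 from Marden~\cite{M}, so there is no in-paper proof to compare against. The proof you wrote is essentially the one found in Marden's book and most references.
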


Pemantle and Rivin \cite{PR} initiated the study of a probabilistic version of the Gauss--Lucas theorem.  In particular, they studied the critical points of the polynomial 
\begin{equation} \label{eq:pnprod}
	p_n(z) := (z-X_1) \cdots (z-X_n)
\end{equation}
when $X_1, \ldots, X_n$ are iid complex-valued random variables.  Their results were later generalized by Kabluchko \cite{Kcrit} to the following.  

\begin{theorem}[Theorem 1.1 from \cite{Kcrit}] \label{thm:PMK}
Let $\mu$ be any probability measure on $\mathbb{C}$.  Let $X_1, X_2, \ldots$ be a sequence of iid random variables with distribution $\mu$.  For each $n \geq 1$, let $p_n$ be the degree $n$ polynomial given in \eqref{eq:pnprod}.  Then the empirical measure constructed from the critical points of $p_n$ converges weakly to $\mu$ in probability as $n \to \infty$.  
\end{theorem}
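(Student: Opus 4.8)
We sketch a proof. Since the conclusion is convergence \emph{in probability}, by the subsequence principle it suffices to show that along every subsequence there is a further subsequence along which the empirical measure of the critical points converges weakly to $\mu$ almost surely; and since a finite measure $\rho$ on $\mathbb{C}$ is determined by its Cauchy transform $G_\rho(z):=\int_{\mathbb C}(z-w)^{-1}\,d\rho(w)$ (which is defined for Lebesgue-a.e.\ $z$ and satisfies $\partial_{\bar z}G_\rho=\pi\rho$ in the sense of distributions), it is enough to control Cauchy transforms. Write $\mu_n:=\frac1n\sum_{k=1}^n\delta_{X_k}$, let $c_1,\dots,c_{n-1}$ (depending on $n$) denote the critical points of $p_n$, and set $\nu_n:=\frac1{n-1}\sum_{j=1}^{n-1}\delta_{c_j}$. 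Since $p_n$ is monic of degree $n$ we have $p_n'/p_n=n\,G_{\mu_n}$, and differentiating the relation $p_n'=n\,p_n\,G_{\mu_n}$ logarithmically yields the basic identity
\[
  G_{\nu_n}(z)\;=\;\frac{1}{n-1}\,\frac{p_n''(z)}{p_n'(z)}\;=\;\frac{n}{n-1}\,G_{\mu_n}(z)\;+\;\frac{1}{n-1}\,\frac{G_{\mu_n}'(z)}{G_{\mu_n}(z)}.
\]
The plan is to show that for Lebesgue-a.e.\ fixed $z$ the right-hand side tends to $G_\mu(z)$ in probability, and that $(\nu_n)$ is tight; combining these with the facts above will give $\nu_n\to\mu$.

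By the law of large numbers (Varadarajan's theorem) $\mu_n\to\mu$ weakly almost surely. A Fubini argument ($\int_{|z|\le R}\int_{\mathbb C}|z-w|^{-1}\,d\mu(w)\,dA(z)<\infty$ for each $R$) shows $\int_{\mathbb C}|z-w|^{-1}\,d\mu(w)<\infty$ for Lebesgue-a.e.\ $z$, so for such $z$ the variables $(z-X_k)^{-1}$ are i.i.d.\ and integrable and $G_{\mu_n}(z)=\frac1n\sum_k(z-X_k)^{-1}\to\E(z-X_1)^{-1}=G_\mu(z)$ almost surely. The substance of the proof is that the correction term $\frac1{n-1}G_{\mu_n}'(z)/G_{\mu_n}(z)$ tends to $0$ in probability for a.e.\ $z$, where $G_{\mu_n}'(z)=-\frac1n\sum_k(z-X_k)^{-2}$. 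At a point $z$ with $G_\mu(z)\ne 0$ it suffices to prove $\frac1{n^2}\sum_k|z-X_k|^{-2}\to 0$ in probability. For this, fix $z$ with $\mu(B(z,r))\le C_z r^2$ for all $r\le r_0(z)$ --- which holds for Lebesgue-a.e.\ $z$, since $\{z:\limsup_{r\to0}\mu(B(z,r))/r^2>t\}$ has Lebesgue measure $O(1/t)$ by a Vitali covering estimate --- and put $\tau_n:=n^{-1/2}(\log n)^{-1}$. Then $\Prob\!\bigl(\min_k|z-X_k|<\tau_n\bigr)\le n\,\mu(B(z,\tau_n))\le C_z(\log n)^{-2}\to 0$, while on the complementary event a truncated first-moment bound gives
\[
  \E\Bigl[\sum_{k=1}^n\frac{\indicator{|z-X_k|\ge\tau_n}}{|z-X_k|^2}\Bigr]\;=\;n\int_{|z-w|\ge\tau_n}\frac{d\mu(w)}{|z-w|^2}\;\le\;C_z'\,n\log n,
\]
obtained by integrating $\mu(B(z,r))\le C_z r^2$ over $\tau_n\le r\le r_0$ and bounding the integral over $\{|z-w|>r_0\}$ trivially by $r_0^{-2}$. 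Markov's inequality then shows $\sum_k|z-X_k|^{-2}\le n(\log n)^3$ with probability tending to $1$, whence $\frac1{n^2}\sum_k|z-X_k|^{-2}\le(\log n)^3/n\to 0$. On the set where $G_\mu$ vanishes --- which may have positive Lebesgue measure, for instance when $\mu$ is the uniform measure on a circle --- one argues instead that $G_{\mu_n}(z)$ and $G_{\mu_n}'(z)$ are both of size $n^{-1/2}$ with high probability (their means are $G_\mu(z)=0$ and $G_\mu'(z)=0$ and their variances are $O(1/n)$), together with a matching lower bound $|G_{\mu_n}(z)|\gtrsim n^{-1/2-o(1)}$ reflecting that the critical points do not accumulate there, so that the ratio stays bounded and the prefactor $\frac1{n-1}$ annihilates it. Either way $G_{\nu_n}(z)\to G_\mu(z)$ in probability, for Lebesgue-a.e.\ $z$.

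For tightness, by the Gauss--Lucas theorem (Theorem~\ref{thm:gauss}) every $c_j$ lies in $\mathrm{conv}\{X_1,\dots,X_n\}$; factoring $p_n=q\cdot r$ according to whether a root lies in $B(0,R)$ or not, for $|z|\ge 4R$ one has $|q'(z)/q(z)|\gtrsim(\deg q)/|z|$, so any critical point with $|z|\ge 4R$ must lie near one of the $\deg r$ roots of $r$, and an argument-principle count bounds the number of such critical points by $O(\deg r)+O(1)$; since $\deg r=\#\{k:|X_k|>R\}\sim n\,\mu(\{|w|>R\})$ almost surely, this gives, for each $\eps>0$, $\lim_{R\to\infty}\limsup_{n\to\infty}\Prob\bigl(\nu_n(\mathbb C\setminus B(0,R))>\eps\bigr)=0$. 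To conclude, fix a subsequence; by Fubini and bounded convergence $\E\int_{|z|\le R}\min\bigl(1,|G_{\nu_n}(z)-G_\mu(z)|\bigr)\,dA(z)\to 0$ for each $R$, so along a further subsequence, almost surely, $G_{\nu_{n_k}}\to G_\mu$ for Lebesgue-a.e.\ $z$ and $(\nu_{n_k})$ is tight. Passing to a weakly convergent sub-subsequence $\nu_{n_k}\to\nu$ and using that the $G_{\nu_{n_k}}$ are bounded in $L^p_{\mathrm{loc}}$ for every $p<2$ (hence uniformly integrable on bounded sets), the a.e.\ convergence upgrades to $L^1_{\mathrm{loc}}$ convergence, which must agree with the $L^1_{\mathrm{loc}}$ limit coming from $\nu_{n_k}\to\nu$; thus $G_\nu=G_\mu$ a.e., so $\nu=\mu$. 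Hence $\nu_{n_k}\to\mu$ weakly almost surely, and since the subsequence was arbitrary, $\nu_n\to\mu$ weakly in probability.

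The crux is the estimate on the correction term $\frac1{n-1}(\log G_{\mu_n})'(z)$: one must simultaneously rule out that the sample points $X_k$ cluster too tightly near a typical $z$ (the small-ball bound together with a truncated first moment) and that $G_{\mu_n}(z)$ becomes anomalously small on the set where $G_\mu$ vanishes. A secondary point is that, since $-\int\log|z-w|\,d\mu(w)$ need not be finite for heavy-tailed $\mu$, the argument cannot be run via logarithmic potentials and must be carried out throughout in terms of Cauchy transforms.
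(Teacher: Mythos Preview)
The paper does not prove this statement: Theorem~\ref{thm:PMK} is quoted verbatim as ``Theorem~1.1 from~\cite{Kcrit}'' (Kabluchko) and is used only as background motivation for Theorem~\ref{thm:wigner:critical}. There is therefore no proof in the paper to compare your attempt against.

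That said, your sketch is essentially Kabluchko's own argument: the logarithmic-derivative identity
\[
G_{\nu_n}(z)=\tfrac{n}{n-1}G_{\mu_n}(z)+\tfrac{1}{n-1}\frac{G_{\mu_n}'(z)}{G_{\mu_n}(z)},
\]
the truncation-plus-Markov control of $\tfrac{1}{n^2}\sum_k|z-X_k|^{-2}$ at typical points via the density estimate $\mu(B(z,r))=O(r^2)$, and the reconstruction of $\mu$ from the a.e.\ limit of the Cauchy transforms are exactly the ingredients of~\cite{Kcrit}. Two places deserve more care. First, on the set $\{G_\mu=0\}$ your claim that $G_\mu'(z)=0$ there is correct only because a.e.\ such $z$ lies in an open component of $\mathbb{C}\setminus\operatorname{supp}\mu$ on which $G_\mu\equiv 0$ by analyticity; you should say so explicitly, since for a.e.\ $z$ inside $\operatorname{supp}\mu$ the variance $\int|z-w|^{-2}\,d\mu(w)$ can be infinite (e.g.\ $\mu$ uniform on a disk). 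Once $z$ is at positive distance from $\operatorname{supp}\mu$, all moments are finite and what you actually need is the anticoncentration $\Prob(|G_{\mu_n}(z)|<Mn^{-3/2})\to 0$, which follows from the CLT for $\sqrt{n}\,G_{\mu_n}(z)$ (nondegenerate unless $\mu$ is a point mass); in~\cite{Kcrit} this is done more quantitatively via the Kolmogorov--Rogozin inequality. Second, your tightness argument (factoring $p_n=q\cdot r$ and counting critical points outside $B(0,R)$) is plausible but sketchy---the lower bound $|q'(z)/q(z)|\gtrsim(\deg q)/|z|$ for $|z|\ge 4R$ can fail due to cancellation in general directions. Kabluchko instead verifies a logarithmic-moment tightness condition of Tao--Vu type, analogous to hypothesis~(i) in Theorem~\ref{thm:replacement} of the present paper; that route is cleaner.
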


Theorem \ref{thm:PMK} was later extended by the first author; indeed, in \cite{O}, the critical points of characteristic polynomials for certain classes of random matrices drawn from the compact classical matrix groups are studied.  Here, we extend these results to classes of perturbed random matrices.  For an $n \times n$ matrix $\mat{A}$, we let $p_{\mat{A}}(z) := \det(\mat{A} - z \mat{I})$ denote the characteristic polynomial of $\mat{A}$.  In this way, the empirical spectral measure $\mu_{\mat{A}}$ can be viewed as the empirical measure constructed from the roots of $p_{\mat A}$.  Let $\mu_{\mat{A}}'$ denote the empirical measure constructed from the critical points of $p_{\mat A}$.  That is,
$$ \mu_{\mat{A}}' := \frac{1}{n-1} \sum_{k=1}^{n-1} \delta_{\xi_k}, $$
where $\xi_1, \ldots, \xi_{n-1}$ are the critical points of $p_{\mat{A}}$ counted with multiplicity.  

We prove the following result for Wigner random matrices.

\begin{theorem} \label{thm:wigner:critical}
Assume $\xi$, $\zeta$, $\mat W_n$, and $\mat P_n$ satisfy the assumptions of Theorem \ref{thm:wigner:refine}.  In addition, for $n$ sufficiently large, assume that the $j$ eigenvalues of $\mat P_n$ which satisfy \eqref{eq:largeevalues} do not depend on $n$.  Then, almost surely, $\mu_{\frac{1}{\sqrt{n}} \mat W_n + \mat P_n}'$ converges weakly on $\mathbb{C}$ as $n \to \infty$ to the (non-random) measure $\mu_{\mathrm{sc}}$.
\end{theorem}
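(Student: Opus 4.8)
The plan is to combine Theorem \ref{thm:wigner:refine} (the refined location of eigenvalues) with a general principle relating the empirical measure of critical points of a polynomial to the empirical measure of its roots, of the type exploited by Pemantle--Rivin \cite{PR}, Kabluchko \cite{Kcrit}, and the first author \cite{O}. The key analytic object is the logarithmic derivative: if $p_{\mat A}(z) = \prod_{k=1}^n (z - \lambda_k)$, then $\frac{p_{\mat A}'(z)}{p_{\mat A}(z)} = \sum_{k=1}^n \frac{1}{z - \lambda_k} = n \int_{\mathbb C} \frac{1}{z - w} \, d\mu_{\mat A}(w)$, which is $n$ times the Cauchy--Stieltjes transform of the empirical spectral measure. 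The critical points are precisely the zeros of this sum (away from the $\lambda_k$). So the strategy is: (i) show that the Cauchy transform of $\mu_{\frac{1}{\sqrt n}\mat W_n + \mat P_n}$ converges, for $z$ outside a neighborhood of $[-2,2]$ together with finitely many outlier points, to the Cauchy transform $g_{\mathrm{sc}}(z) := \int \frac{\rho_{\mathrm{sc}}(x)}{z-x}\,dx$ of the semicircle law; (ii) deduce that the zeros of $\frac1n \frac{p'}{p}$, i.e. the critical points, cannot accumulate away from $[-2,2]$ except possibly near the places where $g_{\mathrm{sc}}$ has a zero or near the outliers; (iii) show $g_{\mathrm{sc}}$ has no zeros off $[-2,2]$ and that each outlier contributes only $o(n)$ critical points, so that the bulk of the $n-1$ critical points must converge to $\mu_{\mathrm{sc}}$.

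More concretely, I would argue as follows. By Theorem \ref{thm:global:sc} (whose hypothesis \eqref{eq:Pnhs} holds since $\rank(\mat P_n), \|\mat P_n\|$ are $O(1)$), $\mu_{\frac{1}{\sqrt n}\mat W_n+\mat P_n} \to \mu_{\mathrm{sc}}$ weakly on $\mathbb C$, a.s. Combined with Theorem \ref{thm:wigner:refine}, a.s. for $n$ large all but $j$ eigenvalues lie in $\Esc_{\delta'}$ and the $j$ outliers converge to the fixed points $\theta_i + \theta_i^{-1}$ (here using the extra hypothesis that the large eigenvalues of $\mat P_n$ are eventually constant in $n$, so the outlier locations stabilize). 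Fix a compact set $L \subset \mathbb C \setminus \big(\Esc_{\delta'} \cup \{\theta_i + \theta_i^{-1}\}_{i\le j}\big)$. On $L$, $z \mapsto \frac1n \frac{p'_n(z)}{p_n(z)} = \frac1n\sum_k \frac{1}{z-\lambda_k}$ is a sequence of analytic functions; splitting off the $j$ outlier terms (each of which is $O(1/n)$ uniformly on $L$) and using weak convergence plus the uniform integrability of $w \mapsto \frac{1}{z-w}$ over $\supp \mu_n \subset \Esc_{\delta'}$ for $z \in L$, this converges uniformly on $L$ to $g_{\mathrm{sc}}(z)$. Since $g_{\mathrm{sc}}$ is analytic and nonvanishing on $\mathbb C \setminus [-2,2]$ (indeed $g_{\mathrm{sc}}(z) = \frac{z - \sqrt{z^2-4}}{2} \ne 0$ there), Hurwitz's theorem gives that, a.s. for $n$ large, $\frac1n\frac{p_n'}{p_n}$ has no zeros in the interior of $L$; letting $L$ exhaust $\mathbb C \setminus (\Esc_{\delta'} \cup \{\theta_i+\theta_i^{-1}\})$ and then $\delta' \to 0$ (via a diagonal/countable-intersection argument over a sequence $\delta'_m \to 0$) shows that a.s. no critical point stays bounded away from $[-2,2] \cup \{\theta_i+\theta_i^{-1}\}$.

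To finish, I would upgrade "no escape" to "weak convergence to $\mu_{\mathrm{sc}}$." By the Gauss--Lucas theorem (Theorem \ref{thm:gauss}) the critical points lie in the convex hull of the eigenvalues, which by the above is a.s. eventually contained in a fixed bounded region, so $\mu'_n := \mu'_{\frac1{\sqrt n}\mat W_n + \mat P_n}$ is a.s. tight. Let $\nu$ be any subsequential weak limit. For any $z$ outside $[-2,2] \cup \{\theta_i+\theta_i^{-1}\}$, the function $\frac{1}{z-w}$ is bounded continuous on a neighborhood of the supports, so $\int \frac{d\mu'_n(w)}{z-w} = \frac{1}{n-1}\frac{p_n''(z)\text{-type sum}}{\cdots}$ — more precisely, writing $q_n = p_n'$, $\frac{1}{n-1}\frac{q_n'(z)}{q_n(z)} = \int \frac{d\mu'_n(w)}{z-w}$; but on $L$ we have $q_n(z) = p_n'(z) = p_n(z)\cdot n g_{\mathrm{sc}}(z)(1+o(1))$ with the $o(1)$ uniform, and differentiating (using that all these are analytic and the convergence is uniform on compacts, so derivatives converge too) yields $\frac{1}{n-1}\frac{q_n'(z)}{q_n(z)} \to g_{\mathrm{sc}}(z)$. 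Hence $\int \frac{d\nu(w)}{z-w} = g_{\mathrm{sc}}(z) = \int\frac{d\mu_{\mathrm{sc}}(w)}{z-w}$ for all such $z$, and since the Cauchy transform determines a compactly supported measure on $\mathbb C$, $\nu = \mu_{\mathrm{sc}}$. As every subsequential limit equals $\mu_{\mathrm{sc}}$ and the sequence is tight, $\mu'_n \to \mu_{\mathrm{sc}}$ a.s.

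The main obstacle I anticipate is making step (iii)--(iv) rigorous near the "boundary" set $[-2,2] \cup \{\theta_i+\theta_i^{-1}\}$: the argument above controls critical points only on compact sets staying away from this set, and one must rule out a nonvanishing fraction of critical points concentrating in shrinking neighborhoods of $[-2,2]$ or at the outliers. The outliers are easy (at most $O(1)$ critical points can be trapped near $O(1)$ points, contributing $o(1)$ mass), but the neighborhood of $[-2,2]$ requires care: here one uses that $\mu_n$ itself puts almost all mass near $[-2,2]$ (Theorem \ref{thm:global:sc}) so that, by Gauss--Lucas applied locally or by a quantitative version of the Cauchy-transform argument with $\delta'_m \to 0$, the critical-point mass in the $\delta'_m$-neighborhood of $[-2,2]$ is forced to converge to $1$. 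This is exactly the mechanism in \cite{Kcrit, O}, and I would invoke or adapt that argument rather than reprove it from scratch.
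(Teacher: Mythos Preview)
Your approach is correct but follows a genuinely different route from the paper.  The paper does not work directly with the Cauchy transform and Hurwitz's theorem.  Instead, it invokes the \emph{companion matrix} observation (Lemma \ref{lemma:companion}) that the critical points of $p_{\mat A}$, together with an extra $0$, are exactly the eigenvalues of $\mat D_n(\mat I_n-\tfrac1n\mat J_n)$, where $\mat D_n=\diag(\lambda_1,\dots,\lambda_n)$.  It then applies the Tao--Vu replacement principle (Theorem \ref{thm:replacement}) to compare $\mu_{\frac1{\sqrt n}\mat D_n(\mat I_n-\frac1n\mat J_n)}$ with $\mu_{\frac1{\sqrt n}\mat D_n}$.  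The log-determinant difference collapses, via \eqref{eq:fundamental}, to $\tfrac1n\log\big|1-\tfrac1n\sum_i\frac{\tilde\lambda_i}{\tilde\lambda_i-z}\big|=\tfrac1n\log|{-}z\,m_{\mathrm{sc}}(z)+o(1)|$, which tends to $0$ for a.e.\ $z$; the positive-distance input $\inf_i|\tilde\lambda_i-z|\ge c$ and the convergence $\tfrac1n\sum_i\frac1{\tilde\lambda_i-z}\to m_{\mathrm{sc}}(z)$ (Lemma \ref{lemma:lowbndst}) come, as in your argument, from Theorem \ref{thm:wigner:refine} and Theorem \ref{thm:global:sc}.

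The two approaches use the same analytic inputs (Theorems \ref{thm:global:sc} and \ref{thm:wigner:refine}) and ultimately the same Stieltjes-transform computation, but package the conclusion differently.  The paper's route is shorter once the replacement principle and the companion-matrix lemma are available: it bypasses tightness, subsequential limits, and the boundary discussion you flag as the ``main obstacle,'' since the logarithmic-potential framework handles weak convergence in one stroke.  Your route is more self-contained complex analysis and makes the mechanism (convergence of $\tfrac1n p_n'/p_n$ to $g_{\mathrm{sc}}$, then Hurwitz, then Cauchy-transform uniqueness) transparent; it also immediately yields the finer statement that at most one critical point sits near each outlier, via the argument principle on a small circle.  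Two places in your writeup deserve a line of care: the uniform (not merely pointwise) convergence of $\tfrac1n\sum_k(z-\lambda_k)^{-1}$ on compact $L$ away from $[-2,2]\cup\{\text{outliers}\}$ follows from pointwise convergence together with the uniform bound $|z-\lambda_k|\ge\dist(L,[-2,2]\cup\{\text{outliers}\})/2$ and Vitali's theorem; and ruling out point masses of a subsequential limit $\nu$ at the outliers follows cleanly because $g_{\mathrm{sc}}$ is bounded in a punctured neighborhood of each outlier while a point mass would force $C_\nu$ to blow up there.  With these, your argument goes through without needing to ``adapt the mechanism of \cite{Kcrit,O}.''
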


A numerical example of Theorem~\ref{thm:wigner:critical} appears in Figure~\ref{fig:wig:crit}.

\begin{remark}
Due to the outlier eigenvalues (namely, those eigenvalues in $\mathbb C \setminus \Esc_{\delta'}$) observed in Theorems \ref{thm:or} and \ref{thm:wigner:refine}, the convex hull of the eigenvalues of $\frac{1}{\sqrt{n}} \mat W_n + \mat{P}_n$ can be quite large.  In particular, it does not follow from the Gauss--Lucas theorem (Theorem \ref{thm:gauss}) that the majority of the critical points converge to the real line.  On the other hand, besides describing the limiting distribution, Theorem \ref{thm:wigner:critical} asserts that all but $o(n)$ of the critical points converge to the real line.  
\end{remark}

\begin{figure}
\includegraphics[width=\textwidth]{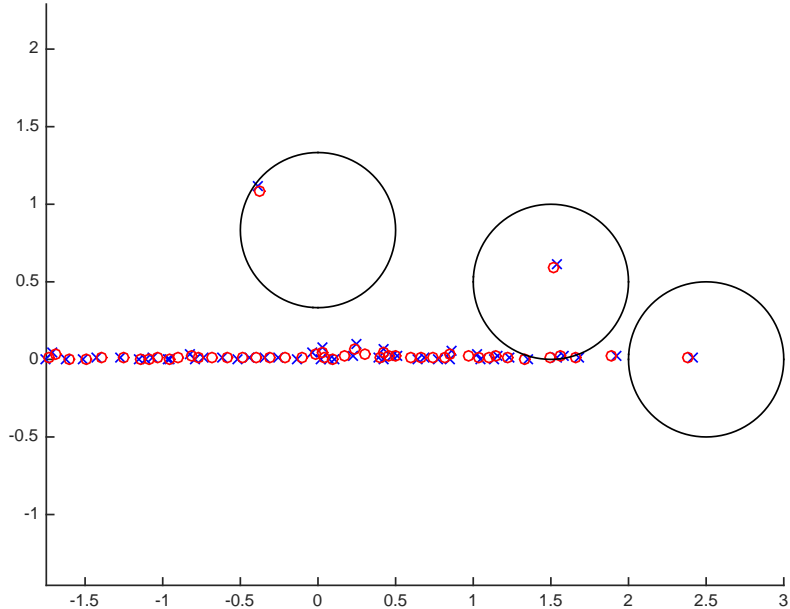}
\caption{ {\bf Critical points and eigenvalues.}
Above is a plot of the eigenvalues (marked by x's) of a 50 by 50 matrix $\frac1{\sqrt{n}}\mat W + \mat P$, where $\mat W$ is a Wigner matrix with standard real Gaussian atom variables and $\mat P = 
\diag(\frac32i,1+i,2,0,0,\dots, 0)$.  The critical points (marked by o's) of the characteristic polynomial of the matrix $\frac1{\sqrt{n}}\mat W + \mat P$ are also plotted above.  Note that the critical points match closely with the eigenvalues, demonstrating Theorem~\ref{thm:wigner:critical}.
Three circles of radius $0.5$ have been drawn centered at $(0,\frac56)$, 
$(\frac32, 1/2)$, and $(\frac52,0)$, which are the locations of the
outliers predicted by Theorem~\ref{thm:or}; notably, there appear to be outlier critical points that match closely with the outlier eigenvalues.
}
\label{fig:wig:crit}
\end{figure}

\subsection{Outline}

The rest of the paper is devoted to the proofs of our main results.  Section \ref{sec:proof:nonreal} is devoted to the proofs of Theorem~\ref{thm:nonreal}, Theorem~\ref{thm:sampcov-nonreal}, and Theorem~\ref{thm:nonreal-nonrandom}.  The results in Section \ref{sec:global} are proven in Section \ref{sec:proof:global}.  Section \ref{sec:proof:refine} contains the proof of our main result from Section \ref{sec:refine}.  Theorem \ref{thm:eigenvectors} and Theorem~\ref{thm:SC:eigenvectors} are proven in Section \ref{sec:proof:eigenvectors}.  We prove Theorem \ref{thm:wigner:critical} in Section \ref{sec:proof:critical}.  Lastly, the appendix contains a number of auxiliary results.

\section{Proof of Theorem \ref{thm:nonreal}, Theorem \ref{thm:sampcov-nonreal}, and Theorem~\ref{thm:nonreal-nonrandom}} \label{sec:proof:nonreal}

\subsection{Proof of Theorem \ref{thm:nonreal}}
We begin this section with a proof of Theorem \ref{thm:nonreal}.  The proof of Theorem~\ref{thm:nonreal-nonrandom} is in Subsection~\ref{ss:nonreal-nonrandom}, and the proof of Theorem~\ref{thm:sampcov-nonreal} is in Subsection~\ref{ss:sampcov-nonreal}.   

Recall that two non-constant polynomials $P$ and $Q$ with real coefficients have \emph{weakly interlacing} zeros if
\begin{itemize}
\item their degrees are equal or differ by one,
\item their zeros are all real, and
\item there exists an ordering such that
\begin{equation} \label{eq:def:interlace}
	\alpha_1 \leq \beta_1 \leq \alpha_2 \leq \beta_2 \leq \cdots \leq \alpha_s \leq \beta_s \leq \cdots, 
\end{equation}
where $\alpha_1, \alpha_2, \ldots$ are the zeros of one polynomial and $\beta_1, \beta_2, \ldots$ are those of the other.  
\end{itemize}
If, in the ordering \eqref{eq:def:interlace}, no equality sign occurs, then $P$ and $Q$ have \emph{strictly interlacing} zeros.  
Analogously, we say two Hermitian matrices have weakly or strictly interlacing eigenvalues if the respective interlacing property holds for the zeros of their characteristic polynomials.  

We begin with the following criterion, which completely characterizes when the zeros of two polynomials have strictly interlacing zeros.  
\begin{theorem}[Hermite--Biehler; Theorem 6.3.4 from \cite{RS}] \label{thm:hb}
Let $P$ and $Q$ be non-constant polynomials with real coefficients.  Then $P$   and $Q$ have strictly interlacing zeros if and only if the polynomials $f(z) := P(z) + \sqrt{-1} Q(z)$ has all its zeros in the upper half-plane $\mathbb{C}^+$ or in the lower half-plane $\mathbb{C}^{-}$.
\end{theorem}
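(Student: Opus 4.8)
The plan is to analyze both implications through the single rational function $R(z) := Q(z)/P(z)$, using the elementary identity $f(z) = P(z) + \sqrt{-1}\,Q(z) = P(z)\bigl(1 + \sqrt{-1}\,R(z)\bigr)$. Since $P$ and $Q$ have no common \emph{real} zero in either scenario (in the interlacing case by strictness; in the half-plane case because $f$ has no real zero), the zeros of $f$ are exactly the points $z$ with $P(z) \neq 0$ and $R(z) = \sqrt{-1}$. Thus everything reduces to locating the solutions of $R(z) = \sqrt{-1}$, and the guiding observation is that the sign of $\Im R(z)$ for $z \notin \mathbb{R}$, as well as the way the real zeros of $P$ and $Q$ are threaded together along $\mathbb{R}$, are both controlled by a partial-fraction expansion of $R$.

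For the implication ``interlacing $\Rightarrow$ zeros in one half-plane,'' I would assume without loss of generality that $\deg P \geq \deg Q$ (otherwise swap the roles of $P$ and $Q$, which amounts to replacing $f$ by $\sqrt{-1}\,\overline{f(\bar z)}$ and only flips which half-plane occurs). Writing $P(z) = a\prod_{i}(z-\alpha_i)$ with $\alpha_1 < \cdots < \alpha_n$ simple real zeros, partial fractions give $R(z) = C + \sum_{i} \frac{r_i}{z-\alpha_i}$ with $r_i = Q(\alpha_i)/P'(\alpha_i)$ and $C$ a real constant. The central lemma is that strict interlacing forces all the residues $r_i$ to share a common sign: $\operatorname{sgn} P'(\alpha_i)$ alternates with $i$ in a pattern determined purely by $n$, and $\operatorname{sgn} Q(\alpha_i)$ alternates in exactly the same way because strict interlacing pins down how many zeros of $Q$ lie below $\alpha_i$. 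Granting the lemma, $\Im R(z) = -\Im(z)\sum_i r_i/|z-\alpha_i|^2$ has sign opposite to (or, in the other orientation, equal to) that of $\Im z$ for every non-real $z$; since $\Im(\sqrt{-1}) = 1 \neq 0$, the equation $R(z) = \sqrt{-1}$ can hold only in $\mathbb{C}^-$ (respectively $\mathbb{C}^+$), and it has no real solution. Hence all zeros of $f$ lie in a single open half-plane.

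For the converse, suppose all $n := \deg f$ zeros $a_1,\dots,a_n$ of $f$ lie in $\mathbb{C}^-$ (the case $\mathbb{C}^+$ is symmetric). Consider $g(z) := f(z)/\overline{f(\bar z)} = \varepsilon \prod_{k}\frac{z-a_k}{z-\bar a_k}$ with $|\varepsilon| = 1$. On the real axis $|g(x)| = 1$, and because $P(x),Q(x) \in \mathbb{R}$ with $f$ having no real zero, one checks $P(x) = 0 \iff g(x) = -1$ and $Q(x) = 0 \iff g(x) = 1$. A short computation (writing $a_k = \sigma_k + \sqrt{-1}\,\tau_k$ with $\tau_k \neq 0$) shows that for each $k$ the argument of $\frac{x-a_k}{x-\bar a_k}$ is a strictly decreasing function of $x \in \mathbb{R}$ with total decrease $2\pi$; summing, $\arg g(x)$ is strictly decreasing with total decrease $2\pi n$ as $x$ runs over $\mathbb{R}$. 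Therefore the solutions of $g(x) = 1$ and of $g(x) = -1$ — i.e.\ the real zeros of $Q$ and of $P$ — interleave strictly along $\mathbb{R}$, and counting how often $\arg g$ meets the residue classes $0$ and $\pi$ modulo $2\pi$ over an interval of length $2\pi n$ shows that $P$ and $Q$ have only real zeros with degrees equal or differing by one. This is precisely strict interlacing.

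The main obstacle is the bookkeeping in the residue-sign lemma and in the crossing count: one must treat the three interlacing patterns ($\deg P = \deg Q$ with either polynomial's smallest zero first, and $\deg P = \deg Q + 1$) separately, carefully track the parities of $\operatorname{sgn} P'(\alpha_i)$ and $\operatorname{sgn} Q(\alpha_i)$, and verify that the choice between $f$ and $\sqrt{-1}\,\overline{f(\bar z)}$ (equivalently, between $R$ and $-R$) aligns the ``opposite sign'' versus ``same sign'' alternative with the ``$\mathbb{C}^-$'' versus ``$\mathbb{C}^+$'' alternative. The only genuinely analytic point — the monotonicity and exact $2\pi$ total variation of $\arg\frac{x-a_k}{x-\bar a_k}$ — is routine once the factor is written out explicitly.
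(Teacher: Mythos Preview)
The paper does not supply its own proof of this theorem; it is quoted verbatim as Theorem~6.3.4 of Rahman--Schmeisser \cite{RS} and used as a black box in the proof of Lemma~\ref{lemma:hbiff}. So there is no ``paper's proof'' to compare against.

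That said, your outline is a correct and essentially classical proof of the Hermite--Biehler theorem. The forward direction via the partial-fraction expansion of $R = Q/P$ and the observation that strict interlacing forces all residues $r_i = Q(\alpha_i)/P'(\alpha_i)$ to share a sign is exactly the standard mechanism; it yields that $\Im R(z)$ and $\Im z$ have a fixed relative sign off the real axis, so the solutions of $R(z) = \sqrt{-1}$ lie in a single open half-plane. For the converse, the Blaschke-type quotient $g(z) = f(z)/\overline{f(\bar z)}$ together with the strict monotonicity of $\arg g$ along $\mathbb{R}$ is again the textbook route; your crossing count correctly forces $P$ and $Q$ to have only simple real zeros with $|\deg P - \deg Q| \leq 1$, alternating along $\mathbb{R}$.

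Two small points worth tightening. First, when you write $a_k = \sigma_k + \sqrt{-1}\,\tau_k$ with ``$\tau_k \neq 0$,'' the monotonicity direction of $\arg\frac{x-a_k}{x-\bar a_k}$ actually depends on the \emph{sign} of $\tau_k$: it is strictly decreasing precisely when $\tau_k < 0$ (the $\mathbb{C}^-$ case you fixed), and strictly increasing when $\tau_k > 0$. Since you already restricted to $\mathbb{C}^-$, this is only a notational slip. Second, in the crossing count you should record explicitly that $g(x) \to \varepsilon$ as $x \to \pm\infty$, so the continuous branch of $\arg g$ runs over an interval of length exactly $2\pi n$, and then split on whether $\varepsilon \in \{1,-1\}$ (equivalently, on which of $\deg P$, $\deg Q$ equals $n$) to get the exact counts $n$ and $n-1$ for the zeros of $P$ and $Q$. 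You gesture at this in your final paragraph, and it is indeed just bookkeeping.
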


We now extend Theorem \ref{thm:hb} to the characteristic polynomial of a perturbed Hermitian matrix.  

\begin{lemma} \label{lemma:hbiff}
Let $n \geq 2$.  Suppose $\mat{A}$ is an $n \times n$ Hermitian matrix of the form
$$ \mat{A} = \begin{bmatrix} \mat{B} & X \\ X^\ast & d \end{bmatrix}, $$
where $\mat B$ is a $(n-1) \times (n-1)$ Hermitian matrix, $X \in \mathbb{C}^{n-1}$, and $d \in \mathbb{R}$.  Let $\mat P$ be the diagonal matrix $\mat P :=    \diag(0, \ldots, 0, \gamma \sqrt{-1})$ for some $\gamma \in \mathbb{R}$ with $\gamma \neq 0$.  Then all eigenvalues of $\mat{A} + \mat{P}$ are in the upper half-plane $\mathbb{C}^+$ or the lower half-plane $\mathbb{C}^{-}$ if and only if the eigenvalues of $\mat{A}$ and $\mat{B}$ strictly interlace.
\end{lemma}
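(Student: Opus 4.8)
The plan is to reduce the statement to the Hermite--Biehler criterion (Theorem~\ref{thm:hb}) by computing the characteristic polynomial of $\mat A + \mat P$ in terms of those of $\mat A$ and $\mat B$. First I would expand $p_{\mat A + \mat P}(z) = \det(\mat A + \mat P - z\mat I)$ along the last row and column: since $\mat P$ only modifies the $(n,n)$-entry (replacing $d$ by $d + \gamma\sqrt{-1}$), a cofactor expansion gives
$$
p_{\mat A + \mat P}(z) = (d + \gamma\sqrt{-1} - z)\det(\mat B - z\mat I) - X^\ast \operatorname{adj}(\mat B - z\mat I) X,
$$
whereas the same expansion applied to $\mat A$ itself yields
$$
p_{\mat A}(z) = (d - z)\det(\mat B - z\mat I) - X^\ast \operatorname{adj}(\mat B - z\mat I) X.
$$
Subtracting, I get the key identity
$$
p_{\mat A + \mat P}(z) = p_{\mat A}(z) + \gamma\sqrt{-1}\, p_{\mat B}(z).
$$
Here $p_{\mat A}$ and $p_{\mat B}$ are polynomials with real coefficients (as $\mat A$, $\mat B$ are Hermitian), of degrees $n$ and $n-1$ respectively, so their degrees differ by exactly one, and all their zeros (the eigenvalues of $\mat A$, resp.\ $\mat B$) are real.

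With this identity in hand, set $P(z) := p_{\mat A}(z)$ and $Q(z) := \gamma\, p_{\mat B}(z)$, both non-constant real polynomials. Then $f(z) := P(z) + \sqrt{-1}\,Q(z) = p_{\mat A + \mat P}(z)$, and its zeros are precisely the eigenvalues of $\mat A + \mat P$. By Theorem~\ref{thm:hb}, $f$ has all its zeros in $\mathbb{C}^+$ or all in $\mathbb{C}^-$ if and only if $P$ and $Q$ have strictly interlacing zeros. Since $\gamma \neq 0$ is a nonzero real constant, the zeros of $Q = \gamma\, p_{\mat B}$ coincide with those of $p_{\mat B}$, i.e.\ with the eigenvalues of $\mat B$; hence $P$ and $Q$ strictly interlace if and only if the eigenvalues of $\mat A$ and $\mat B$ strictly interlace. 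Combining the two equivalences gives exactly the claim.

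The only genuine subtlety — the part I would be most careful about — is justifying the adjugate/cofactor identity cleanly, in particular confirming that the "off-diagonal'' term $X^\ast \operatorname{adj}(\mat B - z\mat I) X$ is literally identical in the two expansions so that it cancels; this is where one must be sure that $\mat P$ perturbs only the last diagonal entry and nothing else, which is exactly the hypothesis $\mat P = \diag(0,\dots,0,\gamma\sqrt{-1})$. One should also note the harmless edge cases: $p_{\mat B}$ is non-constant because $n \geq 2$ (so $n - 1 \geq 1$), and if $\mat B - z\mat I$ is singular at some $z$ the adjugate identity still holds by continuity, so no separate argument is needed. Everything else is a direct invocation of Theorem~\ref{thm:hb}.
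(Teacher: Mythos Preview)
Your proposal is correct and follows essentially the same approach as the paper: both derive the identity $p_{\mat A+\mat P}(z)=p_{\mat A}(z)+\gamma\sqrt{-1}\,p_{\mat B}(z)$ and then invoke the Hermite--Biehler theorem. The only cosmetic difference is that the paper obtains this identity in one line via multilinearity of the determinant in the last row, whereas you use a cofactor/adjugate expansion; the paper's route avoids the adjugate bookkeeping you flag as the ``subtlety.''
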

\begin{proof}
We observe that the characteristic polynomial of $\mat{A} + \mat{P}$ can be written as
\begin{align*}
    \det(\mat{A} + \mat{P} - z \mat{I}) &= \det \begin{pmatrix} \mat{B} - z     \mat{I} & X \\ X^\ast & d - z + \sqrt{-1} \gamma \end{pmatrix} \\
    &= \det \begin{pmatrix} \mat{B} - z \mat{I} & X \\ X^\ast & d-z             \end{pmatrix} + \det \begin{pmatrix} \mat{B} - z \mat{I} & X \\ 0 & \sqrt{-1} \gamma          \end{pmatrix} \\
    &= \det(\mat{A}- z \mat{I}) + \sqrt{-1} \gamma \det(\mat{B} - z \mat{I})
\end{align*}
by linearity of the determinant.  Since $\mat{A}$ and $\mat{B}$ are Hermitian   matrices, it follows that their characteristic polynomials $\det(\mat{A}- z     \mat{I})$ and $\det(\mat{B} - z \mat{I})$ are non-constant polynomials with real coefficients.  Hence, the polynomial $\gamma \det(\mat{B} - z \mat{I})$ is also a non-constant polynomial with real coefficients.  Thus, the claim follows from Theorem \ref{thm:hb}.
\end{proof}

In order to prove Theorem \ref{thm:nonreal}, we will also need the following lemma, which is based on the arguments of Tao-Vu \cite{TVsimple} and Nguyen-Tao-Vu \cite{NTV}.

\begin{lemma}[Strict eigenvalue interlacing] \label{lemma:interlace}
Let $\mu > 0$, and let $\xi$ be a real random variables such that \eqref{eq:nondeg} holds.  Let $\zeta$ be an arbitrary real random variable.  Suppose $\mat W$ is an $n \times n$ Wigner matrix with atom variables $\xi$ and $\zeta$.  Let $\mat{W}'$ be the upper-left $(n-1) \times (n-1)$ minor of $\mat W$.  Then, for any $\alpha > 0$, there exists $C > 0$ (depending on $\alpha$ and $\mu$) such that, with probability at least $1 - C n^{-\alpha}$, the eigenvalues of $\mat{W}$ strictly interlace with the eigenvalues of $\mat{W}'$.  

Furthermore, if $\xi$ and $\zeta$ are absolutely continuous random variables, then the eigenvalues of $\mat{W}$ and $\mat W'$ strictly interlace with probability $1$.  
\end{lemma}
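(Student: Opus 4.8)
The plan is to separate a purely deterministic reduction from the probabilistic input. Write $\mat W$ in block form, with $\mat W'$ the upper-left $(n-1)\times(n-1)$ block, $X \in \mathbb R^{n-1}$ the vector of entries $w_{1n}, \dots, w_{n-1,n}$, and $d = w_{nn}$ the corner entry; note that $X$ is independent of $\mat W'$ and has iid entries distributed as $\xi$. By the Cauchy interlacing theorem the eigenvalues of $\mat W$ and $\mat W'$ always interlace weakly, so what must be excluded with high probability is any equality in the interlacing chain. First I would establish the deterministic equivalence: the eigenvalues of $\mat W$ and $\mat W'$ interlace strictly if and only if $\mat W$ and $\mat W'$ have no common eigenvalue, and this holds if and only if $\mat W'$ has simple spectrum and $X$ is not orthogonal to any eigenvector of $\mat W'$. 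The ``if'' direction follows from the Schur-complement identity: when $\mat W'$ has eigenvalues $\mu_1 > \dots > \mu_{n-1}$ with orthonormal eigenvectors $u_1, \dots, u_{n-1}$, one has $\det(\mat W - z\mat I) = \det(\mat W' - z\mat I)\bigl(d - z - \sum_i (u_i^{\mathrm T} X)^2 (\mu_i - z)^{-1}\bigr)$, and the rational function in parentheses is strictly decreasing between consecutive poles and on each of the two unbounded components, hence has exactly one zero in each of the $n$ resulting intervals, with none coinciding with a $\mu_i$, precisely when every residue $(u_i^{\mathrm T}X)^2$ is positive; for the ``only if'' direction I would note that if $u$ is an eigenvector of $\mat W'$ with $u^{\mathrm T} X = 0$ (which also occurs automatically when some $\mu_i$ is repeated, since then $X^{\perp}$ meets a two-dimensional eigenspace) then $u$ padded with a zero coordinate is an eigenvector of $\mat W$, producing a common eigenvalue.

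With this reduction, the failure event is contained in $\{\mat W' \text{ does not have simple spectrum}\}$ together with $\{\mat W' \text{ has simple spectrum and } u_i^{\mathrm T} X = 0 \text{ for some } i\}$; here $\mat W'$ is again an $(n-1)\times(n-1)$ Wigner matrix with the same atom variables. For the first event I would invoke the simple-spectrum estimates underlying \cite{TVsimple, NTV}, whose relevant input is exactly the anti-concentration supplied by \eqref{eq:nondeg}, to obtain a bound $C_\alpha n^{-\alpha}$ for any prescribed $\alpha$. For the second event I would condition on $\mat W'$, so that the $u_i$ become fixed real unit vectors and the task becomes bounding $\Prob(\exists i: \sum_j (u_i)_j \xi_j = 0)$; each individual linear form is controlled by a Littlewood--Offord small-ball estimate once the corresponding eigenvector is known to be delocalised, and I would then import the decoupling/resampling argument of \cite{TVsimple, NTV} to pass from these individual estimates to the union over $i$, again obtaining $C_\alpha n^{-\alpha}$.

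For the absolutely continuous case the argument is soft: then $\mat W$, hence $\mat W'$, has a density on the space of real symmetric matrices, so $\{\mat W' \text{ has a repeated eigenvalue}\}$, being the zero set of the discriminant polynomial (which is not identically zero), has probability $0$; and conditionally on $\mat W'$ having simple spectrum, $X$ is an absolutely continuous vector independent of $\mat W'$ and therefore avoids the finite union of hyperplanes $\bigcup_i u_i^{\perp}$ almost surely. By Fubini the failure event has probability $0$, so the eigenvalues interlace strictly with probability $1$.

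The step I expect to be the real obstacle is the second small-ball estimate in the discrete case. A direct union bound over the $n-1$ eigenvectors cannot succeed: for a general non-degenerate (possibly lattice-valued) $\xi$, the sharp small-ball bound for a single delocalised linear form is only of order $n^{-1/2}$, and summing over the eigenvectors loses all the gain. What rescues the argument is precisely the decoupling technique of \cite{TVsimple, NTV}, which replaces this union bound by an estimate that remains strong after the last column of $\mat W$ is re-randomised; adapting that technique to the present hypotheses, and tracking the dependence of the constants on $\alpha$ and $\mu$, is where essentially all of the work lies.
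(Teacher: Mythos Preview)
Your proposal is correct and follows essentially the same strategy as the paper: reduce strict interlacing to the event that $X$ is not orthogonal to any unit eigenvector of $\mat W'$, then import the Tao--Vu / Nguyen--Tao--Vu estimate for that event, with the absolutely continuous case handled by a soft measure-zero argument.

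The paper's execution is a little more streamlined than yours in two respects. First, it does not split into the cases ``$\mat W'$ has simple spectrum'' versus ``$\mat W'$ has a repeated eigenvalue''; instead it proves a single deterministic lemma stating that if $\mat W$ and $\mat W'$ share an eigenvalue then some unit eigenvector of $\mat W'$ is orthogonal to $X$. Since weak interlacing forces any repeated eigenvalue of $\mat W'$ to also be an eigenvalue of $\mat W$, the condition ``$\mat W$ and $\mat W'$ share an eigenvalue'' already subsumes your non-simple-spectrum case, so one lemma covers everything. Second, the paper treats the probabilistic bound as a pure black box: it simply cites \cite[Proposition~2.2]{TVsimple}, which directly bounds $\Prob(X \text{ is orthogonal to some unit eigenvector of } \mat W')$ by $C n^{-\alpha}$. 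You correctly identify that internally this requires the decoupling/resampling machinery rather than a naive union bound, but for the purposes of this lemma the paper does not unpack any of it. Your more explicit decomposition would work and makes the dependence on \cite{TVsimple, NTV} more transparent; the paper's version is shorter.
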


We present a proof of Lemma \ref{lemma:interlace} in Appendix \ref{app:interlace} based on the arguments from \cite{NTV,TVsimple}.  We now complete the proof of Theorem \ref{thm:nonreal}.

\begin{proof}[Proof of Theorem \ref{thm:nonreal}]
Let $\mat{W}'$ be the upper-left $(n-1) \times (n-1)$ minor of $\mat{W}$.  We decompose $\mat{W} + \sqrt{n} \mat P$ as 
$$ \mat{W} + \sqrt{n} \mat P = \begin{bmatrix} \mat W' & X \\ X^\mathrm{T} & w_{nn} + \sqrt{-1} \gamma \sqrt{n} \end{bmatrix}, $$
where $X$ is the $n$-th column of $\mat W$ with the $n$-th entry removed and $w_{nn}$ is the $(n,n)$-entry of $\mat W$.

Let $\alpha > 0$, and let $\JOmega$ be the event where the eigenvalues of $\mat W$ and $\mat W'$ strictly interlace.  Lemma~\ref{lemma:interlace} implies that there exists $C > 0$ (depending on $\mu$ and $\alpha$) such that 
$$ \Prob( \JOmega ) \geq 1 - C n^{-\alpha}. $$
When $\xi$ and $\zeta$ are absolutely continuous random variables, the second part of Lemma \ref{lemma:interlace} implies that $\JOmega$ holds with probability $1$.  Moreover, on the event $\JOmega$, Lemma \ref{lemma:hbiff} implies that all the eigenvalues of $\mat W + \sqrt{n} \mat P$ are in the upper half-plane or the lower half-plane.  Hence, the same is true for the eigenvalues of $\frac{1}{\sqrt{n}} \mat W + \mat P$ on the event $\JOmega$.  

It remains to show that the choice of $\mathbb{C}^+$ or $\mathbb{C}^-$ is determined by the sign of $\gamma$.  We observe that
$$ \Im \left( \tr \left( \frac{1}{\sqrt{n}} \mat W + \mat P \right) \right) = \gamma $$
since $\mat{W}$ is a real-symmetric matrix.  As the trace of a matrix is the sum of its eigenvalues, it follows that, on the event $\JOmega$, 
\begin{itemize}
\item if $\gamma > 0$, then all the eigenvalues of $\frac{1}{\sqrt{n}} \mat W + \mat P$ are in $\mathbb{C}^{+}$,
\item if $\gamma < 0$, then all the eigenvalues of $\frac{1}{\sqrt{n}} \mat W + \mat P$ are in $\mathbb{C}^{-}$.
\end{itemize}
The proof of the theorem is complete. 
\end{proof}

\subsection{Proof of Theorem~\ref{thm:nonreal-nonrandom}} \label{ss:nonreal-nonrandom}

We will prove Theorem~\ref{thm:nonreal-nonrandom} from the slightly stronger result below. 

\begin{theorem}\label{thm:nonreal-nonrandom-diag}
Let $\mat M$ be an $n \times n$ Hermitian matrix with eigenvalues $\lambda_1,\lambda_2,
\dots,\lambda_n$ (including repetitions).  Assume that the $k$ eigenvalues $\lambda_1,
\dots,\lambda_k$ are distinct, with corresponding unit eigenvectors $w_1, \dots, w_k$.  
Let $(z_1,\dots, z_k)$  be a list of non-zero complex scalers, let  $(a_1, \dots, a_k)$ be a list of non-zero real scalers, and define the vectors $v:=\sum_{i=1}^k z_i w_i$ and $u:=\sum_{i=1}^k a_i z_i w_i$.  Then the matrix $\mat M + \sqrt{-1} u {v}^*$ shares the $n-k$ eigenvalues $\lambda_{k+1},\dots ,\lambda_n$ with $\mat M$ and also the following holds:
\begin{itemize}
\item if $a_i > 0$ for $i=1,\dots, k$, then $\mat M + \sqrt{-1} u {v}^* $ has exactly $k$ eigenvalues with positive imaginary part, and
\item if $a_i < 0$ for $i=1,\dots, k$, then $\mat M + \sqrt{-1} u {v}^* $  has exactly $k$ eigenvalues with negative imaginary part. 
\end{itemize}
\end{theorem}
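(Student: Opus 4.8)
The plan is to isolate the action of $\mat M+\sqrt{-1}\,u v^*$ on the span of $w_1,\dots,w_k$, compute the characteristic polynomial of that block explicitly, and then invoke the Hermite--Biehler criterion (Theorem~\ref{thm:hb})---exactly as in Lemma~\ref{lemma:hbiff}, but now for a rank-$k$ perturbation supported on an arbitrary invariant subspace.

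First I would set $V:=\operatorname{span}(w_1,\dots,w_k)$. Since $\mat M$ is Hermitian and $\lambda_1,\dots,\lambda_k$ are distinct, the vectors $w_1,\dots,w_k$ are orthonormal, and I extend them to an orthonormal eigenbasis $w_1,\dots,w_n$ of $\mat M$ with $\mat M w_j=\lambda_j w_j$; note $u,v\in V$ and $u,v\neq 0$ because the $z_i$ and $a_i$ are nonzero. Both $V$ and $V^\perp=\operatorname{span}(w_{k+1},\dots,w_n)$ are invariant under $\mat M+\sqrt{-1}\,uv^*$: if $x\in V$ then $\mat M x\in V$ and $uv^*x\in\operatorname{span}(u)\subseteq V$, while if $x\in V^\perp$ then $v^*x=0$, so $(\mat M+\sqrt{-1}\,uv^*)x=\mat M x\in V^\perp$. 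Hence the perturbed matrix is block diagonal along $V\oplus V^\perp$, the $V^\perp$-block is just $\mat M|_{V^\perp}$ (accounting for the shared eigenvalues $\lambda_{k+1},\dots,\lambda_n$, with eigenvectors $w_{k+1},\dots,w_n$), and it remains to analyze the $V$-block.

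In the orthonormal basis $w_1,\dots,w_k$ the $V$-block equals $\mat D+\sqrt{-1}\,\mat A z z^*$, where $\mat D:=\diag(\lambda_1,\dots,\lambda_k)$, $\mat A:=\diag(a_1,\dots,a_k)$ and $z:=\transp{(z_1,\dots,z_k)}$; this uses $v^*w_j=\overline{z_j}$ and $u=\sum_i a_i z_i w_i$. Applying the matrix determinant lemma to $\mat D-\lambda\mat I+(\sqrt{-1}\,\mat A z)(z^*)$ gives the polynomial identity
\begin{equation*}
  \det\bigl(\mat D+\sqrt{-1}\,\mat A z z^*-\lambda\mat I\bigr)=P(\lambda)+\sqrt{-1}\,Q(\lambda),
\end{equation*}
where $P(\lambda):=\prod_{l=1}^{k}(\lambda_l-\lambda)$ and $Q(\lambda):=\sum_{l=1}^{k}a_l\abs{z_l}^2\prod_{m\neq l}(\lambda_m-\lambda)$ are polynomials with real coefficients, of degrees $k$ and $k-1$ respectively (the leading coefficient of $Q$ is $(-1)^{k-1}\sum_l a_l\abs{z_l}^2$, which is nonzero since the $a_l$ are nonzero and of one sign).

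Finally I would verify that $P$ and $Q$ strictly interlace. After permuting the indices $\{1,\dots,k\}$ (which permutes the $a_l,z_l$ and preserves the one-sign hypothesis), assume $\lambda_1<\dots<\lambda_k$. Then $Q(\lambda_l)=a_l\abs{z_l}^2\prod_{m\neq l}(\lambda_m-\lambda_l)$ has sign $\operatorname{sign}(a_l)\,(-1)^{l-1}$, which alternates with $l$ because all $a_l$ share a sign; so by the intermediate value theorem $Q$ has a zero in each interval $(\lambda_l,\lambda_{l+1})$, and since $\deg Q=k-1$ these are all of its zeros and they strictly interlace $\lambda_1,\dots,\lambda_k$. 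Theorem~\ref{thm:hb} then forces all zeros of $P+\sqrt{-1}\,Q$---i.e.\ all eigenvalues of the $V$-block---to lie in $\mathbb{C}^+$ or all in $\mathbb{C}^-$, and $\Im\tr(\mat D+\sqrt{-1}\,\mat A z z^*)=\sum_l a_l\abs{z_l}^2$ selects $\mathbb{C}^+$ when every $a_l>0$ and $\mathbb{C}^-$ when every $a_l<0$. Together with the $n-k$ real eigenvalues $\lambda_{k+1},\dots,\lambda_n$ of the $V^\perp$-block this gives the claimed conclusion; the case $k=1$ is immediate since the $V$-block is then the scalar $\lambda_1+\sqrt{-1}\,a_1\abs{z_1}^2$, so Theorem~\ref{thm:hb} is needed only for $k\geq 2$. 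The one genuinely delicate point is the sign bookkeeping in the interlacing step; the rest is routine linear algebra.
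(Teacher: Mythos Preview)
Your proof is correct, but it takes a genuinely different route from the paper's.  Both arguments begin the same way: decompose along $V=\operatorname{span}(w_1,\dots,w_k)$ and $V^\perp$, observe that the $V^\perp$-block contributes the shared eigenvalues $\lambda_{k+1},\dots,\lambda_n$, and reduce to the $k\times k$ block $\mat D+\sqrt{-1}\,\mat A z z^*$.  From there you compute the characteristic polynomial explicitly via the matrix determinant lemma as $P(\lambda)+\sqrt{-1}\,Q(\lambda)$, verify that the zeros of $P$ and $Q$ strictly interlace (using the one-sign hypothesis on the $a_l$ to force $Q$ to alternate in sign at the $\lambda_l$), and then invoke Hermite--Biehler (Theorem~\ref{thm:hb}) together with a trace computation to pin down the half-plane.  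The paper instead proves a standalone lemma (Lemma~\ref{lem:nonreal-nonrandom}) by a direct eigenvector argument: for each eigenvector $w$ of the $k\times k$ block with eigenvalue $\lambda$, it shows $u^*w\neq 0$ and evaluates a suitable quadratic form $w^*\mat E^{[-1]}\lambda w$ to read off that $\Im\lambda$ has the sign of $\gamma$; no appeal to Hermite--Biehler is needed.

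Your approach has the virtue of staying close in spirit to Lemma~\ref{lemma:hbiff} and of making the interlacing structure explicit, which is conceptually pleasant; the explicit form $P+\sqrt{-1}\,Q$ also makes transparent why the one-sign condition on the $a_l$ is exactly what is needed.  The paper's approach is slightly more self-contained (it does not rely on Theorem~\ref{thm:hb}) and handles each eigenvalue individually rather than all at once, which arguably makes the role of the weights $a_l$ more direct.  Either way the argument is short; your sign bookkeeping in the interlacing step is correct as written.
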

Note that Theorem~\ref{thm:nonreal-nonrandom} follows from Theorem~\ref{thm:nonreal-nonrandom-diag} by taking $a_i= \gamma$ for $i=1,\dots,k$.

We will prove Theorem~\ref{thm:nonreal-nonrandom-diag} by way of the following lemma.

\begin{lemma}\label{lem:nonreal-nonrandom}
Let $\mat D$ be a real diagonal $n \times n$ matrix with the first $k$ diagonal entries distinct real numbers and with no constraints on the remaining diagonal entries, let $\mat E$ be a real diagonal $n \times n$ matrix with the first $k$ diagonal entries strictly positive and all other diagonal entries equal to zero, and let $u$ be any column vector with the first $k$ entries non-zero complex numbers and no constraints on the remaining entries.  Then for any real number $\gamma$ the following holds:
\begin{itemize}
\item if $\gamma >0$, then $\mat D + \gamma\mat E \sqrt{-1} u {u}^* $ has exactly $k$ eigenvalues with positive imaginary part, 
\item if $\gamma<0$, then $\mat D + \gamma \mat E \sqrt{-1} u{u}^* $  has exactly $k$ eigenvalues with negative imaginary part, and
\end{itemize}
regardless of the sign of $\gamma$, the matrix $\mat D + \gamma \mat E \sqrt{-1} u{u}^* $  has 
$n-k$ eigenvalues matching the last $n-k$ diagonal entries in $\mat D$.
\end{lemma}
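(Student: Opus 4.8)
The plan is to compute the characteristic polynomial of $\mat D + \gamma\sqrt{-1}\,\mat E u u^\ast$ by the matrix determinant lemma and then read off the location of the roots using the Hermite--Biehler criterion (Theorem~\ref{thm:hb}). Write $d_1,\dots,d_n$ for the diagonal entries of $\mat D$ and $e_1,\dots,e_n$ for those of $\mat E$, so that $e_1,\dots,e_k>0$ and $e_{k+1}=\dots=e_n=0$; conjugating by a permutation of the first $k$ coordinates (which changes none of the hypotheses) we may also assume $d_1<d_2<\dots<d_k$. Since $\mat E u u^\ast=(\mat E u)u^\ast$ has rank one, the matrix determinant lemma yields, first for $z\notin\{d_1,\dots,d_n\}$ and hence as an identity of polynomials,
\[
\det\bigl(\mat D+\gamma\sqrt{-1}\,\mat E u u^\ast-z\mat I\bigr)
=\Bigl(\prod_{i=k+1}^{n}(d_i-z)\Bigr)\bigl(P(z)+\gamma\sqrt{-1}\,Q(z)\bigr),
\]
where $P(z):=\prod_{i=1}^{k}(d_i-z)$ has degree $k$ and $Q(z):=\sum_{i=1}^{k}c_i\prod_{1\le l\le k,\,l\ne i}(d_l-z)$ has degree $k-1$, both with real coefficients, and $c_i:=e_i|u_i|^2>0$ (here I use both $e_i>0$ and $u_i\ne 0$ for $i\le k$, together with $e_i=0$ for $i>k$ to kill the other terms). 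Reading off the roots of the first factor already proves the last assertion of the lemma, so it remains to place the $k$ roots of $f:=P+\gamma\sqrt{-1}\,Q$.

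The case $k=1$ I would just do by hand: $f$ is linear with its single root at $d_1+\gamma\sqrt{-1}\,c_1$. For $k\ge 2$, the key step is to verify that $P$ and $Q$ have \emph{strictly} interlacing zeros. Here $P$ vanishes exactly at $d_1<\dots<d_k$, while $Q(d_m)=c_m\prod_{l\ne m}(d_l-d_m)$ has sign $(-1)^{m-1}$ (since $c_m>0$ and exactly $m-1$ of the factors are negative); consecutive values $Q(d_m)$, $Q(d_{m+1})$ thus have opposite signs, so the intermediate value theorem produces a zero of $Q$ strictly inside each of the $k-1$ intervals $(d_m,d_{m+1})$, and these account for all zeros of the degree-$(k-1)$ polynomial $Q$. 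Applying Theorem~\ref{thm:hb} to $P$ and $\gamma Q$ (which has the same zeros as $Q$ because $\gamma\ne 0$) forces all $k$ zeros of $f$ to lie in $\mathbb{C}^+$, or all in $\mathbb{C}^-$. To decide which, I would take traces: $\Im\Tr\bigl(\mat D+\gamma\sqrt{-1}\,\mat E u u^\ast\bigr)=\gamma\,u^\ast\mat E u=\gamma\sum_{i=1}^{k}c_i$, and since the eigenvalues $d_{k+1},\dots,d_n$ are real this quantity equals the sum of the imaginary parts of the $k$ roots of $f$; it is positive for $\gamma>0$ and negative for $\gamma<0$. As all these roots lie on the same side of the real axis and $\deg f=k$, this gives exactly $k$ eigenvalues with positive imaginary part when $\gamma>0$ and exactly $k$ with negative imaginary part when $\gamma<0$, which is the conclusion.

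I expect the only genuinely delicate point to be the passage from weak to \emph{strict} interlacing of $P$ and $Q$ --- this is precisely where the hypotheses that $d_1,\dots,d_k$ are distinct and that $u_1,\dots,u_k$ (hence $c_1,\dots,c_k$) are nonzero are consumed, along with the positivity of $e_1,\dots,e_k$ needed so that the values $Q(d_m)$ genuinely alternate in sign. Everything else --- the matrix determinant lemma bookkeeping, factoring out $\prod_{i=k+1}^n(d_i-z)$, and the trace computation --- is routine.
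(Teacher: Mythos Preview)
Your argument is correct, but it differs substantially from the paper's proof. The paper never invokes the Hermite--Biehler theorem here; instead it works directly with eigenvectors. Using the block structure of $\mat D+\gamma\sqrt{-1}\,\mat E u u^\ast$ (the last $n-k$ rows of the first $k$ columns vanish), the paper observes that any eigenvector $w$ of the upper-left $k\times k$ block $\mat M'$, padded with zeros, is an eigenvector of the full matrix with the same eigenvalue $\lambda$. After checking that $u^\ast w\neq 0$ (using distinctness of $d_1,\dots,d_k$ and nonvanishing of $u_1,\dots,u_k$), the paper computes a weighted Rayleigh quotient: with $\mat E^{[-1]}$ the partial inverse of $\mat E$, one has $w^\ast\mat E^{[-1]}\lambda w = w^\ast\mat E^{[-1]}\mat D w + \gamma\sqrt{-1}\,|u^\ast w|^2$, so dividing by the positive real number $w^\ast\mat E^{[-1]} w$ shows that $\Im\lambda$ has the same sign as $\gamma$. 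The block structure then gives the remaining $n-k$ eigenvalues.

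Your route via the matrix determinant lemma and Hermite--Biehler is a genuinely different proof. It has the virtue of tying this lemma into the same machinery (Theorem~\ref{thm:hb}) that the paper uses for the Wigner perturbation in Lemma~\ref{lemma:hbiff}, and the strict-interlacing verification via the sign alternation of $Q(d_m)$ is completely explicit. The paper's approach, by contrast, is self-contained and avoids the special handling of $k=1$ and the separate check that $\deg Q=k-1$; it also yields slightly more, namely an exact formula $\Im\lambda=\gamma|u^\ast w|^2/(w^\ast\mat E^{[-1]}w)$ for each non-real eigenvalue. Both arguments consume the hypotheses at the same point: distinctness of the $d_i$ and nonvanishing of the $u_i$ and $e_i$ for $i\le k$.
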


\newcommand{\Span}{\operatorname{Span}}
\newcommand{\partinv}[1]{#1^{[-1]}}
\newcommand{\partI}{\mat I^{[k]}}

\begin{proof}
Let $\partinv{\mat E}$ denote the partial inverse of $\mat E$ where non-zero entries are inverted and zero entries remain the same.  Thus $\partinv{\mat E} \mat E= \partI$, the diagonal matrix with the first $k$ diagonal entries equal to 1 and all other entries equal to 0.

Let $\mat M'$ be the upper-left $k$ by $k$ minor for $\mat D + \gamma\mat E \sqrt{-1} u {u}^*$ and let $w$ be an eigenvector for $\mat M'$ with eigenvalue $\lambda$.  We will also view $w$ as a length $n$ vector by padding with zeros (i.e., defining a new, $n$-dimensional vector also called $w$ by keeping the first $k$ entries and setting entries $k+1$ to $n$ equal to 0); it will be clear from context whether we are viewing $w$ in dimension $k$ or dimension $n$. 

Looking only at the first $k$ columns, the last $n-k$ rows of $\mat D + \gamma\mat E \sqrt{-1} u {u}^* $ are all equal to zero.  For $w$ an eigenvector of $\mat M'$ with eigenvalue $\lambda$, we thus see that padding with zeros so $w$ is an $n$-dimensional vector implies that $w$ is also an eigenvector with the eigenvalue $\lambda$ for the matrix
$\mat D + \gamma\mat E \sqrt{-1} u {u}^* $.  We will show that each of the $k$ eigenvalues for $\mat M'$ has non-zero imaginary part with the same sign as $\gamma$.


Because $\mat M'w=\lambda w$, we also have $(\mat D + \gamma\mat E \sqrt{-1} u {u}^*)w = \lambda w$. If $u^* w = 0$, then $w$ is also an eigenvector for $\mat D$, and because the only non-zero entries in $w$ are in the first $k$ coordinates, we must have that $w$ is a standard basis vector with a 1 in coordinate $i$ and zeros everywhere else, where $1 \le i \le k$.  But since each of the first $k$ coordinates of $u$ is non-zero by assumption, we have $u^*w \ne 0$, a contradiction.  Thus, we conclude that $u^*w \ne 0$.

Consider the number $w^*\partinv{\mat  E} \lambda w$ and note that 
\begin{align*}
w^*\partinv{\mat E} \lambda w &= w^*  \partinv{\mat E}(\mat D + \gamma\mat E \sqrt{-1} u {u}^*)w =
w^*  \partinv{\mat E} \mat D w + \gamma \sqrt{-1} w^*\partI u u^* w \\
&=w^*  \partinv{\mat E} \mat D w + \gamma \sqrt{-1} w^*u u^* w \\
&=w^*  \partinv{\mat E} \mat D w + \gamma \sqrt{-1} \abs{u^* w}^2.
\end{align*}
The last line above is a real number plus a non-zero purely imaginary number, and dividing by the positive real number $w^* \partinv{\mat E} w$ shows that the imaginary part of $\lambda$ is $\gamma \sqrt{-1} \abs{u^*w}^2/(w^* \partinv{\mat E} w)$, which has the same sign as $\gamma$.  Thus we have shown that $\mat D + \gamma\mat E \sqrt{-1} u {u}^*$ has $k$ eigenvalues with non-zero imaginary part, each of which has the same sign as $\gamma$.

To show that the remaining $n-k$ eigenvalues match the last $n-k$ diagonal entries in $\mat D$ (which are all real), note that the matrix $\mat D + \gamma\mat E \sqrt{-1} u {u}^* $ has block structure 
$$\begin{pmatrix}
\mat M' & \bullet \\
\mat 0 & \mat D_2
\end{pmatrix},$$
where $\mat 0$ is the $n-k$ by $k$ zero matrix, $\bullet$ is unspecified entries, and $\mat D_2$ is an $n-k$ by $n-k$ diagonal matrix with diagonal entries matching the last $n-k$ entries in $\mat D$.  Because of the block diagonal structure with all lower-triangular entries outside of the blocks equal to zero, we have that 
$p_{\mat D + \gamma\mat E \sqrt{-1} u {u}^*}(z) = p_{\mat M'}(z) p_{\mat D_2}(z)$, 
where $p_{\mat A}(z)$ is the characteristic polynomial for a square matrix $\mat A$; thus, each diagonal entry in $\mat D_2$ is an eigenvalue for $\mat D + \gamma\mat E \sqrt{-1} u {u}^*,$ completing the proof.
\end{proof}

\begin{proof}[Proof of Theorem~\ref{thm:nonreal-nonrandom-diag}]  
Given that $\mat M$ is Hermitian with $k$ distinct eigenvalues, we can find a unitary matrix $\mat U$ such that $\mat U^*\mat M\mat U =\mat D$, a diagonal matrix with the first $k$ diagonal entries distinct and so that the first $k$ columns of $\mat U$ are the eigenvectors $w_1,\dots,w_k$ in order.  Let $\gamma = 1$ if $a_i >0$ for $i=1,\dots,k$, and let $\gamma = -1$ if $a_i < 0$ for $i=1,\dots,k$.  Let $\mat E$ be the diagonal matrix with the first $k$ diagonal entries $\gamma a_1,\dots, \gamma a_k$ (which are positive real numbers), and let the remaining diagonal entries of $\mat E$ be equal to zero.
Finally, let $z =\transp{(z_1,\dots,z_k,0,\dots, 0)}$, and note that $v=\mat U z$ and $u=\mat U\gamma \mat Ez$ from their definitions.

The eigenvalues of $\mat M + \sqrt{-1}uv^*$ are the same as the eigenvalues of
$$\mat U^*(\mat M + \sqrt{-1}uv^*) \mat U =
\mat D + \sqrt{-1} \mat U^* (\mat U\gamma \mat E z )(\mat Uz)^*\mat U = \mat D + \gamma \sqrt{-1} \mat E z z^*$$

Applying Lemma~\ref{lem:nonreal-nonrandom} to $\mat D + \gamma\sqrt{-1} \mat E z z^*$ completes the proof.
\end{proof}

\subsection{Proof of Theorem~\ref{thm:sampcov-nonreal}}
\label{ss:sampcov-nonreal}

We begin with a corollary to Theorem~\ref{thm:nonreal-nonrandom-diag} adapting results on additive perturbations to multiplicative perturbations.

\begin{corollary}\label{cor:multshift}
Let $\mat M$ be an $n \times n$ Hermitian matrix with $k$ distinct, positive eigenvalues $\lambda_1,\dots, \lambda_k$ with corresponding unit eigenvectors $w_1,\dots,w_k$.  
Let $v$ be any linear combination $\sum_{i=1}^k z_i w_i$ where the $z_i$ are complex scalers and $z_i \ne 0$ for $i=1,\dots, k$.
Then 
\begin{itemize}
\item if $\gamma>0$, $\mat M(\mat I + \gamma \sqrt{-1} v v^*) $ has exactly $k$ eigenvalues with positive imaginary part, and 
\item if $\gamma <0$, $\mat M(\mat I + \gamma \sqrt{-1}v v^*) $  has at exactly $k$ eigenvalues with negative imaginary part.
\end{itemize}
Furthermore, regardless of the value of $\gamma \ne 0$, the matrix $\mat M(\mat I + \gamma \sqrt{-1}v v^*) $ shares $n-k$ eigenvalues with $\mat M$.
\end{corollary}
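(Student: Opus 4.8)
The plan is to realize the multiplicative perturbation as an additive one of the type already handled by Theorem~\ref{thm:nonreal-nonrandom-diag}. Expanding the product,
\[
\mat M(\mat I + \gamma\sqrt{-1}vv^*) = \mat M + \sqrt{-1}\,(\gamma\mat M v)\,v^*,
\]
so, writing $u := \gamma\mat M v$, it suffices to analyze the eigenvalues of $\mat M + \sqrt{-1}uv^*$.

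Next I would express $u$ in the eigenbasis of $\mat M$. Since $v = \sum_{i=1}^k z_i w_i$ and $\mat M w_i = \lambda_i w_i$,
\[
u = \gamma\mat M v = \sum_{i=1}^k (\gamma\lambda_i)\, z_i w_i = \sum_{i=1}^k a_i z_i w_i, \qquad a_i := \gamma\lambda_i .
\]
This is exactly the structure of the vectors $v$ and $u$ in Theorem~\ref{thm:nonreal-nonrandom-diag}, and checking the hypotheses is the only place where the assumptions of the corollary are used: the $z_i$ are nonzero by hypothesis, and because each $\lambda_i$ is a strictly positive real number and $\gamma\ne 0$ is real, each $a_i = \gamma\lambda_i$ is a nonzero real scalar; moreover all the $a_i$ are positive when $\gamma>0$ and all negative when $\gamma<0$. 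In particular $u$ lies in $\operatorname{span}(w_1,\dots,w_k)$ with all $k$ of its coordinates nonzero.

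Then I would simply invoke Theorem~\ref{thm:nonreal-nonrandom-diag} with these $(z_1,\dots,z_k)$ and $(a_1,\dots,a_k)$. It gives that $\mat M + \sqrt{-1}uv^* = \mat M(\mat I + \gamma\sqrt{-1}vv^*)$ retains the $n-k$ eigenvalues $\lambda_{k+1},\dots,\lambda_n$ of $\mat M$ (this holds for every $\gamma\ne 0$, which is the ``furthermore'' clause), has exactly $k$ eigenvalues with positive imaginary part when $\gamma>0$ (the case $a_i>0$), and has exactly $k$ eigenvalues with negative imaginary part when $\gamma<0$ (the case $a_i<0$). This is precisely the conclusion of the corollary.

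There is no genuine analytic obstacle; the entire content is the one-line algebraic identity turning the multiplicative perturbation into an additive one, together with the observation that the positivity of $\lambda_1,\dots,\lambda_k$ is exactly what forces $a_i=\gamma\lambda_i$ to share a common sign, which is the hypothesis Theorem~\ref{thm:nonreal-nonrandom-diag} requires. The only thing to verify with any care is the bookkeeping that $u=\gamma\mat M v$ indeed has all $k$ relevant coordinates nonzero (using $z_i\ne 0$ and $\lambda_i\ne 0$) and that the eigenvectors $w_i$ are the unit eigenvectors of the distinct eigenvalues, so that the data feeds correctly into Theorem~\ref{thm:nonreal-nonrandom-diag}.
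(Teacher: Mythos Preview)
Your proof is correct and takes essentially the same approach as the paper: both rewrite $\mat M(\mat I + \gamma\sqrt{-1}vv^*) = \mat M + \sqrt{-1}(\gamma \mat M v)v^*$ and then invoke Theorem~\ref{thm:nonreal-nonrandom-diag} with $a_i = \gamma\lambda_i$. The paper first conjugates by a unitary $\mat U$ to reduce to a diagonal matrix before applying the theorem, whereas you apply Theorem~\ref{thm:nonreal-nonrandom-diag} directly to $\mat M$; since that theorem is already stated for arbitrary Hermitian $\mat M$, your route is slightly more streamlined but substantively identical.
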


\begin{proof}
Since $\mat M$ is Hermitian, there exists an unitary matrix $\mat U$ such that $\mat U^* \mat M \mat U = \mat D$, where $\mat D = \diag(\lambda_1,\dots,\lambda_n)$, where $\lambda_1,\dots,\lambda_k$ are distinct, non-zero eigenvalues, and where the columns of $\mat U$ are $w_1,\dots, w_n$.  Note that $v=\mat U z$ where 
$z=
\transp{(z_1,\dots,z_k,0,\dots, 0)}$.

Conjugating by $\mat U^*$, we see that the eigenvalues of $\mat M(\mat I + \gamma \sqrt{-1} v v^*) $ are the same as the eigenvalues of $\mat D + \gamma\mat D \sqrt{-1} zz^* = \mat D + \sqrt{-1}(\gamma\mat D z)z^*$.  We can now apply Theorem~\ref{thm:nonreal-nonrandom-diag}, noting that the eigenvectors corresponding to the $k$ distinct, positive eigenvalues of $\mat D$ are standard basis vectors and setting $v=z$ and $u=\gamma \mat D z$. \end{proof}

To prove Theorem~\ref{thm:sampcov-nonreal}, we will make use of the following lemma, which characterizes some almost sure properties of the eigenvalues and eigenvectors of $\mat S_n$.

\begin{lemma} \label{lemma:distinct}
Let $m$ and $n$ be positive integers, and set $r:=\min\{m,n\}$.  If $\mat S_n$
is a sample covariance matrix with atom variable $\xi$ and parameters $(m,n)$, where $\xi$ is an absolutely continuous random variable, then
\begin{itemize}
\item with probability 1, the matrix $\mat S_n$ has $r$ distinct, strictly positive eigenvalues $\lambda_1, \dots, \lambda_r$, and
\item with probability 1, any eigenvector $v=(v_1,\dots, v_n)$ for $\mat S_n$ corresponding to one of the non-zero eigenvalues $\lambda_1,\dots, \lambda_r$ has all non-zero coordinates, i.e. $v_i \ne 0$ for all $1\le i\le n$. 
\end{itemize}
\end{lemma}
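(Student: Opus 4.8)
The plan is to exploit that a sample covariance matrix $\mat S_n = \mat X^{\mathrm T}\mat X$ is, up to a change of basis, governed by the entries of the $m\times n$ matrix $\mat X$, which form an absolutely continuous random vector in $\mathbb{R}^{mn}$; and the two conclusions are each the statement that a certain bad set in $\mathbb{R}^{mn}$ has Lebesgue measure zero, hence probability zero. For the first bullet, recall $\mat S_n$ has rank exactly $r = \min\{m,n\}$ with probability one (this is already standard, but it also falls out of the argument below since a generic $m\times n$ matrix has full rank, the complement being the zero set of a nontrivial polynomial in the entries); the $r$ nonzero eigenvalues of $\mat S_n$ are the squares of the nonzero singular values of $\mat X$, equivalently the nonzero eigenvalues of the $r\times r$ matrix $\mat X\mat X^{\mathrm T}$ or $\mat X^{\mathrm T}\mat X$ (whichever is $r\times r$). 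The eigenvalues of $\mat S_n$ fail to be distinct precisely when the discriminant of its characteristic polynomial vanishes; the discriminant is a polynomial in the entries of $\mat S_n$, hence a polynomial in the entries of $\mat X$. So I must show this polynomial is not identically zero, which follows by exhibiting a single matrix $\mat X_0$ (e.g.\ a rectangular diagonal matrix with distinct positive diagonal entries) for which $\mat S_n$ has $r$ distinct positive eigenvalues; a nonzero polynomial has Lebesgue-null zero set, and since $\xi$ is absolutely continuous the entries of $\mat X$ have an absolutely continuous joint law, so the zero set has probability zero.

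For the second bullet I would argue on the event (of probability one) that the $r$ nonzero eigenvalues are distinct, so each corresponding eigenspace is one-dimensional and the eigenvector $v$ is determined up to scalar. Fix a coordinate index $i$ with $1\le i\le n$; I want $\Prob(v_i = 0\text{ for some eigenvector }v\text{ of a nonzero eigenvalue})=0$, and then take a union over the finitely many $i$. The condition ``$\mat S_n$ has an eigenvector with eigenvalue $\lambda\neq 0$ and $i$-th coordinate zero'' can be rephrased as: the matrix $\mat S_n - \lambda \mat I$ has a kernel vector supported off coordinate $i$, i.e.\ the submatrix of $\mat S_n - \lambda\mat I$ obtained by deleting column $i$ has a nontrivial kernel vector that, together with $\lambda$ being a root of $p_{\mat S_n}$, forces a polynomial identity among the entries of $\mat X$. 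Concretely, $v$ is (a scalar multiple of) a vector whose entries are the signed $(n-1)\times(n-1)$ minors of $\mat S_n-\lambda\mat I$ along a suitable row; requiring the $i$-th such minor to vanish while $\lambda$ is an eigenvalue is the vanishing of the resultant (in $\lambda$) of $p_{\mat S_n}(\lambda)$ and that minor, a polynomial $R_i$ in the entries of $\mat X$. It remains to check $R_i\not\equiv 0$, again by producing one witness matrix $\mat X_0$: for instance perturb the diagonal example so that all entries of the relevant eigenvectors are nonzero — a generic small rotation applied to $\mat X_0$ works, and one can verify directly that the resulting eigenvectors of $\mat S_n$ have no zero coordinate. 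Then $R_i=0$ is a Lebesgue-null event, done.

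I expect the main obstacle to be the bookkeeping in the second bullet: making precise the passage from ``some eigenvector of some nonzero eigenvalue has a zero coordinate'' to a single nontrivial polynomial (or finite collection of polynomials) in the entries of $\mat X$, uniformly over which of the $r$ eigenvalues is involved. The cleanest route is probably to avoid resultants and instead argue as follows: on the full-measure event that the $r$ nonzero eigenvalues are distinct, pick the eigenvalue $\lambda_\ell$; then $v$ is proportional to any nonzero column of the adjugate $\operatorname{adj}(\mat S_n - \lambda_\ell \mat I)$, and $v_i=0$ means the entire $i$-th row of this adjugate vanishes, which is $n$ polynomial conditions on the entries of $\mat X$ once one substitutes that $\lambda_\ell$ satisfies the characteristic equation. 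Rather than chase this algebraically, it may be simpler still to observe that ``$\mat S_n$ has an eigenvector $v$ with $v_i=0$ and eigenvalue $\lambda\neq 0$'' implies $e_i$ (the $i$-th standard basis vector) is not in the cyclic subspace generated by $v$ under $\mat S_n$, which is a degenerate algebraic condition, and conclude via the same witness-matrix-plus-absolute-continuity template. In all versions the key points are identical — reduce to Lebesgue-measure-zero of a real-algebraic set, and defeat the ``identically zero'' worry with one explicit nondegenerate $\mat X_0$ — so I would state a small general lemma (a proper algebraic subvariety of $\mathbb{R}^{mn}$ is null for the law of $\mat X$ when $\xi$ is absolutely continuous) and invoke it twice.
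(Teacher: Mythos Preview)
Your first bullet is essentially the paper's argument: the paper forms the symmetric polynomial $f(\lambda_1,\dots,\lambda_r)=\bigl(\prod_i\lambda_i\bigr)\bigl(\prod_{i<j}(\lambda_i-\lambda_j)^2\bigr)$, rewrites it via the fundamental theorem of symmetric polynomials as a polynomial $\tilde f$ in the entries of $\mat X$, exhibits a witness (a rectangular diagonal matrix with distinct positive diagonal entries) showing $\tilde f\not\equiv 0$, and then invokes the standard fact that a nonzero polynomial in independent absolutely continuous variables is almost surely nonzero. Your discriminant formulation is the same thing.

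For the second bullet the paper takes a genuinely different and cleaner route that sidesteps the algebraic bookkeeping you flagged. Instead of resultants or adjugates, it uses an \emph{independence} argument: if $v$ is an eigenvector with nonzero eigenvalue $\lambda$ and $v_\ell=0$, then deleting the $\ell$-th coordinate of $v$ yields $\hat v$, which is an eigenvector of $\hat{\mat S}_n:=\hat{\mat X}_n^{\mathrm T}\hat{\mat X}_n$, where $\hat{\mat X}_n$ is $\mat X$ with its $\ell$-th column $X_\ell$ removed. The eigenvalue equation forces $X_\ell^{\mathrm T}(\hat{\mat X}_n\hat v)=0$, and one checks $\hat{\mat X}_n\hat v\neq 0$ (else $\lambda=0$). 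Now $X_\ell$ is independent of $\hat{\mat X}_n\hat v$, so conditioning on the latter, the orthogonality condition is a single nontrivial linear constraint on the absolutely continuous vector $X_\ell$ and has probability zero; a union bound over $\ell$ finishes. This avoids any discussion of which eigenvalue, which column of the adjugate, or resultants in $\lambda$. Your algebraic approach would also go through once the details are pinned down --- the correct polynomial to look at is the resultant of $p_{\mat S_n}(\lambda)/\lambda^{n-r}$ and the $(i,i)$-principal minor $p_{\mat S_n^{(i)}}(\lambda)$, since for a simple eigenvalue of a symmetric matrix the eigenvector's $i$-th coordinate vanishes iff that minor vanishes at $\lambda$ --- but the paper's independence trick buys you a shorter, less delicate argument.
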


The proof of Lemma~\ref{lemma:distinct} is given in Appendix~\ref{app:distinct}.

\begin{proof}[Proof of Theorem~\ref{thm:sampcov-nonreal}]
The matrix $\mat S_n$ is Hermitian, so there exists a unitary matrix $\mat U$ such that $\mat U^* \mat S_n \mat U= \mat D$, a diagonal matrix with real diagonal entries in decreasing order.  Because $\mat S_n$ is a random sample covariance matrix, we know with probability 1 from Lemma~\ref{lemma:distinct} that $\mat D$ has the first $r=\min\{m,n\}$ diagonal entries $\lambda_1 >\lambda_2 > \dots >\lambda_r > 0$ distinct and strictly positive, and all remaining diagonal entries, if any, equal to zero (which follows by considering the rank of $\mat S_n$).  Let $v$ be a standard basis vector  and let $\{w_i\}_{i=1}^n$ be the columns of $\mat U$, which form an orthonormal basis of eigenvectors for $\mat S_n$; thus, we can write $v = \sum_{i=1}^n z_i w_i$ where each complex constant $z_i \ne 0$ for $1 \le i \le r$ with probability 1 (again by Lemma~\ref{lemma:distinct}).

Consider the matrix $\mat S_n(\mat I +\gamma \sqrt{-1} v v^*)$ and note that it has the same eigenvalues as 
$$\mat U^*\mat S_n(\mat I +\gamma \sqrt{-1} v v^*)\mat U = \mat D+\gamma\mat D \sqrt{-1} \mat U^* v v^* \mat U$$ 
We may now apply Lemma~\ref{lem:nonreal-nonrandom} with $\mat E = \mat D$ and $k=r$, noting that $u=\mat U^* v$ is a vector with all coordinates non-zero with probability 1 and that the conditions on $\mat D$ and $\mat E$ are also satisfied with probability 1.
\end{proof}

\section{Proof of results in Section \ref{sec:global}} \label{sec:proof:global}

This section is devoted to the proof of Theorem \ref{thm:global:sc} and Theorem \ref{thm:global:mp}.  We will use Theorem \ref{thm:kahan} to show that the empirical spectral measure of a Hermitian matrix does not significantly change when perturbed by a arbitrary matrix $\mat P$.  Indeed, both theorems will follow from the deterministic lemma below.  

\begin{lemma} \label{lemma:lip}
Let $\mat{M}$ be an $n \times n$ Hermitian matrix, and assume $\mat{P}$ is an arbitrary $n \times n$ matrix.  Then for any bounded, Lipschitz function $f:\mathbb{C} \to \mathbb{C}$, there exists a constant $C > 0$ (depending only on the Lipschitz constant of $f$) such that 
$$ \left| \int f d \mu_{\mat{M}} - \int f d \mu_{\mat M + \mat P} \right| \leq \frac{C}{\sqrt{n}} \| \mat P \|_2. $$
\end{lemma}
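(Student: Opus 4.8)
The plan is to deduce the lemma directly from Kahan's bound \eqref{eq:kahan2} in Theorem~\ref{thm:kahan}, combined with the Cauchy--Schwarz inequality. Since $\mat M$ is Hermitian, write its eigenvalues as $\lambda_1 \ge \cdots \ge \lambda_n$ (all real), and write the eigenvalues of $\mat M + \mat P$ as $\mu_1 + \sqrt{-1}\,\nu_1, \dots, \mu_n + \sqrt{-1}\,\nu_n$ with $\mu_1 \ge \cdots \ge \mu_n$, exactly as in the statement of Theorem~\ref{thm:kahan}. The key preliminary observation is that
\[
\int f\, d\mu_{\mat M} = \frac1n \sum_{k=1}^n f(\lambda_k), \qquad
\int f\, d\mu_{\mat M + \mat P} = \frac1n \sum_{k=1}^n f(\mu_k + \sqrt{-1}\,\nu_k),
\]
and neither sum depends on the order in which the eigenvalues are listed; hence we are free to use precisely the orderings for which \eqref{eq:kahan2} holds.

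Let $L$ denote the Lipschitz constant of $f$, so that $|f(z) - f(w)| \le L|z - w|$ for all $z, w \in \mathbb{C}$. First I would estimate, term by term,
\[
\left| \int f\, d\mu_{\mat M} - \int f\, d\mu_{\mat M + \mat P} \right|
\le \frac{1}{n} \sum_{k=1}^n \left| f(\lambda_k) - f(\mu_k + \sqrt{-1}\,\nu_k) \right|
\le \frac{L}{n} \sum_{k=1}^n \left| \lambda_k - (\mu_k + \sqrt{-1}\,\nu_k) \right|.
\]
Then I would apply the Cauchy--Schwarz inequality to the last sum over the $n$ indices, obtaining
\[
\frac{L}{n} \sum_{k=1}^n \left| \lambda_k - (\mu_k + \sqrt{-1}\,\nu_k) \right|
\le \frac{L}{\sqrt{n}} \left( \sum_{k=1}^n \left| \lambda_k - (\mu_k + \sqrt{-1}\,\nu_k) \right|^2 \right)^{1/2}.
\]
Finally, \eqref{eq:kahan2} bounds the sum under the square root by $2 \| \mat P \|_2^2$, so the whole expression is at most $\frac{\sqrt{2}\, L}{\sqrt{n}} \| \mat P \|_2$, which proves the lemma with $C = \sqrt{2}\, L$ (a constant depending only on the Lipschitz constant of $f$).

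I do not expect a substantial obstacle here: in fact the boundedness of $f$ is not needed, only its Lipschitz property, and the only point requiring a moment's care is the observation that the two empirical averages are invariant under reordering of the eigenvalues, which is exactly what permits us to invoke the matched pairing in Kahan's inequality \eqref{eq:kahan2}. Everything else is a one-line application of Cauchy--Schwarz together with Theorem~\ref{thm:kahan}.
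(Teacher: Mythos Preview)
Your proof is correct and follows essentially the same argument as the paper: order the eigenvalues as in Theorem~\ref{thm:kahan}, apply the Lipschitz bound termwise, then Cauchy--Schwarz, and finally invoke \eqref{eq:kahan2} to obtain the constant $C=\sqrt{2}L$. Your remark that boundedness of $f$ is not actually used is also accurate.
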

\begin{proof}
Let $f: \mathbb{C} \to \mathbb{C}$ be a bounded, Lipschitz function with Lipschitz constant $C_f$.  Let $\lambda_1 \geq \cdots \geq \lambda_n$ denote the ordered eigenvalues of $\mat M$.  Let $\alpha_1 + \sqrt{-1} \beta_1, \ldots, \alpha_n + \sqrt{-1} \beta_n$ be the eigenvalues of $\mat{M} + \mat P$ ordered so that $\alpha_1 \geq \cdots \geq \alpha_n$.  Then, by the Cauchy-Schwarz inequality and Theorem \ref{thm:kahan}, we obtain
\begin{align*}
	\left| \int f d \mu_{\mat{M}} - \int f d \mu_{\mat M + \mat P} \right| &= \frac{1}{n} \left| \sum_{i=1}^n f(\lambda_i) - \sum_{i=1}^n f(\alpha_i + \sqrt{-1} \beta_i) \right| \\
		&\leq \frac{C_f}{n} \sum_{i=1}^n \left|\lambda_i - \alpha_i - \sqrt{-1} \beta_i \right| \\
		&\leq \frac{C_f}{\sqrt{n}} \sqrt{ \sum_{i=1}^n \left| \lambda_i - \alpha_i - \sqrt{-1} \beta_i \right|^2 } \\
		&\leq \frac{\sqrt{2} C_f}{\sqrt{n}} \| \mat P \|_2,
\end{align*}
as desired.
\end{proof}
\begin{remark}
One can also show that 
$$ L(\mu_{\mat{M}}, \mu_{\mat{M} + \mat{P}})^3 \leq \frac{1}{n} \| \mat P \|_2^2, $$
where $L(\mu, \nu)$ denotes the Levy distance between the probability measures $\mu$ and $\nu$.  See \cite[Theorem A.38]{BSbook} and \cite[Remark A.39]{BSbook} for details.  
\end{remark}

We now prove Theorem \ref{thm:global:sc}.  

\begin{proof}[Proof of Theorem \ref{thm:global:sc}]
By the portmanteau theorem (see, for instance, \cite[Theorem 11.3.3]{D}), it suffices to show that, a.s., 
$$ \lim_{n \to \infty} \int f d \mu_{\frac{1}{\sqrt{n}} \mat W_n + \mat P_n} = \int f d \mu_{\mathrm{sc}} $$
for every bounded, Lipschitz function $f: \mathbb{C} \to \mathbb{C}$.  

Theorem \ref{thm:wigner} implies that, almost surely, 
$$ \lim_{n \to \infty} \int f d \mu_{\frac{1}{\sqrt{n}} \mat W_n} = \int f d \mu_\mathrm{sc} $$
for every bounded, Lipschitz function $f: \mathbb{R} \to \mathbb{R}$ (and hence for every bounded, Lipschitz function $f: \mathbb{C} \to \mathbb{C}$).  Thus, by the triangle inequality, it suffices to show
$$ \lim_{n \to \infty} \left| \int f d \mu_{\frac{1}{\sqrt{n}} \mat W_n} - \int f d \mu_{\frac{1}{\sqrt{n}} \mat W_n + \mat P_n} \right| = 0. $$
The claim now follows from Lemma \ref{lemma:lip} and condition \eqref{eq:Pnhs}.   
\end{proof}

In order to prove Theorem \ref{thm:global:mp}, we will need the following well-known bound on the spectral norm of a sample covariance matrix.

\begin{theorem}[Spectral norm of a sample covariance matrix; Theorem 5.8 from \cite{BSbook}] \label{thm:sc:norm}
Let $\xi$ be a real random variable with mean zero, unit variance, and finite fourth moment.  For each $n \geq 1$, let $\mat S_n$ be a sample covariance matrix with atom variable $\xi$ and parameters $(m,n)$, where $m$ is a function of $n$ such that $y_n := \frac{m}{n} \to y \in (0, \infty)$ as $n \to \infty$.  Then a.s.
$$ \limsup_{n \to \infty} \frac{1}{\sqrt{m n}} \| \mat S_n \| \leq \lambda_+, $$
where $\lambda_+$ is defined in \eqref{eq:def:lambdapm}.  
\end{theorem}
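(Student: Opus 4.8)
This is the Bai--Yin theorem on the largest eigenvalue of a sample covariance matrix; the statement and a complete proof are \cite[Theorem~5.8]{BSbook}, and I describe the route I would take to establish it, via the moment method. Write $\mat S_n = \transp{\mat X}\mat X$ with $\mat X$ an $m\times n$ matrix of iid copies of $\xi$, so that $\|\mat S_n\|$ is the square of the largest singular value of $\mat X$ and coincides with the top eigenvalue of $\mat X\transp{\mat X}$. It suffices to show that for each fixed $\eps>0$ one has, almost surely, $\frac{1}{\sqrt{mn}}\|\mat S_n\|\le\lambda_++\eps$ for all large $n$; intersecting these events over $\eps=1/\ell$, $\ell\ge 1$, then yields the asserted $\limsup$ bound.

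The first step is a reduction to bounded entries. The plan is to replace each $x_{ij}$ by its truncation at level $\delta_n\sqrt{n}$, with $\delta_n\to 0$ tending to zero sufficiently slowly, and then to recenter and rescale to restore mean zero and unit variance. The finite fourth moment enters in two places: it makes the centering correction of order $o(n^{-1})$, hence a rank-one matrix of spectral norm $o(1)$ after the $(mn)^{-1/2}$ normalization; and, through a sparsity estimate on the number and magnitude of the removed entries, it guarantees that truncation alters the largest singular value of $\mat X$ only negligibly, almost surely. This is the step where the fourth-moment hypothesis is genuinely needed, since with only a second moment the largest singular value of $\mat X$ is not $O(\sqrt{n})$ and no bound of the stated form can hold.

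For the truncated matrix I would run the moment method with a slowly growing exponent. Since $\mat S_n$ is positive semidefinite, $\|\mat S_n\|^{k}\le\tr(\mat S_n^{k})$ for every positive integer $k$, so with $k=k_n\to\infty$ it suffices to control the expectation of $\tr\!\bigl[(\tfrac{1}{\sqrt{mn}}\mat S_n)^{k_n}\bigr]$. Expanding this trace as a sum over closed walks of length $2k_n$ on the complete bipartite graph with vertex classes $\{1,\dots,m\}$ and $\{1,\dots,n\}$ and using independence together with the mean-zero property, the dominant contribution comes from walks whose edge graph is a tree; counting these, and using $m\sim yn$, one obtains
$$ \E\,\tr\!\left[\left(\tfrac{1}{\sqrt{mn}}\mat S_n\right)^{k_n}\right] \;\le\; n\,(\lambda_++o(1))^{k_n}, $$
the leading part being $n$ times the $k_n$-th moment of $\mu_{\mathrm{MP},y}$ — which is at most $n\lambda_+^{k_n}$ because $\mu_{\mathrm{MP},y}$ is supported in $[0,\lambda_+]$ — while the subleading walks are absorbed into the $o(1)$. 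Markov's inequality then gives
$$ \Prob\!\left(\tfrac{1}{\sqrt{mn}}\|\mat S_n\|>\lambda_++\eps\right) \;\le\; n\left(\frac{\lambda_++o(1)}{\lambda_++\eps}\right)^{k_n}, $$
which is summable in $n$ as soon as $k_n$ exceeds a suitable multiple of $\log n$; Borel--Cantelli then delivers the desired almost-sure bound. Because only this one-sided inequality is needed, the first-moment estimate suffices and no second-moment (variance) computation is required.

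The hard part is executing the first two steps in tandem: the truncation must be performed delicately enough that the removed part does not perturb the operator norm — a Frobenius-norm bound on the removed entries is too crude here, and one must instead exploit their extreme sparsity — and the walk count must be controlled uniformly in the growing exponent $k_n$ so that contributions from non-tree walks remain negligible. These are precisely the technical issues resolved in \cite[Theorem~5.8]{BSbook}; since the result is used here only as an input to Theorem~\ref{thm:global:mp}, I do not reproduce them.
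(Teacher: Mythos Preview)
The paper does not prove this theorem at all; it is quoted verbatim as Theorem~5.8 of \cite{BSbook} and used as a black box in the proof of Theorem~\ref{thm:global:mp}. Your sketch correctly identifies it as the Bai--Yin theorem and outlines the standard moment-method proof (truncation at level $\delta_n\sqrt{n}$, trace expansion over bipartite closed walks with growing exponent $k_n\asymp\log n$, Markov plus Borel--Cantelli), which is indeed the route taken in \cite{BSbook}; so your proposal goes strictly beyond what the paper does, and the outline is sound.
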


\begin{proof}[Proof of Theorem \ref{thm:global:mp}]
Since $\mat S_n(\mat I + \mat P) = \mat S_n + \mat S_n \mat P$, it suffices, by Theorem~\ref{thm:mp} and Lemma \ref{lemma:lip}, to show that a.s. $\frac1{n\sqrt m} \|\mat S_n \mat P\|_2\to 0$ as $n\to \infty$. 
From \cite[Theorem A.10]{BSbook}, it follows that
$$ \| \mat S_n \mat P_n \|_2 \leq \| \mat S_n \| \|\mat P_n \|_2. $$
Hence, the claim follows from Theorem \ref{thm:sc:norm} and condition \eqref{eq:Pnhs}.  
\end{proof}

\section{Proof of results in Section \ref{sec:refine}} \label{sec:proof:refine}

This section is devoted to the results in Section \ref{sec:refine}.  We begin with the following deterministic lemma.  

\begin{lemma}[Eigenvalue criterion] \label{lemma:eigcriterion}
Let $\mat M$ and $\mat P$ be arbitrary $n \times n$ matrices.  Suppose $z \in \mathbb{C}$ is not an eigenvalue of $\mat M$.  Then $z$ is an eigenvalue of $\mat M + \mat P$ if and only if 
$$ \det(\mat I + (\mat M - z \mat I)^{-1} \mat P) = 0. $$
\end{lemma}
\begin{proof}
Suppose $z \in \mathbb{C}$ is not an eigenvalue of $\mat M$.  Then $z$ is an eigenvalue of $\mat M + \mat P$ if and only if 
$$ \det (\mat M + \mat P - z \mat I) = 0. $$
This is equivalent to the condition that
\begin{equation} \label{eq:condeig}
	\det (\mat M - z \mat I) \det ( \mat I + (\mat M - z \mat I)^{-1} \mat P) = 0. 
\end{equation}
Since $z$ is not an eigenvalue of $\mat M$, $\det (\mat M - z \mat I) \neq 0$.  Thus, condition \eqref{eq:condeig} is equivalent to 
$$ \det( \mat I + (\mat M - z \mat I)^{-1}\mat P ) = 0, $$
as desired.
\end{proof}

Versions of Lemma \ref{lemma:eigcriterion} have also appeared in \cite{AGG, BY, BGGM, BGN, CDMF, KY, KY2, OR, PRS, Tout}.  

We will also make use of the following identity:
\begin{equation} \label{eq:fundamental}
	\det \left( \mat{I} + \mat {A} \mat{B} \right) = \det \left( \mat{I} + \mat{B} \mat{A} \right),
\end{equation}
valid for arbitrary $n \times k$ matrices $\mat{A}$ and $k \times n$ matrices $\mat{B}$.  We observe that the left-hand side is an $n \times n$ determinant while the right-hand side is a $k \times k$ determinant.  For low-rank perturbations, we will apply \eqref{eq:fundamental} with $k$ fixed and $n$ tending to infinity to transform an unbounded-dimensional problem to a finite-dimensional one.  

\subsection{Proof of Theorem \ref{thm:wigner:refine}}
We now focus on the proof of Theorem \ref{thm:wigner:refine}.  We will need the following concentration result for the largest eigenvalues of a Wigner matrix.  

\begin{lemma}[Bounds on the largest eigenvalue \cite{EYY,AEKYY,EKYY}] \label{lemma:wigner:largest}
Let $\xi$ and $\zeta$ be real random variables which satisfy condition {\bf C0}.  For each $n \geq 1$, let $\mat{W}_n$ be an $n \times n$ Wigner matrix with atom variables $\xi$ and  $\zeta$.  Then, for any $\eps > 0$, a.s., for $n$ sufficiently large, 
$$ \sup_{1 \leq i \leq n} \left| \frac{1}{\sqrt{n}} \lambda_i(\mat W_n) \right| \leq 2 + n^{-2/3 + \eps}. $$
\end{lemma}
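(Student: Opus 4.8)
The plan is to obtain the bound as a direct consequence of the edge rigidity estimates for Wigner matrices established in \cite{EYY, AEKYY, EKYY}, and then to upgrade the resulting high-probability bound to an almost sure one via the Borel--Cantelli lemma. Fix $\eps > 0$ and set
$$ A_n := \left\{ \sup_{1 \le i \le n} \left| \tfrac{1}{\sqrt n}\lambda_i(\mat W_n) \right| > 2 + n^{-2/3 + \eps} \right\}. $$
The goal is to show $\sum_n \Prob(A_n) < \infty$; the almost sure statement for this single $\eps$ then follows, and intersecting over $\eps = 1/k$, $k \in \mathbb{N}$, yields the lemma as stated.

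First I would reduce to the case of entries with sub-exponential tails. Under condition {\bf C0}(ii) this already holds (and the diagonal normalization, variance $2$ with vanishing third moment, is exactly the one used in \cite{EYY, EKYY}), so the edge rigidity results apply to $\mat W_n$ directly. Under condition {\bf C0}(i) one has only finite moments of all orders, and I would run the standard truncation argument: for a small $\tau > 0$, let $E_n$ be the event that $|w_{ij}| \le n^{\tau}$ for all $i \le j$. By Markov applied to $\E|\xi|^p + \E|\zeta|^p \le C_p$ and a union bound over the $O(n^2)$ entries, $\Prob(E_n^c) \le C_p\, n^{2 - p\tau}$, which is $O(n^{-D})$ for any fixed $D$ once $p$ is large. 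On $E_n$ the matrix $\mat W_n$ equals a deterministic recentering/rescaling (by factors $1 + O(n^{-c})$, with $c \to \infty$ as $p \to \infty$) of a Wigner-type matrix $\widehat{\mat W}_n$ with bounded, hence sub-exponential, entries; a routine estimate, using only the crude deterministic bound $\|\widehat{\mat W}_n\| = O(n^{1/2+\tau})$ valid on $E_n$, shows this modification perturbs every eigenvalue of $\tfrac{1}{\sqrt n}\mat W_n$ by $o(n^{-2/3+\eps})$. Thus it suffices to prove $\sum_n \Prob(A_n) < \infty$ for a Wigner-type matrix with bounded entries.

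Second, for such a matrix the edge rigidity estimate of \cite{EYY, EKYY} gives, for every fixed $D > 0$, a constant $C_{\eps, D}$ with $\Prob(A_n) \le C_{\eps, D}\, n^{-D}$; equivalently one may use the one-sided upper-tail bound $\Prob\big(\lambda_1(\tfrac{1}{\sqrt n}\mat W_n) > 2 + t\big) \le \exp(-c n t^{3/2})$ at $t = n^{-2/3+\eps}$, together with the same bound applied to $-\mat W_n$ to control $\lambda_n(\mat W_n)$. Taking $D = 2$ makes $\sum_n \Prob(A_n) < \infty$, and Borel--Cantelli finishes the argument. The only genuine work here is the bookkeeping in the truncation step of case (i) --- choosing $\tau$ small enough that the finite-moment tail beats the $n^2$ entries while the deterministic rescaling it induces stays below the target accuracy $n^{-2/3+\eps}$. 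The substantive content, namely the $n^{-2/3}$-scale concentration of the extreme eigenvalues, is supplied entirely by the cited rigidity theorems and is not reproved here.
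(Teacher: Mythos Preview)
Your approach is correct and essentially the same as the paper's: invoke the edge-rigidity results of \cite{EYY, AEKYY, EKYY} to get a high-probability bound on $A_n$, then apply Borel--Cantelli. The only difference is that your truncation step under condition {\bf C0}(i) is unnecessary, since the paper simply cites \cite[equation (2.33)]{AEKYY} (derived from \cite[Theorem 7.6]{EKYY}), which already applies directly to Wigner matrices whose entries have all moments finite; as a minor aside, the deterministic bound on $E_n$ should read $\|\widehat{\mat W}_n\| = O(n^{1+\tau})$ rather than $O(n^{1/2+\tau})$, though this does not affect your conclusion.
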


Lemma \ref{lemma:wigner:largest} follows from \cite[Theorem 2.1]{EYY} (see also \cite[Theorem 3.7]{KY}) for condition {\bf C0} (ii) by applying the Borel--Cantelli lemma; and the same conclusion follows under condition {\bf C0} (i) using \cite[equation (2.33)]{AEKYY} (which follows from \cite[Theorem~7.6]{EKYY}) and the Borel--Cantelli lemma.  

We define $m_{\mathrm{sc}}$ to be the Stieltjes transform of the semicircle law.  That is,
$$ m_{\mathrm{sc}}(z) := \int_{-\infty}^\infty \frac{\rho_{\mathrm{sc}}(x)}{x - z} dx $$
for $z \in \mathbb{C}$ with $z \not\in [-2,2]$.  In addition, $m_{\mathrm{sc}}$ is also characterized as the unique solution of
\begin{equation} \label{eq:mscdef}
	m_{\mathrm{sc}}(z) + \frac{1}{m_{\mathrm{sc}}(z)} + z = 0 
\end{equation}
that satisfies $\Im m_{\mathrm{sc}}(z) > 0$ when $\Im(z) > 0$, extended where possible by analyticity.

\begin{lemma} \label{lemma:msc}
For all $z \in \mathbb{C}$ with $z \not\in [-2,2]$, $|m_{\mathrm{sc}}(z)| \leq 1$. 
\end{lemma}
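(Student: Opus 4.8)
The plan is to use the characterization of $m_{\mathrm{sc}}$ as the solution of the quadratic relation \eqref{eq:mscdef} together with the explicit boundary behavior of $m_{\mathrm{sc}}$ on the real line. From \eqref{eq:mscdef} one has $m_{\mathrm{sc}}(z) = \tfrac{1}{2}\left(-z + \sqrt{z^2-4}\right)$ for the branch with $\Im m_{\mathrm{sc}}(z) > 0$ when $\Im z > 0$, and the companion root is $\tfrac{1}{2}\left(-z - \sqrt{z^2-4}\right)$; the product of the two roots equals $1$ (directly from \eqref{eq:mscdef}, which gives $m_{\mathrm{sc}}(z)^2 + z\,m_{\mathrm{sc}}(z) + 1 = 0$, so the two roots multiply to $1$). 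Thus at each $z \notin [-2,2]$ the two candidate values $m_{\pm}$ satisfy $|m_+|\cdot|m_-| = 1$, and it suffices to show that $m_{\mathrm{sc}}(z)$ is always the root of smaller (or equal) modulus, i.e. $|m_{\mathrm{sc}}(z)| \le 1$.

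First I would record the value on the boundary: for real $x$ with $|x| \ge 2$, one checks from the integral definition (or from the explicit formula) that $m_{\mathrm{sc}}(x)$ is real with $|m_{\mathrm{sc}}(x)| = \tfrac{1}{2}\left(|x| - \sqrt{x^2-4}\right) \le 1$, with equality only at $x = \pm 2$. In particular the inequality holds on the boundary of the domain $\C \setminus [-2,2]$ together with the point at infinity, since $m_{\mathrm{sc}}(z) = -1/z + O(1/z^3) \to 0$ as $z \to \infty$. Next I would invoke analyticity: $m_{\mathrm{sc}}$ is holomorphic on the connected open set $\C \setminus [-2,2]$, and the function $z \mapsto |m_{\mathrm{sc}}(z)|$ attains no interior maximum unless $m_{\mathrm{sc}}$ is constant (maximum modulus principle), applied on $\C \setminus [-2,2]$ viewed inside $\widehat{\C}$ so that $\infty$ is an interior point where $m_{\mathrm{sc}}$ extends holomorphically with value $0$. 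Hence $\sup_{z \notin [-2,2]} |m_{\mathrm{sc}}(z)|$ is controlled by the boundary values, which are all $\le 1$, giving the claim.

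An alternative, more hands-on route avoids the maximum modulus principle entirely: show directly that $|m_{\mathrm{sc}}(z)| \le 1$ for $z \notin [-2,2]$ by a continuity/connectedness argument. The set where $|m_{\mathrm{sc}}(z)| = 1$ is closed, the set where $|m_{\mathrm{sc}}(z)| < 1$ is open and nonempty (it contains a neighborhood of $\infty$), and on $\C \setminus [-2,2]$ the two branch values $m_\pm$ never coincide except on the branch cut, so $|m_{\mathrm{sc}}|$ can never equal $1$ at an interior point without the other branch also having modulus $1$ there — but the branches are exchanged only across $[-2,2]$. One then argues that $|m_{\mathrm{sc}}| < 1$ is both open and closed in $\C\setminus[-2,2]$ (using $|m_+||m_-|=1$ and $m_+ \ne m_-$ off the cut), hence all of the connected set. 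I expect the main technical point — and the only place requiring care — to be the justification of holomorphy at $\infty$ (or equivalently the clean handling of the branch structure), since everything else is the elementary algebra of the quadratic \eqref{eq:mscdef}; the product-of-roots identity $m_+ m_- = 1$ is really what is doing all the work.
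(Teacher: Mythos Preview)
Your second route (the connectedness argument) is correct and in fact proves the strict inequality $|m_{\mathrm{sc}}(z)|<1$ on $\mathbb C\setminus[-2,2]$. The key step you sketch can be made precise in one line: if $|m|=1$ and $m+1/m=-z$, then $1/m=\bar m$ and hence $z=-(m+\bar m)=-2\Re m\in[-2,2]$; so $|m_{\mathrm{sc}}(z)|=1$ is impossible off the cut. Since $\{\,|m_{\mathrm{sc}}|<1\,\}$ is open, nonempty (it contains a neighborhood of $\infty$), and its complement in $\mathbb C\setminus[-2,2]$ is also open (because $|m_{\mathrm{sc}}|=1$ never occurs there), connectedness of $\mathbb C\setminus[-2,2]$ finishes the proof.

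Your first route has a slip: the points $x\in\mathbb R$ with $|x|\ge 2$ are interior points of $\mathbb C\setminus[-2,2]$, not boundary points; the topological boundary (in $\widehat{\mathbb C}$) is the slit $[-2,2]$ itself. The maximum modulus argument is easily repaired once you compute the correct boundary values: approaching $x\in(-2,2)$ from either half-plane gives $m_{\mathrm{sc}}(x\pm i0)=\tfrac12(-x\pm i\sqrt{4-x^2})$, which has modulus exactly $1$, and at $x=\pm 2$ the value is $\mp 1$. With that correction the maximum modulus principle on $\widehat{\mathbb C}\setminus[-2,2]$ (where $m_{\mathrm{sc}}$ is holomorphic and vanishes at $\infty$) yields the bound.

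By contrast, the paper's proof is not self-contained: it quotes \cite[Lemma 3.4]{EYY} for $\Im z>0$, invokes the symmetry $\overline{m_{\mathrm{sc}}(z)}=m_{\mathrm{sc}}(\bar z)$ to cover $\Im z<0$, and then appeals to continuity to handle real $z$ with $|z|>2$. Your argument is more elementary and transparent, exploiting directly the Vieta relation $m_+m_-=1$ coming from \eqref{eq:mscdef}; the paper's approach has the advantage of brevity at the cost of relying on an external reference.
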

\begin{proof}
The case when $\Im z > 0$ follows from \cite[Lemma 3.4]{EYY}.  By symmetry, we also obtain that $|m_{\mathrm{sc}}(z)| \leq 1$ when $\Im z < 0$.  Thus, it suffices to consider the case when $z$ is real.  A simple continuity argument verifies the same bound on $\mathbb{R} \setminus [-2,2]$.  
\end{proof}

The key tool we will need to prove Theorem \ref{thm:wigner:refine} is the following isotropic semicircle law from \cite{AEKYY,KY}.  

\begin{theorem}[Isotropic semicircle law] \label{thm:wigner:isotropic}
Let $\xi$ and $\zeta$ be real random variables which satisfy condition {\bf C0}.  For each $n \geq 1$, let $\mat{W}_n$ be an $n \times n$ Wigner matrix with atom variables $\xi$ and  $\zeta$, and let $u_n$ and $v_n$ be unit vectors in $\mathbb{C}^n$.  Then, for any $\kappa > 3$ and $\eps > 0$, a.s.
\begin{equation}\label{eq:wigner:isotropic}
 \lim_{n \to \infty} \sup_{z \in \mathcal{S}_{n}(\kappa, \eps)} \left| u_n^\ast \left( \frac{1}{\sqrt{n}} \mat W_n - z \mat I \right)^{-1} v_n - m_{\mathrm{sc}}(z) u_n^\ast v_n \right| = 0, 
 \end{equation}
where $\mathcal{S}_n(\kappa, \eps)$ is the union of
$$ \left\{ z = E + \sqrt{-1} \eta : |E| \leq \kappa\ ,\  n^{-1 + \eps} \leq |\eta| \leq \kappa \right\} $$
and 
\begin{equation} \label{eq:wigner:outside}
	\left\{ z = E + \sqrt{-1} \eta : 2 + n^{-2/3 + \eps} \leq |E| \leq \kappa\ ,\  |\eta| \leq \kappa \right\}. 
\end{equation}
\end{theorem}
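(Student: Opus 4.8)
The plan is to deduce the isotropic law from the \emph{entrywise} local semicircle law together with a fluctuation–averaging argument for bilinear forms, and then to pass from a high‑probability bound to the stated uniform almost‑sure convergence. Write $G(z) := \left(\tfrac{1}{\sqrt n}\mat W_n - z\mat I\right)^{-1}$ and $z = E + \sqrt{-1}\eta$. First I would invoke the entrywise local law of \cite{EYY,AEKYY,EKYY}: on an event of probability $1 - O(n^{-D})$ for every $D>0$ one has, uniformly for $z \in \mathcal{S}_n(\kappa,\eps)$,
$$ \max_{1\le i,j\le n}\left| G_{ij}(z) - m_{\mathrm{sc}}(z)\,\delta_{ij}\right| \le \Psi(z), $$
where $\Psi(z)$ is a deterministic control parameter (of size roughly $\sqrt{\Im m_{\mathrm{sc}}(z)/(n\eta)} + (n\eta)^{-1}$ when $|\eta|\ge n^{-1+\eps}$, with an analogous quantity near the edge) that is $o(1)$ throughout $\mathcal{S}_n(\kappa,\eps)$, together with the Ward identity $\sum_j |G_{ij}(z)|^2 = \Im G_{ii}(z)/\eta$. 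Since $|m_{\mathrm{sc}}(z)| \le 1$ by Lemma \ref{lemma:msc}, it then suffices to show that, on this event, $\sup_{z}\left| u_n^\ast\bigl(G(z) - m_{\mathrm{sc}}(z)\mat I\bigr)v_n\right| = o(1)$ for arbitrary unit vectors $u_n, v_n$.

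Decompose $u_n^\ast(G - m_{\mathrm{sc}})v_n = \sum_i \overline{(u_n)_i}(v_n)_i (G_{ii} - m_{\mathrm{sc}}) + \sum_{i\ne j}\overline{(u_n)_i}(v_n)_j G_{ij}$. The diagonal sum is bounded by $\Psi\,\|u_n\|\,\|v_n\| = \Psi$ directly from the entrywise law, so it is negligible. The delicate term is the off‑diagonal sum, where the crude bound $n\Psi$ is useless and one must exploit cancellation. Here I would follow the high‑moment (``$(2p)$‑th moment'') strategy of \cite{AEKYY,KY}: estimate $\E\bigl|\sum_{i\ne j}\overline{(u_n)_i}(v_n)_j G_{ij}\bigr|^{2p}$ by expanding each $G_{ij}$ with $i\ne j$ through the Schur‑complement/minor identities (e.g.\ $G_{ij} = -G_{ii}\sum_{k}^{(i)}(\tfrac{1}{\sqrt n}\mat W_n)_{ik}G^{(i)}_{kj}$), using that the $i$‑th row of $\mat W_n$ is independent of the minor resolvent $G^{(i)}$, and applying the large‑deviation bounds for linear and quadratic forms in independent entries (the Ward identity controlling the relevant variances). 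Careful combinatorial bookkeeping of how the $2p$ factors pair up and which resolvent minors appear yields $\E|\cdots|^{2p} \le (Cp)^{Cp}\,\Psi^{2p}$ up to $n^{o(1)}$ losses, whence Markov's inequality gives that the off‑diagonal sum is $O(n^{\eps'}\Psi)$ with probability $1 - O(n^{-D})$ for every $D$. Combining the two contributions, $\left| u_n^\ast(G(z) - m_{\mathrm{sc}}(z)\mat I)v_n\right| \le n^{\eps'}\Psi(z)$ for each fixed $z$ on a high‑probability event.

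To obtain the uniform, almost‑sure statement, I would discretize: fix an $n^{-10}$‑net $\mathcal{N}_n$ of $\mathcal{S}_n(\kappa,\eps)$ of polynomial cardinality, apply the previous bound with a union bound over $\mathcal{N}_n$, and transfer to all of $\mathcal{S}_n(\kappa,\eps)$ using that $z\mapsto G(z)$ is Lipschitz with constant $\|G(z)\|^2 \le \dist(z,\Lambda(\tfrac{1}{\sqrt n}\mat W_n))^{-2}$; on $\mathcal{S}_n(\kappa,\eps)$ this is at most $n^2$ (using $|\eta|\ge n^{-1+\eps}$), and on the region \eqref{eq:wigner:outside} with $|E|\ge 2+n^{-2/3+\eps}$ Lemma \ref{lemma:wigner:largest} guarantees that, a.s.\ for large $n$, $z$ is at distance $\gtrsim n^{-2/3+\eps}$ from the spectrum, keeping the Lipschitz constant polynomial there as well. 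Since the total failure probability is summable in $n$, the Borel--Cantelli lemma upgrades the bound to one holding almost surely for all large $n$, and as $\Psi = o(1)$ we conclude. The main obstacle is the off‑diagonal fluctuation‑averaging estimate: controlling $\sum_{i\ne j}\overline{(u_n)_i}(v_n)_j G_{ij}$ genuinely requires the recursive minor expansion and the combinatorial accounting of resolvent subminors, and this is the technical heart of \cite{AEKYY,KY}; the reduction to it and the net/Borel--Cantelli step are routine.
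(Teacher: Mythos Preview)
Your proposal is correct, but it takes a much longer route than the paper. In the paper, Theorem \ref{thm:wigner:isotropic} is not proved from scratch: the isotropic local law for Wigner matrices is imported directly from \cite[Theorems 2.12 and 2.15]{AEKYY} and \cite[Theorems 2.2 and 2.3]{KY} as a black box, and the only work done is a list of routine modifications --- extending from $\mathbb{C}^+$ to $\mathbb{C}^-$ by the symmetry $\overline{G(z)} = G(\bar z)$, passing from a fixed $z$ to a uniform bound by a net argument, handling the $\eta = 0$ portion of the edge region \eqref{eq:wigner:outside} by a continuity argument exploiting the $O(n^{2/3})$ Lipschitz bound for $m_{\mathrm{sc}}$ and the $\eta$-independent error bounds near the edge, and upgrading high-probability to almost-sure via Borel--Cantelli. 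The two sets of cited results are needed because condition {\bf C0} has two non-equivalent alternatives.

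What you do instead is outline the \emph{proof} of the isotropic law from the entrywise local law --- splitting the bilinear form into diagonal and off-diagonal parts, dispatching the diagonal part trivially, and attacking the off-diagonal sum $\sum_{i\ne j}\overline{(u_n)_i}(v_n)_j G_{ij}$ by the $(2p)$-th moment method with resolvent minor expansions and large-deviation bounds. This is precisely the technical heart of \cite{AEKYY,KY}, so you are re-deriving what the paper simply quotes. Your net/Lipschitz/Borel--Cantelli endgame matches the paper's. Your approach is more self-contained and shows you understand where the isotropic bound comes from, but it duplicates substantial work already in the literature; the paper's approach is a one-paragraph citation with adaptations. Both are valid.
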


Theorem \ref{thm:wigner:isotropic} essentially follows from \cite[Theorems 2.12 and 2.15]{AEKYY} and \cite[Theorems 2.2 and 2.3]{KY}.  We require both sets of theorems because Definition \ref{def:C0} (condition {\bf C0}) has two separate (non-equivalent) conditions.  Indeed, the results in \cite{AEKYY} hold under the first set of conditions in Definition \ref{def:C0} and those in \cite{KY} hold under the second set.  

Theorem \ref{thm:wigner:isotropic} follows from the results in \cite{AEKYY,KY} with the following modifications.
\begin{itemize}
\item The results in \cite{AEKYY,KY} only deal with points $z \in \mathcal{S}_{n}(\kappa,\eps) \cap \mathbb{C}^+$ (i.e. points in the upper half-plane).  Since the entries of $\mat{W}_n$ are real, we have
$$ \overline{ \left( \frac{1}{\sqrt{n}} \mat W_n - z \mat I \right)^{-1} } = \left( \frac{1}{\sqrt{n}} \mat W_n - \overline{z} \mat I \right)^{-1}, $$
and hence the results also easily extend to the lower half-plane.

\item The results in \cite{AEKYY,KY} also only hold for a fixed point $z \in \mathcal{S}_n(\kappa, \eps)$.  However, this can easily be overcome by a simple net argument; see \cite[Remark 2.6]{AEKYY} and \cite[Remark 2.4]{KY} for details.
\item In \cite[Theorem 2.15]{AEKYY} and \cite[Theorem 2.3]{KY}, the authors consider a region similar to \eqref{eq:wigner:outside} but missing the case when $\eta = 0$ (i.e. the part of the region in \eqref{eq:wigner:outside} that intersects the real line).  This issue can be remedied, however, by using the net argument of the previous bullet point along with the facts below that ensure that real line is close enough to where \cite[Theorem 2.15]{AEKYY} and \cite[Theorem 2.3]{KY} hold with sufficient strength.  Indeed, due to the fact that $\Im(m_{\mathrm{sc}}(z))$ becomes nearly proportional to $\Im(z)$ (see \cite[Lemma~7.1]{AEKYY}), the error bounds in \cite[Theorem 2.15]{AEKYY} and \cite[Theorem 2.3]{KY} can be bounded by a term that does not depend on $\eta$.  This, along with the fact that that $m_{\mathrm{sc}}$ is Lipschitz continuous with constant at most $5n^{2/3}$ on $\mathcal{S}_n(\kappa, \eps)$ shows that the net argument extends to include the real line (see also \cite[Remark~2.7]{AEKYY} for similar reasoning in the sample covariance case).

\item Finally, the results in \cite{AEKYY, KY} hold with very high probability.  To achieve the almost sure bounds in Theorem \ref{thm:wigner:isotropic}, one can apply the Borel--Cantelli lemma.  

\end{itemize}

With Theorem \ref{thm:wigner:isotropic} in hand, we can now prove Theorem \ref{thm:wigner:refine}.  

\begin{proof}[Proof of Theorem \ref{thm:wigner:refine}]
Take $\kappa := \sup_{n \geq 1} \| \mat P_n \| + 3$.  By Lemma \ref{lemma:wigner:largest}, a.s., for $n$ sufficiently large, 
$$ \left\| \frac{1}{\sqrt{n}} \mat W_n + \mat P_n \right\| \leq \kappa, $$
and hence all eigenvalues of $\frac{1}{\sqrt{n}} \mat W_n + \mat P_n$ are contained in the region
$$ \{z \in \mathbb{C} : |z| \leq \kappa \} \subseteq \{z = E + \sqrt{-1} \eta : |E| \leq \kappa, |\eta| \leq \kappa \}. $$
In addition, by Lemma \ref{lemma:wigner:largest} (and the fact that the eigenvalues of $\mat W_n$ are real), a.s., for $n$ sufficiently large, no eigenvalue of $\frac{1}{\sqrt{n}} \mat W_n$ can be in $\mathcal{S}_n(\kappa,\eps)$, where $\mathcal{S}_n(\kappa,\eps)$ is defined in Theorem \ref{thm:wigner:isotropic}.  
Thus, in view of Lemma \ref{lemma:eigcriterion}, we find that $z \in \mathcal{S}_n(\kappa, \eps)$ is an eigenvalue of $\frac{1}{\sqrt{n}} \mat W_n + \mat P_n$ if and only if 
$$ \det(\mat I + \mat G_n(z) \mat P_n) = 0, $$
where 
$$ \mat G_n(z) := \left( \frac{1}{\sqrt{n}} \mat W_n - z \mat I \right)^{-1} $$
is the resolvent of $\frac{1}{\sqrt{n}} \mat W_n$.  By the singular value decomposition, we can write $\mat P_n = \mat A_n \mat B_n$ for some $n \times k$ and $k \times n$ matrices $\mat {A}_n$ and $\mat B_n$, both of operator norm $O(1)$.  Thus, by \eqref{eq:fundamental}, it follows that  $z \in \mathcal{S}_n(\kappa, \eps)$ is an eigenvalue of $\frac{1}{\sqrt{n}} \mat W_n + \mat P_n$ if and only if 
\begin{equation} \label{eq:detBA}
	\det ( \mat I + \mat B_n \mat G_n(z) \mat A_n) = 0. 
\end{equation}
Observe that the determinant in \eqref{eq:detBA} is a determinant of an $k \times k$ matrix.  Since $k = O(1)$, Theorem \ref{thm:wigner:isotropic} implies that, a.s., 
$$ \det ( \mat I + \mat B_n \mat G_n(z) \mat A_n) = \det(\mat I + m_{\mathrm{sc}}(z) \mat B_n \mat A_n) + o(1) $$
uniformly for $z \in \mathcal{S}_n(\kappa, \eps)$.  By another application of \eqref{eq:fundamental}, we conclude that if $z \in \mathcal{S}_n(\kappa, \eps)$ is an eigenvalue of $\frac{1}{\sqrt{n}} \mat W_n + \mat P_n$, then
\begin{equation} \label{eq:retPn}
	\det (\mat I + m_{\mathrm{sc}}(z) \mat B_n \mat A_n) = \det (\mat I + m_{\mathrm{sc}}(z) \mat P_n) = o(1). 
\end{equation}

We will return to \eqref{eq:retPn} in a moment, but first need to restrict the domain $\mathcal{S}_n(\kappa, \eps)$ slightly.  We recall that the eigenvalues $\lambda_1(\mat P_n), \ldots, \lambda_j(\mat P_n)$ satisfy
\begin{equation} \label{eq:geqdelta}
	|\lambda_i(\mat P_n)| \geq 1 + \delta, 
\end{equation}
and the remaining nontrivial eigenvalues of $\mat P_n$ (some of which may be zero) satisfy 
\begin{equation} \label{eq:leqdelta}
	|\lambda_i(\mat P_n)| \leq 1 - \delta 
\end{equation}
for $j+1 \leq i \leq k$.  By Lemma \ref{lem:ellipse} below, there exists $\delta' > 0$ such that $\delta'<\frac{\delta^2}{1+\delta}$ and 
\begin{equation} \label{eq:3delta'}
	\lambda_i(\mat P_n) + \frac{1}{\lambda_i(\mat P_n)} \not\in \Esc_{3\delta'} \qquad \text{for } 1 \leq i \leq j.
\end{equation}
Hence the assumptions of Theorem \ref{thm:or} are satisfied.  By Theorem \ref{thm:or}, a.s., for $n$ sufficiently large, there are exactly $j$ eigenvalues of $\frac{1}{\sqrt{n}} \mat W_n + \mat P_n$ in the region $\mathbb{C} \setminus \Esc_{2 \delta'}$ and after labeling the eigenvalues properly
$$ \lambda_i \left( \frac{1}{\sqrt{n}} \mat W_n + \mat P_n \right) = \lambda_i (\mat P_n) + \frac{1}{ \lambda_i(\mat P_n)} + o(1) $$
for $1 \leq i \leq j$.  Thus, it suffices to show that a.s.~the remaining $n-j$ eigenvalues of $\frac{1}{\sqrt{n}} \mat W_n + \mat P_n$ are not contained in 
$$ \tilde{\mathcal{S}}_n(\kappa, \eps) := \mathcal{S}_n(\kappa, \eps) \cap \Esc_{2\delta'}. $$  

Returning to \eqref{eq:retPn}, we see that a.s., for $n$ sufficiently large, if $z \in \tilde{\mathcal{S}}_n(\kappa, \eps)$ is an eigenvalue of $\frac{1}{\sqrt{n}} \mat W_n + \mat P_n$, then 
\begin{equation} \label{eq:detprod}
	\det (\mat I + m_{\mathrm{sc}}(z) \mat P_n) = \prod_{i=1}^k (1 + m_{\mathrm{sc}}(z) \lambda_i(\mat P_n)) = o(1). 
\end{equation}
In order to reach a contradiction, suppose $z \in \tilde{\mathcal{S}}_n(\kappa, \eps)$ is an eigenvalue of $\frac{1}{\sqrt{n}} \mat W_n + \mat P_n$ and 
that 
\eqref{eq:detprod} holds.  Then there exists $1 \leq i \leq k$ such that
\begin{equation} \label{eq:loc1msc}
	1 + m_{\mathrm{sc}}(z) \lambda_i(\mat P_n) = o(1). 
\end{equation}
There are two cases to consider: either $\lambda_i(\mat P_n)$ satisfies \eqref{eq:geqdelta} or \eqref{eq:leqdelta}.  However, in view of Lemma \ref{lemma:msc}, it must be the case that \eqref{eq:geqdelta} holds, and hence $1 \leq i \leq j$.  Thus, from \eqref{eq:loc1msc}, we find that 
$$ m_{\mathrm{sc}}(z) = \frac{-1}{\lambda_i(\mat P_n)} + o(1). $$
Using \eqref{eq:mscdef}, this yields that
$$ z = \lambda_i(\mat P_n) + \frac{1}{\lambda_i(\mat P_n)} + o(1). $$
From \eqref{eq:3delta'}, for $n$ sufficiently large, this implies that $z \not\in \Esc_{2\delta'}$, a contradiction of our assumption that $z \in \tilde{\mathcal{S}}_n(\kappa, \eps)$.  We conclude that, a.s., for $n$ sufficiently large, no eigenvalues of $\frac{1}{\sqrt{n}} \mat W_n + \mat P_n$ are contained in $\tilde{\mathcal{S}}_n(\kappa, \eps)$, and the proof is complete.  
\end{proof}

It remains to prove the following lemma.  

\begin{lemma} \label{lem:ellipse}
Suppose $\lambda \in \mathbb{C}$ with $|\lambda| \geq 1 + \delta$ for some $\delta > 0$.  Then there exists $\delta' > 0$ (depending only on $\delta$) such that
$$ \lambda + \frac{1}{\lambda} \not\in \Esc_{\delta'}. $$
Furthermore, one can ensure that $\delta'<\frac{\delta^2}{1+\delta}$.
\end{lemma}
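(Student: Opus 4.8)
The plan is to exploit the fact that the map $\lambda \mapsto \lambda + \lambda^{-1}$ sends the circle $\{|\lambda| = r\}$ (for $r > 1$) onto an ellipse confocal with $[-2,2]$, and then to evaluate exactly the distance from that ellipse to $[-2,2]$. Writing $r := |\lambda| \geq 1+\delta$ and $\lambda = r e^{\sqrt{-1}\theta}$, a one-line computation gives
\[
\lambda + \frac1\lambda = \left(r + \frac1r\right)\cos\theta + \sqrt{-1}\left(r - \frac1r\right)\sin\theta ,
\]
so $\lambda + \lambda^{-1}$ lies on the ellipse $E_r$ with semi-axes $a_r := r + r^{-1}$ and $b_r := r - r^{-1}$. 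Since $a_r^2 - b_r^2 = 4$, this ellipse has foci at $\pm 2$; in particular $\dist(\lambda + \lambda^{-1}, [-2,2]) \geq \dist(E_r, [-2,2])$, and it suffices to bound the latter from below.

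To compute $\dist(E_r, [-2,2])$ I would write a generic point of $E_r$ as $(a_r\cos t, b_r\sin t)$ and split according to whether its orthogonal projection onto the real axis lands inside $[-2,2]$ (in which case the distance to the segment is $b_r|\sin t|$) or outside it (in which case the squared distance is $(a_r|\cos t| - 2)^2 + b_r^2\sin^2 t$). Using $a_r^2 - b_r^2 = 4$ to simplify both expressions, one finds that in the first regime the minimum is $b_r^2/a_r = a_r - 4/a_r$, while in the second regime the squared distance is monotone in $|\cos t|$ and is minimized at the vertex $(a_r, 0)$, where it equals $(a_r - 2)^2$. Since $a_r > 2$ forces $a_r - 4/a_r > a_r - 2$, we conclude
\[
\dist(E_r, [-2,2]) = a_r - 2 = r + \frac1r - 2 = \frac{(r-1)^2}{r} .
\]
(One can instead derive the same value from the focal-distance property of a confocal ellipse, which streamlines the case analysis.)

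Finally, $g(s) := (s-1)^2/s$ has $g'(s) = (s^2-1)/s^2 \geq 0$ for $s \geq 1$, so $g$ is nondecreasing on $[1,\infty)$ and hence $\dist(E_r, [-2,2]) = g(r) \geq g(1+\delta) = \delta^2/(1+\delta)$. Therefore $\dist(\lambda + \lambda^{-1}, [-2,2]) \geq \delta^2/(1+\delta)$, so for any $\delta'$ with $0 < \delta' < \delta^2/(1+\delta)$ the point $\lambda + \lambda^{-1}$ is strictly farther than $\delta'$ from $[-2,2]$ and hence lies outside $\Esc_{\delta'}$; this establishes the lemma together with the stated bound on $\delta'$. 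The one step that is not completely routine is the exact evaluation of $\dist(E_r,[-2,2])$: it is an elementary minimization, but one must be a little careful because the nearest point of $[-2,2]$ switches between an interior point and the endpoint $2$; once this is settled, monotonicity in $r$ gives the conclusion immediately.
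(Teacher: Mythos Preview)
Your proof is correct and in fact more careful than the paper's. Both arguments begin identically, writing $\lambda = re^{\sqrt{-1}\theta}$ and observing that $\lambda + \lambda^{-1}$ lies on the ellipse $E_r$ with semi-axes $r+r^{-1}$ and $r-r^{-1}$. The paper then argues softly: since $r+r^{-1}>2$, the ellipse $E_r$ and the segment $[-2,2]$ are disjoint compact sets, so their distance is positive; the bound $\delta' < \delta^2/(1+\delta)$ is then read off from the single test point $\lambda = 1+\delta$.

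Your approach is more quantitative: you compute $\dist(E_r,[-2,2])$ exactly as $(r-1)^2/r$ via the two-regime minimization, and then use monotonicity of $s\mapsto (s-1)^2/s$ on $[1,\infty)$ to obtain the uniform lower bound $\delta^2/(1+\delta)$ over all $r\ge 1+\delta$. This buys you an explicit handling of the dependence of $\delta'$ on $\delta$ alone (rather than on the particular $r=|\lambda|$), a point the paper's compactness argument leaves implicit --- the paper's proof as written gives $\dist(E_r,[-2,2])>0$ for each fixed $r$ but does not spell out why a single $\delta'$ works for all $r\ge 1+\delta$. Your explicit formula also makes clear that $\delta^2/(1+\delta)$ is the sharp threshold, whereas the paper only observes it is a necessary upper bound by testing $\lambda=1+\delta$.
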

\begin{proof}
For any $r > 1$, let $E_r$ denote the ellipse
$$ E_r := \left\{ z \in \mathbb{C} : \frac{\Re(z)^2}{\left( r + \frac{1}{r} \right)^2} + \frac{\Im(z)^2}{\left( r - \frac{1}{r} \right)^2} = 1 \right\}. $$ 
Let $\dist(E_r, [-2,2])$ denote the distance in the complex plane between the ellipse $E_r$ and the line segment $[-2,2] \subset \mathbb{R}$.

Assume $\lambda \in \mathbb{C}$ with $|\lambda| \geq 1 + \delta$.  We write $\lambda$ in polar coordinates as $\lambda = r e^{\sqrt{-1} \theta}$, where $r \geq 1 + \delta$.  Thus, 
$$ \lambda + \frac{1}{\lambda} = \left ( r + \frac{1}{r} \right) \cos \theta + \sqrt{-1} \left( r - \frac{1}{r} \right) \sin \theta. $$
We conclude that $\lambda + \frac{1}{\lambda} \in E_{r}$.  We also note that, for $r > 1$, the function $r \mapsto r + 1/r$ 
is strictly increasing. This shows that $E_r$ and the segment $[-2,2]$ are disjoint, since $r+\frac1r>2$.

It follows that $E_{r}$ and the line segment $[-2,2]$ are compact disjoint subsets in the complex plane.  Hence, $\dist(E_{r},[-2, 2]) > 0$, and the first claim follows.  

To show that $\delta'<\frac{\delta^2}{1+\delta}$, we note that, because $\delta'$ depends only on $\delta$, we may choose $\lambda=1+\delta$.  Thus, we know that $\lambda+\frac 1\lambda -2= 1+\delta +\frac1{1+\delta}-2 > \delta'$, an inequality that can be rearranged to show $\delta'<\frac{\delta^2}{1+\delta}$.
\end{proof}

\subsection{Proof of Theorem \ref{thm:sc:refine}}
We now turn our attention to the the proof of Theorem \ref{thm:sc:refine}.  Let $m_{\mathrm{MP}}$ be the Stieltjes transform of the Marchenko--Pastur law $\mu_{\mathrm{MP},1}$.  That is, 
$$ m_{\mathrm{MP}}(z) := \int_{\mathbb{R}} \frac{\rho_{\mathrm{MP},1}(x)}{x - z} dx $$
for $z \in \mathbb{C}$ with $z \not\in [0,4]$, where $\rho_{\mathrm{MP},1}$ is the density defined in \eqref{eq:def:rhoMP} and \eqref{eq:def:lambdapm} with $y = 1$.  In particular, $m_{\mathrm{MP}}$ is characterized as the unique solution of 
\begin{equation} \label{eq:mMP}
	m_{\mathrm{MP}}(z) + \frac{1}{z + z m_{\mathrm{MP}}(z)} = 0 
\end{equation}
satisfying $\Im m_{\mathrm{MP}}(z) > 0$ for $\Im z > 0$ and extended where possible by analyticity.   In addition, $m_{\mathrm{MP}}$ satisfies the following bound.  

\begin{lemma} \label{lemma:mMP}
For all $z \in \mathbb{C}$ with $z \not \in [0,4]$, $|1 + z m_{\mathrm{MP}}(z)| \leq 1$.  
\end{lemma}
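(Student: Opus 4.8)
The plan is to express $1 + z m_{\mathrm{MP}}(z)$ in terms of the semicircle Stieltjes transform and then invoke Lemma~\ref{lemma:msc}. First I would use the elementary identity $\frac{z}{x-z} = -1 + \frac{x}{x-z}$ together with the fact that $\rho_{\mathrm{MP},1}$ integrates to $1$ to rewrite the defining integral as
\[
1 + z\, m_{\mathrm{MP}}(z) = \int_{0}^{4} \frac{x\,\rho_{\mathrm{MP},1}(x)}{x - z}\, dx .
\]

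The key observation is that $x\,\rho_{\mathrm{MP},1}(x)$ is precisely the semicircle density recentered on $[0,4]$. Indeed, from \eqref{eq:def:rhoMP} and \eqref{eq:def:lambdapm} with $y = 1$ we have $\lambda_- = 0$ and $\lambda_+ = 4$ (and no point mass), so for $x \in [0,4]$,
\[
x\,\rho_{\mathrm{MP},1}(x) = \frac{1}{2\pi}\sqrt{x(4-x)} = \frac{1}{2\pi}\sqrt{4 - (x-2)^2} = \rho_{\mathrm{sc}}(x-2),
\]
using $x(4-x) = 4 - (x-2)^2$ and the definition \eqref{eq:def:rhosc}. Substituting $y = x - 2$ in the integral above then gives
\[
1 + z\, m_{\mathrm{MP}}(z) = \int_{-2}^{2} \frac{\rho_{\mathrm{sc}}(y)}{y - (z-2)}\, dy = m_{\mathrm{sc}}(z-2),
\]
which is well defined precisely because $z \notin [0,4]$ forces $z - 2 \notin [-2,2]$. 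Applying Lemma~\ref{lemma:msc} at the point $z-2$ then yields $|1 + z\, m_{\mathrm{MP}}(z)| = |m_{\mathrm{sc}}(z-2)| \le 1$, which is the claim.

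There is no real obstacle here; the only points needing a little care are checking that the defining integral for $m_{\mathrm{MP}}$ converges for every $z \notin [0,4]$ (the singularity of $\rho_{\mathrm{MP},1}$ at the origin is integrable and causes no trouble away from $z = 0$, which is itself excluded) and that the change of variables is legitimate. As an alternative to the integral manipulation one could argue purely algebraically: eliminating $m_{\mathrm{MP}}(z)$ between $g(z) := 1 + z\, m_{\mathrm{MP}}(z)$ and the functional equation \eqref{eq:mMP} gives $g^2 + (z-2)g + 1 = 0$, i.e.\ $g + g^{-1} + (z-2) = 0$; after checking the sign of $\Im g(z)$ for $\Im z > 0$ one identifies $g(z) = m_{\mathrm{sc}}(z-2)$ via \eqref{eq:mscdef} and concludes as before. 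I would present the integral version as the main proof, since it avoids the imaginary-part bookkeeping needed to pin down the correct branch in the algebraic approach.
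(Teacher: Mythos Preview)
Your proof is correct, and it takes a genuinely different route from the paper's. The paper instead invokes the identity $w\,m_{\mathrm{MP}}(w^2) = m_{\mathrm{sc}}(w)$ for $w \notin [-2,2]$, and then uses the functional equation \eqref{eq:mscdef} (which gives $1 + w\,m_{\mathrm{sc}}(w) = -m_{\mathrm{sc}}(w)^2$) to conclude $|1 + w^2 m_{\mathrm{MP}}(w^2)| = |m_{\mathrm{sc}}(w)|^2 \le 1$; every $z \notin [0,4]$ is hit by some such $w^2$. Your argument replaces this quadratic substitution with the cleaner affine identity $1 + z\,m_{\mathrm{MP}}(z) = m_{\mathrm{sc}}(z-2)$, obtained by recognizing $x\,\rho_{\mathrm{MP},1}(x) = \rho_{\mathrm{sc}}(x-2)$ directly at the level of densities. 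The advantage of your approach is that it avoids both the square-root bookkeeping and the appeal to \eqref{eq:mscdef}, landing on Lemma~\ref{lemma:msc} in one step; the paper's version is terser on the page but leans on the unproved relation $w\,m_{\mathrm{MP}}(w^2) = m_{\mathrm{sc}}(w)$. Your alternative algebraic derivation via $g^2 + (z-2)g + 1 = 0$ is essentially a hybrid of the two and would also work, with the branch check you mention.
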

\begin{proof}
We observe that, for $z \in \mathbb{C}$ with $z \not\in [-2,2]$, we have $z m_{\mathrm{MP}}(z^2) = m_{\mathrm{sc}}(z)$.  Thus,
$$ |1 + z^2 m_{\mathrm{MP}}(z^2)| = |1 + z m_{\mathrm{sc}}(z)| = |m_{\mathrm{sc}}(z)|^2 \leq 1 $$
by \eqref{eq:mscdef} and Lemma \ref{lemma:msc}.  
\end{proof}

We now state the analogue of Theorem \ref{thm:wigner:isotropic}.  

\begin{theorem}[Isotropic Marchenko--Pastur law] \label{thm:sc:isotropic}
Let $\xi$ be a real random variable which satisfies condition {\bf C1}.  For each $n \geq 1$, let $\mat{S}_n$ be an $n \times n$ sample covariance matrix with atom variable $\xi$ and parameters $(n,n)$, and let $u_n$ and $v_n$ be unit vectors in $\mathbb{C}^n$.  Then, for any $\kappa > 4$ and $\delta', \eps > 0$, a.s.
$$ \lim_{n \to \infty} \sup_{z \in \mathcal{T}_n(\kappa, \delta',\eps)} \left| u_n^\ast \left( \frac{1}{n} \mat S_n - z \mat I \right)^{-1} v_n - m_{\mathrm{MP}}(z) u_n^\ast v_n \right| = 0, $$
where $\mathcal{T}_n(\kappa, \delta',\eps)$ is the union of 
$$ \left \{ z = E + \sqrt{-1} \eta : |z| \leq \kappa, n^{-1 + \eps} \leq |\eta| \leq \kappa, |z| \geq \delta' \right\}, $$
\begin{equation} \label{eq:sc:outside1}
	\left \{ z = E + \sqrt{-1} \eta : 4 + n^{-2/3 + \eps} \leq E \leq \kappa, |\eta| \leq \kappa \right\},
\end{equation}
and
\begin{equation} \label{eq:sc:outside2} 
	\left\{ z = E + \sqrt{-1} \eta : E < 0, |E| \leq \kappa, |\eta| \leq \kappa, |z| \geq \delta' \right\}. 
\end{equation}
\end{theorem}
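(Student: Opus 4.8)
The plan is to deduce Theorem~\ref{thm:sc:isotropic} from the isotropic Marchenko--Pastur local law of \cite{AEKYY} and the anisotropic local law of \cite{KY} (specialized to the identity population covariance), by the same route used above to obtain Theorem~\ref{thm:wigner:isotropic}. Those works establish, for each fixed $z$ in a spectral domain of the form $\{z : \dist(z,[0,4]) \ge c,\ n^{-1+\eps} \le |\Im z| \le \kappa\}$, together with a separate statement valid near the soft edge at $4$ and in the region to the left of $[0,4]$, the estimate
$$ \left| u_n^\ast \left( \tfrac1n \mat S_n - z \mat I \right)^{-1} v_n - m_{\mathrm{MP}}(z)\, u_n^\ast v_n \right| = O\left( \sqrt{ \frac{\Im m_{\mathrm{MP}}(z)}{n |\Im z|} } + \frac{1}{n|\Im z|} \right) $$
with probability $1 - O_D(n^{-D})$ for every $D>0$; condition {\bf C1} supplies precisely the moment hypotheses these results need.

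From here the argument is a replay of the four modifications used to pass from \cite{AEKYY,KY} to Theorem~\ref{thm:wigner:isotropic}. (i) Since $\mat S_n$ has real entries, $\overline{(\tfrac1n\mat S_n - z\mat I)^{-1}} = (\tfrac1n\mat S_n - \bar z\mat I)^{-1}$ and $\overline{m_{\mathrm{MP}}(z)} = m_{\mathrm{MP}}(\bar z)$, so the bound for $z\in\mathbb C^+$ immediately gives the bound for $\bar z\in\mathbb C^-$. (ii) To pass from fixed $z$ to uniformity over $z\in\mathcal{T}_n(\kappa,\delta',\eps)$ one runs a net argument: on $\mathcal{T}_n$ the resolvent $z\mapsto(\tfrac1n\mat S_n-z\mat I)^{-1}$ is Lipschitz with constant $\dist(z,\Lambda(\tfrac1n\mat S_n))^{-2}$, which is polynomially bounded in $n$ because the a.s.\ eigenvalue-location bound for $\tfrac1n\mat S_n$ (the analogue of Lemma~\ref{lemma:wigner:largest}, which follows from \cite{AEKYY,KY}) together with the nonnegativity of $\mat S_n$ keeps $z$ at distance $\gtrsim n^{-1+\eps}$ from the spectrum throughout $\mathcal{T}_n$; since $m_{\mathrm{MP}}$ is likewise polynomially Lipschitz on $\mathcal{T}_n$, evaluating the fixed-$z$ estimate on a sufficiently fine ($n^{-C}$-)net and interpolating gives the uniform bound. (iii) To include the parts of \eqref{eq:sc:outside1} and \eqref{eq:sc:outside2} lying on the real axis ($\eta=0$) one uses that, outside $[0,4]$, $m_{\mathrm{MP}}$ is real-analytic and real-valued on $\mathbb R$, so $\Im m_{\mathrm{MP}}(E+\sqrt{-1}\eta)$ is comparable to $\eta$ as $\eta\to0$; hence the dominant error term $\sqrt{\Im m_{\mathrm{MP}}(z)/(n\eta)}$ is bounded by a quantity not depending on $\eta$, and combining this with the net argument of (ii) lets one pass to $\eta=0$ exactly as in \cite[Remark~2.7]{AEKYY}. (iv) Finally, the high-probability estimates become almost-sure statements by the Borel--Cantelli lemma.

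The one genuinely new feature compared with the Wigner setting concerns the two endpoints of $[0,4]$. Because the aspect ratio is $1$, the left endpoint $\lambda_-=0$ of $\mu_{\mathrm{MP},1}$ is a \emph{hard} edge at which $\rho_{\mathrm{MP},1}$ blows up like $x^{-1/2}$ and the local law degenerates; the results of \cite{AEKYY,KY} are available only at distance bounded below from $0$, which is exactly why $\mathcal{T}_n(\kappa,\delta',\eps)$ carries the restriction $|z|\ge\delta'$, so no additional work is needed near the origin. At the soft right edge $4$ and in the region $E<0$ of \eqref{eq:sc:outside2}, on the other hand, the picture mirrors the Wigner edges at $\pm2$, and I expect the main point requiring care to be checking that $n^{-2/3+\eps}$ is the correct edge tolerance in \eqref{eq:sc:outside1}. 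This can be done either directly (the square-root vanishing of $\rho_{\mathrm{MP},1}$ at $4$ is of the same order as that of $\rho_{\mathrm{sc}}$ at $2$), or by transporting the Wigner edge estimates through the substitution $z\leftrightarrow z^2$, which sends $[-2,2]$ to $[0,4]$ and relates the Stieltjes transforms via $z\,m_{\mathrm{MP}}(z^2)=m_{\mathrm{sc}}(z)$ (the identity already used to prove Lemma~\ref{lemma:mMP}), under which the Wigner edge scale $n^{-2/3}$ at $2$ maps to the scale $n^{-2/3}$ at $4$. Apart from this bookkeeping at the edges, the proof proceeds verbatim as in that of Theorem~\ref{thm:wigner:isotropic}.
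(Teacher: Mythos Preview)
Your proposal is correct and follows essentially the same approach as the paper: reduce to the isotropic local law of \cite{AEKYY}, then apply the four modifications (symmetry to the lower half-plane, a net argument for uniformity, a continuity argument to reach $\eta=0$ outside the spectrum, and Borel--Cantelli for the almost-sure conclusion). One small correction: only \cite{AEKYY} is needed here, since condition {\bf C1} consists of a single moment hypothesis (unlike {\bf C0}, which splits into two cases requiring \cite{AEKYY} and \cite{KY} respectively), and the reference \cite{KY} in this paper treats Wigner matrices rather than sample covariance matrices. Your additional remarks about the hard edge at $0$ (explaining the constraint $|z|\ge\delta'$) and the $n^{-2/3}$ scale at the soft edge $4$ are helpful elaborations that the paper simply defers to \cite[Remark~2.7]{AEKYY}.
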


Theorem \ref{thm:sc:isotropic} follows from \cite[Theorems 2.4 and 2.5]{AEKYY} with the following modifications.
\begin{itemize}
\item The results in \cite{AEKYY} only deal with points $z \in \mathcal{T}_{n}(\kappa,\delta',\eps) \cap \mathbb{C}^+$ (i.e. points in the upper half-plane).  The results easily extend to the lower half-plane by symmetry.  
\item The results in \cite{AEKYY} also only hold for a fixed point $z \in \mathcal{S}_n(\kappa, \eps)$.  However, this can easily be overcome by a simple net argument; see \cite[Remark 2.6]{AEKYY} for details.
\item The case when $\eta = 0$ in \eqref{eq:sc:outside1} and \eqref{eq:sc:outside2} can again be obtained by a continuity argument; see \cite[Remark 2.7]{AEKYY} for details.   
\item The results in \cite{AEKYY} hold with very high probability.  To achieve the almost sure bounds in Theorem \ref{thm:wigner:isotropic}, one must apply the Borel--Cantelli lemma. 
\end{itemize}

We will need the following bound on the largest eigenvalue of a sample covariance matrix, which is the analogue of Lemma \ref{lemma:wigner:largest}.  

\begin{lemma}[Bounds on the largest eigenvalue] \label{lemma:sc:largest}
Let $\xi$ be a real random variable which satisfies condition {\bf C1}.  For each $n \geq 1$, let $\mat S_n$ be a sample covariance matrix with atom variable $\xi$ and parameters $(n,n)$.  Then for any $\eps > 0$, a.s., for $n$ sufficiently large, 
$$ \sup_{1 \leq i \leq n } \left| \frac{1}{n} \lambda_i(\mat S_n) \right| \leq 4 + n^{-2/3 + \eps}. $$
\end{lemma}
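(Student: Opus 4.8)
The plan is to derive this exactly as Lemma~\ref{lemma:wigner:largest} was derived in the Wigner case: combine a high-probability edge estimate for the largest eigenvalue with the Borel--Cantelli lemma. The first observation is that $\mat S_n = \transp{\mat X}\mat X$ is positive semidefinite, so $\lambda_i(\mat S_n) \ge 0$ for every $i$, and hence
\[ \sup_{1 \le i \le n} \left| \frac{1}{n} \lambda_i(\mat S_n) \right| = \frac{1}{n} \lambda_1(\mat S_n), \]
where $\lambda_1(\mat S_n)$ denotes the largest eigenvalue; it therefore suffices to bound $\frac{1}{n}\lambda_1(\mat S_n)$ from above.

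Next I would invoke an eigenvalue rigidity estimate at the upper edge for sample covariance matrices of the type established via the local laws in \cite{AEKYY} (see also \cite{EKYY} and the references therein). For parameters $(n,n)$ the relevant aspect ratio is $y = 1$, so by \eqref{eq:def:lambdapm} the upper soft edge of $\mu_{\mathrm{MP},1}$ is $\lambda_+ = 4$; under condition {\bf C1} (all moments of $\xi$ finite) such estimates give, for every $\eps > 0$ and every $D > 0$, a constant $C = C_{\eps, D}$ with
\[ \Prob\!\left( \frac{1}{n}\lambda_1(\mat S_n) \ge 4 + n^{-2/3 + \eps} \right) \le C n^{-D} \]
for all $n$. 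Taking $D = 2$ makes the right-hand side summable in $n$, so the Borel--Cantelli lemma gives that, almost surely, $\frac{1}{n}\lambda_1(\mat S_n) < 4 + n^{-2/3+\eps}$ for all sufficiently large $n$; combined with the first paragraph, this is exactly the claimed bound.

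The only real obstacle is bibliographic rather than mathematical: one has to match the normalization and moment hypotheses of the available edge/rigidity theorems --- which are frequently stated for rectangular $M \times N$ matrices with $M/N$ bounded away from $0$ and $\infty$, or for the companion matrix $\mat X \transp{\mat X}$ --- to the square case and the $\frac1n$ normalization used here. (Note that Theorem~\ref{thm:sc:norm}, although already cited in the paper, only yields $\limsup_n \frac{1}{n}\lambda_1(\mat S_n) \le 4$ and not the rate $n^{-2/3+\eps}$, so a genuinely sharper input is needed.) A clean alternative, if preferred, is to use $\lambda_1(\mat S_n) = \sigma_{\max}(\mat X)^2$ together with a high-probability edge bound $\frac{1}{\sqrt n}\sigma_{\max}(\mat X) \le 2 + n^{-2/3 + \eps/2}$, and then absorb the resulting cross term $4 n^{-2/3+\eps/2}$ into $n^{-2/3+\eps}$ for $n$ large (since $\log_n 4 \to 0$); Borel--Cantelli then finishes the argument as before.
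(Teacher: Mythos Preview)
Your proposal is correct and matches the paper's approach: the paper simply states that Lemma~\ref{lemma:sc:largest} follows from \cite[Theorem~2.10]{AEKYY}, which is precisely the edge rigidity input you invoke, converted to an almost sure statement via Borel--Cantelli. Your additional remarks (positive semidefiniteness reducing to the top eigenvalue, the alternative via $\sigma_{\max}(\mat X)$) are correct elaborations but not needed beyond the citation.
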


Lemma \ref{lemma:sc:largest} follows from \cite[Theorem 2.10]{AEKYY}.  We will also need the following result which is the analogue of Lemma \ref{lem:ellipse}.

\begin{lemma} \label{lem:sc}
Suppose $\lambda \in \mathbb{C}$ with $|\lambda| \geq 1 + \delta$ for some $\delta > 0$.  Then there exists $\delta' > 0$ (depending only on $\delta$) such that
$$ \lambda \left( 1 + \frac{1}{\lambda} \right)^2 \not\in \Emp_{\delta'}. $$
Furthermore, one can ensure that $\delta'<\frac{\delta^2}{1+\delta}$.
\end{lemma}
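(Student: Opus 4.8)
The plan is to reduce this statement directly to Lemma~\ref{lem:ellipse} via an elementary algebraic identity, so that essentially no new work is required. First I would observe that, since $|\lambda| \ge 1 + \delta > 0$, the scalar $\lambda$ is nonzero, and we may expand
\[
	\lambda\left(1 + \frac{1}{\lambda}\right)^2 = \lambda\left(1 + \frac{2}{\lambda} + \frac{1}{\lambda^2}\right) = \lambda + \frac{1}{\lambda} + 2 .
\]
Thus the map $\lambda \mapsto \lambda(1+1/\lambda)^2$ is simply the map $\lambda \mapsto \lambda + 1/\lambda$ appearing in Lemma~\ref{lem:ellipse} composed with the translation $z \mapsto z + 2$.

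Next I would record the matching relation between the two neighborhoods. Since $\Esc_{\delta'}$ is the $\delta'$-neighborhood of $[-2,2]$, since $\Emp_{\delta'}$ is the $\delta'$-neighborhood of $[0,4]$, and since $[0,4] = [-2,2] + 2$, it follows immediately from the definitions that $\Emp_{\delta'} = \{\, z + 2 : z \in \Esc_{\delta'}\,\}$; equivalently, a point $w$ lies in $\Emp_{\delta'}$ if and only if $w - 2$ lies in $\Esc_{\delta'}$.

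Finally, I would apply Lemma~\ref{lem:ellipse} to $\lambda$ (whose hypothesis $|\lambda| \ge 1 + \delta$ is exactly the one assumed here) to obtain a constant $\delta' > 0$ depending only on $\delta$, with $\delta' < \frac{\delta^2}{1+\delta}$, such that $\lambda + 1/\lambda \notin \Esc_{\delta'}$. Combining this with the identity and the translation identity for the neighborhoods gives $\lambda(1+1/\lambda)^2 - 2 = \lambda + 1/\lambda \notin \Esc_{\delta'}$, hence $\lambda(1+1/\lambda)^2 \notin \Emp_{\delta'}$, and the bound $\delta' < \frac{\delta^2}{1+\delta}$ is inherited verbatim from Lemma~\ref{lem:ellipse}. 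I do not expect any genuine obstacle: the whole content is the identity $\lambda(1+1/\lambda)^2 = \lambda + 1/\lambda + 2$ together with the fact that the Marchenko--Pastur support $[0,4]$ (for $y=1$) is the shift by $2$ of the semicircle support $[-2,2]$, which is precisely the shift that relates the resolvent characterization $m_{\mathrm{MP}}$ to $m_{\mathrm{sc}}$ (cf. the proof of Lemma~\ref{lemma:mMP} via $z\,m_{\mathrm{MP}}(z^2) = m_{\mathrm{sc}}(z)$).
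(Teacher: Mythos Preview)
Your proposal is correct and matches the paper's own proof essentially line for line: the paper also expands $\lambda(1+1/\lambda)^2 = 2 + \lambda + 1/\lambda$, invokes Lemma~\ref{lem:ellipse} to obtain $\delta'$ with $\lambda+1/\lambda\notin\Esc_{\delta'}$ and $\delta'<\frac{\delta^2}{1+\delta}$, and then uses the translation $\Emp_{\delta'}=\Esc_{\delta'}+2$ to conclude.
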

\begin{proof}
We observe that
$$ \lambda \left( 1 + \frac{1}{\lambda} \right)^2 = (\lambda + 1) \left( 1 + \frac{1}{\lambda} \right) = 2 + \lambda + \frac{1}{\lambda}. $$
By Lemma \ref{lem:ellipse}, there exists $\delta' > 0$ such that $\lambda + 1/\lambda \not\in \Esc_{\delta'}$ and $\delta'<\frac{\delta^2}{1+\delta}$.  Hence, we conclude that $2 + \lambda + \frac{1}{\lambda} \not\in \Emp_{\delta'}$.  
\end{proof}

\begin{proof}[Proof of Theorem \ref{thm:sc:refine}]
Let $\lambda_1(\mat P_n), \ldots, \lambda_j(\mat P_n)$ be the $j$ eigenvalues of $\mat P_n$ which satisfy \eqref{eq:sc:largeevalues}.  Then, by Lemma~\ref{lem:sc}, there exists $\delta' > 0$ such that $\delta'<\frac{\delta^2}{1+\delta}$ and 
\begin{equation} \label{eq:sc:3delta}
	\lambda_i(\mat P_n) \left( 1 + \frac{1}{\lambda_i(\mat P_n)} \right)^2 \not\in \Emp_{3\delta'} 
\end{equation}
for $1 \leq i \leq j$.  Take 
$$ \kappa := 5 + 5\sup_{n \geq 1} \| \mat P_n \|. $$ 
By Theorem \ref{thm:sc:norm}, a.s., for $n$ sufficiently large, $\left\| \frac{1}{n} \mat S_n (\mat I+ \mat P_n) \right\| \leq \kappa$.  Hence, a.s., for $n$ sufficiently large, all the eigenvalues of $\frac{1}{n} \mat S_n (\mat I+ \mat P_n)$ are contained in the disk $\{z \in \mathbb{C} : |z| \leq \kappa \}$.  

We first consider the region 
$$ \mathcal{T}'_n(\kappa, \delta') := \left\{ z \in \mathbb{C} : |z| \leq \kappa \right\} \cap \left( \mathbb{C} \setminus \Emp_{\delta'} \right). $$
Notice that, by Theorem \ref{thm:sc:norm} (and the fact that the eigenvalues of $\frac{1}{n} \mat S_n$ are real and non-negative), a.s., for $n$ sufficiently large, no point outside $\Emp_{\delta'}$ is an eigenvalue of $\frac{1}{n} \mat S_n$.  By Lemma \ref{lemma:eigcriterion}, a.s., for $n$ sufficiently large, $z \in \mathcal{T}'_n(\kappa, \delta')$ is an eigenvalue of $\frac{1}{n} \mat S_n (\mat I+ \mat P_n)$ if and only if 
$$ f(z) := \det \left( \mat I + \mat R_n(z) \frac{1}{n} \mat S_n \mat P_n \right) = 0, $$
where 
$$ \mat R_n(z) := \left( \frac{1}{n} \mat S_n - z \mat I \right)^{-1} $$
is the resolvent of $\frac{1}{n} \mat S_n$.  Rewriting, we find that
$$ f(z) = \det \left( \mat I + \left( \mat I + z \mat R_n(z) \right) \mat P_n \right). $$
By the singular value decomposition, we can write $\mat P_n = \mat A_n \mat B_n$, where $\mat A_n$ is an $n \times k$ matrix, $\mat B_n$ is a $k \times n$ matrix, and both $\mat A_n$ and $\mat B_n$ have spectral norm $O(1)$.  Thus, by \eqref{eq:fundamental}, we obtain that
$$ f(z) = \det \left( \mat I + \mat B_n \left( \mat I + z \mat R_n(z) \right) \mat A_n \right). $$

Introduce the function 
$$ g(z) := \prod_{i=1}^k \left( 1 + (1 + z m_{\mathrm{MP}}(z)) \lambda_j(\mat P_n) \right), $$
where $\lambda_1(\mat P_n), \ldots, \lambda_k(\mat P_n)$ are the non-trivial eigenvalues of $\mat P_n$ (some of which may be zero), including the $j$ eigenvalues which satisfy \eqref{eq:sc:largeevalues}.   Then $f$ and $g$ are both meromorphic functions that are equal to $1$ at infinity.  In view of Lemma \ref{lemma:eigcriterion}, it follows that a.s., for $n$ sufficiently large, the zeros of $f(z)$ correspond to eigenvalues of $\frac{1}{n} \mat S_n (\mat I+ \mat P_n)$ outside $\Emp_{\delta'}$.  In addition, an inspection of the argument of Lemma \ref{lemma:eigcriterion} reveals that the multiplicity of a given eigenvalue is equal to the degree of the corresponding zero of $f$. 

From Theorem \ref{thm:sc:isotropic} and another application of \eqref{eq:fundamental}, we have that a.s., 
\begin{align*}
	f(z) &= \det \left( \mat I + (1 + z m_{\mathrm{MP}}(z)) \mat B_n\mat A_n \right) + o(1) \\
	&= \det \left( \mat I + (1 + z m_{\mathrm{MP}}(z)) \mat P_n \right) + o(1) \\
	&= g(z) + o(1) 
\end{align*}
uniformly for $z \in \mathcal{T}'_n(\kappa, \delta')$.   Thus, by Rouche's theorem (or the argument principle) and Lemma~\ref{lem:sc:ident} below, a.s., there are exactly $j$ eigenvalues of $\frac{1}{n} \mat S_n (\mat I+ \mat P_n)$ not in the region $\Emp_{\delta'}$ and, after labeling the eigenvalues properly, they satisfy
$$ \lambda_i \left( \frac{1}{n} \mat S_n ( \mat I + \mat P_n) \right) = \lambda_i( \mat P_n) \left( 1 + \frac{1}{\lambda_i(\mat P_n)} \right)^2 + o(1) $$
for $1 \leq i \leq j$.  

We now consider the region
$$ \tilde{\mathcal{T}}_n(\kappa, \delta', \eps) := \mathcal{T}_n(\kappa, \delta', \eps) \cap \Emp_{\delta'}, $$
where $\mathcal{T}_n(\kappa, \delta', \eps)$ is defined in Theorem \ref{thm:sc:isotropic}.  It remains to show that a.s., for $n$ sufficiently large, no eigenvalue of $\frac{1}{n} \mat S_n (\mat I + \mat P_n)$ is contained in $\tilde{\mathcal{T}}_n(\kappa, \delta',\eps)$.  

By Lemma \ref{lemma:sc:largest} (and the fact that the eigenvalues of $\frac{1}{n} \mat S_n$ are real and non-negative), it follows that a.s., for $n$ sufficiently large, no point in $\tilde{\mathcal{T}}_n(\kappa, \delta', \eps)$ is an eigenvalue of $\frac{1}{n} \mat S_n$.  Thus, by Lemma \ref{lemma:eigcriterion}, a.s., for $n$ sufficiently large, $z \in \tilde{\mathcal{T}}_n(\kappa, \delta', \eps)$ is an eigenvalue of $\frac{1}{n} \mat{S}_n (\mat I + \mat P_n)$ if and only if $f(z) = 0$.  Moreover, by Theorem \ref{thm:sc:isotropic}, we obtain (as before) that, a.s., 
$$ f(z) = g(z) + o(1) $$
uniformly for $z \in \tilde{\mathcal{T}}_n(\kappa, \delta', \eps)$.  

In order to reach a contradiction, suppose that $z \in \tilde{\mathcal{T}}_n(\kappa, \delta', \eps)$ is an eigenvalue of $\frac{1}{n} \mat{S}_n (\mat I + \mat P_n)$.  Then, a.s.
$$ g(z) = o(1). $$
By definition of $g$, this implies that there exists $1 \leq i \leq k$ such that
$$ 1 + (1 + z m_{\mathrm{MP}}(z)) \lambda_i(\mat P_n) = o(1). $$
By Lemma \ref{lemma:mMP} and condition \eqref{eq:sc:noeigdelta}, it must be the case that $\lambda_i(\mat P_n)$ satisfies \eqref{eq:sc:largeevalues}.  In other words, $1 \leq i \leq j$.  Hence, we obtain that
$$ z m_{\mathrm{MP}}(z) = -1 - \frac{1}{\lambda_i(\mat P_n)} + o(1). $$
In view of \eqref{eq:mMP} and the fact that $\|P_n\| = O(1)$, we find that 
$$ z = \lambda_i(\mat P_n) \left( 1 + \frac{1}{\lambda_i(\mat P_n)} \right)^2 + o(1). $$
This contradicts \eqref{eq:sc:3delta} and the assumption that $z \in \tilde{\mathcal{T}}_n(\kappa, \delta',\eps)$.  Therefore, we conclude that a.s., for $n$ sufficiently large, no eigenvalue of $\frac{1}{n} \mat S_n (\mat I + \mat P_n)$ is contained in $\tilde{\mathcal{T}}_n(\kappa, \delta',\eps)$.
\end{proof}

It remains to verify the following.  

\begin{lemma} \label{lem:sc:ident}
Let $\lambda \in \mathbb{C}$ with $||\lambda| - 1| \geq \delta$, for some $\delta > 0$.  If $1 + (1 + z m_{\mathrm{MP}}(z)) \lambda = 0$, then $|\lambda| \geq 1+\delta$ and $z = \lambda \left( 1 + \frac{1}{\lambda} \right)^2$.  
\end{lemma}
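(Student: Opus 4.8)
The plan is to read the hypothesis $1 + (1 + z m_{\mathrm{MP}}(z))\lambda = 0$ as an equation that does two jobs: first it pins down the modulus of $\lambda$ via the a priori bound of Lemma~\ref{lemma:mMP}, and then, combined with the defining relation \eqref{eq:mMP} for $m_{\mathrm{MP}}$, it can be solved explicitly for $z$. Note at the outset that the mere appearance of $m_{\mathrm{MP}}(z)$ in the hypothesis presupposes $z\notin[0,4]$, so in particular $z\neq 0$; also $\lambda\neq 0$, since $\lambda=0$ would turn the hypothesis into $1=0$.

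First I would rearrange the hypothesis to $1 + z m_{\mathrm{MP}}(z) = -1/\lambda$. Taking absolute values and invoking Lemma~\ref{lemma:mMP} gives $1/|\lambda| = |1 + z m_{\mathrm{MP}}(z)| \leq 1$, so $|\lambda|\geq 1$; together with the hypothesis $||\lambda|-1|\geq\delta$ this forces $|\lambda|-1\geq\delta$, i.e. $|\lambda|\geq 1+\delta$. In particular $\lambda\neq -1$, so $1+\lambda\neq 0$, which is needed for the divisions below. For the second part, write $m := m_{\mathrm{MP}}(z)$, so that the relation just obtained reads $zm = -1 - \tfrac{1}{\lambda} = -\tfrac{\lambda+1}{\lambda}$. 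Meanwhile \eqref{eq:mMP} can be rewritten as $zm(1+m) = -1$ (this uses $z\neq 0$ and the fact that $1+m\neq 0$, which is itself forced by \eqref{eq:mMP}, and incidentally shows $m\neq 0$). Substituting $zm = -\tfrac{\lambda+1}{\lambda}$ into $zm(1+m)=-1$ yields $1+m = \tfrac{\lambda}{\lambda+1}$, hence $m = -\tfrac{1}{\lambda+1}$. Plugging this value of $m$ back into $zm = -\tfrac{\lambda+1}{\lambda}$ and solving for $z$ gives $z = \tfrac{(\lambda+1)^2}{\lambda} = \lambda\left(1+\tfrac{1}{\lambda}\right)^2$, as claimed.

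I do not anticipate a substantial obstacle: the argument is a short algebraic manipulation built on Lemma~\ref{lemma:mMP} and \eqref{eq:mMP}, exactly paralleling the inline identification step used in the Wigner case in the proof of Theorem~\ref{thm:wigner:refine}. The only points that require care are the nonvanishing bookkeeping — checking that $\lambda$, $z$, $1+\lambda$, $m_{\mathrm{MP}}(z)$, and $1+m_{\mathrm{MP}}(z)$ are all nonzero — so that each division is legitimate, and recording the implicit restriction $z\notin[0,4]$ at the start.
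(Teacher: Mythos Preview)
Your proof is correct and follows essentially the same approach as the paper: both use Lemma~\ref{lemma:mMP} to force $|\lambda|\geq 1+\delta$, then substitute $z m_{\mathrm{MP}}(z) = -1 - 1/\lambda$ into \eqref{eq:mMP} to solve for $z$. You are simply more explicit about the nonvanishing checks (on $\lambda$, $z$, $1+\lambda$, $m$, $1+m$) that the paper leaves implicit.
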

\begin{proof}
Suppose $1 + (1 + z m_{\mathrm{MP}}(z)) \lambda = 0$.  From Lemma \ref{lemma:mMP}, it must be the case that $|\lambda| \geq 1$.  Thus, by assumption, $|\lambda| \geq 1 + \delta$.  Hence, we obtain that 
\begin{equation} \label{eq:idzmMP}
	z m_{\mathrm{MP}}(z) = -1 - \frac{1}{\lambda}. 
\end{equation}
Substituting \eqref{eq:idzmMP} into \eqref{eq:mMP} yields the solution
$$ z = \lambda \left(1 + \frac{1}{\lambda} \right)^2, $$  
and the proof is complete.  
\end{proof}

\section{Proof of results in Section \ref{sec:eigenvectors}} \label{sec:proof:eigenvectors}

In order to prove Theorem \ref{thm:eigenvectors} and Theorem \ref{thm:SC:eigenvectors}, we will need the following concentration result.

\begin{lemma} \label{lemma:conc}
For each $n \geq 1$, let $\mat{A}_n$ be a deterministic $n \times n$ matrix and let $u_n$ be a random vector uniformly distributed on the unit sphere in $\mathbb{R}^n$ or $\mathbb{C}^n$.  If there exists a constant $C > 0$ such that $\sup_{n} \|\mat{A}_n\| \leq C$, then almost surely, $$ \lim_{n \to \infty} \left| u_n^\ast \mat{A}_n u_n - \frac{1}{n} \tr \mat{A}_n \right| = 0. $$
\end{lemma}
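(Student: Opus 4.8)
The plan is to combine an elementary first-moment computation with concentration of measure on the sphere. Write $u_n = (u_1, \ldots, u_n)$. In both the real and the complex case, the uniform distribution on the unit sphere is invariant under coordinate permutations and sign/phase changes, so $\E[\bar u_i u_j] = \tfrac{1}{n}\delta_{ij}$; consequently
$$ \E\left[ u_n^\ast \mat A_n u_n \right] = \sum_{i,j} (\mat A_n)_{ij}\, \E[\bar u_i u_j] = \frac{1}{n} \sum_{i} (\mat A_n)_{ii} = \frac{1}{n} \tr \mat A_n . $$
Thus it suffices to show that $u_n^\ast \mat A_n u_n$ concentrates around its mean at a rate that is summable in $n$.

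For the concentration step I would first record a Lipschitz bound. Identifying $\mathbb{C}^n$ with $\mathbb{R}^{2n}$ equipped with the Euclidean metric, so that the unit sphere of $\mathbb{C}^n$ becomes $S^{2n-1}$, we have for unit vectors $u,v$
$$ \left| u^\ast \mat A_n u - v^\ast \mat A_n v \right| = \left| u^\ast \mat A_n (u-v) + (u-v)^\ast \mat A_n v \right| \le \|\mat A_n\|\,(\|u\| + \|v\|)\,\|u-v\| \le 2C\|u-v\| , $$
so $u \mapsto u^\ast \mat A_n u$ --- and hence each of its real and imaginary parts --- is $2C$-Lipschitz on the sphere. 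Applying the classical L\'evy concentration inequality for Lipschitz functions on $S^{N-1}$ separately to $\Re(u_n^\ast \mat A_n u_n)$ and $\Im(u_n^\ast \mat A_n u_n)$, with $N = n$ in the real case and $N = 2n$ in the complex case, yields absolute constants $c, C' > 0$ such that for every $t > 0$
$$ \Prob\left( \left| u_n^\ast \mat A_n u_n - \frac{1}{n}\tr \mat A_n \right| > t \right) \le C' \exp\!\left( -\frac{c\, n\, t^2}{C^2} \right) . $$

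The conclusion then follows from the Borel--Cantelli lemma: for each fixed $\eps > 0$ the right-hand side above is summable in $n$, so almost surely $\left| u_n^\ast \mat A_n u_n - \frac{1}{n}\tr \mat A_n \right| \le \eps$ for all $n$ large enough; intersecting these events over $\eps = 1/k$, $k \ge 1$, gives the claimed almost sure convergence to $0$.

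The only delicate points here are bookkeeping ones: treating the complex case by passing to $\mathbb{R}^{2n}$ and splitting into real and imaginary parts, and replacing the median by the mean in the statement of L\'evy's inequality (the two differ by $O(C/\sqrt n)$, which is harmless). If one prefers to avoid quoting L\'evy's lemma, an essentially self-contained alternative is to bound the fourth central moment directly using the explicit fourth moments of the uniform measure on the sphere: since $\|\mat A_n\|_F^2 \le nC^2$, this gives $\E\big| u_n^\ast \mat A_n u_n - \tfrac{1}{n}\tr \mat A_n \big|^4 = O(n^{-2})$, which is again summable and suffices for Borel--Cantelli. (Note that a second-moment/Chebyshev estimate alone does \emph{not} suffice, since the variance is only of order $1/n$.) There is no substantial obstacle in this argument.
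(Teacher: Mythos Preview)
Your argument is correct, but the route is different from the paper's. The paper does not work directly on the sphere; instead it represents $u_n$ as $q_n/\|q_n\|$ with $q_n$ a standard Gaussian vector, applies the Hanson--Wright inequality to obtain $\tfrac{1}{n}|q_n^\ast \mat A_n q_n - \tr \mat A_n| \to 0$ almost surely (via Borel--Cantelli), and then handles the normalization by the law of large numbers ($\|q_n\|^2/n \to 1$) together with a triangle-inequality splitting. Your approach via L\'evy's spherical concentration is more direct in that it sidesteps the normalization step entirely; on the other hand, the paper's Gaussian-plus-Hanson--Wright argument is slightly more portable (it generalizes immediately to any sub-Gaussian coordinate model, not just the exactly uniform sphere). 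Both methods yield the same exponential tail and finish identically via Borel--Cantelli. Your remark about the median-versus-mean discrepancy and the complex case via $\mathbb{R}^{2n}$ is the right bookkeeping, and your parenthetical about the fourth-moment alternative (and the inadequacy of a bare second-moment bound) is also accurate.
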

\begin{proof}
Since $u_n$ is a random vector uniformly distributed on the unit sphere in $\mathbb{R}^n$ (or, respectively, $\mathbb{C}^n$), $u_n$ has the same distribution as $q_n / \|q_n\|$, where 
$$ q_n := (\xi_1, \ldots, \xi_n) $$
and $\xi_1 \ldots, \xi_n$ are iid standard real (respectively, complex) Gaussian random variables.  Thus, it suffices to show that almost surely
$$ \frac{q_n^\ast}{\|q_n\|} \mat{A}_n \frac{q_n}{\|q_n\|} - \frac{1}{n} \tr \mat{A}_n \longrightarrow 0 $$
as $n \to \infty$.  

By the Hanson--Wright inequality (see\footnote{Most authors choose to state the Hanson--Wright inequality for the case when both the random vector and deterministic matrix are real.  The result easily extends to the complex setting; see \cite[Section 3.1]{RVhw} for a standard complexification trick.}, for instance, \cite[Theorem 1.1]{RVhw} or \cite[Theorem 2.3]{A}), we obtain that, for any $t > 0$, 
$$ \Prob \left( \frac{1}{n} \left| q_n^\ast \mat{A}_n q_n - \tr \mat{A}_n \right| \geq t \right) \leq C' \exp(- c' n \min\{t^2,t\}) $$
for some constants $C', c' > 0$ depending only on $C$.  Here, we used the bound $\|\mat{A}_n\|_2^2 \leq n \|\mat{A}_n\|^2 \leq Cn$.  Thus, by the Borel--Cantelli lemma, we conclude that
\begin{equation} \label{eq:asconc}
	\frac{1}{n} \left| q_n^\ast \mat{A}_n q_n - \tr \mat{A}_n \right| \longrightarrow 0 
\end{equation}
almost surely as $n \to \infty$.  

Therefore, by the triangle inequality, we have
\begin{align*}
	\left| \frac{1}{\|q_n\|^2} q_n^\ast \mat{A}_n q_n - \frac{1}{n} \tr \mat{A}_n \right| &\leq \left| \frac{q_n^\ast \mat{A}_n q_n}{\|q_n\|^2} - \frac{q_n^\ast \mat{A}_n q_n}{n} \right| + \frac{1}{n} \left| q_n^\ast \mat{A}_n q_n - \tr \mat{A}_n \right| \\
	&\leq \|\mat{A}_n\| \|q_n\|^2 \left| \frac{1}{\|q_n\|^2} - \frac{1}{n} \right| + \frac{1}{n} \left| q_n^\ast \mat{A}_n q_n - \tr \mat{A}_n \right| \\
	&\leq C \left| 1- \frac{\|q_n\|^2}{n} \right| + \frac{1}{n} \left| q_n^\ast \mat{A}_n q_n - \tr \mat{A}_n \right|.
\end{align*}
The first term on the right-hand side converges almost surely to zero by the law of large numbers.  The second term converges to zero by \eqref{eq:asconc}.
\end{proof}

We also require the following well-known bound from \cite{BSbook} for the spectral norm of a Wigner random matrix.  

\begin{theorem}[Spectral norm of a Wigner matrix; Theorem 5.1 from \cite{BSbook}] \label{thm:wigner:norm}
Let $\xi$ and $\zeta$ be real random variables.  Assume $\xi$ has mean zero, unit variance, and finite fourth moment;  suppose $\zeta$ has mean zero and finite variance.  For each $n \geq 1$, let $\mat{W}_n$ be an $n \times n$ Wigner matrix with atom variables $\xi$ and $\zeta$.  Then a.s.
$$ \limsup_{n \to \infty} \frac{1}{\sqrt{n}} \| \mat W_n \| \leq 2. $$
\end{theorem}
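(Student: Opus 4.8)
The plan is to prove this via the classical moment (trace) method, as in \cite[Chapter 5]{BSbook}. Since $\mat W_n$ is real symmetric, $\norm{\mat W_n} = \max_{1 \le i \le n} \abs{\lambda_i(\mat W_n)}$, and for every positive integer $k$ one has the crude bound
$$ \norm{\mat W_n}^{2k} = \max_{1 \le i \le n} \lambda_i(\mat W_n)^{2k} \le \tr\left( \mat W_n^{2k} \right) = \sum_{i=1}^n \lambda_i(\mat W_n)^{2k}. $$
So it suffices to estimate $\E\,\tr(\mat W_n^{2k})$ for a well-chosen sequence $k = k_n \to \infty$ and then combine Markov's inequality with the Borel--Cantelli lemma. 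Before the moment computation I would carry out the standard truncation and recentering step: because $\xi$ and $\zeta$ are only assumed to have a finite fourth moment (respectively finite variance), one replaces each entry $w_{ij}$ by $w_{ij}\indicator{\abs{w_{ij}} \le \eta_n \sqrt{n}}$ for a sequence $\eta_n \downarrow 0$ tending to $0$ slowly, then subtracts the mean and renormalizes. Using $\E\abs{\xi}^4 < \infty$ one checks that the norm of the original matrix and of the truncated matrix have the same almost-sure $\limsup$, reducing to the case of (nearly) bounded entries; this is where the fourth-moment hypothesis is used.

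The heart of the matter is the combinatorial estimate of $\E\,\tr(\mat W_n^{2k})$ for the truncated matrix. Expanding,
$$ \E\,\tr\left( \mat W_n^{2k}\right) = \sum_{i_0, i_1, \dots, i_{2k-1}} \E\left[ w_{i_0 i_1} w_{i_1 i_2} \cdots w_{i_{2k-1} i_0} \right], $$
and each summand corresponds to a closed walk of length $2k$ on the vertex set $\{1,\dots,n\}$. Since the entries are independent up to symmetry and have mean zero, a summand vanishes unless every edge of the walk is traversed at least twice; if the walk uses $v$ distinct vertices and $e$ distinct edges, then $e \le k$ and, by connectivity of the walk, $v \le e+1 \le k+1$. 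The walks attaining $v = k+1$ are exactly those tracing a tree and traversing each of its $k$ edges once in each direction; these are counted by the Catalan number $C_k = \frac{1}{k+1}\binom{2k}{k}$, and by the unit-variance assumption each contributes $n(n-1)\cdots(n-k) \sim n^{k+1}$, giving the main term. Walks with $v \le k$, or with some edge used three or more times, contribute at most $n^{k}$ times polynomial-in-$k$ factors times truncated moment factors $\E\abs{w_{ij}}^p \le (\eta_n\sqrt n)^{p-2}$; I would bound these uniformly for $k \le k_n$. (Any walk touching a loop at a vertex automatically has $v \le k$, so the weak hypotheses on $\zeta$ are harmless.) The outcome is
$$ \frac1n\,\E\,\tr\left( \left( \tfrac{1}{\sqrt n}\mat W_n \right)^{2k} \right) \le C_k\,(1+o(1)) \le 4^k(1+o(1)), $$
with the $o(1)$ uniform over $k \le k_n$.

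Finally, by Markov's inequality, for any $\eps > 0$ and $k = k_n$,
$$ \Prob\left( \frac1{\sqrt n}\norm{\mat W_n} > 2 + \eps \right) \le \frac{\E\,\tr\left( \left(\tfrac{1}{\sqrt n}\mat W_n\right)^{2k}\right)}{(2+\eps)^{2k}} \le n(1+o(1))\left( \frac{2}{2+\eps} \right)^{2k_n}. $$
Choosing $k_n$ to grow like a small positive power of $n$ (with $\eta_n$ taken to decay fast enough that the lower-order walk contributions are still controlled uniformly up to $k_n$), the right-hand side decays faster than any polynomial in $n$ and is in particular summable; the Borel--Cantelli lemma then gives $\limsup_n \frac1{\sqrt n}\norm{\mat W_n} \le 2 + \eps$ almost surely, and letting $\eps \downarrow 0$ along a countable sequence finishes the argument (after undoing the truncation). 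The step I expect to be the main obstacle is precisely this uniform control, for all $k$ up to $k_n \sim n^{c}$, of the non-Dyck walk contributions under only a fourth-moment assumption: it forces a careful joint choice of the truncation level $\eta_n$ and the moment order $k_n$, and is the technical core of the Bai--Yin argument underlying \cite[Theorem 5.1]{BSbook}.
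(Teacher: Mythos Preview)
The paper does not prove this statement at all: Theorem~\ref{thm:wigner:norm} is simply quoted from \cite[Theorem 5.1]{BSbook} as a known tool, with no proof or sketch given. Your proposal is therefore not comparable to anything in the paper itself; rather, you have outlined the argument behind the cited reference. As such a sketch, your write-up is accurate and hits the right points: the truncation/recentering step consuming the fourth-moment hypothesis, the closed-walk expansion of $\E\,\tr(\mat W_n^{2k})$, the Catalan/Dyck-path identification of the leading term, the subleading bound on non-tree walks, and the Markov plus Borel--Cantelli conclusion with $k_n$ growing like a small power of $n$. This is exactly the Bai--Yin moment method underlying \cite[Theorem 5.1]{BSbook}, and you correctly flag the uniform control of non-Dyck contributions (jointly in $k \le k_n$ and the truncation level) as the technical crux.
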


We now prove Theorem \ref{thm:eigenvectors} and Theorem \ref{thm:SC:eigenvectors}.  The proofs presented here are based on the arguments given in \cite{BGN}.    

\begin{proof}[Proof of Theorem \ref{thm:eigenvectors}]
For convenience, let $\lambda_1, \ldots, \lambda_n$ denote the eigenvalues of $\frac{1}{\sqrt{n}} \mat W_n + \mat P_n$.  Recall that $\tilde{\theta} := \theta + 1/\theta$ and $|\theta| > 1$.  By Lemma \ref{lem:ellipse}, there exists $\delta' > 0$ such that $\delta'< \frac{\delta^2}{1+\delta}$ and $\tilde{\theta} \not\in \Esc_{3\delta'}$.  By Theorem \ref{thm:or}, a.s., for $n$ sufficiently large, only one eigenvalue of $\frac{1}{\sqrt{n}} \mat W_n + \mat P_n$ is not in $\Esc_{2\delta'}$ and, after labeling the eigenvalues correctly, this eigenvalue a.s.~satisfies
\begin{equation} \label{eq:lambdanconv}
	\lambda_n = \tilde{\theta} + o(1). 
\end{equation}

It follows from Theorem \ref{thm:wigner:norm}, that a.s., for $n$ sufficiently large, 
\begin{equation} \label{eq:normbnd}
	\left\| \frac{1}{\sqrt{n}} \mat W_n \right\| \leq 2 + \delta' 
\end{equation}
Thus, a.s., for $n$ sufficiently large, $\frac{1}{\sqrt{n}} \mat W_n - \lambda_n \mat I$ is invertible.  In addition, since $\tilde{\theta} \not\in \Esc_{3\delta'}$, there exists $\eps > 0$ such that, a.s.
\begin{equation} \label{eq:epsbnds1}
	\left\| \left( \frac{1}{\sqrt{n}} \mat W_n - \tilde{\theta} \mat I \right)^{-1} \right\| \leq \frac{1}{\eps} \quad \text{and} \quad \left\| \left( \frac{1}{\sqrt{n}} \mat W_n - \lambda_n \mat I \right)^{-1} \right\| \leq \frac{1}{\eps} 
\end{equation}
for $n$ sufficiently large (see, e.g., \cite[Theorem~2.2]{TE}).  By symmetry, we also conclude that a.s.~for $n$ sufficiently large
\begin{equation} \label{eq:epsbnds2}
	\left\| \left( \frac{1}{\sqrt{n}} \mat W_n - \overline{\tilde{\theta}} \mat I \right)^{-1} \right\| \le \frac1\eps\quad \text{and} \quad \left\| \left( \frac{1}{\sqrt{n}} \mat W_n - \overline{\lambda_n} \mat I \right)^{-1} \right\| \leq \frac{1}{\eps}. 
\end{equation}

Let $v_n$ be a unit eigenvector of $\frac{1}{\sqrt{n}} \mat{W}_n + \mat{P}_n$ with corresponding eigenvalue $\lambda_n$.  Since 
$$ \left( \frac{1}{\sqrt{n}} \mat{W}_n + \mat{P}_n \right) v_n = \lambda_n v_n, $$
we have
$$ \left( \frac{1}{\sqrt{n}} \mat{W}_n - \lambda_n \mat I \right) v_n = - \mat{P}_n v_n = (-\theta u_n^\ast v_n) u_n. $$
Hence, a.s, for $n$ sufficiently large, we obtain
$$ v_n = (- \theta u_n^\ast v_n) \mat{G}_n(\lambda_n) u_n, $$
where 
$$ \mat G_n(z) := \left( \frac{1}{\sqrt{n}} \mat{W}_n - z \mat I \right)^{-1} $$
is the resolvent of $\frac{1}{\sqrt{n}} \mat W_n$.  As $v_n$ is a unit vector, this implies that
$$ v_n = \frac{ - \theta u_n^\ast v_n}{|\theta u_n^\ast v_n|} \frac{ \mat{G}_n(\lambda_n) u_n}{\sqrt{ u_n^\ast \mat{G}_n(\overline{\lambda_n}) \mat{G}_n(\lambda_n) u_n}}. $$
Therefore, we find that 
\begin{equation} \label{eq:eigenvectordef}
	|u_n^\ast v_n|^2 = \frac{ \left| u_n^\ast \mat{G}_n(\lambda_n) u_n \right|^2}{ u_n^\ast \mat{G}_n(\overline{\lambda_n}) \mat{G}_n(\lambda_n)  u_n }. 
\end{equation}

By the resolvent identity,
\begin{align*}
	&\left| u_n^\ast \left( \frac{1}{\sqrt{n}} \mat W_n - \lambda_n \mat I \right)^{-1}u_n  - u_n^\ast \left( \frac{1}{\sqrt{n}} \mat W_n - \tilde{\theta} \mat I \right)^{-1}u_n \right| \\
	&\qquad\qquad\qquad\qquad\leq |\lambda_n - \tilde{\theta}|  \left\|  \left( \frac{1}{\sqrt{n}} \mat W_n - \lambda_n \mat I \right)^{-1} \right\| \left\| \left( \frac{1}{\sqrt{n}} \mat W_n - \tilde{\theta} \mat I \right)^{-1}  \right\|
\end{align*}
using the fact that that $\lambda_n$ and $\tilde{\theta}$ are not eigenvalues of $\frac{1}{\sqrt{n}} \mat W_n$.   Combining this inequality with \eqref{eq:lambdanconv} and \eqref{eq:epsbnds1}, we obtain that, a.s., 
$$ \left| u_n^\ast \mat G_n(\lambda_n) u_n - u_n^\ast \mat G_n(\tilde{\theta}) u_n \right| \leq \frac{|\lambda_n - \tilde{\theta}|}{\eps^2} = o(1). $$
Thus, since $\mat W_n$ and $u_n$ are independent, Lemma \ref{lemma:conc} implies that a.s.
$$ u_n^\ast \mat G_n(\lambda_n) u_n - \frac{1}{n} \tr \mat G_n(\tilde{\theta}) \longrightarrow 0 $$
as $n \to \infty$.  Let $f: \mathbb{R} \to \mathbb{C}$ be a bounded continuous function such that
$$ f(x) := \frac{1}{x - \hat{\theta}} \quad \text{for } x \in \mathbb{R} \cap \Esc_{2\delta'} $$
and $f(x) := 0$ for $x \in \mathbb{R} \setminus \Esc_{3\delta'}$.  In view of \eqref{eq:normbnd}, it follows that a.s., for $n$ sufficiently large, 
$$ \frac{1}{n} \tr \mat G_n(\tilde{\theta}) = \frac{1}{n} \sum_{i=1}^n f \left(\frac{1}{\sqrt{n}} \lambda_i(\mat W_n) \right). $$
Thus, by the semicircle law (Theorem \ref{thm:wigner}), we have that a.s.
$$ \frac{1}{n} \tr \mat G_n(\tilde{\theta}) \longrightarrow \int_{-\infty}^\infty \frac{\rho_{\mathrm{sc}}(x)}{x - \tilde{\theta}} dx $$
as $n \to \infty$.  Hence, we conclude that a.s.
\begin{equation} \label{eq:numGn}
	u_n^\ast \mat G_n(\lambda_n) u_n \longrightarrow \int_{-\infty}^\infty \frac{\rho_{\mathrm{sc}}(x)}{x - \tilde{\theta}} dx 
\end{equation}
as $n \to \infty$.  Using the bounds in \eqref{eq:epsbnds1} and \eqref{eq:epsbnds2}, a similar argument reveals that a.s.
\begin{equation} \label{eq:denGn}
	u_n^\ast \mat {G}_n(\overline{\lambda_n}) \mat G_n(\lambda_n) u_n \longrightarrow \int_{-\infty}^\infty \frac{\rho_{\mathrm{sc}}(x)}{|x - \tilde{\theta}|^2}dx 
\end{equation}
as $n \to \infty$.  Combining \eqref{eq:numGn} and \eqref{eq:denGn} with \eqref{eq:eigenvectordef} completes the proof of the theorem.  
\end{proof}

We now turn to the proof of Theorem~\ref{thm:SC:eigenvectors}.

\begin{proof}[Proof of Theorem~\ref{thm:SC:eigenvectors}]
Let $\lambda_1,\dots , \lambda_n$ be the eigenvalues of $\frac1n \mat S_n(\mat I+ \mat P_n)$.  Recall that $\mat P_n = \theta u_n {u_n}^\ast$ where $\theta \in \mathbb C$ and $\abs{\theta} >1$.  By Theorem~\ref{thm:sc:refine}, a.s.~for $n$ sufficiently large, only one eigenvalue of $\frac1n \mat S_n(\mat I+ \mat P_n)$ lies outside of $\Emp_{2\delta}$ for some $\delta > 0$, and this eigenvalue satisfies 
\begin{equation}\label{eqn:sc:lambdanconv}
\lambda_n=\hat\theta+o(1)
\end{equation}
after renumbering properly, where $\hat\theta := \theta\left(1+\frac1\theta\right)^2$.  Moreover, the remaining eigenvalues a.s.~lie inside $\Emp_{\delta}$.  

Note that by Lemma~\ref{lemma:sc:largest} (and the fact that the eigenvalues of $\frac{1}{n} \mat S_n$ are real and non-negative), the matrix $\frac1n \mat S_n -\lambda_n \mat I$ is a.s.~invertible for sufficiently large $n$.  Furthermore, the norm of the inverse of $\frac1n \mat S_n -\lambda_n \mat I$ is well behaved; in particular, since $\hat\theta \notin \Emp_{\delta}$, there exists $\eps >0$ such that 
\begin{equation}\label{eqn:sc:epsbnds1}
\left\| \left(\frac1n \mat S_n -\lambda_n \mat I\right)^{-1} \right\| < \frac1\eps 
\qquad \mbox{and}\qquad
\left\| \left(\frac1n \mat S_n -\hat\theta \mat I\right)^{-1} \right\| < \frac1\eps 
\end{equation}
a.s.~for $n$ sufficiently large (see, e.g., \cite[Theorem~2.2]{TE}).  By symmetry, the above bounds imply
\begin{equation}\label{eqn:sc:epsbnds2}
\left\| \left(\frac1n \mat S_n -\overline{\lambda_n} \mat I\right)^{-1} \right\| < \frac1\eps 
\qquad \mbox{and}\qquad
\left\| \left(\frac1n \mat S_n -\overline{\hat\theta} \mat I\right)^{-1} \right\| < \frac1\eps 
\end{equation}
a.s.~for $n$ sufficiently large.

Let $v_n$ be a unit eigenvector of $\frac1n \mat S_n(\mat I+ \mat P_n)$ with eigenvalue $\lambda_n$, so 
$$\frac1n \mat S_n(\mat I+ \mat P_n)v_n=\lambda_n v_n.$$
Rearranging this equality, we have
\begin{align*}
v_n &= -(\theta u_n^\ast v_n) \mat R_n(\lambda_n)\frac1n\mat S_n u_n 
\end{align*}
where 
$$ \mat R_n(z) := \left(\frac1n \mat S_n - z \mat I\right)^{-1} $$  
is the resolvent of $\frac{1}{n} \mat S_n$.  Because $v_n$ is a unit vector, we arrive at 
$$v_n= \frac{ -\theta u_n^\ast v_n }{ | \theta u_n^\ast v_n| } \frac{ \mat R_n(\lambda_n)\frac1n\mat S_n u_n}{\sqrt{u_n^\ast \frac1n S_n\mat R_n(\bar{\lambda}_n) \mat R_n(\lambda_n)\frac1n\mat S_n u_n}}.$$

Thus, we obtain
\begin{equation}\label{eq:sc:eigenvectordef}
|u_n^\ast v_n|^2 = 
\frac{| u_n^\ast \mat R_n(\lambda_n)\frac1n\mat S_n u_n |^2}
{ u_n^\ast \frac1n S_n\mat R_n(\bar{\lambda}_n) \mat R_n(\lambda_n)\frac1n\mat S_n u_n},
\end{equation}
and we will now proceed to show that the numerator and denominator of the above tend to the desired limits.

For the numerator, using Lemma~\ref{lemma:sc:largest}, the resolvent identity, \eqref{eqn:sc:lambdanconv}, and \eqref{eqn:sc:epsbnds1}, we note that a.s.
\begin{align*}
	\left| u_n^\ast \mat R_n(\lambda_n)\frac1n\mat S_n u_n - u_n^\ast \mat R_n(\hat\theta)\frac1n\mat S_n u_n\right| &\le \|\mat R_n(\lambda_n) -\mat R_n(\hat\theta)   \| \left\|\frac1n \mat S_n\right\| \\
	&\le 5 |\lambda_n-\hat\theta|\pfrac{1}{\eps^2} \\
	&= o(1).
\end{align*}

Using the fact that $u_n$ is independent of $\mat R_n(\hat\theta)\frac1n\mat S_n$, we can apply Lemma~\ref{lemma:conc} to show that
$$ u_n^\ast \mat R_n(\lambda_n)\frac1n\mat S_n u_n- \frac1n \tr\left(\mat R_n(\hat\theta)\frac1n\mat S_n\right) \longrightarrow 0 $$
a.s.~as $n\to\infty$.  Let $f: \mathbb{R} \to \mathbb{C}$ be a bounded continuous function such that
$$ f(x) := \frac{x}{x - \hat\theta} \quad \text{for } x \in \mathbb{R} \cap \Emp_{\delta} $$
and $f(x) := 0$ for $x \in \mathbb{R} \setminus \Emp_{2\delta}$.  Notice that
$$
\frac1n \tr\left(\mat R_n(\hat\theta)\frac1n\mat S_n\right) = \frac1n \sum_{i=1}^n \frac{\frac1n\lambda_i(\mat S_n)}{\frac1n\lambda_i(\mat S_n) + \hat\theta} = \frac1n\sum_{i=1}^n f\left(\frac1n \lambda_i(\mat S_n) \right).
$$

Thus, using Theorem~\ref{thm:mp}, we have that a.s.
\begin{equation}\label{numSC}
u_n^\ast \mat R_n(\lambda_n)\frac1n\mat S_n u_n
\longrightarrow
\int \frac{x\rho_{\mathrm{MP},1}(x)}{x-\hat\theta}\,dx
\end{equation}
as $n\to\infty$.  Using a similar argument along with the bounds in \eqref{eqn:sc:epsbnds1} and \eqref{eqn:sc:epsbnds2}, one can show that a.s.
\begin{align}
u_n^\ast \frac1n \mat S_n\mat R_n(\bar{\lambda}_n) \mat R_n(\lambda_n)\frac1n\mat S_n u_n
&=
\frac1n\tr\left(\frac1n \mat  S_n\mat R_n(\bar{\lambda}_n) \mat R_n(\lambda_n)\frac1n\mat S_n\right) +o(1) \nonumber \\
&\longrightarrow
\int \frac{x^2\rho_{\mathrm{MP},1}(x)}{\left|x-\hat\theta\right|^2}\,dx. \label{denomSC}
\end{align}

Combining \eqref{numSC} and \eqref{denomSC} with \eqref{eq:sc:eigenvectordef} completes the proof.
\end{proof}

\section{Proof of Theorem \ref{thm:wigner:critical}} \label{sec:proof:critical}

In order to prove Theorem \ref{thm:wigner:critical}, we will need the Tao--Vu replacement principle from \cite{TVesd}, which we present below.  Note that we use $\log$ to denote the natural logarithm.

\begin{theorem}[Replacement principle; Theorem 2.1 from \cite{TVesd}] \label{thm:replacement}
For each $n \geq 1$, let $\mat A_n$ and $\mat B_n$ be random $n \times n$ matrices with complex-valued entries.  Assume that
\begin{enumerate}[(i)]
\item the expression
$$ \frac{1}{n^2} \| \mat A_n \|^2_2 + \frac{1}{n^2} \| \mat B_n \|^2_2 $$
is bounded in probability (resp. almost surely), and
\item
for a.e. $z \in \mathbb{C}$, 
$$ \frac{1}{n} \log \left| \det \left( \frac{1}{\sqrt{n}} \mat A_n - z \mat I \right) \right| - \frac{1}{n} \log \left| \det \left( \frac{1}{\sqrt{n}} \mat B_n - z \mat I \right) \right| $$
converges in probability (resp. almost surely) to zero as $n \to \infty$.  In particular, for each fixed $z$, these determinants are nonzero with probability $1 - o(1)$ (resp. almost surely for $n$ sufficiently large).  
\end{enumerate}
Then, for any bounded continuous function $f: \mathbb{C} \to \mathbb{C}$, 
$$ \int f d\mu_{\frac{1}{\sqrt{n}} \mat A_n} - \int f d \mu_{\frac{1}{\sqrt{n}} \mat B_n} $$
converges in probability (resp. almost surely) to zero as $n \to \infty$.  
\end{theorem}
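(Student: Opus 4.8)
\medskip
\noindent\textbf{Proof proposal.} The plan is to replace both empirical spectral measures by their logarithmic potentials, where hypothesis (ii) applies directly, and to use hypothesis (i) to control the logarithmic singularities of those potentials. For an $n\times n$ matrix $\mat A$ with eigenvalues $\lambda_1(\mat A),\dots,\lambda_n(\mat A)$, I would set
\[
	U_{\mat A}(z):=\frac1n\log\left|\det\left(\tfrac1{\sqrt n}\mat A-z\mat I\right)\right|=\frac1n\sum_{j=1}^n\log\left|z-\lambda_j\!\left(\tfrac1{\sqrt n}\mat A\right)\right| ,
\]
and recall the distributional identity $\frac1{2\pi}\Delta\log|\cdot|=\delta_0$ on $\mathbb C$, which gives $\frac1{2\pi}\Delta U_{\mat A}=\mu_{\frac1{\sqrt n}\mat A}$ in the sense of distributions. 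Hence, for every test function $\varphi\in C_c^\infty(\mathbb C)$,
\[
	\int\varphi\,d\mu_{\frac1{\sqrt n}\mat A_n}-\int\varphi\,d\mu_{\frac1{\sqrt n}\mat B_n}=\frac1{2\pi}\int_{\mathbb C}\bigl(U_{\mat A_n}(z)-U_{\mat B_n}(z)\bigr)\Delta\varphi(z)\,dz .
\]
Hypothesis (ii) says the integrand tends to zero for a.e.\ $z$, so for smooth test functions the task reduces to upgrading this to $U_{\mat A_n}-U_{\mat B_n}\to0$ in $L^1_{\mathrm{loc}}(\mathbb C)$.

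The key step, and the one place hypothesis (i) is used, will be a uniform local $L^p$ bound (for a fixed $p>1$) on the potentials. For the positive part I would combine $\log^+x\le\frac12\log(1+x^2)$ with Jensen's inequality to obtain $U_{\mat A_n}^+(z)\le\frac12\log\bigl(1+2|z|^2+\frac2n\sum_j|\lambda_j(\tfrac1{\sqrt n}\mat A_n)|^2\bigr)$, then bound $\frac1n\sum_j|\lambda_j(\tfrac1{\sqrt n}\mat A_n)|^2\le\frac1{n^2}\|\mat A_n\|_2^2$ by Schur's inequality; this is precisely the quantity controlled by (i), so $U_{\mat A_n}^+$ is locally bounded uniformly in $n$. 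For the negative part I would use that a single logarithmic singularity lies in every local $L^p$, namely $\int_{|w|\le1}(\log^-|w|)^p\,dw<\infty$ for all $p<\infty$, together with $U_{\mat A_n}^-(z)\le\frac1n\sum_j\log^-|z-\lambda_j(\tfrac1{\sqrt n}\mat A_n)|$ and convexity of $t\mapsto t^p$, to conclude $\int_{|z|\le R}(U_{\mat A_n}^-)^p\,dz\le C_{R,p}$ \emph{deterministically}, for every $p<\infty$. With $U_{\mat A_n}$ and $U_{\mat B_n}$ thus bounded in $L^p_{\mathrm{loc}}$ and their difference converging a.e., Vitali's convergence theorem gives $L^1_{\mathrm{loc}}$ convergence, hence $\int\varphi\,d\mu_{\frac1{\sqrt n}\mat A_n}-\int\varphi\,d\mu_{\frac1{\sqrt n}\mat B_n}\to0$ for all $\varphi\in C_c^\infty(\mathbb C)$. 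In both regimes of the theorem one uses Fubini to turn the ``for a.e.\ $z$'' in (ii) into an a.e.-in-$z$ statement off an exceptional null event; in the in-probability regime one additionally truncates the potentials to level $T$ and discards the small-probability event $\{\frac1{n^2}(\|\mat A_n\|_2^2+\|\mat B_n\|_2^2)>M\}$, using (i), before letting $T\to\infty$ and then $M\to\infty$.

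Next I would pass from $C_c^\infty$ test functions to an arbitrary bounded continuous $f\colon\mathbb C\to\mathbb C$ by a routine tightness-and-truncation step. Chebyshev's inequality together with $\frac1n\sum_j|\lambda_j(\tfrac1{\sqrt n}\mat A_n)|^2\le\frac1{n^2}\|\mat A_n\|_2^2=O(1)$ from (i) gives $\mu_{\frac1{\sqrt n}\mat A_n}(\{|z|>R\})\le C/R^2$ uniformly, and likewise for $\mat B_n$. Given $\eps>0$, choose $R$ with $2\|f\|_\infty C/R^2<\eps$, a cutoff $\chi\in C_c^\infty(\mathbb C)$ with $\mathbf{1}_{\{|z|\le R\}}\le\chi\le\mathbf{1}_{\{|z|\le 2R\}}$, and (by mollification) $g\in C_c^\infty(\mathbb C)$ with $\|f\chi-g\|_\infty<\eps$; writing $\int f\,d\mu=\int g\,d\mu+\int(f\chi-g)\,d\mu+\int f(1-\chi)\,d\mu$ for each of the two measures, the $(f\chi-g)$ terms are each $\le\eps$, the $f(1-\chi)$ terms are each $\le\eps$ by tightness, and the difference of the $g$-integrals tends to zero by the previous paragraph. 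Letting $\eps\to0$ completes the argument.

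The hard part will be the uniform local $L^p$ control of the logarithmic potentials: each $U_{\mat A_n}$ carries a logarithmic singularity at every one of its $n$ eigenvalues, and hypothesis (i) is precisely what prevents these from accumulating badly and what makes it legitimate to pass from a.e.\ convergence to $L^1_{\mathrm{loc}}$ convergence. A secondary nuisance is threading the ``almost surely'' versus ``in probability'' alternatives through the ``for a.e.\ $z$'' clause in hypothesis (ii), handled by the Fubini and truncation steps indicated above.
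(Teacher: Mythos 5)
The paper does not prove this statement at all: it is quoted verbatim as Theorem 2.1 of Tao--Vu \cite{TVesd} and used as a black box, so there is no internal proof to compare against. Your argument is correct and is essentially the original Tao--Vu proof: the Girko/log-potential identity $\mu_{\frac{1}{\sqrt n}\mat A_n}=\frac1{2\pi}\Delta U_{\mat A_n}$, hypothesis (i) via Schur's inequality to control $\log^+$ and give tightness, the deterministic local integrability of $\log^-$, a Fubini/uniform-integrability (Vitali) step to upgrade the pointwise convergence in (ii) to $L^1_{\mathrm{loc}}$, and a smoothing-plus-tightness step to pass from $C_c^\infty$ test functions to bounded continuous $f$; the truncation handling of the in-probability case is sketched but standard and fillable.
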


We will also need the following result from \cite{CN}. 

\begin{lemma}[Companion matrix for the critical points] \label{lemma:companion}
If $x_1, \ldots, x_n \in \mathbb{C}$ are the roots of $p(z) := \prod_{j=1}^n (z - x_j)$, and $p$ has critical points $y_1, \ldots, y_{n-1}$ counted with multiplicity, then the matrix
$$ \mat D_{n} \left(\mat I_{n} - \frac{1}{n} \mat J_{n} \right) $$
has $y_1, \ldots, y_{n-1}, 0$ as its eigenvalues, where $\mat D_{n} := \diag(x_1, \ldots,x_{n})$, $\mat I_{n}$ is the identity matrix of order $n$, and $\mat J_{n}$ is the $n \times n$ matrix of all entries $1$.  
\end{lemma}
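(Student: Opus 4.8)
The plan is to compute the characteristic polynomial of $\mat M := \mat D_n\bigl(\mat I_n - \tfrac1n \mat J_n\bigr)$ explicitly and to recognize it, up to a scalar, as $z\,p'(z)$. First I would write $\mat J_n = \mathbf 1 \transp{\mathbf 1}$, where $\mathbf 1 \in \mathbb{C}^n$ is the all-ones column vector, so that $\mat D_n \mat J_n = d\,\transp{\mathbf 1}$ is rank one with $d := \transp{(x_1,\dots,x_n)} = \mat D_n \mathbf 1$. Fix $z \in \mathbb{C}$ with $z \notin \{x_1,\dots,x_n\}$, so that $\mat D_n - z\mat I_n$ is invertible; then
$$ \mat M - z\mat I_n = (\mat D_n - z\mat I_n)\Bigl(\mat I_n - \tfrac1n (\mat D_n - z\mat I_n)^{-1} d\,\transp{\mathbf 1}\Bigr), $$
and taking determinants, while using the identity \eqref{eq:fundamental} to collapse the second factor to a $1\times 1$ determinant, yields
$$ \det(\mat M - z\mat I_n) = \det(\mat D_n - z\mat I_n)\Bigl(1 - \tfrac1n \transp{\mathbf 1}(\mat D_n - z\mat I_n)^{-1} d\Bigr). $$

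Next I would evaluate the scalar $\transp{\mathbf 1}(\mat D_n - z\mat I_n)^{-1} d = \sum_{j=1}^n \tfrac{x_j}{x_j - z}$. Writing $\tfrac{x_j}{x_j - z} = 1 + \tfrac{z}{x_j - z}$ and using the logarithmic-derivative identity $\tfrac{p'(z)}{p(z)} = \sum_{j=1}^n \tfrac{1}{z - x_j}$ (valid counting multiplicity) rewrites the sum as $n - z\,\tfrac{p'(z)}{p(z)}$. Combining this with $\det(\mat D_n - z\mat I_n) = \prod_{j=1}^n (x_j - z) = (-1)^n p(z)$, the factors of $p(z)$ cancel, leaving
$$ \det(\mat M - z\mat I_n) = (-1)^n\,\frac{z\,p'(z)}{n}. $$
Since $p$ is monic of degree $n$ we have $p'(z) = n\prod_{k=1}^{n-1}(z - y_k)$, so $\det(\mat M - z\mat I_n) = (-1)^n z\prod_{k=1}^{n-1}(z-y_k)$; equivalently $\det(z\mat I_n - \mat M) = z\prod_{k=1}^{n-1}(z-y_k)$, and reading off the roots shows the eigenvalues of $\mat M$ are $y_1,\dots,y_{n-1},0$ counted with multiplicity.

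I do not expect a substantive obstacle here — this is essentially a determinant bookkeeping computation. The one point requiring care is that the rank-one manipulation is valid only for $z$ outside the finite set $\{x_1,\dots,x_n\}$ (and, should $0$ be one of the $x_j$, also for $z \ne 0$), so the argument must conclude by observing that both sides of $\det(\mat M - z\mat I_n) = (-1)^n z\,p'(z)/n$ are polynomials in $z$ of degree $n$ that agree on a cofinite subset of $\mathbb{C}$, hence agree identically. A minor secondary point is to track the sign $(-1)^n$ correctly and to note that neither the logarithmic-derivative identity nor the factorization of $p'$ is disturbed by repeated roots of $p$ or repeated critical points.
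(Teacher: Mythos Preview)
Your argument is correct. The rank-one factorization $\mat M - z\mat I_n = (\mat D_n - z\mat I_n)\bigl(\mat I_n - \tfrac1n (\mat D_n - z\mat I_n)^{-1} d\,\transp{\mathbf 1}\bigr)$, the Sylvester-type determinant reduction, the logarithmic-derivative identity, and the final polynomial-identity step are all valid exactly as you state them, including in the presence of repeated roots or critical points.

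As for comparison with the paper: the paper does not actually prove this lemma. It is quoted verbatim as a result from \cite{CN} and used as a black box. That said, the exact computation you carry out does appear in the paper, embedded in the proof of Theorem~\ref{thm:wigner:critical}: there the authors write
\[
1 - \frac1n\sum_{i=1}^n \frac{\tilde\lambda_i}{\tilde\lambda_i - z}
= \det\!\Bigl[\mat I_n - (\tilde{\mat D}_n - z\mat I_n)^{-1}\tilde{\mat D}_n\,\tfrac1n\mat J_n\Bigr]
= \det\!\Bigl[\tilde{\mat D}_n\bigl(\mat I_n - \tfrac1n\mat J_n\bigr) - z\mat I_n\Bigr]\det\!\Bigl[(\tilde{\mat D}_n - z\mat I_n)^{-1}\Bigr],
\]
which is your identity read in the other direction. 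So your proof recovers the cited result by precisely the mechanism the paper itself relies on elsewhere; there is no genuinely different idea to contrast.
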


In order to prove Theorem \ref{thm:wigner:critical}, we will use Theorem \ref{thm:replacement} to compare the eigenvalues of $\frac{1}{\sqrt{n}} \mat W_n + \mat P_n$ with the eigenvalues of the companion matrix defined in Lemma \ref{lemma:companion}.  Since Theorem \ref{thm:global:sc} describes the limiting behavior of the empirical spectral measure of $\frac{1}{\sqrt{n}} \mat W_n + \mat P_n$, we would then conclude that $\mu'_{\frac{1}{\sqrt{n}} \mat W_n + \mat P_n}$ also converges to the same limit.  (Note that the companion matrix defined in Lemma \ref{lemma:companion} contains an extra eigenvalue at the origin, but this single eigenvalue does not effect the limiting empirical spectral measure.)  We require the following lemma before we present the proof of Theorem \ref{thm:wigner:critical}.  


\begin{lemma} \label{lemma:lowbndst}
Assume $\xi$, $\zeta$, $\mat W_n$, and $\mat P_n$ satisfy the assumptions of Theorem \ref{thm:wigner:refine}.  In addition, for $n$ sufficiently large, assume that the $j$ eigenvalues of $\mat P_n$ which satisfy \eqref{eq:largeevalues} do not depend on $n$.  Then, for a.e. $z \in \mathbb{C}$, there exists $c > 0$ (depending on $z$) such that a.s., for $n$ sufficiently large,
\begin{equation} \label{eq:lambdailowerbnd}
	\inf_{1 \leq i \leq n} \left| \lambda_i \left( \frac{1}{\sqrt{n}} \mat W_n + \mat P_n \right) - z \right| \geq c. 
\end{equation}
In addition, for a.e. $z \in \mathbb{C}$, a.s.
\begin{equation} \label{eq:lambdaisum}
	\lim_{n \to \infty} \frac{1}{n} \sum_{i=1}^n \frac{1}{ \lambda_i \left( \frac{1}{\sqrt{n}} \mat W_n + \mat P_n \right) - z } \longrightarrow m_{\mathrm{sc}}(z), 
\end{equation}
where $m_{\mathrm{sc}}$ is the Stieltjes transform of the semicircle law defined in \eqref{eq:mscdef}.  
\end{lemma}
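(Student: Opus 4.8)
The plan is to read both assertions off the structural description of the spectrum of $\frac{1}{\sqrt n}\mat W_n + \mat P_n$ given by Theorem \ref{thm:wigner:refine}, together with the global convergence of its empirical spectral measure provided by Theorem \ref{thm:global:sc}. First I would check that Theorem \ref{thm:global:sc} applies: since $\sup_{n\ge 1}\rank(\mat P_n)\le k$ and $\sup_{n\ge 1}\|\mat P_n\| = O(1)$, one has $\|\mat P_n\|_2^2 \le k\|\mat P_n\|^2 = O(1)$, so condition \eqref{eq:Pnhs} holds and hence, almost surely, $\mu_n := \mu_{\frac{1}{\sqrt n}\mat W_n + \mat P_n}$ converges weakly on $\mathbb{C}$ to $\mu_{\mathrm{sc}}$. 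Let $\theta_1,\dots,\theta_j$ denote the $j$ eigenvalues of $\mat P_n$ satisfying \eqref{eq:largeevalues}, which by hypothesis do not depend on $n$ once $n$ is large, and set $N := \mathbb{R} \cup \{\theta_i + \theta_i^{-1} : 1\le i\le j\}$, a Lebesgue-null subset of $\mathbb{C}$. I will establish both statements for every fixed $z \in \mathbb{C}\setminus N$; note that for such $z$ the quantity $m_{\mathrm{sc}}(z)$ is well defined since $z \notin [-2,2]$.

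For \eqref{eq:lambdailowerbnd}, fix $z \in \mathbb{C}\setminus N$ and write $\eta := \Im(z) \ne 0$ and $c_1 := \min_{1\le i\le j}|\theta_i + \theta_i^{-1} - z| > 0$. Applying Theorem \ref{thm:wigner:refine} with $\eps = \tfrac12$ gives that, almost surely, for $n$ sufficiently large and after a suitable relabeling, the $j$ outlier eigenvalues satisfy $\lambda_i(\frac{1}{\sqrt n}\mat W_n + \mat P_n) = \theta_i + \theta_i^{-1} + o(1)$ for $1\le i\le j$, while the remaining $n-j$ eigenvalues satisfy $\sup_{j+1\le i\le n}|\Im(\lambda_i(\frac{1}{\sqrt n}\mat W_n + \mat P_n))| \le n^{-1/2}$. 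Hence, almost surely, for $n$ large each outlier obeys $|\lambda_i - z| \ge c_1 - o(1) \ge c_1/2$ and each bulk eigenvalue obeys $|\lambda_i - z| \ge |\eta| - n^{-1/2} \ge |\eta|/2$, so \eqref{eq:lambdailowerbnd} holds with $c := \tfrac12\min(c_1,|\eta|)$, a constant depending only on $z$.

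For \eqref{eq:lambdaisum}, observe that $\frac1n\sum_{i=1}^n (\lambda_i - z)^{-1} = \int_{\mathbb C}(w-z)^{-1}\,d\mu_n(w)$; the one genuine obstacle is that $w\mapsto (w-z)^{-1}$ is unbounded on $\mathbb{C}$, so Theorem \ref{thm:global:sc} cannot be applied to it directly. I would get around this by truncation: choose a continuous function $\phi:\mathbb{C}\to[0,1]$ with $\phi\equiv 0$ on $\{w\in\mathbb{C} : |w-z|\le c/3\}$ and $\phi\equiv 1$ on $\{w\in\mathbb{C} : |w-z|\ge c/2\}$, so that $h(w) := \phi(w)(w-z)^{-1}$ is bounded and continuous on all of $\mathbb{C}$ (its real and imaginary parts are bounded continuous functions, each at most $3/c$ in absolute value). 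On the almost sure event on which both \eqref{eq:lambdailowerbnd} holds with this $c$ and $\mu_n \to \mu_{\mathrm{sc}}$ weakly, one has $\phi(\lambda_i) = 1$ for every $i$ once $n$ is large, hence $\int_{\mathbb C}(w-z)^{-1}\,d\mu_n(w) = \int h\,d\mu_n \to \int h\,d\mu_{\mathrm{sc}}$ by weak convergence (applied to $\Re h$ and $\Im h$ separately).

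Finally, since $\supp\mu_{\mathrm{sc}} = [-2,2]$ and $\dist(z,[-2,2]) \ge |\eta| > c/2$, also $\phi\equiv 1$ on $\supp\mu_{\mathrm{sc}}$, so $\int h\,d\mu_{\mathrm{sc}} = \int (x-z)^{-1}\,d\mu_{\mathrm{sc}}(x) = m_{\mathrm{sc}}(z)$, which gives \eqref{eq:lambdaisum}. The hypothesis that the outlier eigenvalues of $\mat P_n$ eventually not depend on $n$ enters only to guarantee that $N$ is a null set; without it the outlier limits could in principle fail to form a Lebesgue-null set, and the argument above would break down. I do not expect any step other than the truncation (and the bookkeeping ensuring the single exceptional set $N$ works simultaneously for the separation bound and the weak-convergence application) to require care.
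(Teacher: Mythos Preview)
Your proposal is correct and follows essentially the same approach as the paper: both exclude the Lebesgue-null set $\mathbb{R}\cup\{\theta_i+\theta_i^{-1}:1\le i\le j\}$, use Theorem \ref{thm:wigner:refine} to separate $z$ from the bulk (via the imaginary part) and from the outliers (via their limiting locations), and then handle the unbounded integrand $(w-z)^{-1}$ by multiplying by a continuous cutoff supported away from $z$ so that Theorem \ref{thm:global:sc} applies. The only differences are cosmetic---the paper writes $c=\tfrac12\dist(z,Q)$ in one stroke whereas you compute $c=\tfrac12\min(c_1,|\eta|)$ explicitly, and the cutoff radii differ slightly---but the logic is identical.
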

\begin{proof}
We take $n$ sufficiently large so that $\lambda_1(\mat P_n), \ldots, \lambda_j(\mat P_n)$ are the $j$ eigenvalues of $\mat P_n$ which satisfy \eqref{eq:largeevalues} and which do not depend on $n$.  Define the set $Q \subset \mathbb{C}$ by
$$ Q := \mathbb{R} \bigcup \left\{ \lambda_i(\mat P_n) + \frac{1}{\lambda_i(\mat P_n)} : 1 \leq i \leq j \right\}. $$
Clearly, $Q \subset \mathbb{C}$ has (Lebesgue) measure zero, and, by construction, $Q$ does not depend on $n$.  

Fix $z \in \mathbb{C}$ with $z \notin Q$.  Set $c := \dist(z,Q)/2$.  Recall that $\Lambda\left( \frac{1}{\sqrt{n}} \mat W_n + \mat P_n \right)$ denotes the set of eigenvalues of $\frac{1}{\sqrt{n}} \mat W_n + \mat P_n$.  By Theorem \ref{thm:wigner:refine}, it follows that, a.s., for $n$ sufficiently large, $\Lambda\left( \frac{1}{\sqrt{n}} \mat W_n + \mat P_n \right) \subset \EQ_c,$ where $\EQ_c := \{w \in \mathbb C : \dist(w,Q)\le c\}$.  Also, by the definition of $c$, we have $\dist(z, \EQ_c) \ge c$.  Thus, we have 
$$\dist\left(\Lambda\left( \frac{1}{\sqrt{n}} \mat W_n + \mat P_n \right), z \right) \ge \dist(\EQ_c, z) \ge c$$ almost surely for $n$ sufficiently large.
%
This verifies \eqref{eq:lambdailowerbnd}.

For \eqref{eq:lambdaisum}, we again fix $z \notin Q$.  Let $c$ be defined as above.  Let $f: \mathbb{C} \to \mathbb{C}$ be a bounded continuous function such that $f(\lambda) :=  \frac{1}{\lambda - z}$ for $|\lambda - z| \geq c$ and $f(\lambda) := 0$ for $|\lambda - z| < c/2$.  By \eqref{eq:lambdailowerbnd}, it follows that, a.s., for $n$ sufficiently large, 
$$ \frac{1}{n} \sum_{i=1}^n \frac{1}{ \lambda_i \left( \frac{1}{\sqrt{n}} \mat W_n + \mat P_n \right) - z } = \frac{1}{n} \sum_{i=1}^n f \left( \lambda_i \left( \frac{1}{\sqrt{n}} \mat W_n + \mat P_n \right) \right). $$
Thus, by Theorem \ref{thm:global:sc}, we conclude that a.s. 
$$ \frac{1}{n} \sum_{i=1}^n f \left( \lambda_i \left( \frac{1}{\sqrt{n}} \mat W_n + \mat P_n \right) \right) \longrightarrow \int_{\mathbb{R}} \frac{\rho_{\mathrm{sc}}(x)}{x - z}dx = m_{\mathrm{sc}}(z) $$
as $n \to \infty$, and the proof is complete.  
\end{proof}

We now prove Theorem \ref{thm:wigner:critical}.

\begin{proof}[Proof of Theorem \ref{thm:wigner:critical}]
Let $\lambda_1, \ldots, \lambda_n$ be the eigenvalues of $\mat{W}_n + \sqrt{n} \mat P_n$.  Let $\mat D_{n} := \diag(\lambda_1, \ldots, \lambda_{n})$, and let $\mat J_{n}$ denote the $n \times n$ matrix of all ones.  In view of Lemma \ref{lemma:companion}, it follows that, for any bounded and continuous function $f: \mathbb{C} \to \mathbb{C}$, 
$$ \int f d \mu'_{\frac{1}{\sqrt{n}} \mat W_n + \mat P_n} = \int f d \mu_{\frac{1}{\sqrt{n}} \mat D_n \left( \mat I_n - \frac{1}{n} \mat J_n \right)} + o(1). $$
(The $o(1)$ term is due to the fact that the matrix $\frac{1}{\sqrt{n}} \mat D_n \left( \mat I_n - \frac{1}{n} \mat J_n \right)$ has a deterministic eigenvalue at the origin, which is not a critical point.)  In addition, Theorem \ref{thm:global:sc} implies that a.s.
$$ \int f d \mu_{\frac{1}{\sqrt{n}} \mat D_n} \longrightarrow \int f d \mu_{\mathrm{sc}} $$
as $n \to \infty$.  Therefore, it suffices to show that a.s.
\begin{equation} \label{eq:showfdmu}
	\int f d \mu_{\frac{1}{\sqrt{n}} \mat D_n \left( \mat I_n - \frac{1}{n} \mat J_n \right)} - \int f d \mu_{\frac{1}{\sqrt{n}} \mat D_n} \longrightarrow 0 
\end{equation}
as $n \to \infty$.  We will use Theorem \ref{thm:replacement} to verify \eqref{eq:showfdmu}.  

We begin by observing that 
\begin{align*}
	\frac{1}{n^2} \| \mat D_{n} \|_2^2 &= \frac{1}{n^2} \sum_{i=1}^{n} |\lambda_i|^2 \\
		&\leq \frac{1}{n} \| \mat W_n + \sqrt{n} \mat P_n \|^2 \\
		&\leq \frac{2}{n} \| \mat W_n \|^2 + 2 \| \mat P_n \|^2,
\end{align*}
where the first inequality above uses the fact that the spectral radius of any matrix is bounded above by its spectral norm. Hence, by Theorem \ref{thm:wigner:norm} and the assumption that $\|\mat P_n\| = O(1)$, it follows that $\frac{1}{n^2} \| \mat D_{n} \|_2^2$ is a.s.~bounded.  Similarly, we compute
\begin{align*}
\frac{1}{n} &\left \| \mat D_{n} \left( \mat I_{n} - \frac{1}{n} \mat J_{n} \right)  \right \|_2 
\leq  
\frac{1}{n} \norm{\mat D_{n}}_2 +
\frac{1}{n} \norm{\mat D_{n}}_2 \norm{\frac{1}{n} \mat J_{n}}_2,
\end{align*}
which shows that the left-hand side above is a.s.~bounded by noting that $\frac{1}{n} \norm{\mat D_{n}}_2$ is a.s.~bounded from before and $ \norm{\frac{1}{n}\mat J_{n}}_2=  1$.  


In view of Theorem \ref{thm:replacement}, it remains to show that, for a.e. $z \in \mathbb{C}$, a.s.
\begin{align*}
	\frac{1}{n} &\log \left| \det \left[ \frac{1}{\sqrt{n}} \mat D_{n} \left( \mat I_{n} - \frac{1}{n} \mat J_{n} \right)  - z \mat I_{n} \right] \right| 
	- \frac{1}{n} \log \left| \det \left[ \frac{1}{\sqrt{n}} \mat D_{n} - z \mat I_{n} \right] \right| \longrightarrow 0 
\end{align*}
as $n \to \infty$.  

Define $\tilde{\mat D}_{n} := \frac{1}{\sqrt{n}} \mat D_{n}$, and set $\tilde{\lambda}_i := \frac{1}{\sqrt{n}} \lambda_i$ for $1 \leq i \leq n$.  By Lemma \ref{lemma:lowbndst}, we fix $z \in \mathbb{C} \setminus \mathbb{R}$ such that a.s., for $n$ sufficiently large
\begin{equation} \label{eq:lambdailowbnd}
	\inf_{1 \leq i \leq n} | \tilde{\lambda}_i - z| \geq c
\end{equation}
for some $c > 0$ (depending on $z$), and a.s.
\begin{equation} \label{eq:lambdaiconv}
	\frac{1}{n} \sum_{i=1}^n \frac{1}{\tilde{\lambda}_i - z } \longrightarrow m_{\mathrm{sc}}(z) 
\end{equation}
as $n \to \infty$.  

We first observe that, by \eqref{eq:lambdailowbnd} and \eqref{eq:lambdaiconv}, we have a.s.
\begin{align*}
	1 - \frac{1}{n} \sum_{i=1}^{n} \frac{ \tilde{\lambda}_i}{\tilde{\lambda}_i - z } &= -\frac{z}{n} \sum_{i=1}^n \frac{1}{\tilde{\lambda}_i - z} = - zm_{\mathrm{sc}}(z) + o(1). 
\end{align*}
In particular, since $z \not\in \mathbb{R}$, we obtain that a.s., for $n$ sufficiently large, 
$$ \left| 1 - \frac{1}{n} \sum_{i=1}^{n} \frac{ \tilde{\lambda}_i}{\tilde{\lambda}_i - z } \right| $$
is bounded away from zero and is also bounded above by a constant (depending on $z$).  Hence, 
\begin{equation} \label{eq:zerolimln}
	\lim_{n \to \infty} \frac{1}{n} \log \left| 1 - \frac{1}{n} \sum_{i=1}^{n} \frac{ \tilde{\lambda}_i}{\tilde{\lambda}_i - z }   \right| = 0 
\end{equation}
a.s.  However, we now notice that
\begin{align*}
	1 - \frac{1}{n} \sum_{i=1}^{n} \frac{ \tilde{\lambda}_i}{\tilde{\lambda}_i - z } &= 1 - \frac{1}{n} \tr \left[ \left( \tilde{\mat D}_n - z \mat I_n \right)^{-1} \tilde{\mat D}_n \right] \\
	&= 1 - \frac{1}{n} u_n^\mathrm{T} \left( \tilde{\mat D}_n - z \mat I_n \right)^{-1} \tilde{\mat D}_n u_n, 
\end{align*}
where $u_{n}$ is the $n$-dimensional vector of all ones.  Since $u_{n} u_{n}^\mathrm{T} = \mat J_{n}$, we apply \eqref{eq:fundamental} to obtain
\begin{align*}
	1 - \frac{1}{n} \sum_{i=1}^{n} \frac{ \tilde{\lambda}_i}{\tilde{\lambda}_i - z } &= \det \left[ \mat I_{n} - \left( \tilde{\mat D}_n - z \mat I_n \right)^{-1} \tilde{\mat D}_n \frac{1}{n} \mat J_n \right] \\
	&= \det \left[ \tilde{\mat D}_n \left( \mat I_n - \frac{1}{n} \mat J_n \right) - z \mat I_n\right] \det \left[\left( \tilde{\mat D}_n - z \mat I_n \right)^{-1} \right]. 
\end{align*}
Therefore, by \eqref{eq:lambdailowbnd}, a.s., for $n$ sufficiently large, we conclude that the determinant 
$$ \det \left( \tilde{\mat D}_n \left( \mat I_n - \frac{1}{n} \mat J_n \right) - z\mat I_n \right) $$ 
is nonzero.  In addition, by taking logarithms of both sides and applying \eqref{eq:zerolimln}, we conclude that a.s.
\begin{align*}
	\frac{1}{n} &\log \left| \det \left[ \frac{1}{\sqrt{n}} \mat D_{n} \left( \mat I_{n} - \frac{1}{n} \mat J_{n} \right)  - z \mat I_{n} \right] \right| 
	- \frac{1}{n} \log \left| \det \left[ \frac{1}{\sqrt{n}} \mat D_{n} - z \mat I_{n} \right] \right| \longrightarrow 0 
\end{align*}
as $n \to \infty$, and the proof of the theorem is complete.  
\end{proof}

\section*{Acknowledgments} 

We would like to thank 
Kevin Costello,
Yen Do, 
David Renfrew, 
and
Benedek Valko
for helpful discussions on the project.  We would also like to thank Yan Fyodorov for bringing references \cite{FSphys1996,FSphys1997,FKphys1999,FSphys2003,SFTphys1999} to our attention.  
Lastly, we thank the anonymous referee for a careful reading of the manuscript and valuable suggestions.

\appendix

\section{Proof of Lemma \ref{lemma:interlace}} \label{app:interlace}

In order to prove Lemma \ref{lemma:interlace}, we will need the following deterministic lemma.  

\begin{lemma} \label{lemma:nonzero}
Let 
$$ \mat{W} = \begin{pmatrix} \mat W' & X \\ X^\ast & d \end{pmatrix} $$
be an $n \times n$ Hermitian matrix, where $\mat W'$ is the upper-left $(n-1) \times (n-1)$ minor of $\mat W$, $X \in \mathbb{C}^{n-1}$, and $d \in \mathbb{R}$.  If an eigenvalue of $\mat{W}$ is equal to an eigenvalue of $\mat W'$, then there exists a unit eigenvector $w$ of $\mat{W}'$ such that $X^\ast w = 0$.  
\end{lemma}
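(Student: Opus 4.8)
The plan is to expose a value $\lambda$ that is an eigenvalue of both $\mat W$ and $\mat W'$, take an eigenvector $v$ of $\mat W$ for $\lambda$, and split it according to the block structure. Write $v = \transp{(\transp{v'}, v_n)}$ with $v' \in \mathbb{C}^{n-1}$ and $v_n \in \mathbb{C}$ (not both zero), and expand $\mat W v = \lambda v$ into the two block equations
$$ (\mat W' - \lambda \mat I) v' = - v_n X, \qquad X^\ast v' + (d - \lambda) v_n = 0. $$
Note that $\lambda$ is real since it is an eigenvalue of the Hermitian matrix $\mat W$. Everything reduces to a dichotomy on whether $v_n$ vanishes.

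If $v_n = 0$, then $v' \neq 0$, the first block equation becomes $\mat W' v' = \lambda v'$, so $w := v'/\|v'\|$ is a unit eigenvector of $\mat W'$, and the second block equation reduces to $X^\ast v' = 0$, whence $X^\ast w = 0$; this settles that case immediately.

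If $v_n \neq 0$, the first block equation exhibits $-v_n X$ as a member of the range of $\mat W' - \lambda \mat I$. Since $\mat W'$ is Hermitian and $\lambda$ is real, $\mat W' - \lambda \mat I$ is Hermitian, so $\operatorname{Range}(\mat W' - \lambda \mat I) = \ker(\mat W' - \lambda \mat I)^{\perp}$. Hence $X$ is orthogonal to every vector in $\ker(\mat W' - \lambda \mat I)$, which is a nonzero subspace because $\lambda$ is an eigenvalue of $\mat W'$; choosing any unit vector $w$ in this kernel gives a unit eigenvector of $\mat W'$ with $w^\ast X = 0$, and taking conjugates yields $X^\ast w = 0$.

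I do not expect a serious obstacle here: the mathematical content is just the elementary spectral fact that for a Hermitian matrix the range equals the orthogonal complement of the kernel. The only points requiring care are bookkeeping: making sure the two cases are genuinely exhaustive (the case $v_n = 0$ must be treated separately, since there the first block equation no longer places $X$ inside $\operatorname{Range}(\mat W' - \lambda \mat I)$), and reading off the orthogonality in the correct direction so that it is $X^\ast w$ — rather than some adjoint variant — that vanishes.
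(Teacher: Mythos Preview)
Your proof is correct and follows essentially the same route as the paper: split an eigenvector of $\mat W$ according to the block structure, write out the two block equations, and run a dichotomy on whether the last coordinate vanishes. The only cosmetic difference is that in the case $v_n \neq 0$ the paper fixes a unit eigenvector $u$ of $\mat W'$ in advance and left-multiplies the first block equation by $u^\ast$ to obtain $v_n\, u^\ast X = 0$ directly, whereas you phrase the same step abstractly via $\operatorname{Range}(\mat W' - \lambda \mat I) = \ker(\mat W' - \lambda \mat I)^{\perp}$; these are the same computation.
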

\begin{proof}
Assume $\lambda$ is an eigenvalue of $\mat W$ with unit eigenvector $\begin{pmatrix} v \\ q \end{pmatrix}$, where $v \in \mathbb{C}^{n-1}$ and $q \in \mathbb{C}$.  In addition, assume $\lambda$ is an eigenvalue of $\mat W'$ with unit eigenvector $u \in \mathbb{C}^{n-1}$.  From the eigenvalue equation 
$$ \mat W \begin{pmatrix} v \\ q \end{pmatrix} = \lambda \begin{pmatrix} v \\ q \end{pmatrix}, $$
we obtain
\begin{align}
	\mat W' v + q X &= \lambda v, \label{eq:eig1} \\
	X^\ast v + qd &= \lambda q. \label{eq:eig2}
\end{align}
Since $u^\ast \mat W' = \lambda u^\ast$, we multiply \eqref{eq:eig1} on the left by $u^\ast$ to obtain $q u^\ast X = 0$.  In other words, either $u^\ast X = 0$ or $q = 0$. 

If $u^\ast X = 0$, then the proof is complete (since $u$ is a unit eigenvector of $\mat W'$).  Assume $q = 0$.  Then $v$ is a unit vector, and equations \eqref{eq:eig1} and \eqref{eq:eig2} imply that
\begin{align*}
	\mat W' v &= \lambda v, \\
	X^\ast v &= 0.
\end{align*}
In other words, $v$ is a unit eigenvector of $\mat W'$ and $X^\ast v = 0$. 
\end{proof}

We now prove Lemma \ref{lemma:interlace}.  By Cauchy's interlacing theorem (see, for instance, \cite[Corollary III.1.5]{Bhatia}), it follows that the eigenvalues of $\mat W$ weakly interlace with the eigenvalues of $\mat W'$.  Thus, in order to prove Lemma \ref{lemma:interlace}, we will need to show that this weak interlacing is actually strict.  To that end, suppose a $\mat{W}$ and $\mat{W}'$ have a common eigenvalue.  Lemma \ref{lemma:nonzero} implies that there is a unit eigenvector $w$ of $\mat{W}'$ such that $X$ is orthogonal to $w$.  The crucial part is that $X$ and $\mat W'$ are independent.  Hence, $X$ and $w$ are independent.  Let $\JOmega$ be the event that $X$ is orthogonal to a unit eigenvector of $\mat{W}'$.  Using the result \cite[Proposition 2.2]{TVsimple} yields the following bound on the probability of the event $\JOmega$.

\begin{lemma}[Tao-Vu; Proposition 2.2 from \cite{TVsimple}] \label{lemma:tveigenvector}
Under the assumptions of Lemma \ref{lemma:interlace}, for every $\alpha > 0$, there exists $C>0$ (depending on $\mu$ and $\alpha$) such that $\Prob(\JOmega) \leq C n^{-\alpha}$.  
\end{lemma}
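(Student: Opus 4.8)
The plan is to read the bound off from \cite[Proposition 2.2]{TVsimple} once the random objects are put in the right form, so that the proof is mostly a matter of checking hypotheses and recalling what that proposition supplies. Recall from the proof of Theorem~\ref{thm:nonreal} (and from Lemma~\ref{lemma:nonzero}) that $X \in \mathbb{R}^{n-1}$ is the $n$-th column of $\mat W$ with its last entry removed and that $\mat W'$ is the upper-left $(n-1)\times(n-1)$ minor of $\mat W$. The entries of $X$ are iid copies of $\xi$, so they satisfy the non-degeneracy bound \eqref{eq:nondeg} with the same $\mu$; and since $\{w_{ij}:1\le i\le j\le n\}$ is a jointly independent family, $X$ is independent of $\mat W'$, which is itself an $(n-1)\times(n-1)$ real symmetric Wigner matrix with atom variables $\xi$ and $\zeta$. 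Thus $\JOmega$ is exactly the event that a random vector with iid non-degenerate coordinates is orthogonal to some unit eigenvector of an independent Wigner matrix, which is precisely the setting of \cite[Proposition 2.2]{TVsimple}.

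To make the dependence on \cite{TVsimple} transparent, here is the structure of the argument behind that proposition. Condition on $\mat W'$. If $\mat W'$ has an eigenvalue of multiplicity at least $2$, its eigenspace has dimension at least $2$ and therefore contains a nonzero, hence (after normalization) a unit, vector orthogonal to $X$, so $\JOmega$ automatically occurs on this event; the probability that $\mat W'$ has a repeated eigenvalue is $O_{\mu,\alpha}(n^{-\alpha})$ by the simple-spectrum estimates of \cite{TVsimple,NTV}. On the complementary event, $\mat W'$ has $n-1$ simple eigenvalues with well-defined unit eigenvectors $v_1,\dots,v_{n-1}$, and $\JOmega$ occurs iff $X \cdot v_i = 0$ for some $i$; so by a union bound it suffices to show $\Prob(X\cdot v_i = 0 \mid \mat W') = O_{\mu,\alpha}(n^{-\alpha-1})$ for each fixed $i$.

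This last bound is the core inverse-Littlewood--Offord input. If the small-ball probability $\Prob(X\cdot v_i = 0 \mid \mat W')$ were not smaller than every fixed power of $n$, then the coordinates of the fixed unit vector $v_i = v_i(\mat W')$ would be forced to carry strong additive structure (they would essentially lie in a generalized arithmetic progression of bounded rank and polynomially bounded size); but with overwhelming probability the eigenvectors of the Wigner matrix $\mat W'$ are delocalized, $\|v_i\|_\infty = n^{-1/2+o(1)}$, and carry no such structure, a contradiction. Combining the per-eigenvector bound, the union bound over the $n-1$ eigenvectors, and the repeated-eigenvalue estimate yields $\Prob(\JOmega) \le C n^{-\alpha}$ with $C = C(\mu,\alpha)$, which is the claim.

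I expect the only real obstacle to be bookkeeping at the interface with \cite{TVsimple}: one must confirm that \cite[Proposition 2.2]{TVsimple} is stated (or immediately implies, via a routine net argument using the delocalization bound) the exact-orthogonality conclusion, for an arbitrary target exponent $\alpha$, and under the present hypotheses, which place no condition on the diagonal atom variable $\zeta$ beyond being real. If the cited proposition is phrased instead for near-orthogonality, or for a bounded family of eigenvectors, passing to exact orthogonality of a single eigenvector at a time costs nothing and the argument goes through verbatim.
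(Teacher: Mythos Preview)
Your proposal is correct and matches the paper's approach: the paper simply cites \cite[Proposition 2.2]{TVsimple} without further argument, and your proof correctly verifies that the hypotheses of that proposition are met (the entries of $X$ are iid copies of $\xi$ satisfying \eqref{eq:nondeg}, and $X$ is independent of $\mat W'$). Your additional sketch of the inverse Littlewood--Offord machinery behind the cited proposition goes beyond what the paper provides, but it is accurate and helpful context.
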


The proof of the first part of Lemma \ref{lemma:interlace} is now complete by applying Lemma \ref{lemma:tveigenvector}.  

We now consider the case when $\xi$ and $\zeta$ are absolutely continuous random variables.  In view of Lemma \ref{lemma:nonzero}, it suffices to show that, with probability $1$, $X$ is not orthogonal to any unit eigenvector of $\mat{W}'$.  In this case, it is well-known that $\mat{W}'$ has simple spectrum with probability $1$; see, for instance, \cite[Section 2.3]{DG}.  Condition on the matrix $\mat{W}'$ to have simple spectrum.  The unit eigenvectors of $\mat{W}'$ are now uniquely determined up to sign.  Let $v_1, \ldots, v_{n-1}$ be the unit eigenvectors of $\mat{W}'$.  Since $X$ is a $(n-1)$-vector whose entries are iid copies of $\xi$ and since $X$ and $\mat{W}'$ are independent, it follows that
$$ \Prob(X^* v_i = 0) = 0 $$
for all $1 \leq i \leq n-1$.  Therefore, by the union bound, we conclude that, with probability $1$, $X$ is not orthogonal to any unit eigenvector of $\mat{W}$.

\section{Proof of Lemma \ref{lemma:distinct}} \label{app:distinct}

Results similar to Lemma~\ref{lemma:distinct} can be proven in a variety of ways (see for example, \cite[Section~2.3]{DG}).  Below we give a proof using symmetric polynomials.

We begin by first proving the assertion that the sample covariance matrix  $\mat S_n = \transp{\mat X_n}\mat X_n$ almost surely has $r:=\min\{m,n\}$ distinct, strictly positive eigenvalues.  

Recall that $\mat X_n$ is an $m$ by $n$ real matrix.  In the case that $n>m$, set $k:=n-r = n - \min\{m,n\}$ and note that the matrix $\mat S_n = \transp{\mat X_n}\mat X_n$ has at least $k$ eigenvalues equal to zero, as one can see by noting that $\mat S_n$ is an $n$ by $n$ matrix with rank at most $\min\{m,n\}$.  
Also note that any non-zero eigenvalues for $\mat S_n$ must be strictly positive since $\mat S_n$ is non-negative definite.  

Let $\lambda_1,\lambda_2,\dots,\lambda_r$ be the $r=\min\{m,n\}$ largest eigenvalues for $\mat S_n$, and consider the polynomial
$$f(\lambda_1,\dots,\lambda_r) = \left(\prod_{1\le i\le r}\lambda_i \right)\left(\prod_{1 \le i < j \le r} (\lambda_i-\lambda_j)^2\right).$$
Note that $f(\lambda_1,\dots,\lambda_r)$ is non-zero if and only if $\lambda_1,\dots,\lambda_r$ are all distinct and non-zero.  Because $f$ is a symmetric function of $\lambda_1,\dots,\lambda_r$, we can write $f$ as a polynomial in terms of the elementary symmetric functions of $\lambda_1,\dots,\lambda_r$.  All of the elementary symmetric functions of $\lambda_1,\dots, \lambda_r$ appear as coefficients of the polynomial $$g(x)=\frac{\det(\mat S_n - x\mat I)}{x^{k}}$$ (recall that at least $k$ eigenvalues of $\mat S_n$ are equal to zero); thus, we may use the fundamental theorem of symmetric polynomials to write $f(\lambda_1,\dots,\lambda_r)= \tilde f(x_{11},\dots,x_{1n}, \dots,x_{m1},\dots,x_{mn})$, a polynomial in the entries of $\mat X_n=(x_{ij})$. 

We note that $\tilde f(x_{11},\dots,x_{mn})$ is not identically zero (for example, one can choose $\mat X_n$ to be a matrix with increasing positive integers on the main diagonal and zeros elsewhere).  One can show by induction on the number of variables that any polynomial $h(y_1,\dots,y_\ell)$ that is not identically zero satisfies $\Prob(h(y_1,\dots,y_\ell) =0 ) =0$ if $y_1,\dots,y_\ell$ are independent, absolutely continuous random variables.  This completes the proof of the first assertion of Lemma~\ref{lemma:distinct}.

We now turn to proving that any eigenvector corresponding to one of the $r$ strictly positive eigenvalues must have all non-zero coordinates.  Let $v=(v_1,\dots,v_n)$ be a unit eigenvector of $\mat S_n$ corresponding to $\lambda$, one of the strictly positive eigenvalues.  Let $X_1,\dots,X_n$ be the column vectors of $\mat X_n$.  
Because $v$ is an eigenvector, we must have that $\transp{X_\ell}\mat X_n v = \lambda v_\ell$, and thus $v_\ell = 0$ if and only if $\transp{X_\ell}\mat X_n v = 0.$  
\newcommand\hatmat[1]{\hat{\mat{#1}}}

\newcommand\BOmega{E}
Let $\hat v$ be the $(n-1)$-dimensional vector formed by deleting the $\ell$-th coordinate of $v$, and let $\hatmat X_n$ be the $m$ by $n-1$ matrix formed by deleting the $\ell$-th column of $\mat X_n$.  Note that if $v_\ell=0$, then $\hat v$ is an eigenvector for $\hatmat S_n=\transp{\hatmat X_n}\hatmat X_n$, which is independent of $X_\ell$.  Thus the probability that $v_\ell=0$ is bounded by the probability of the event 
$$\BOmega:= \left\{ \mbox{\parbox{4in}{ there exist $1 \leq \ell \leq n$ and a unit eigenvector $\hat v$ of $\hatmat S_n$ with strictly positive eigenvalue such that $\hatmat X_n \hat v$ is orthogonal to the vector $X_\ell$}} \right\}.$$ 

Furthermore,
we observe that $\hatmat X_n \hat v =0$ means that $v$ (with $\ell$-th coordinate set equal to zero) is an eigenvector for $\mat S_n$ with eigenvalue $0$, a contradiction that implies that the event $\BOmega$ only occurs when $\hatmat X_n \hat v \ne 0$. Thus $\Prob(\BOmega) = \Prob(\BOmega \mbox{ and } \hatmat X_n \hat v \ne 0)$.
Finally, we note that for any unit eigenvector $\hat v$ for $\hatmat S_n$, we have 
$\Prob(\transp{X_\ell}\hatmat X_n \hat v =0 \mbox{ and } \hatmat X_n \hat v \ne 0) =0$
 by conditioning on $\hatmat X_n \hat v$ and noting that if $\hatmat X_n \hat v$ is non-zero, then with probability 1, we have $\transp{X_\ell}\hatmat X_n \hat v \ne 0$ (recall that $X_\ell$ and $\hatmat X_n \hat v$ are independent random variables and that the entries of $X_{\ell}$ are absolutely continuous).  
An application of the union bound completes the proof.

\end{document}